\titleformat{\section}{\Large\bfseries}{\thesection.}{4pt}{}
\titleformat{\subsection}{\large\bfseries}{\thesection.\arabic{subsection}.}{4pt}{}
\titleformat{\subsubsection}{\bfseries}{\thesection.\arabic{subsection}.\arabic{subsubsection}.}{4pt}{}
\titleformat*{\paragraph}{\bfseries}
\titleformat*{\subparagraph}{\bfseries}
\newtheorem{theorem}{Theorem}[section]
\newtheorem{corollary}[theorem]{Corollary}
\newtheorem{lemma}[theorem]{Lemma}
\newtheorem{proposition}[theorem]{Proposition}
\theoremstyle{definition}
\newtheorem{definition}[theorem]{Definition}
\newtheorem{remark}[theorem]{Remark}
\newcommand{\ep}{\varepsilon}
\newtheorem{acknowledgment}[theorem]{Acknowledgment}
\numberwithin{equation}{section}
\title{  Profile  of a  touch-down   solution  to a  nonlocal  MEMS Model}
\author[G. K. Duong and   H. Zaag ]{}
\subjclass{Primary: 35K50, 35B40; Secondary: 35K55, 35K57.}
 \keywords{Blowup solution, Blowup profile, Stability, Semilinear heat equation, non variation heat equation}
\thanks{\today}
\begin{document}
\maketitle

% Enter the first author's name and address:
\centerline{Giao Ky DUONG \footnote{ G. K. Duong  is supported   by the project INSPIRE. This project has received funding from the European Union’s Horizon 2020 research and innovation programme under the Marie Sk\l odowska-Curie grant agreement No 665850.}  and   Hatem ZAAG    } 
\medskip
{\footnotesize
  \centerline{Universit\'e Paris 13, Sorbonne Paris Cit\'e, LAGA, CNRS UMR 7539, Villetaneuse, France.}
}

\bigskip
\begin{center}\thanks{\today}\end{center}

\begin{abstract} 
 In this paper,   we   are  interested in   the mathematical  model  of  MEMS devices which  is   presented   by the following   equation  on  $(0,T) \times \Omega:$
\begin{eqnarray*}
 \partial_t u = \Delta u   +\displaystyle  \frac{\lambda }{  (1-u)^2 \left( 1 +\displaystyle \gamma \int_{\Omega} \frac{1}{1-u} dx \right)^2}, \quad   0 \leq u <1,  
\end{eqnarray*} 
where  $\Omega$ is a bounded  domain in  $\mathbb{R}^n$ and   $\lambda, \gamma > 0$. In  this work,  we   have succeeded to  construct   a  solution  which   quenches  in  finite time T     only  at   one interior point    $a \in \Omega$.  In particular,   we   give  a  description   of  the  quenching  behavior   according to the following  final  profile 
$$  1 - u(x,T)    \sim   \theta^*\left[  \frac{|x-a|^2}{|\ln|x-a||}  \right]^\frac{1}{3} \text{  as } x \to  a, \theta^*  > 0.$$
The  construction   relies on    some connection between  the  quenching   phemonenon  and the  blowup phenomenon. More precisely,  we   change  our problem to   the construction of  a  blowup solution  for a related PDE  and  describe  its behavior. The  method is inspired by the  work of Merle and Zaag \cite{MZnon97} with a  suitable modification.   In addition to that,   the proof  relies      on two  main steps:\textit{ A reduction   to a finite dimensional problem} and  \textit{ a topological argument  based on  Index theory}.   The main difficulty and novelty of this work is    that  we   handle    the     nonlocal   integral   term in the above equation.   The interpretation of the finite  dimensional  parameters in terms of the blowup point and the blowup time  allows  to derive  the stability of the  constructed solution with respect to initial data.
\end{abstract}

\maketitle
\section{Introduction.}
We   are interested  in   the  motion   of some  elastic membranes  which is   usually found  in  Micro-Electro Mechanical System (MEMS) devices, which are available  in a variety of electronic devices, such as microphones, transducers, sensors, actuators and so on.    Described briefly,  MEMS  devices contain  an  elastic  membrane   which  is hanged  above a  rigid  ground plate   connected in series with a fixed voltage source and a fixed capacitor.  For more details  on the physical background and possible applications, we refer the reader to  \cite{FMPSSAIM2006},   \cite{GTK2014}, \cite{PBCH2002}  and \cite{PTJEM2001}.
 
 \medskip
For a  MEMS device (in \cite{GSSAIM2015} and \cite{GTK2014}),  the distance  between the rigid  ground  plate  and the  elastic  membrane changes with time. It is refered   to as the  \textit{deflection} of the membrane. Here,    we    assume that this distance  is very small compared to the device.   In fact,  we    can fully   describe  the behavior   of the deflection  by the following hyperbolic   equation 
\begin{equation}\label{MEMS-u-hyper+para}
\left\{  \begin{array}{rcl}
 \varepsilon^2  \partial_{tt} u +  \partial_t u &=& \Delta u   + \displaystyle \frac{\lambda f(x,t) }{  (1 - u)^2 \left( 1 + \gamma \displaystyle \int_{\Omega} \frac{1}{1-u} dx \right)^2}, \quad   x \in \Omega, t > 0, \\[0.5cm]
\\
u (x,t)  & =& 0, x \in    \partial \Omega, t > 0,\\
\\
\\
u(x, 0) & =& u_0 (x), x  \in \bar  \Omega.
\end{array} \right.
\end{equation}
where   $\Omega$ is  considered as the domain of  the  rigid  plate, $u$ is the deflection    of the   membrane   to the plate, $\lambda >0, \gamma > 0$ and  $f$ is continuous. Here, the distance  between  the rest position of the membrane and the rigid plate is normalized to 1. When the device is under  voltage,  $u$ will vary in the interval $[0,1)$.    In addition to that, the  parameter $\lambda$   represents  the ratio  of  the reference electrostatic force  to the reference  elastic  force and  $\varepsilon $   is the ratio of the  interaction    of   the  inertial and damping  terms in our model. Moreover,  the function  $f $   represents    the varying   dielectric properties  of  the membrane.  See   \cite{GKDCDS2012}  for more details.

\medskip
 In fact,  we    are interested  in   a simpler  case of \eqref{MEMS-u-hyper+para} considered in  the following  parabolic equation:   
\begin{equation}\label{equa-mems-u}
\left\{  \begin{array}{rcl}
 \partial_t u &=& \Delta u   +\displaystyle \frac{\lambda }{  (1-u)^2 \left( 1 +\displaystyle \gamma \int_{\Omega} \frac{1}{1-u} dx \right)^2},  \quad x \in \Omega, t > 0, \\[0.7cm]
u (x,t) & =& 0,  x \in \partial \Omega,  t > 0,\\
u( x, 0) & =&  u_0(x), x \in \Omega.
\end{array} \right.
\end{equation}
Moreover, we  are also interested in   the  following  generalization of  problem \eqref{equa-mems-u}:
\begin{equation}\label{equa-mems-u-general}
\left\{  \begin{array}{rcl}
 \partial_t u &=& \Delta u   +\displaystyle \frac{\lambda }{  (1 - u)^p \left( 1 +\displaystyle \gamma \int_{\Omega} \frac{1}{1-u} dx \right)^q},  \quad x \in \Omega, t > 0, \\[0.7cm]
u (x,t) & =& 0,  x \in \partial \Omega,  t > 0,\\
u( x, 0) & =&  u_0(x), x \in \Omega,
\end{array} \right.
\end{equation}
 where  $p, q > 0$.
 Introducing
\begin{equation}\label{defini-Q-T}
Q_T =  (0,T) \times  \Omega, \text{ where } T> 0,
\end{equation}
we say that  $u$ is a  \textit{classical  solution} of  \eqref{equa-mems-u} (in the sense of Proposition 1.2.2  page 13 in  Kavallaris  and Suzuki \cite{NKMI2018})  if $u$   is  a  function    in   $C^{2,1} (Q_T) \cap C(\bar Q_T)$ that     satisfies  \eqref{equa-mems-u} at every point  in  $Q_T$  as well as   the boundary and  initial  conditions, with 
$$  u  (x,t) \in [0,1),  \forall x \in  \Omega, t  \in (0,T).$$
According to the above mentioned  reference in \cite{NKMI2018},  the local Cauchy problem of \eqref{equa-mems-u} is   solved. Then, either     our solution is   global in time or   there exists  $T > 0$ such that 
\begin{equation}\label{defi-quenching-finite}
 \liminf_{   t\to T} \left[  \min_{x \in \bar \Omega }\{  1 -   u(t,x)     \}  \right]  = 0.
\end{equation}
We can see that if  the above condition occurs,    the   right-hand side    of  \eqref{equa-mems-u} may become singular.   This  phenomenon     is  refered to as     \textit{ touch-down }  in finite  time $T$ in reference to the physical phenomenon, where the membrane ''touches'' the rigid ground plate which is placed below. In fact, in our setting, we follow the literature and place the regid plate at $u=1$, above the membrane  which is located at $u(x,t)$.   Note that in case of \textit{touch-down}, the MEMS device breaks down.

\bigskip
Mathematically, we may refer to the behavior in  \eqref{defi-quenching-finite} as finite-time quenching. Moreover,  $a \in \Omega$  is a  quenching point if and  only if   there exist  sequences  $(a_n ,t_n) \in  \Omega \times  (0,T)  $ such that 
$$ u(a_n, t_n) \to   1, \text{ as  }  n \to + \infty.$$

\medskip

The  \textit{ touch-down} phenomenon     has been strongly  studied      in  recent decades. In one space dimension, we  would like  to mention the paper by Guo, Hu and Wang in \cite{GHWQAM09} who gave a sufficient condition for quenching, and also a lower bound on the quenching final profile (see Remark \ref{remark-constant-Gua-Hu-Wang} below). There is also the paper by Guo and Hu in \cite{GHJDE2018} who find a constant limit for  the  similarity variables version valid only  on compact sets, and yielding  the quenching rate.

\medskip

In higher dimensions,  let us for example mention the following result by Guo and Kavallaris in \cite{GKDCDS2012}:

\medskip
\textit{
Consider  $\Omega$  such that   $ |\Omega | \leq \frac{1}{2}$. Then, for all  $\lambda > 0$ fixed and  $\gamma > 0$,   there exist initial data with a small    energy     such that    problem   \eqref{equa-mems-u} has a solution  which quenches  in finite time.}

\medskip

 In our   paper,   we   are  interested  in     proving a   general  quenching result with no  restriction on any $\lambda > 0, \gamma >0 $ and $C^2$ bounded domain  $ \Omega$.   In fact, we do much  better  that   \cite{GHJDE2018} and  \cite{GHWQAM09},   and   give  a sharp decription of   the asymptotics  of   the  solution    near  the  quenching   region. The following  is  the  main result:

\begin{theorem}[Existence    of  a  \textit{touch-down}  solution]\label{theorem-existence}
Consider    $\lambda > 0,  \gamma > 0   $ and  $\Omega  $ a $ C^2$   bounded domain   in  $\mathbb{R}^n,$  containing   the origin.  Then,  there exist  initial data  $u_0  \in C^\infty (\bar \Omega)$ such that  the  solution    of   \eqref{equa-mems-u} quenches  in finite time    $T = T(u_0) > 0$  only at  the origin.    In particular,   the following holds:
\begin{itemize}
\item[$(i)$] The   intermidiate  profile: For  all  $t\in [0,T)$
\begin{equation}\label{profile-intermidiate}
\left\| \frac{(T-t)^{\frac{1}{3}} }{ 1 - u (.,t)}     -   \theta^*  \left(  3 + \frac{9}{8}  \frac{|.|^2}{\sqrt{(T-t)|\ln(T-t)|}}  \right)^{-\frac{1}{3}} \right\|_{L^\infty (\Omega)} \leq \frac{C}{\sqrt{|\ln(T-t)|}},
\end{equation} 
for some   $\theta^*  = \theta^* (\lambda, \gamma, \Omega, T)  > 0$.  
\item[$(ii)$] The final profile:  There  exists  $u^* \in C^2(\Omega  ) \cap C (\bar \Omega) $ such that  $u $ uniformly converges   to $u^*$  as  $t \to T,$ and   
\begin{equation}\label{profile-final}
 1 -u^*(x)    \sim   \theta^* \left[  \frac{9 }{16} \frac{|x|^2}{|\ln|x||}  \right]^\frac{1}{3} \text{  as } x \to  0.
\end{equation}
\end{itemize}
\end{theorem}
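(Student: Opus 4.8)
\emph{Proof plan.} I would follow the strategy announced in the abstract, turning the quenching problem into a blow-up problem. Setting $w=\frac{1}{1-u}$, equation \eqref{equa-mems-u} becomes
\begin{equation*}
 \partial_t w=\Delta w-\frac{2|\nabla w|^2}{w}+\frac{\lambda\,w^4}{\bigl(1+\gamma\int_\Omega w\,dx\bigr)^2},\qquad w|_{\partial\Omega}=1,
\end{equation*}
so that $u$ quenches at $a$ at time $T$ exactly when $w$ blows up at $a$ at time $T$, with leading nonlinearity $w^4$ — whence the exponent $\tfrac13$. The key structural remark is that the profile \eqref{profile-intermidiate} makes $w(\cdot,T)$ \emph{integrable} near $a$, so $I(t):=\int_\Omega w(x,t)\,dx$ stays bounded up to $t=T$ and converges to $I^\ast:=\int_\Omega\frac{dx}{1-u^\ast}<\infty$; hence the nonlocal coefficient $c(t):=\lambda\bigl(1+\gamma I(t)\bigr)^{-2}$ converges to a positive constant $c^\ast$, and near $(a,T)$ the equation is a lower-order perturbation of the scalar semilinear heat equation $\partial_t w=\Delta w+c^\ast w^4$. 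Thus $\theta^\ast$ is the profile amplitude attached to $c^\ast$, and the construction must close a consistency loop: $\theta^\ast=\theta^\ast(c^\ast)$ while $c^\ast=\lambda(1+\gamma I^\ast)^{-2}$ with $I^\ast$ read off from the $u^\ast$ we produce.

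\medskip

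After normalizing $a=0$, I would pass to self-similar variables $w(x,t)=(T-t)^{-1/3}W(y,s)$, $y=\frac{x}{\sqrt{T-t}}$, $s=-\ln(T-t)$, and write $W=\varphi+q$, where $\varphi(y,s)$ is the approximate profile underlying \eqref{profile-intermidiate}, namely the profile of $\partial_t w=\Delta w+c^\ast w^4$ together with its $1/s$ correction — which here must be recomputed so as to incorporate the gradient term $-2|\nabla w|^2/w$ and the discrepancy $c(t)-c^\ast$. The linearization of the $W$-equation around $\varphi$ is governed, to leading order, by $\Lc=\Delta-\tfrac12 y\cdot\nabla+\mathrm{Id}$ on the Gaussian-weighted space $L^2_\rho$, whose spectrum is $\{1-\tfrac k2:k\in\N\}$: one positive eigenvalue $1$ (constants), $n$ positive eigenvalues $\tfrac12$ (the $y_i$), the degree-two modes $y_iy_j-2\delta_{ij}$ which the $1/s$ profile correction renders weakly dissipative, and a uniformly negative remainder; the nonlocal, gradient and potential perturbations only contribute at order $1/s$. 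I would then design a shrinking ``tube'' $\Vc_A(s)$ around $\varphi$: componentwise bounds $O(1/s^2)$ (with an $O(\ln s/s^2)$ tolerance on the $|y|^2$-mode) on the neutral and stable coordinates, $O(1/s^2)$ on the $(n+1)$ expanding coordinates $(q_0,q_1)\in\R^{n+1}$, and a pointwise bound $O(1/\sqrt s)$ in the outer region $|y|\gtrsim\sqrt s$; and I would prepare a matching $(n+1)$-parameter family of initial data at $s=s_0$, pulled back to $u_0$ by gluing to a fixed smooth function so that $u_0\in C^\infty(\bar\Omega)$, $0\le u_0<1$, $u_0|_{\partial\Omega}=0$.

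\medskip

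The heart of the argument is then the usual two-step scheme. \textbf{(a) Reduction to a finite-dimensional problem:} using the Duhamel formula for $\Lc$, maximum-principle/energy estimates for the outer part, and the boundedness and slow variation of $I(t)$ to absorb the nonlocal term, one shows that as long as $W(s)\in\Vc_A(s)$ all the bounds defining $\Vc_A$ are strictly improved except possibly those on the $(n+1)$ coordinates $(q_0,q_1)$, so that only these can reach the boundary of the tube. \textbf{(b) Topological argument:} the reduced coordinates obey $\dot q_0=q_0+O(1/s^2)$ and $\dot q_{1,i}=\tfrac12 q_{1,i}+O(1/s^2)$; were there no trapped solution, the ``exit map'' sending the $(n+1)$ initial parameters to the rescaled first exit point of $(q_0,q_1)$ would be a continuous retraction of a closed ball onto its boundary sphere, contradicting Brouwer's theorem (this is the ``index theory'' step). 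Hence there is a choice of parameters with $W(s)\in\Vc_A(s)$ for all $s\ge s_0$.

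\medskip

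For such a solution, $\|W(\cdot,s)-\varphi(\cdot,s)\|_{L^\infty}\lesssim s^{-1/2}$; undoing $w=\frac1{1-u}$ gives the intermediate estimate $(i)$. Away from $0$, the tube bounds together with parabolic regularity keep $u$ bounded away from $1$ and prevent quenching off the origin, so $u(\cdot,t)$ converges in $C^2_{\mathrm{loc}}(\Omega\setminus\{0\})\cap C(\bar\Omega)$ to some $u^\ast$, and letting $t\to T$ along $T-t\sim|x|^2$ in $(i)$ — the matched-asymptotics limit of Merle–Zaag \cite{MZnon97} — yields the final profile $(ii)$. One checks en route that $I(t)\to I^\ast<\infty$ with a rate, and that $\theta^\ast$ is compatible with $c^\ast=\lambda(1+\gamma I^\ast)^{-2}$. \textbf{The main obstacle}, and the novelty over \cite{MZnon97}, is precisely this nonlocal term: obtaining a quantitative rate for $I(t)-I^\ast$ and keeping it a genuine lower-order perturbation throughout steps (a)–(b), while simultaneously closing the fixed-point relation between the profile amplitude $\theta^\ast$ and the nonlocal constant $c^\ast$. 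Finally, reading the $n+1$ parameters as the blow-up point and blow-up time gives, as in \cite{MZnon97}, the stability of the constructed solution with respect to initial data.
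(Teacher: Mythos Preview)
Your overall strategy is the same as the paper's --- linearize around the Merle--Zaag profile in similarity variables, reduce to the $(n+1)$ expanding modes, and close by a topological argument --- and your identification of the nonlocal term as the central difficulty is correct. But the paper handles that difficulty by a device you do not use, and this matters.

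\medskip

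\textbf{The rescaling by $\bar\theta(t)$.} You work with $w=1/(1-u)$ and keep the time-dependent coefficient $c(t)=\lambda(1+\gamma I(t))^{-2}$ in front of $w^4$, planning to linearize around the profile of $\partial_t w=\Delta w+c^\ast w^4$ and then ``close the fixed-point relation'' between the amplitude $\theta^\ast$ and $c^\ast=\lambda(1+\gamma I^\ast)^{-2}$. The paper avoids this loop entirely: it sets $\bar u=w-1$ and then rescales by
\[
U(x,t)=\frac{\lambda^{1/3}}{\bar\theta(t)}\,\bar u(x,t),\qquad \bar\theta(t)=\Bigl(1+\gamma|\Omega|+\gamma\!\int_\Omega \bar u\,dx\Bigr)^{2/3},
\]
which is exactly the (time-dependent) power that normalizes the coefficient of the quartic to $1$. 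The equation for $U$ then reads $\partial_t U=\Delta U-2|\nabla U|^2/(U+\text{small})+(U+\text{small})^4-\frac{\bar\theta'(t)}{\bar\theta(t)}U$, so the nonlocal term reappears only as the logarithmic-derivative forcing $-(\bar\theta'/\bar\theta)U$. An a priori computation (Proposition~\ref{propo-bar-mu-bounded}) shows $|\bar\theta'(t)|\le C(T-t)^{(3n-8)/6}|\ln(T-t)|^n$, hence in similarity variables this forcing is exponentially small and plays no role in the reduction. There is no consistency loop to close: $\theta^\ast=\bar\theta_T/\lambda^{1/3}$ is read off \emph{after} the construction, with $\bar\theta_T=\lim_{t\to T}\bar\theta(t)$. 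Your route is not wrong, but the fixed point you describe is a genuine extra layer that the paper's rescaling dissolves; you should say how you would actually run it (e.g.\ a Schauder-type argument on $c^\ast$), otherwise the construction is circular.

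\medskip

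\textbf{The bounded domain and the three-region shrinking set.} You mention gluing the initial data to a smooth function on $\bar\Omega$, but the analysis itself must also cope with $\Omega\neq\mathbb{R}^n$. The paper does this explicitly: it multiplies $W$ by a cut-off $\psi_{M_0}$ to obtain a function $w$ defined on all of $\mathbb{R}^n$ (introducing a localized error term $F(w,W)$ that is shown to be tiny), and it defines the shrinking set $S(t)$ by three separate regimes in the \emph{physical} variable --- a blow-up region $P_1(t)$ where one controls $q\in V_{K_0,A}(s)$, an intermediate region $P_2(t)$ where one controls a rescaled function $\mathcal U(x,\xi,\tau)$ against an ODE solution $\hat{\mathcal U}(\tau)$, and a regular region $P_3(t)$ where $U$ is compared to (the heat flow of) its initial data. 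The a priori estimates in $P_2$ and $P_3$ are what make the outer part close, and they are what ultimately give the final profile \eqref{profile-final}. Your ``pointwise bound $O(1/\sqrt s)$ in the outer region $|y|\gtrsim\sqrt s$'' and ``maximum-principle/energy estimates'' gesture at this but do not substitute for the $P_2/P_3$ machinery; in particular, the proof that quenching occurs \emph{only} at the origin and the derivation of $u^\ast$ rely on the $P_2$ control, not just on the similarity-variable tube.
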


\begin{remark}
Note that when   $\gamma =0$,   our problem   coincides with the work of Filippas and Guo \cite{FGQAM93} and also   Merle and Zaag \cite{MZnon97}.  Our paper is then meaningful  when  $\gamma \ne  0$, and the whole issue is how to control the non local term. Note that \cite{FGQAM93} derived  the final quenching profile, however, only in one space dimension, whereas   \cite{MZnon97} constructed  a quenching solution  in higher dimensions, proved its stability with respect to initial data, and gave its intermediate and final  profiles. 
\end{remark}

\begin{remark}
For simplicity, we choose  to write our result when the solution quenches at the origin. Of course, we can make it quenches at any arbitrary $a \in \Omega$, simply replace $x$ by $x-a$ in the statement. 
\end{remark}
\begin{remark} In  Theorem \ref{theorem-existence},     we can describe  the  evolution  of  our solution at  $x = 0$ as follows:
$$ 1 - u(0,t)      \sim     \frac{(T-t)^\frac{1}{3}}{  \sqrt[3]{3}   \theta^* }, \text{ as }  t \to T.$$
\end{remark}

\begin{remark}\label{remark-constant-Gua-Hu-Wang}
From \eqref{profile-final}, we see that  the final profile  $u^* $ has a cusp at the origin which is equivalent to  
$$   \frac{C_0 |x|^{\frac{2}{3}}}{|\ln|x||^{\frac{1}{3}}}. $$
This description is  in fact  much better than the  result of Guo, Hu and Wang  in \cite{GHWQAM09} who gave some sufficient  conditions  for quenching  in one space dimension, and proved the existence of  a cusp at the quenching point bounded  from below by  $C (\beta) |x|^\beta  $ for any $ \beta  \in \left( \frac{2}{3}, 1\right)$, which   is less accurate than our estimate \eqref{profile-final}.
\end{remark}
\begin{remark} Note that  we  can explicitly write the  formula of the initial data
\begin{equation}\label{form-initial-MEMS}
u (x,0)  =  \frac{\bar u (x,0)}{ \bar u(x,0) + 1},
\end{equation}
where 
$$  \bar u (x,0)   =    \frac{\bar  \theta (0)}{\lambda^{\frac{1}{3}}}  U (x,0), $$
with  
$$U (x,0)   =   T^{-\frac{1}{3}}   \left[   \varphi ( \frac{x}{\sqrt{T}}, - \ln T) +  \left( d_0 + d_1 \cdot z  \right)\chi_0 \left( \frac{16 |z|}{K_0^2}\right)  \right]  \chi_1 (x,0)  + (1 - \chi_1 (x,0)) H^* (x),$$
$$  z  = \frac{x}{ \sqrt{ T |\ln T|}},   $$
$$ \chi_1 (x,0)   =  \chi_0 \left(   \frac{|x|}{ \sqrt{T} |\ln T|} \right),$$
and  $\bar \theta (0)$ is the  unique positive  solution of the following equation
$$ \bar \theta (0)  = \left( 1 + \gamma |\Omega| +   \frac{\gamma}{\sqrt[3]{\lambda}} \bar \theta (0)  \int_{\Omega} U (0) dx  \right)^{\frac{2}{3}},$$
and  $  \chi_0, \varphi,   H^*$  are defined in \eqref{defini-chi-0}, \eqref{defini-varphi},  \eqref{defini-H-epsilon-0}, respectively. Here,  $T$ is small enough and  the parameters  $d_0, d_1$ are  fine-tuned in order to get the desired behavior.
\end{remark}
\begin{remark}[An open question] How big can $\theta^*$ be?   This question   is related  to the work  of Merle and Zaag in  \cite{MZnon97} (see the  Theorem on  page 1499),  which  corresponds to the case where $\gamma =0$. For that  case,  the answer is $\theta^* = \frac{1}{\sqrt[3]{\lambda}}$.  It is  very interesting  to  answer the question in    the general case.  By a  glance to \eqref{defini-2-bar-the-t}, we know that  $\theta^*$ is stricly greater than $ \frac{(1 + \gamma |\Omega|)^{\frac{2}{3}}}{\sqrt[3]{\lambda}}$.  Let us   define 
$$ \mathcal{T}_{max} =\left( \frac{(1 + \gamma |\Omega|)^{\frac{2}{3}}}{\sqrt[3]{\lambda}}, + \infty\right), $$
and 
$$ \mathcal{T} = \left\{  \theta^* \in \mathbb{R} \text{ such that } \eqref{profile-intermidiate} \text{ holds with }    u  \text{ a positive solution to  } \eqref{equa-mems-u}, \text{ for some } T >0   \right\}.$$
Then,  by a fine modification in the proof, we can construct  a solution such that  $\theta^*$ arbitrarily takes  large values in   $\mathcal{T}_{max}$. In particular, we can prove that $\mathcal{T}$ is   a dense  subset of $\mathcal{T}_{max}$.  We would like to make    the following  conjecture  
$$ \mathcal{T} =  \mathcal{T}_{max}.$$
\end{remark}

\bigskip
Now, we would like to mention that  our proof of Theorem \ref{theorem-existence} holds in a   more general setting. More precisely, if we consider  problem  \eqref{equa-mems-u-general}     in the following  regime
\begin{equation}\label{condition-p-q}
 n  - \frac{2}{ p + 1} > 0,     \text{ and }    q  > 0  \text{ and } n \geq 1,
\end{equation} 
then,    Theorem \ref{theorem-existence}  changes   as follows:
\begin{theorem}[Existence    of a \textit{touch-down}  solution to \eqref{equa-mems-u-general}]\label{theorem-existence-general}
Consider   $ \lambda, \gamma > 0, $ and   $\Omega$  a $C^2$ bounded domain  in  $\mathbb{R}^n$ and condition   \eqref{condition-p-q} holds. Then,   there exist initial data $\hat u_0 $  in  $C^\infty (\bar{\Omega})$ such that  the solution  of equation \eqref{equa-mems-u-general}  touches down in finite time   only at  the  origin. In particular,  the  following holds:
\begin{itemize}
\item[$(i)$] The   intermidiate  profile, for all  $t\in [0,T)$
\begin{equation}\label{profile-intermidiate-general}
\left\| \frac{(T-t)^{\frac{1}{p+1}} }{ 1 - u (.,t)}     -   \hat \theta^*  \left(  p + 1 + \frac{(p+1)^2}{ 4 p}  \frac{|.|^2}{\sqrt{(T-t)|\ln(T-t)|}}  \right)^{-\frac{1}{p+1}} \right\|_{L^\infty (\Omega)} \leq \frac{C}{\sqrt{|\ln(T-t)|}},
\end{equation} 
for some   $ \hat \theta^* (\lambda, \gamma, \Omega, p, q)> 0$.  
\item[$(ii)$]  The exists  $\hat u^* \in C^2(\Omega  ) \cap C (\bar \Omega) $ such that  $u $ uniformly converges   to $ \hat u^*$  as  $t \to T,$ and   
\begin{equation}\label{profile-final-general}
1- \hat u^*(x)    \sim  \hat \theta^* \left[   \frac{(p+ 1)^2}{8 p} \frac{|x|^2}{|\ln|x||}  \right]^\frac{1}{p+1} \text{  as } x \to  0.
\end{equation}
\end{itemize}
\end{theorem}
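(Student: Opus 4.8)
\medskip
\noindent\textbf{Proof strategy.} The plan is to reduce the quenching problem to a blow-up problem for a non-autonomous, non-local semilinear heat equation and then to construct a blow-up solution with the prescribed profile following the constructive method of Merle and Zaag \cite{MZnon97}. The first step is the change of unknowns
\begin{equation*}
\frac{1}{1-u(x,t)}=1+\mu(t)\,U(x,t),\qquad \mu(t)=\lambda^{-\frac{1}{p+1}}\,\bar\theta(t),
\end{equation*}
where $\bar\theta(t)$ is fixed along the trajectory by $\bar\theta(t)^{(p+1)/q}=1+\gamma\int_\Omega(1-u(x,t))^{-1}\,dx$. This normalization turns the non-local factor $\bigl(1+\gamma\int_\Omega(1-u)^{-1}\bigr)^q$ into the scalar $\bar\theta(t)^{p+1}$, and substituting $u=\mu U/(1+\mu U)$ one obtains for $U$ the closed equation
\begin{equation*}
\partial_t U=\Delta U+U^{p+2}-\frac{2\mu\,|\nabla U|^2}{1+\mu U}-\frac{\mu'}{\mu}\,U+\Bigl[\bigl(\tfrac1\mu+U\bigr)^{p+2}-U^{p+2}\Bigr],
\end{equation*}
in which the blow-up of $U$ at a point is exactly the quenching of $u$ there, the expected rate $U\sim(T-t)^{-1/(p+1)}$ corresponding to $1-u\sim c\,(T-t)^{1/(p+1)}$ at the quenching point. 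When $\gamma=0$ one has $\mu\equiv\lambda^{-1/(p+1)}$ and this is exactly the reformulation underlying \cite{MZnon97}; when $\gamma\ne0$, $\mu(t)$ (equivalently $\int_\Omega(1-u)^{-1}\,dx$) is an extra unknown coupled to $U$, so the construction has to be run for the coupled pair, the value $\bar\theta(0)$ in the initial data being the unique positive root of its consistency equation — precisely the formula displayed after the statement.

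\smallskip
\noindent\textbf{The non-local term: the main obstacle.} The genuinely new difficulty, and the only essential departure from \cite{MZnon97}, is the presence of the functional $\int_\Omega(1-u)^{-1}\,dx$ in the equation for $U$. First I would show, a priori along any solution staying in the shrinking set introduced below, that this integral stays bounded and in fact converges as $t\to T$: splitting $\Omega$ into the parabolic zone $|x|\lesssim\sqrt{(T-t)|\ln(T-t)|}$ and its complement, the expected profile makes the contribution of the first zone of order $(T-t)^{n/2-1/(p+1)}|\ln(T-t)|^{n/2}$, which tends to $0$ \emph{precisely because of hypothesis \eqref{condition-p-q}}, i.e.\ $n-\tfrac{2}{p+1}>0$; on the complement a comparison argument bounds $(1-u)^{-1}$ by an integrable power of $|x|$, again by \eqref{condition-p-q}. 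Hence $\bar\theta(t)\to\bar\theta^*>0$ and $\mu(t)\to\mu^*=\hat\theta^*=\lambda^{-1/(p+1)}\bar\theta^*>0$, which in particular gives the lower bound on $\hat\theta^*$ noted after the statement. One moreover bounds $\mu'(t)$ through the time-derivative of the integral, so that $\tfrac{\mu'}{\mu}U$ is negligible against $U^{p+2}$ on the self-similar scale; and finally one must check that the perturbation the non-local term produces, propagated through the $w^{p+2}$ nonlinearity, projects onto the slow modes below at a size too small to disturb their $1/s$-dynamics. A secondary, similar verification is needed for the gradient term, which sits at borderline scaling.

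\smallskip
\noindent\textbf{Self-similar variables and the linear picture.} Fixing the (to-be-determined) quenching point at the origin, I would pass to $y=x/\sqrt{T-t}$, $s=-\ln(T-t)$, $w(y,s)=(T-t)^{1/(p+1)}U(x,t)$, which satisfies
\begin{equation*}
\partial_s w=\mathcal{L}w-\frac{w}{p+1}+w^{p+2}-\frac{2|\nabla_y w|^2}{w}+\tilde F,\qquad \mathcal{L}=\Delta-\tfrac12\,y\cdot\nabla,
\end{equation*}
with $\mathcal{L}$ self-adjoint in $L^2_\rho$ ($\rho=e^{-|y|^2/4}$), spectrum $\{1-\tfrac k2:k\in\mathbb N\}$, and $\tilde F$ collecting the lower-order remainders from the previous steps. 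The approximate self-similar profile is $\varphi(y,s)=\bigl(p+1+\tfrac{(p+1)^2}{4p}\tfrac{|y|^2}{s}\bigr)^{-1/(p+1)}$ — the profile of the quenching equation $v_t=\Delta v-\lambda v^{-p}$ as in \cite{MZnon97} (see also \cite{FGQAM93}), the coefficient $\tfrac{(p+1)^2}{4p}$ being fixed by the solvability condition at order $1/s$, to which the term $-2|\nabla_y w|^2/w$ contributes (so that it differs from the pure-power value $\tfrac{(p+1)^2}{4(p+2)}$). Writing $w=\varphi+q$ and decomposing $q$ over the spectrum of the operator linearized around $\varphi$: the modes $k=0$ (dimension $1$) and $k=1$ (dimension $n$) are expanding, the mode $k=2$ is the borderline, slowly decaying one, and $k\ge3$ is exponentially stable; the finite-dimensional obstruction is the $(1+n)$-dimensional $(k=0,k=1)$ component.

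\smallskip
\noindent\textbf{Shrinking set, topological reduction, final profile.} Following \cite{MZnon97} I would introduce a shrinking set $V_A(s)$ prescribing the expected sizes of the modes of $q$ (together with an $L^\infty$ bound in the outer region $|y|\gtrsim\sqrt s$) and a family of prepared initial data at $s_0=-\ln T$ depending on parameters $(d_0,d_1)\in\mathbb R^{1+n}$. The heart of the proof is then: (i) a priori estimates — parabolic regularity and the spectral gap force the $k\ge2$ part of $q$ to stay strictly inside its bounds while $q\in V_A(s)$, the contributions of $\tilde F$ being controlled as above; (ii) consequently $q$ can leave $V_A(s)$ only through $(q_0,q_1)$; (iii) a topological (index-theoretic) argument on $(d_0,d_1)$ produces a choice for which the solution never exits $V_A(s)$, hence stays in it for all $s\ge s_0$. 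This yields $q(s)\to0$ in the relevant norm, i.e.\ \eqref{profile-intermidiate-general}, and, undoing the changes of variables, a solution of \eqref{equa-mems-u-general} quenching only at the origin at time $T$. Finally, \eqref{profile-final-general} follows by the standard argument: for fixed small $x\ne0$, evaluate the intermediate profile at the exit time $t_x$ defined by $|x|^2\sim(T-t_x)|\ln(T-t_x)|$ (so $T-t_x\sim|x|^2/(2|\ln|x||)$) to get $1-u(x,t_x)\sim c\,(T-t_x)^{1/(p+1)}$, then an ODE comparison on $(t_x,T)$ shows $1-u(x,\cdot)$ converges to a comparable limit; tracking the constants produces the coefficient $\hat\theta^*\bigl(\tfrac{(p+1)^2}{8p}\bigr)^{1/(p+1)}$. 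As in \cite{MZnon97}, the interpretation of $(d_0,d_1)$ in terms of the quenching point and time moreover gives stability of the constructed solution with respect to initial data.
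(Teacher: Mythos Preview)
Your proposal is correct and follows essentially the same strategy as the paper, which in fact only proves the special case $p=q=2$ (Theorem~\ref{theorem-existence}) in detail and declares Theorem~\ref{theorem-existence-general} to follow by the same method. The paper's execution adds two technical devices you sketch only implicitly: a cut-off localization $\psi_{M_0}$ (see \eqref{defini-psi-M-0-cut}--\eqref{defini-w-small}) to reduce the bounded-domain problem to one on $\mathbb{R}^n$, and a three-region shrinking set $S(t)$ built on $P_1(t)\cup P_2(t)\cup P_3(t)$ (Definition~\ref{defini-shrinking-set-S-t}) rather than a two-zone split, with the intermediate region $P_2(t)$ controlled via the rescaled function $\mathcal{U}(x,\xi,\tau)$ of \eqref{rescaled-function-U} compared to the ODE solution $\hat{\mathcal{U}}$ --- but your outline is fully consistent with this.
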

\begin{remark}
When $\gamma = 0$, we have no nonlocal term in the equation,  and this result was already proved  in \cite{MZnon97}. We also  mention that in one space dimension,  the final profile in \eqref{profile-final-general}  was also derived  in \cite{FGQAM93}. 
\end{remark}

\begin{remark}
We don't  give the proof  of Theorem \ref{theorem-existence-general} here because the  techniques  are the same as for Theorem \ref{theorem-existence}.  In fact,   for simplicity, we will only give  the proof    for the MEMS  case
$$ p = q = 2,$$
considered  in equation \eqref{equa-mems-u} and Theorem \ref{theorem-existence}.  Of course,  all  our   estimates can be  carried  on for   the general case. 
\end{remark}

In addition to that,    we can apply     the techniques   of Merle in \cite{Mercpam92} to create  a soluiton   which  quenches  at  arbitrary given points.
\begin{corollary}\label{corollary-k-point} For any  $k$ points   $ a_1, a_2, ...., a_k$ in $\Omega,$ there exist  initial data  such that  \eqref{equa-mems-u-general} has  a solution  which  quenches exactly  at  $a_1,..., a_k$. Moreover,    the  local behavior  at  each  $a_i$ is also  given    by  \eqref{profile-intermidiate-general}, \eqref{profile-final-general} by replacing   $x$ by  $x - a_i$    and    $L^\infty (\Omega)$ by $L^\infty (|x - a_i| \leq  \omega_0),$ for some  $\omega_0 > 0,$ small enough.
\end{corollary}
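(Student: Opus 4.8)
The plan is to combine the one-point construction of Theorem~\ref{theorem-existence-general} with the multi-point gluing scheme of Merle~\cite{Mercpam92}. Fix $a_1,\dots,a_k\in\Omega$ and pick $\omega_0>0$ so small that the balls $B(a_i,2\omega_0)$, $i=1,\dots,k$, are pairwise disjoint and compactly contained in $\Omega$. As in \eqref{form-initial-MEMS} we pass to $\bar u=u/(1-u)$, so that $1-u=(1+\bar u)^{-1}$ and the nonlocal integral becomes $\int_\Omega\frac{1}{1-u}\,dx=|\Omega|+\int_\Omega\bar u\,dx$, a single scalar; after this substitution and the rescaling $\bar u=\bar\theta(t)\lambda^{-1/(p+1)}U$ absorbing that scalar, one reduces, near each $a_i$, to a controlled perturbation of the standard semilinear heat equation with power nonlinearity, exactly as in the proof of Theorem~\ref{theorem-existence}. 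We then take initial data of the shape \eqref{form-initial-MEMS} but with $k$ independent copies: in each ball $B(a_i,\omega_0)$ we plug in the one-point ansatz centered at $a_i$ carrying its own pair of tuning parameters $(d_0^{(i)},d_1^{(i)})\in\mathbb{R}^{n+1}$, glued by a cutoff to a fixed smooth bounded function on $\Omega\setminus\bigcup_i B(a_i,\omega_0)$; the resulting $\hat u_0$ lies in $C^\infty(\bar\Omega)$.

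\textbf{Reduction and topological argument.} One reproduces the analysis of Theorem~\ref{theorem-existence} in the $k$ similarity-variable frames centered at the $a_i$ simultaneously. The shrinking (trapping) set is now the product of $k$ copies of the one-point shrinking set, and the only obstruction to remaining in it is governed by the $k$-tuple of finite-dimensional parameters $(d_0^{(i)},d_1^{(i)})_{i=1}^k\in(\mathbb{R}^{n+1})^k$; the same index-theory / Brouwer-type argument then selects parameters for which the solution stays trapped in every frame, giving quenching at each $a_i$ with the profiles \eqref{profile-intermidiate-general} and \eqref{profile-final-general} (with $x$ replaced by $x-a_i$ and $L^\infty(\Omega)$ by $L^\infty(|x-a_i|\le\omega_0)$). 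Since the points are separated by a fixed distance while each one-point profile decays away from its center, the interaction terms — the tail of the frame-$i$ ansatz as seen in frame $j\ne i$, together with the effect of the outer region — are exponentially small in the similarity time and are swept into the controlled error, so the $k$ reductions decouple at leading order.

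\textbf{The nonlocal term.} The genuinely new point compared with \cite{Mercpam92} is that the factor $\bar\theta(t)$ is a \emph{single} scalar shared by all $k$ regions, determined by the implicit equation of the remark following \eqref{form-initial-MEMS} in which $\int_\Omega\frac{1}{1-u}\,dx$ now receives a bounded contribution from each of the $k$ quenching regions plus an $O(1)$ contribution from the outer region. Here one uses that near $a_i$ the final profile gives $\frac{1}{1-u^*(x)}\sim C|x-a_i|^{-2/(p+1)}|\ln|x-a_i||^{1/(p+1)}$, which is integrable over $B(a_i,\omega_0)$ in $\mathbb{R}^n$ precisely because $n-\frac{2}{p+1}>0$ by \eqref{condition-p-q}; hence the nonlocal term stays bounded and continuous on $[0,T]$, and, by the same argument as in the one-point case together with the per-region estimates above, $\bar\theta(t)$ converges to a finite limit $\hat\theta^*>0$ with the quantitative rate needed to keep each frame in its trapping set. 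Because $\bar\theta(t)$ enters all frames only through this uniform, slowly varying multiplier, it does not destroy the decoupling of the finite-dimensional reductions.

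\textbf{Quenching exactly at $a_1,\dots,a_k$, and the main obstacle.} It remains to rule out quenching on $\bar\Omega\setminus\{a_1,\dots,a_k\}$: inside each punctured ball $B(a_i,\omega_0)\setminus\{a_i\}$ the trapped-solution estimates provide, as in Theorem~\ref{theorem-existence}, a positive lower bound for $1-u$, while outside $\bigcup_i B(a_i,\omega_0)$ the equation is uniformly parabolic with bounded right-hand side, so $u$ extends continuously up to $t=T$ there and stays bounded away from $1$ — this is the localization of the quenching set, handled exactly as in \cite{Mercpam92,MZnon97}. The hard part, and the only place the nonlocal structure really bites, is the step just above: showing that a single shared scalar $\bar\theta(t)$, fed simultaneously by all $k$ quenching regions through the integral, can be controlled sharply enough that the $k$ finite-dimensional reductions remain effectively independent; once its boundedness and convergence are established with the sharp rate afforded by \eqref{condition-p-q}, the rest is a routine, if lengthy, transcription of the proof of Theorem~\ref{theorem-existence} and of Merle's multi-point machinery.
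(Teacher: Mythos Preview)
Your proposal is correct and follows exactly the route the paper indicates: the authors explicitly do not prove this corollary, stating only that it follows from Theorem~\ref{theorem-existence-general} combined with the techniques of Merle~\cite{Mercpam92}, and your sketch is a faithful expansion of precisely that strategy, including the correct observation that the single shared scalar $\bar\theta(t)$ remains bounded thanks to condition~\eqref{condition-p-q}.
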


As  a  consequence  of  our   techniques, we can derive the  stability of the  quenching  solution  which we  constructed   in Theorem   \ref{theorem-existence-general} under the perturbations of  initial data.  
\begin{theorem}[Stability of  the  constructed solution]\label{theor-stability}   Let us consider $ \hat u,  $ the solution which we constructed  in  Theorem  \ref{theorem-existence-general} and we   also  define  $\hat T$ as  the quenching time of the solution and $\hat \theta^*$ as the coefficient in front of the profiles \eqref{profile-intermidiate-general} and  \eqref{profile-final-general}. Then, there exists   an open  subset    $  \hat{\mathcal{U}}_0  $   in $ C (\bar{\Omega}),$ containing  $\hat  u (0)$ such that   for all initial data  $u_0 \in   \hat{\mathcal{U}}_0,$   equation  \eqref{equa-mems-u-general}  has  a unique solution   $u$ quenching  in finite time  $T (u_0)$  at  only one quenching  point  $a(u_0)$.  Moreover,  the   asymptotics  \eqref{profile-intermidiate-general} and  \eqref{profile-final-general}  hold by replacing $\hat{u} (x,t)$ by $u (x - a (u_0), t),$ and $\hat \theta^*$ by some $\theta^* (u_0)$ . Note that, we have 
$$  (a(u_0), T(u_0), \theta^*(u_0)) \to   (0, \hat T, \hat \theta), \text{ as }   \|u_0  - \hat u_0\|_{C(\bar \Omega)} \to 0.$$
 \end{theorem}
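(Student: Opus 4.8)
The plan is to exploit the fact that the construction behind Theorem \ref{theorem-existence-general} does not really produce a single solution but, through the reduction to a finite--dimensional problem, a solution whose asymptotics are governed by finitely many parameters, and that these parameters can be read off as the quenching point and the quenching time. Concretely: given $u_0$ close to $\hat u_0$ in $C(\bar\Omega)$, first apply the same change of unknown that turns quenching for \eqref{equa-mems-u-general} into blowup for the associated PDE, so that the transformed initial datum is close to that of $\hat u$. Then, for each candidate pair $(a,T)$ with $a$ near the origin and $T$ near $\hat T$, pass to the self--similar variables $y=(x-a)/\sqrt{T-t}$, $s=-\ln(T-t)$ centered at $(a,T)$, write the rescaled solution as the fixed profile $\vp$ plus a remainder $q_{a,T}(y,s)$, and decompose $q_{a,T}$ along the spectrum of the linearized operator into its zero mode $q_0$, its linear mode $q_1\in\R^n$, the null mode $q_2$, and the negative and outer parts. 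The reduction to a finite--dimensional problem --- namely, that inside the shrinking set $V_A(s)$ built in the proof of Theorem \ref{theorem-existence-general} all components except $(q_0,q_1)\in\R^{1+n}$ are strictly controlled, so the only way to leave $V_A(s)$ is through $(q_0,q_1)$ --- is then verbatim the one already carried out, since it uses nothing about the initial data beyond membership in $V_A(s_0)$ at the initial time $s_0=-\ln T$.

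The key new point is that the topological argument is now run with $(a,T)$ playing the role that the tuning parameters $(d_0,d_1)$ play in the existence proof. Translating the center of the self--similar variables in space generates, to leading order, a linear mode, and translating the blowup time generates a constant mode; hence the boundary map $(a,T)\mapsto (q_0,q_1)(s_0)$ has the same non--degeneracy as the map $(d_0,d_1)\mapsto(q_0,q_1)(s_0)$ used before. Therefore, for $u_0$ close enough to $\hat u_0$, one can choose $(a,T)$ near $(0,\hat T)$ so that $q_{a,T}(s_0)\in V_A(s_0)$ with $(q_0,q_1)(s_0)$ on the boundary of the corresponding rectangle and pointing strictly outward. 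By the reduction step any exit from $V_A(s)$ must then occur through $(q_0,q_1)$, and an index/degree argument exactly as in \cite{MZnon97} (and \cite{Mercpam92}) produces at least one pair $(a,T)$ for which the solution remains in $V_A(s)$ for all $s\ge s_0$.

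For that trapped solution I would conclude as in Theorem \ref{theorem-existence-general}: staying in $V_A(s)$ forces $w_{a,T}\to\vp$, which yields \eqref{profile-intermidiate-general} with $a(u_0):=a$, $T(u_0):=T$, and $\hat\theta^*$ replaced by $\theta^*(u_0)$, the latter being the limit of the auxiliary scalar $\bar\theta(s)$, i.e. the positive root of the fixed--point equation carrying the nonlocal integral. The final profile \eqref{profile-final-general} follows from the intermediate one by the same integration argument. Uniqueness of the quenching point $a(u_0)$, and the fact that no other point quenches, come from the ``no quenching outside'' estimates already contained in $V_A(s)$; openness of $\hat{\mathcal U}_0$ is immediate, since every requirement used ($u_0$ close enough in $C(\bar\Omega)$, existence of $(a,T)$ with rescaled datum in $V_A(s_0)$, outward pointing and non--degeneracy of the boundary map) is an open condition on $u_0$. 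Finally, the continuity statement $(a(u_0),T(u_0),\theta^*(u_0))\to(0,\hat T,\hat\theta^*)$ follows by a compactness/uniqueness argument: along any sequence $u_0^{(k)}\to\hat u_0$ the parameters stay in a compact neighborhood of $(0,\hat T,\hat\theta^*)$, any subsequential limit yields, after passing to the limit in \eqref{profile-intermidiate-general} and using continuous dependence of \eqref{equa-mems-u-general} on data on compact time intervals, a solution that must coincide with $\hat u$, hence with quenching point $0$, time $\hat T$ and coefficient $\hat\theta^*$; since the limit is independent of the subsequence, the whole family converges.

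The main obstacle, as everywhere in the paper, is the nonlocal term $\gamma\int_\Omega(1-u)^{-1}\,dx$: one must show that along the perturbed solutions this integral stays uniformly bounded and depends continuously on $(a,T)$ and on $u_0$ --- in particular that it develops no singularity of its own away from the quenching point --- so that $\bar\theta(s)$ and hence $\theta^*(u_0)$ are well defined, stay bounded away from $\mathcal{T}_{\max}$'s endpoint, and converge. A second, more technical, difficulty is checking that the boundary map $(a,T)\mapsto(q_0,q_1)(s_0)$ is non--degenerate \emph{uniformly} in $u_0$ near $\hat u_0$ and in the presence of the nonlocal coupling; once this is secured, the remaining topological and dynamical estimates are a routine transcription of the proof of Theorem \ref{theorem-existence-general}.
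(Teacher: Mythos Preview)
Your proposal is correct and matches the approach the paper indicates: the paper does not give a detailed proof of Theorem~\ref{theor-stability} but states that it ``follows also from Theorem~\ref{theorem-existence-general} by the method of Merle and Zaag in \cite{MZdm97}'', and further notes in the abstract that ``the interpretation of the finite dimensional parameters in terms of the blowup point and the blowup time allows to derive the stability''. Your outline --- replace the tuning parameters $(d_0,d_1)$ by $(a,T)$, use that space translation generates the linear mode and time translation the constant mode, then rerun the reduction to finite dimensions and the degree argument, with the nonlocal term $\bar\theta$ controlled as in Proposition~\ref{propo-bar-mu-bounded} --- is precisely this method.
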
 
 Let us now  comment on the organization of the paper. As we have stated  earlier, Theorem \ref{theorem-existence} is a special case of Theorem \ref{theorem-existence-general}. For simplicity in the notations, we only  prove Theorem \ref{theorem-existence}. The interested  reader may derive  the general case by inspection. Moreover, we don't  prove  Corollary \ref{corollary-k-point}  and Theorem \ref{theor-stability}, since the former follows  from Theorem \ref{theorem-existence-general} and the techniques of  Merle in \cite{Mercpam92}, and the latter follows also from Theorem \ref{theorem-existence-general}  by the method of Merle and Zaag in \cite{MZdm97}. In conclusion, we only prove Theorem \ref{theorem-existence} in this paper.
 
 \medskip
  The paper is organized  as follows:
  
  - In Section 2, we give  a different formulation of the problem, and show how the profile in  \eqref{profile-intermidiate}  arises naturally.
  
  - In Section 3, we give  the proof  without technical details.
  
  - In Section 4, we prove  the technical details.
  
Some appendices are added at the end.

\begin{acknowledgment}
We would  like  to thank  the referees for the careful reading of the paper and for pointing  out various earlier references.
\end{acknowledgment}
\section{ Setting  of the problem}

\subsection{ Our main idea} 
We aim  in   this subsection    at  explaining  our  key  idea   in this paper. The rigorous proof will be given    later. Introducing
\begin{equation}\label{defini-alpha-t}
\alpha (t)  = \displaystyle \frac{\lambda}{ \left( 1   +  \gamma  \displaystyle \int_{\Omega}  \frac{1}{1 -u(t)}  dx \right)^2},
\end{equation}
  we rewrite   \eqref{equa-mems-u}   as the  following
 \begin{equation}\label{equa-u-alpha-t}
  \partial_t u  =  \Delta u   +  \frac{\alpha (t)}{(1 - u)^2}.
 \end{equation}
Under this general form, we see our equation \eqref{equa-mems-u} as a step by step generalization,  starting from a much simpler context:

\medskip

- \textbf{Problem 1: Case where  $\alpha (t) \equiv \alpha_0$}. This case was considered by Merle and Zaag in \cite{MZnon97} where, the  authors  constructed    a solution  $u_{\alpha_0}$ satisfying 
 $$  u_{\alpha_0}(x,t) \to 1 \text{ as }  (x,t) \to (x_0,T) , $$
 for some  $ T > 0,$   and  $x_0 \in \Omega$. In particular, they    gave  	 a sharp    description  for the  quenching   profile. Technically, the authors  in that work introduced 
 $$  \bar u = \frac{1}{1 - u} - 1 = \frac{u}{1 - u},$$
 and constructed  a blowup solution for the following equation derived  from  \eqref{equa-u-alpha-t}:
 \begin{equation}\label{equa-bar-u-merle-zaag}
 \partial_t \bar u = \Delta \bar u - 2\frac{|\nabla \bar u|^2}{\bar u} + \alpha_0 \bar u^4, \text{ with } \alpha (t) \equiv \alpha_0, 
 \end{equation}
 (see equation (III), page 1500 in \cite{MZnon97} for more details).  

\medskip
- \textbf{Problem 2: Case  where  $  0 <  \alpha_1 \leq   \alpha (t) \leq  \alpha_2 $ for all $ t> 0 $ for some  $  0 < \alpha_1 < \alpha_2$}.   This  case is indeed a reasonable generalization which follows  with no difficulty from the stategy of  \cite{MZnon97} for \textbf{Problem 1}.

\medskip
 
 - \textbf{Problem 3: Equation \eqref{equa-mems-u}}.  Our  idea  here  is to see \eqref{equa-mems-u}  as a coupled system between \textbf{Problem 2} and  \eqref{defini-alpha-t}:
 \begin{eqnarray*}
 \left\{  \begin{array}{rcl}
  \partial_t u  &=  \Delta u   +  \frac{\alpha (t)}{(1 - u)^2}, \\[0.4cm]
 \alpha (t)  &=  \frac{\lambda}{ \left( 1 + \gamma \int_\Omega \frac{1}{1 - u} dx \right)^2} .
 \end{array} \right.
\end{eqnarray*}  
  
 A simple idea would be to try a kind  of fixed-point argument starting from  some solution to \textbf{Problem 1}, then defining  $\alpha (t)$ according to \eqref{defini-alpha-t} defined with this solution, then solving \textbf{Problem 2} with this $\alpha (t)$, then defining a new $\alpha (t)$ with the new solution, and so  forth.
 
 \medskip
 In order to make this  method to work, one has to check  whether the iterated  $\alpha (t)$ stay  away from $0$ and  $+ \infty$, as requested  in the context of   \textbf{Problem 2}. We checked whether this holds  when $u$ solves \textbf{Problem 1}.  Fortunately, this was   the case, and this gave us a serious hint to  treat our equation \eqref{equa-mems-u} as a pertubation of  \textbf{Problem 1}.
 
 \bigskip
 In fact, our proof uses no interation, and we diredly apply the stategy of  Merle and Zaag  in \cite{MZnon97}  to control the various terms (including the  nonlocal term), in order to find  a solution  which stays near  the desired behavior. 
\subsection{Formulation  of the problem}
 In this  section, we aim at   setting the  mathematical   framework  of our   problem.       The     rigorous proof will be given later.  Our  aim  is  to  construct a solution  for equation  \eqref{equa-mems-u}, defined for all  $(x, t) \in  \Omega \times [0,T),$ for some  $T > 0$ with $0 \leq  u (x,t) < 1,$ and   
$$ u(x,t)  \to  1  \text{ as }  (x,t) \to (x_0,T), $$ 
 for some  $ x_0 \in \Omega$.   Without loss of  generality,
  we   assume that 
 $$ x_0 = 0 \in  \Omega.$$
 Introducing, 
\begin{equation}\label{defini-bar-u}
\bar  u  =   \frac{1}{1 - u}  - 1 =   \frac{u}{1 -u} \in [0, + \infty),
\end{equation}
we   derive  from \eqref{equa-mems-u}  the  following   equation  on  $\bar u$  
\begin{equation}\label{equa-bar-u}
\left\{  \begin{array}{rcl}
\displaystyle  \partial_t \bar u  &=&  \Delta \bar u   - 2 \frac{ |  \nabla  \bar u|^2}{ \bar  u + 1}  + \displaystyle \frac{\lambda (\bar u + 1 )^4}{ (1 + \gamma|\Omega|  +\displaystyle \gamma  \int_{\Omega} \bar u dx)^2}, x \in \Omega, t > 0,  \\[0.5cm]
\bar  u (x,t)  & =& 0, x \in \partial \Omega, t > 0,\\[0.2cm]
\bar u(x, 0) & =& \bar u_0 (x),\quad     x \in \bar \Omega.
\end{array} \right.
\end{equation}
  Our aim  becomes  then to construct a blowup solution for equation \eqref{equa-bar-u} such that
  $$\bar u (0,t) \to + \infty   \text{ as } t \to T.$$
  In order to see  our equation as a (not so small) perturbation of  the standard case in \eqref{equa-bar-u-merle-zaag}, we suggest   to make  one more scaling by introducing
\begin{equation}\label{defini-U-x-t}
U (x,t) = \frac{\lambda^{\frac{1}{3}}}{ \bar \theta (t)}  \bar u (x,t), \quad  U (x,t)  \geq  0, \quad \forall (x,t) \in \Omega \times [0,T),
\end{equation}
with 
\begin{equation}\label{defini-2-bar-the-t}
\bar  \theta (t)  =  \left( 1 +  \gamma |\Omega| + \gamma  \int_{\Omega} \bar u (t) dx  \right)^{\frac{2}{3}}.
\end{equation}
Then,  thanks to    equation   \eqref{equa-bar-u}, we  deduce   the  following equation to be satisfied by   $U$: 
\begin{equation}\label{equa-U}
\left\{  \begin{array}{rcl}
\displaystyle  \partial_t  U  &=&  \Delta U    - 2 \frac{ |  \nabla  U|^2}{ U  +  \frac{\lambda^{\frac{1}{3}}}{\bar \theta(t)}}   + \left( U +   \frac{\lambda^{\frac{1}{3}}}{ \bar \theta (t)}  \right)^4  - \frac{\bar \theta ' (t)}{\bar  \theta (t)} U,  x \in \Omega, t> 0,  \\[0.5cm]
 U (x,t) & =& 0,    x \in   \partial \Omega, t > 0,\\[0.2cm]
U(x,0 ) & =& U_0 (x), x \in \bar \Omega.
\end{array} \right.
\end{equation}
 Note that in the  blowup regime, which is our focus, $U$ is large and equation \eqref{equa-U}  appears indeed as a perturbation  of equation \eqref{equa-bar-u-merle-zaag}. 
 
 \noindent
Introducing   the following  notation  
\begin{eqnarray}
\bar \mu (t) & = &  \int_{\Omega} U  (t) dx \label{defini-bar-mu},
  \end{eqnarray}
  we may   rewrite  \eqref{defini-2-bar-the-t} as the following equation 
 \begin{equation}\label{relation-bar theta-bar-mu}
\bar  \theta (t) = \left(  1 + \gamma |\Omega| +   \frac{\gamma}{ \lambda^\frac{1}{3} }  \bar \theta (t) \bar  \mu (t)  \right)^\frac{2}{3}.
 \end{equation}
This implies   that $ \bar\theta (t)$ solves the following cubic equation
\begin{equation}\label{ordre 3-equ-bat-theta-bar-mu}
\theta^3(t)  = \left( 1 + \gamma |\Omega |  +  \frac{\gamma}{ \lambda^{\frac{1}{3}}}  \bar \theta (t) \bar \mu (t)\right) ^2 = (A + B(t) \bar \theta (t))^2 =  A^2 + 2 A B(t) \bar \theta (t) + B ^2(t)  \bar \theta^2 (t),
\end{equation}
where
$$ A = 1 + \gamma |\Omega| \text{ and }   B (t) = \frac{\gamma }{ \lambda^{\frac{1}{3}}} \bar \mu (t).$$
Since it happens  that  $\bar \theta (t) $ is the unique positive solution of  \eqref{ordre 3-equ-bat-theta-bar-mu},   we  may    solve  \eqref{ordre 3-equ-bat-theta-bar-mu}   and  express $\bar \theta (t)$ in terms of $\bar \mu(t)$  as follows

\begin{eqnarray}
\bar{\theta} (t) &= &\frac{  \sqrt[3]{27 A^2 +3\sqrt[3]{3}  \sqrt{27 A^2 + 4 A^3 B^3(t)} + 18 A B^3 (t) +2 B^6 (t)} }{3 \sqrt[3]{2}}   + \frac{B^3(t)}{3}\label{defini}\\
& +&  \frac{\sqrt[3]{2} (6 AB (t) + B^4(t))}{\sqrt[3]{27 A^2 +3\sqrt[3]{3}  \sqrt{27 A^2 + 4 A^3 B^3(t)} + 18 A B^3(t) +2 B^6(t)} }. \nonumber
\end{eqnarray}

\medskip
\noindent
Particularly,  we   show here  the equivalence between  equation  \eqref{equa-bar-u} and \eqref{equa-U}.
  \begin{lemma}[Equivalence between \eqref{equa-bar-u} and \eqref{equa-U}]\label{equivalent-bar u-U} Consider $\lambda > 0, \gamma > 0$ and $\Omega $ a bounded  domain   in $\mathbb{R}^n$. Then, the following   holds:
  
$(i)$  We consider  $\bar u $ a solution of equation  \eqref{equa-bar-u} on $[0,T),$ for some $T> 0$ and  introduce
 $$U (t) =  \frac{\lambda^{\frac{1}{3}}}{\bar \theta (t)} \bar u (t),  
 $$
 where  $\bar  \theta (t)  = \left( 1 + \gamma |\Omega| + \gamma \int_{\Omega} \bar u(t) dx\right)^{\frac{2}{3}}$. Then,  $U$ is   a solution of equation \eqref{equa-U} on $[0,T)$.
 
 $(ii)$ Otherwise,  we consider  $U $  a solution  of  equation \eqref{equa-U}  on  $ [0,T),$ for some $T> 0$ and  introduce
 $$   \bar u (t)   = \frac{\bar \theta (t)}{\lambda^{\frac{1}{3}}}  U(t), \forall t \in [0,T), $$
where     $\bar  \theta (t)$ is defined as in   relation \eqref{relation-bar theta-bar-mu},  then $\bar  u $ is the solution of equation \eqref{equa-bar-u} on $[0,T)$.  In particular, the uniqueness of the solution is preserved.
 \end{lemma}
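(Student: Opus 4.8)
The plan is to treat the two directions as essentially reciprocal algebraic manipulations, the only subtle point being the well-posedness and uniqueness of the defining relation for $\bar\theta(t)$. For part $(i)$, I would start from a solution $\bar u$ of \eqref{equa-bar-u}, set $\bar\theta(t) = (1 + \gamma|\Omega| + \gamma\int_\Omega \bar u(t)\,dx)^{2/3}$, and note first that $\bar\theta$ is a $C^1$ function of $t$ on $[0,T)$: indeed $t\mapsto \int_\Omega \bar u(t)\,dx$ is $C^1$ since $\bar u$ is a classical solution and $\Omega$ is bounded, and the map $s\mapsto (1+\gamma|\Omega|+\gamma s)^{2/3}$ is smooth on $[0,\infty)$ with the argument bounded away from $0$. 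Hence $U(t) = \lambda^{1/3}\bar\theta(t)^{-1}\bar u(t)$ is well-defined and $C^{2,1}$ in $(x,t)$, vanishes on $\partial\Omega$, and is nonnegative. Then I would simply differentiate: $\partial_t U = \lambda^{1/3}\bar\theta^{-1}\partial_t\bar u - \lambda^{1/3}\bar\theta^{-2}\bar\theta'\,\bar u$, substitute $\partial_t\bar u$ from \eqref{equa-bar-u}, and rewrite each term using $\bar u = \lambda^{-1/3}\bar\theta U$. The Laplacian term gives $\Delta U$ after multiplying by $\lambda^{1/3}\bar\theta^{-1}$; the gradient term $-2|\nabla\bar u|^2/(\bar u+1)$ becomes $-2|\nabla U|^2/(U + \lambda^{1/3}\bar\theta^{-1})$ after the same scaling (the factor $\lambda^{1/3}\bar\theta^{-1}$ cancels correctly because $\nabla\bar u = \lambda^{-1/3}\bar\theta\nabla U$ and $\bar u+1 = \lambda^{-1/3}\bar\theta(U+\lambda^{1/3}\bar\theta^{-1})$); the source term $\lambda(\bar u+1)^4/(1+\gamma|\Omega|+\gamma\int_\Omega\bar u\,dx)^2$ equals $\lambda\cdot\lambda^{-4/3}\bar\theta^4(U+\lambda^{1/3}\bar\theta^{-1})^4/\bar\theta^3 = \lambda^{-1/3}\bar\theta(U+\lambda^{1/3}\bar\theta^{-1})^4$, using that the denominator is exactly $\bar\theta^{3/2}$ squared $= \bar\theta^3$; multiplying by $\lambda^{1/3}\bar\theta^{-1}$ produces $(U+\lambda^{1/3}\bar\theta^{-1})^4$. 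Finally the $-\lambda^{1/3}\bar\theta^{-2}\bar\theta'\bar u$ term is exactly $-(\bar\theta'/\bar\theta)U$. Collecting everything yields \eqref{equa-U}.

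For part $(ii)$ the argument runs in reverse, but here one must first make sense of $\bar\theta(t)$ given only $U$. I would use \eqref{relation-bar theta-bar-mu}, equivalently the cubic \eqref{ordre 3-equ-bat-theta-bar-mu} with $A = 1+\gamma|\Omega| > 0$ and $B(t) = \gamma\lambda^{-1/3}\bar\mu(t)$ where $\bar\mu(t) = \int_\Omega U(t)\,dx \ge 0$; since the right-hand side of \eqref{ordre 3-equ-bat-theta-bar-mu} is a nonnegative, nondecreasing, strictly-less-than-linear-slope perturbation for $\theta$ large, the map $\theta\mapsto \theta^3 - (A+B(t)\theta)^2$ is strictly increasing for $\theta$ past its unique critical structure and negative at $\theta = 0$, giving a unique positive root $\bar\theta(t)$, explicitly the Cardano expression \eqref{defini}. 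Smoothness of $t\mapsto\bar\theta(t)$ then follows from the implicit function theorem applied to $F(\theta,t) = \theta^3-(A+B(t)\theta)^2$: at the positive root $\partial_\theta F = 3\bar\theta^2 - 2B(t)(A+B(t)\bar\theta) = 3\bar\theta^2 - 2B(t)\bar\theta^{3/2}$, and one checks this is strictly positive at the relevant root (this is where the hypothesis that we pick the correct branch matters — the largest real root). With $\bar\theta\in C^1$ in hand, define $\bar u(t) = \lambda^{-1/3}\bar\theta(t)U(t)$ and reverse the computation above: differentiate, substitute $\partial_t U$ from \eqref{equa-U}, and verify that the $-(\bar\theta'/\bar\theta)U$ term in \eqref{equa-U} is precisely what is needed so that $\partial_t\bar u = \lambda^{-1/3}(\bar\theta\partial_t U + \bar\theta' U)$ reassembles into \eqref{equa-bar-u}; the consistency of the denominator is guaranteed because $\bar\theta$ was chosen to solve \eqref{relation-bar theta-bar-mu}, i.e. $(1+\gamma|\Omega|+\gamma\int_\Omega\bar u\,dx)^2 = \bar\theta^3$ holds automatically. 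Uniqueness is preserved because the change of variables $\bar u\leftrightarrow U$ is a bijection: given $\bar u$, $\bar\theta$ and hence $U$ are determined; conversely given $U$, $\bar\theta(t)$ is the \emph{unique} positive root of the cubic and hence $\bar u$ is determined.

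The main obstacle, and the only place requiring genuine care rather than bookkeeping, is the solvability and regularity of $\bar\theta(t)$ in part $(ii)$: one must verify that \eqref{ordre 3-equ-bat-theta-bar-mu} has exactly one positive root for every $t$ (so that the Cardano formula \eqref{defini} is unambiguous), that the resulting $\bar\theta(t)$ is at least $C^1$ in $t$ (needed to differentiate and to make $\partial_t\bar u$ classical), and — importantly — that the relation $(1+\gamma|\Omega|+\gamma\bar\theta\bar\mu/\lambda^{1/3})^{2/3} = \bar\theta$ coming from the cubic is the \emph{exact} form \eqref{defini-2-bar-the-t} needed to close \eqref{equa-bar-u}, rather than merely a formula that happens to satisfy the cubic. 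Everything else — the two differentiation computations and the term-by-term matching — is routine once the scalings $\nabla\bar u = \lambda^{-1/3}\bar\theta\nabla U$ and $\bar u + 1 = \lambda^{-1/3}\bar\theta(U+\lambda^{1/3}\bar\theta^{-1})$ are recorded.
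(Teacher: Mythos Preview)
Your proposal is correct and follows the same approach as the paper, namely a direct verification from the definitions; in fact the paper's own proof consists solely of the sentence ``The proof is easily deduced from the definition in this lemma. We kindly ask the reader to self-check,'' so your write-up supplies precisely the term-by-term check (and the well-posedness discussion for $\bar\theta(t)$ in part $(ii)$) that the authors leave to the reader.
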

 \begin{proof}
 The proof is  easily deduced from  the definition in this lemma. We  kindly  ask  the   reader to  self-check.
 \end{proof}
 \begin{remark}
From   settings    \eqref{defini-bar-u} and \eqref{defini-U-x-t} and the   local well-posedness of equation \eqref{equa-mems-u}  in the sense  of  classical solutions  (see  Proposition 1.2.2 at  page 12 in Kavallaris and Suzuki \cite{NKMI2018}), we    can derive the   local existence and  uniqueness of classical solutions of equations \eqref{equa-bar-u} and \eqref{equa-U}.   Since nonnegativity is preserved  for  these equation, we will assume that $\bar u$ and $U$ are  nonegative.
 \end{remark}
Thanks to Lemma  \ref{equivalent-bar u-U}, our problem is  reduced  to  constructing  a nonnegative solution to  \eqref{equa-U}, which blows up   in finite time  only at  the origin. We also    aim at    describing its    asymptotics at the singularity. 

\noindent
Since we defined $U$ in \eqref{defini-U-x-t} on purpose so that  \eqref{equa-U} appears as  a perturbation  of equation \eqref{equa-bar-u-merle-zaag} for $U$ large, it is reasonable to make the following hypotheses:
\begin{itemize}
\item[$(i)$] $1 \leq  \bar \theta (t) \leq C_0$ for some $C_0  > 0 $. Note that  from \eqref{relation-bar theta-bar-mu}, we have  $ \bar \theta (t) \geq 1.$\label{condition-bund-bar-theta}\\
\item[$(ii)$] $ |\bar \theta' (t)|  \ll U^3 (t) \text{ when } U \text{ large}.$   
\end{itemize}

\noindent
It is  then  reasonable   to expect for equation \eqref{equa-U} the same profile as the one   constructed in \cite{MZnon97} for equation \eqref{equa-bar-u-merle-zaag}. So,  it is   natural to  follow  that work by  introducing the following    \textit{Similarity-Variables}:
\begin{equation}\label{similarity-variables}
W(y,s)  =  (T-t)^{\frac{1}{3}} U(x,t), \text{ and }  s = - \ln(T-t) \text{ and  }  y   = \frac{x}{\sqrt{T-t}}.
\end{equation}
Using   equation  \eqref{equa-U}, we  write the equation  of  $W$ in  $(y,s)$ as follows
\begin{equation}\label{equa-W}
\left\{  \begin{array}{rcl}
\partial_s W &=&  \Delta W -\frac{1}{2} y \cdot \nabla W  -\frac{W}{3}  - 2 \frac{\left| \nabla W\right|^2}{W   +    \frac{\lambda^{\frac{1}{3}}  e^{-\frac{s}{3}}}{ \theta (s)}}  + \left(W  +  \frac{\lambda^{\frac{1}{3}}e^{-\frac{s}{3}}}{\theta (s)}    \right)^{4}  - \frac{ \theta' (s)}{\theta (s)}  W,  \\[0.5cm]
 W (y,s) & =& 0,    y \in   \partial \Omega_s, s  > -\ln T,\\[0.2cm]
W(y, -\ln T ) & =& W_0 (y), y \in \bar \Omega_s,
\end{array} \right.
\end{equation} 
where  
 \begin{equation}\label{defini-theta-s}
 \theta (s) =  \bar  \theta (t(s)) = \bar \theta (T -  e^{-s} ),
\end{equation} 
and  
\begin{equation}\label{defini-Omega-s}
 \Omega_s  =     e^{\frac{s}{2}} \Omega,
\end{equation}
with  $\bar \theta $  satisfies   \eqref{relation-bar theta-bar-mu} and  \eqref{defini}.   

\medskip
\noindent
  We  observe in equation \eqref{equa-W}  that      $\Omega_s$  changes    as  $s \to + \infty$.     This is a major difficulty in comparison with  the situation where  $\Omega = \mathbb{R}^n$. In order to  overcome this difficulty, we  intend to introduce some cut-off of the solution,  so that we reduce  to the case $\Omega = \mathbb{R}^n$. Of course, there is a price to pay, in the sense that   we will need to handle  some cut-off terms. Our model for this will be  the work made   by   Mahmoudi, Nouaili  and  Zaag in \cite{MNZNon2016}  for the construction of a  blowup solution  to  the semilinear heat equation defined on the circle. Let us  note that  the situation with $\Omega$ bounded was  already mentioned in \cite{MZnon97}. However, the authors in that work avoided  the problem by giving the proof only in the case where $\Omega  = \mathbb{R}^n$.  In this work, we are happy to handle   the case with a bounded $\Omega$, following  the ideas  of       Mahmoudi, Nouaili  and  Zaag in \cite{MNZNon2016}.    Let us mention that Vel\'azquez was also faced in \cite{VJDE93} by the question of reducing a problem  defined on a bounded interval  to a problem considered on the whole real line. He made the reduction thanks to the extension  of the solution  defined on the interval to another solution defined on the whole line, thanks to some  truely $1$-$d$ techniques. In our case, given that we work in higher dimensions, we use a different method, based on the localization of the equation, thanks to some cut-off functions.

  \medskip
    More precisely,  we  introduce the following  cut-off function 
\begin{equation}\label{defini-chi-0}
 \chi_0 \in C_0^\infty ([0,+\infty)), \quad supp (\chi_0) \subset [0,2], \quad 0 \leq \chi_0(x) \leq 1, \forall x \text{ and  }   \chi_0 (x)= 1, \forall x \in [0,1].
 \end{equation}
Then,   we   define  the following  function
\begin{equation}\label{defini-psi-M-0-cut}
  \psi_{M_0} (y,s)  =   \chi_0 \left(   M_0  y e^{- \frac{s}{2}}       \right), \text{ for some } M_0 > 0.
\end{equation}
Let us  introduce
\begin{equation}\label{defini-w-small}
w (y,s)  = \left\{   \begin{array}{rcl}
W (y,s)  \psi_{M_0} (y,s)  &  \text{  if }  &  y \in  \Omega_s,\\[0.3cm]
0  & & \text{otherwise }.  
\end{array}  \right.
\end{equation}
We  remark  that  $w$ is defined  on $\mathbb{R}^n$ and $ s \geq   - \ln T $  and     $w \equiv   0  $ whenever  $|y| \geq \frac{2}{M_0}  e^{\frac{s}{2}} $.  Note that  $M_0$ will    be fixed large enough together with  others  parameters at the end of  our proof.

\medskip
\noindent
Using  equation \eqref{equa-W}, we  derive from \eqref{relation-bar theta-bar-mu}  the  equation satisfied by $w$ as follows
\begin{equation}\label{equa-w}
\partial_s w = \Delta w  -  \frac{1}{2} y \cdot \nabla w - \frac{1}{3} w  - 2 \frac{|\nabla w|^2}{ w  + \frac{\lambda^{\frac{1}{2}}  e^{-\frac{s}{3}}}{\theta (s)}}     +  \left(w  +  \frac{\lambda^{\frac{1}{3}}e^{-\frac{s}{3}}}{\theta (s)}    \right)^{4}  - \frac{ \theta' (s)}{\theta (s)}  w + F(w,W),
\end{equation}
where  $F(w,W)$  encapsulates  the cut-off terms and   is  defined  as  follows
\begin{equation}\label{defini-F-1}
F(w,W) = \left\{   \begin{array}{rcl} & &
W \left[ \partial_s \psi_{M_0}   -  \Delta \psi_{M_0}  +  \frac{1}{2} y \cdot \nabla \psi_{M_0} \right] - 2  \nabla \psi_{M_0} \cdot \nabla W\\[0.3cm]
 & &+  2 \frac{|\nabla w|^2}{ w  + \frac{\lambda^{\frac{1}{2}}  e^{-\frac{s}{3}}}{\theta (s)}} - 2 \frac{|\nabla W|^2 \psi_{M_0}}{ W  + \frac{\lambda^{\frac{1}{2}}  e^{-\frac{s}{3}}}{\theta (s)}} +  \psi_{M_0}\left(W  +  \frac{\lambda^{\frac{1}{3}}e^{-\frac{s}{3}}}{\theta (s)}    \right)^{4} -  \left(w  +  \frac{\lambda^{\frac{1}{3}}e^{-\frac{s}{3}}}{\theta (s)}    \right)^{4} \\[0.5cm]
 & & \text{ if } y\in \Omega e^{\frac{s}{2}},\\[0.4cm]
 & &  0   \text{ otherwise}. 
\end{array}   \right. 
\end{equation}
We   remark  that  $F \equiv 0 $ on   the region $ \{ y \in \mathbb{R}^n \left| \right.  |y|   \leq \frac{1}{M_0} e^{\frac{s}{2}}  \text{ or }   |y| \geq  \frac{2}{M_0} e^{\frac{s}{2}}  \}$ and  that we  have from the conditions $(i)$ and $(ii)$ on $\bar \theta (t)$ on page  \pageref{condition-bund-bar-theta} that 
$$  1 \leq \theta (s) \leq C_0,  \text{ and } | \theta' (s)| \ll W^3(y,s)  . $$
 Making the further  assumption  that 
$$  \theta' (s) \to 0, $$
we see that    equation  \eqref{equa-Q} is  almost  the same as equation (15) at page 1502  in  \cite{MZnon97} at least  when $|y| \leq \frac{e^{\frac{s}{2}}}{M_0}$   . Hence,    it  is reasonable  to expect  for  equation  \eqref{equa-w}  the same profile  as the authors found in \cite{MZnon97} for equation (15) in that work, namely  
\begin{equation}\label{defini-varphi}
\varphi (y,s)  =  \left(  3 +  \frac{9}{8} \frac{|y|^2}{s} \right)^{-\frac{1}{3}} + \frac{  (3)^{-\frac{1}{3}} n}{ 4 s },
\end{equation}
(note that, this  profile  was also  defined    in \cite{MZnon97} for a general $p >2$, and  that here we need to take    $p =4  $ hence  $\kappa =  (3)^{-\frac{1}{3}}$).  In particular, we would like to construct  $w$ as a  perturbation  of $\varphi$. So, we  introduce  the following function
\begin{equation}\label{defini-q}
q =  w   -   \varphi . 
\end{equation}
 Using   equation  \eqref{equa-W}, we easily write  the   equation of   $q$ 
 \begin{equation}\label{equa-Q}
\partial_s q   =  ( \mathcal{L} + V)  q +   T(q)   +  B(q)  + N(q)+ R(y,s) + F(w,W),   
\end{equation}
where
\begin{eqnarray}
\mathcal{L}  & =&  \Delta  - \frac{1}{2} y  \cdot  \nabla  + Id,\label{defini-ope-mathcal-L}\\
V(y,s) & = &        4 \left(  \varphi^3(y,s)    -  \frac{1}{3}     \right) ,\label{defini-potential-V}\\  
J(q, \theta (s))   &=& -2   \frac{\left|  \nabla q +  \nabla \varphi   \right|^2}{ q +  \varphi + \frac{\lambda^{\frac{1}{3}}e^{-\frac{s}{3}}}{\theta (s)} }      + 2 \frac{ |\nabla  \varphi  |^2  }{\varphi + \frac{\lambda^{\frac{1}{3}}e^{-\frac{s}{3}}}{\theta (s)} }, \label{defini-T-Q}  \\
B(q)  &    =&     \left(q +  \varphi  + \frac{\lambda^{\frac{1}{3}}e^{-\frac{s}{3}}}{\theta (s)} \right)^4  -   \varphi^4 -  4 \varphi^3 q      , \label{defini-B-Q}\\
R(y,s) & = &  -   \partial_s\varphi +       \Delta \varphi - \frac{1}{2}  y \cdot \nabla \varphi - \frac{\varphi}{3}     + \varphi^4   - 2 \frac{|\nabla  \varphi |^2}{\varphi +  \frac{\lambda^{\frac{1}{3}}e^{-\frac{s}{3}}}{\theta (s)} }, \label{defini-rest-term}\\
N (q)  & =&  - \frac{ \theta' (s)}{\theta (s)}   \left(  q+ \varphi \right), \label{defini-N-term} 
\end{eqnarray}
with $\theta (s)$  defined in \eqref{defini-theta-s} and  $F(w,W)$  given in \eqref{defini-F-1}.

\medskip
In particular,   we assume that  $U$   and  $q$  have   good conditions  such that     Lemmas  \ref{lemma-bound-B-Q}, \ref{lemma-bound-T-Q},   \ref{lemma-Bound-R},  \ref{lemma-Bound-N-Q} and  \ref{lemma-F-y-s}  hold. Then,  it is easy to see that  all terms  in the  right-hand side of \eqref{equa-Q}    become very   small, except for    $(\mathcal{L} + V)q$.  As a matter of fact, this term   plays the most important  role   in our analysis.   Therefore, we show here    some  main properties  on   the  linear operator $\mathcal{L}$ and the    potential $V$ (see more details   in \cite{DNZtunisian-2017}, \cite{DU2017})

- \textit{Operator $\mathcal{L}$:} This operator is  self-adjoint in  $\mathcal{D} (\mathcal{L}) \subset L^2_\rho (\mathbb{R}^n),$  where  $L^2_\rho$ is  defined  as  follows
$$   L^2_\rho  (\mathbb{R}^n) = \left\{        f \in L^2_{loc} (\mathbb{R}^n) \left| \right. \int_{\mathbb{R}^n}  |f(y)|^2 \rho (y) dy  < + \infty   \right\},$$
and
$$ \rho (y)  =  \frac{e^{-\frac{|y|^2}{4}}}{ (4\pi )^{\frac{n}{2}}}. $$
This is the spectrum set of operator $\mathcal{L}$ 
$$   \text{Spec} (\mathcal{L}) =  \left\{    1 -  \frac{m}{2}    \left|  \right.      m \in \mathbb{N}\right\}   .$$
The eigenspace  which  corresponds  to  $\lambda_m = 1  - \frac{m}{2}$ is  given  
$$     \mathcal{E}_m =  \left\{   h_{m_1} (y_1). h_{m_2} (y_2).... h_{m_n} (y_n)   \left|  \right.  m_1 + ...+m_n = m    \right\}, $$
where   $h_{m_i}$ is the  (rescaled ) Hermite  polynomial in one  dimension. 

- \textit{ Potential $V$:} It  has  two   impotant  properties:

\begin{itemize}
\item[$(i)$] The  potential  $V(., s) \to 0$  in $L^2_\rho(\mathbb{R}^n)$ as  $s \to + \infty$:  In particular,    in the  region  $|y| \leq K_0 \sqrt s$ ( the singular domain),     $V$ has  some weak perturbations    on   the   effect of   operator  $\mathcal{L}$.
\item[$(ii)$]   $V(y,s)$ is almost a constant  on the region $|y| \geq K_0 \sqrt s$:   For all  $\epsilon > 0$, there  exists   $ \mathcal{C}_\epsilon > 0$ and  $s_\epsilon$ such that 
$$     \sup_{ s \geq  s_\epsilon, \frac{|y|}{ \sqrt s}  \geq \mathcal{C}_\epsilon} \left|  V(y,s)   - \left( -\frac{4}{3}\right)  \right|   \leq  \epsilon. $$
Note  that   $ - \frac{4}{3 }  <   -1  $  and  that  the  largest  eigenvalue  of  $\mathcal{L}$ is  $1$. Hence, roughly speaking, we may assume that  $\mathcal{L} + V$ admits a  strictly  negative  spectrum. Thus,  we  can easily control   our solution in the  region $\{ |y| \geq K_0 \sqrt{s} \}$ with $K_0$ large enough.
 \end{itemize}

\medskip 
 From these properties, it appears that the behavior   of $\mathcal{L} + V$  is not  the  same  inside  and outside   of  the  singular  domain $\{ |y| \leq   K_0 \sqrt{s}\}$.   Therefore, it is natural  to   decompose  every    $r  \in L^\infty (\mathbb{R}^n)$ as follows:
\begin{equation}\label{R=R-b+R-e  }
r (y)=   r_b (y) +  r_e (y)  \equiv  \chi (y,s)   r(y) + (1- \chi (y,s) )    r (y),
\end{equation} 
where   $\chi (y,s)$  is defined as follows 
\begin{equation}\label{defini-chi-y-s}
\chi (y,s)  = \chi_0 \left(  \frac{|y|}{K_0 \sqrt s}   \right),
\end{equation}
and  $\chi_0 $ is given in \eqref{defini-chi-0}.   From the above  decomposition, we immediately
have the following: 
\begin{eqnarray*}
\text{Supp }(r_b)  & \subset & \{ |y| \leq 2 K_0 \sqrt{s} \},\\
\text{Supp }(r_e) & \subset & \{ |y| \geq K_0 \sqrt{s}\}.
 \end{eqnarray*}
In addition to that,  we  are interested in expanding    $r_b$  in   $L^2_\rho \left( \mathbb{R}^n \right)$ according to the basis  which  is created by    the eigenfunctions of  operator $\mathcal{L}$:
\begin{eqnarray*}
r_b (y)  & =  &r_0  + r_1 \cdot y +   y^T \cdot r_2 \cdot y  - 2 \text{ Tr}(r_2)  + r_-(y), \\
& \text{ or } & \\
r_b (y) & = & r_0  + r_1 \cdot y +    r_\perp  (y),
\end{eqnarray*}
where  
\begin{equation}\label{defini-R-i}
r_i =   \left(   P_\beta ( r_b )  \right)_{\beta \in \mathbb{N}^n, |\beta|= i}, \forall  i \geq 0,
\end{equation}
with $P_\beta(r_b)$  being  the projection  of  $r_b$ on   the   eigenfunction    $h_\beta$ defined as follows:
\begin{equation}\label{defin-P-i}
P_\beta (r) =  \int_{\mathbb{R}^n}  r_b   \frac{h_\beta}{\|h_\beta\|_{L^2_\rho}}   \rho dy, \forall \beta \in \mathbb{N}^n,
\end{equation}
and
\begin{equation}\label{defini-R-perp}
r_\perp =  P_\perp (r)  =   \sum_{\beta \in \mathbb{R}^n, |\beta| \geq 2}   h_\beta P_\beta (R_b),
\end{equation}
and
\begin{equation}\label{defini-R--}
r_-   =   \sum_{\beta \in \mathbb{R}^n, |\beta| \geq 3}   h_\beta P_\beta (r_b).
\end{equation}
In other  words,  $r_\perp $ is   the part  of  $r_b$  which    is  orthogonal   to the   eigenfunctions   corresponding to   eigenvalues $0$ and  $1$  and  $r_-$  is  orthogonal   to the  eigenfunctions   corresponding to   eigenvalues $ 1, \frac{1}{2}$ and  $0$.     We  should note that  $r_0$ is a scalar,  $r_1$ is a vector  and  $r_2$ is a  square  matrix   of   order $n$  and  that they are  the components of $r_b$ not $r$.        Finally,   we   write   $r$  as follows
\begin{eqnarray}
r (y) &  =  &  r_0 + r_1 \cdot y  +  y^T \cdot r_2 \cdot y  - 2 \text{ Tr}( r_2)   	 + r_- (y) + r_e (y),\label{represent-non-perp}\\
& \text{ or }&\nonumber \\
r (y) &  =  &   r_0 +  r_1 \cdot y  +  r_\perp  (y)    + r_e (y).\label{represent-with-perp}
\end{eqnarray}

\medskip
\textbf{   \textit{A summary   of  our problem:}}     Even though  we   created  many  extra functions  from  $U$  to  $q$,  we   always    concentrate  on   solution  $U$  to  equation \eqref{equa-U}.    More precisely,  we  aim at constructing   $U$  blowing up in finite time.  Then, we  will use  equation \eqref{equa-Q} as a crucial formulation  in  our  proof. Indeed,   in order to control $U$  blowing up  in finite time, it is enough to  control   the  transform function $q$  of  $U$      (see definitions  \eqref{similarity-variables},    \eqref{defini-w-small}   and \eqref{defini-q}) satisfying
\begin{equation}\label{norm-Q-infinity-to-0}
 \left\|  q(.,s)    \right\|_{L^\infty (\mathbb{R}^n)}  \to  0, \text{ as  } s \to + \infty. 
\end{equation}
\section{The  proof   of   the existence  result  assuming   technical details}
In this  section, we aim at giving  a proof without     technical details to Theorem \ref{theorem-existence}. We would like to summarize the structure of this section as follows:

\textit{- Construction of a shrinking set:} We rely here on  the ideas of the   Merle and Zaag's work in \cite{MZnon97} to introduce a shrinking set that will guarantee the convergence to zero  for $q$ defined in \eqref{defini-q}. This set will constrain our solution as we want.    Once our solution  is trapped in,  we may show the  main  asymptotics of our solution. In particular, \eqref{norm-Q-infinity-to-0} holds and our result follows.

\textit{ - Preparation  of initial data:}   We    introduce a familly of initial data  to equation \eqref{equa-U} depending on some  finite set parameters. As a matter of fact, we will  choose  these parameters such that  our solution  belongs to the shrinking set for all $t \in [0,T)$.

\textit{ - The existence of a trapped solution:} Using  a reduction  to a finite  dimensional problem (corresponding to the finite parameters introduced in our initial data) and a topological argument, we  can derive  the existence of a blowup solution in finite time, trapped in the shrinking set.   More precisely, we show  in this part that there exist  initial data in that family of initial data such that our solution  is completely confined in the shrinking set.

\textit{- The conclusion  of  Theorem \ref{theorem-existence}:} Finally, we rely on   the existence of a blowup solution, trapped in the shrinking set  to  get the conclusion of  Theorem \ref{theorem-existence}.

\subsection{ Shrinking set}
In order to control  the  solution    $U$   blowing up in finite time and satisfying    \eqref{norm-Q-infinity-to-0},   we   adopt the   general ideas given by Merle and  Zaag in  \cite{MZnon97}.   For each $K_0 > 0,  \epsilon_0 > 0, \alpha_0 > 0$ and   $t \in  [0, T) $  with $T > 0$, we define
\begin{eqnarray}
P_1 (t)  &=&  \left\{   x \in \mathbb{R}^n  \left| \right.  |x| \leq K_0 \sqrt{(T-t) |\ln(T-t)|}    \right\} \label{defini-P-1-t},\\
P_2 (t) & =& \left\{   x \in \mathbb{R}^n   \left| \right.   \frac{K_0}{4} \sqrt{(T-t) |\ln(T-t)|}  \leq  |x| \leq \epsilon_0   \right\} \label{defini-P-2-t},\\
P_3(t) & =&  \left\{  x \in \mathbb{R}^n  \left|  \right.  |x| \geq \frac{\epsilon_0}{4}       \right\}.\label{defini-P-3-t}
\end{eqnarray}
 As a matter of fact,  we have
$$ \Omega \subset  \mathbb{R}^n =   P_1 (t) \cup P_2 (t) \cup P_3 (t), \text{ for all } t \in [0,T).$$
 We aim at controlling    our problem  on  $P_1 (t), P_2 (t)$ and $P_3 (t)$ as follows:

- On   region  $P_1(t)(\text{\textit{blowup region})}$:  We     control  $w$ (see  \eqref{similarity-variables}) instead of $U$. More precisely,    we  show that    $w$ is  a perturbation   of the profile  $\varphi$ (the blowup profile, introduced in    \eqref{defini-varphi}). Then,      \eqref{norm-Q-infinity-to-0}  will follow from the control of $w$.  

- On   region $P_2(t)(\text{\textit{intermidiate region})}$:   We control  a  rescaled  function   $\mathcal{U}$ instead of $U$. More precisely,        $\mathcal{U}$  is defined  as follows: For all $x \in P_2 (t), \xi \in   (T - t(x))^{-\frac{1}{2}} (\bar \Omega - x)   $ 	and  $\tau \in \left[- \frac{t(x)}{T-t(x)}, 1 \right),$ we define
\begin{equation}\label{rescaled-function-U}
\mathcal{U} (x, \xi, \tau) =  \left(T-t (x) \right)^{\frac{1}{3}}  U \left( x + \xi \sqrt{ T- t(x)},   (T-t(x)) \tau  +  t(x)     \right),
\end{equation}
where  $t(x)$  is defined as  the solution of the following equation
\begin{eqnarray}
|x| &  =& \frac{K_0}{4} \sqrt{ (T-t(x) ) | \ln(T-t(x))|} \text{ and  } t(x) < T.\label{defini-t(x)-}
\end{eqnarray}
We   remark that  if $\epsilon_0 $ is  small enough, then $t(x)$ is   well defined for all $x $ in $P_2(t)$. In addition to that,  using \eqref{defini-t(x)-}, we have the following  asymptotic 
$$  t(x)  \to  T, \text{ as }  x \to 0.$$
 For convenience, we     introduce 
\begin{equation}\label{defini-theta}
\varrho (x) = T - t(x).
\end{equation}
Then,  the following holds
$$\varrho(x) \to 0  \text{ as } x  \to 0.$$

\noindent
As a matter of fact, using  \eqref{equa-U},   we   write the  equation  satisfied by $\mathcal{U}$  in $(\xi, \tau) \in  \varrho^{-\frac{1}{2}}(x) (\bar \Omega - x) \times  \left[   -  \frac{t(x)}{\varrho (x)}, 1 \right)$  as follows:
 \begin{equation}\label{equa-xi}
 \partial_\tau  \mathcal{U}  =   \Delta_\xi \mathcal{U}   -  2 \frac{\left| \nabla \mathcal{U}  \right|^2  }{  \mathcal{U}  +  \frac{\lambda^{\frac{1}{3}}  \varrho^{\frac{1}{3}}(x)}{\tilde \theta ( \tau )}  } +  \left(\mathcal{U}  +  \frac{\lambda^{\frac{1}{3}} \varrho^{\frac{1}{3} }(x) }{\tilde \theta ( \tau )} \right)^4 -\frac{\tilde \theta_\tau' (\tau)}{\tilde \theta (\tau)}  \mathcal{U},
 \end{equation}
where   
\begin{equation}\label{defini-tilde-theta}
\tilde   \theta (\tau)   =         \bar \theta ( \tau\varrho(x)    + t(x)),  
\end{equation} 
with  $\bar{\theta} (t)$  defined in \eqref{defini-theta}.
 We now consider the following domain
 $$  |\xi| \leq  \alpha_0  \sqrt{|\ln( \varrho(x))|}  \text{ and }   \tau \in \left[ -\frac{  t(x)}{ \varrho(x)}, 1 \right).  $$
 When $\tau = 0$, we are in region $P_1(t(x))$, in fact (note that $P_1 (t(x))$ and $P_2(t(x))$ have some overlapping by definition). From our constraints in $P_1(t(x))$, we derive that $\mathcal{U}(x,\xi, 0)$ is flat in the sense that 
$$  \mathcal{U}(x, \xi, 0) \sim \left( 3 + \frac{9}{8} \frac{K_0^2}{16}\right)^{-\frac{1}{3}}. $$ 
Our idea is to show that this flatness will be conserved for all $\tau \in [0, 1)$ (that is for all $t \in [t(x),T)$), in the sense that the solution will not depend that much on space. In one word,   $\mathcal{U}$ is regarded   as a perturbation  of  $\hat{ \mathcal{U}}(\tau),$ where  $\hat{ \mathcal{U}}(\tau)$ is   defined as follows
 \begin{equation}\label{equa-hat-mathcal-U}
 \left\{  \begin{array}{rcl}
 \partial_\tau  \hat{\mathcal{U}} (\tau)   & = &  \hat{ \mathcal{U}}^4 (\tau),\\
 \hat{\mathcal{U}} (0)  & = &  \left(  3  + \displaystyle \frac{9}{8} \displaystyle \frac{K_0^2}{16} \right)^{-\frac{1}{3}}.
\end{array} \right.
 \end{equation}
Note that,  we can  give  an explicit  formula  to the  solution  of equation \eqref{equa-hat-mathcal-U} 
 \begin{equation}\label{defin-hat-mathcal-U-tau}
 \hat{\mathcal{U}} (\tau) =  \left(   3 (1 -\tau)  +  \displaystyle \frac{9}{8} \frac{K_0^2}{16}  \right)^{-\frac{1}{3}}.
 \end{equation}
 
 - On  region $P_3(t)(\text{\textit{regular region})}$:   Thanks to  the well-posedness property  of  the Cauchy problem  for equation  \eqref{equa-Q}, we control  the solution $U$ as a  perturbation of     initial data $U(0).$ Indeed, the blowup time $T$ will be chosen small in our analysis.
 
 \medskip
 Relying on those ideas,   we give in the following  the definition of  our  shrinking set:
\begin{definition}[Definition of  $S(t)$]\label{defini-shrinking-set-S-t} Let us consider    $ T> 0, K_0 > 0,  \epsilon_0 > 0, \alpha_0 > 0, A > 0, \delta_0 > 0, C_0> 0, \eta_0  > 0$ and   $t \in [0, T).$ Then,  we introduce  the following  set    
$$ S(T,  K_0 ,\epsilon_0, \alpha_0, A, \delta_0, \eta_0,  C_0, t) \quad   (S(t) \text{ for short}),$$
as a  subset  of         $C^{2,1}\left( \Omega  \times (0,t)  \right) \cap C(   \bar \Omega\times [0,t] ),$  containing  all functions  $U$  satisfying the following conditions:
\begin{itemize}
\item[$(i)$] \textbf{Estimates  in  $P_1(t)$}: We have   $q(s) \in V_{K_0,A} (s),$ where  $q(s)$ is  introduced   in \eqref{defini-q}, $s = -  \ln(T-t)$ and    $V_{K_0, A} (s)$ is   a  subset of  all function $r$  in $L^\infty (\mathbb{R}^n),$  satisfying the following estimates:
\begin{eqnarray*}
| r_i   |   &  \leq  & \frac{A}{s^{2}},  (i=0,1),  \text{ and  }  | r_2 | \leq  \frac{A^2 \ln s}{s^2},\\
|r_-(y)| & \leq  &  \frac{A^2}{s^2} (1 + |y|^3) ,  \text{ and }  \| r_e \|_{L^\infty(\mathbb{R}^n)} \leq    \frac{A^2}{ \sqrt s},\\
\left|   \left(   \nabla r  \right)_\perp    \right|  & \leq &  \frac{A}{s^2} (1 + |y|^3),  \forall y \in \mathbb{R}^n,
\end{eqnarray*}    
where the definitions  of   $r_i, r_-, \left( \nabla r \right)_\perp $   are given in    \eqref{defini-R-i}, \eqref{defini-R-perp} and \eqref{defini-R--}, respectively.  
\item[$(ii)$] \textbf{ Estimates in  $ P_2 (t)$:}     For all   $|x|    \in \left[  \frac{K_0}{4 }  \sqrt{(T-t)|\ln(T-t)|},  \epsilon_0 \right], \tau (x,t)  =   \frac{t- t(x)}{\varrho (x)}$    and $|\xi|  \leq  \alpha_0  \sqrt{|\ln \varrho(x)|},$    we have  the following
\begin{eqnarray*}
\left|   \mathcal{U}  (x, \xi, \tau(x,t))  - \hat{\mathcal{U}}(\tau(x,t))  \right|  & \leq &   \delta_0, \\
\left|  \nabla_\xi  \mathcal{U}  (x, \xi, \tau(x,t))  \right|  & \leq  & \frac{C_0}{\sqrt{|\ln \varrho(x)|}},\\
\end{eqnarray*}
where   $\mathcal{U},$  $\hat{\mathcal{U}}$ and  $\varrho(x) $  are  given \eqref{rescaled-function-U}, \eqref{defini-theta} and  \eqref{defin-hat-mathcal-U-tau},   respectively.
\item[$(iii)$] \textbf{Estimates in  $P_3(t)$:}  For all  $  x \in  \{ |x| \geq \frac{\epsilon_0}{4} \} \cap  \Omega,$ we have 
\begin{eqnarray*}
\left|    U(x,t)  -  U(x,0)  \right|  & \leq  &  \eta_0,\\
\left|  \nabla U(x,t) -  \nabla e^{t \Delta} U(x, 0) \right| & \leq  &  \eta_0. 
\end{eqnarray*}
\end{itemize}
\end{definition}

\noindent
In addition to that,   we  would like to introduce    the  set  $S^* (K_0, \epsilon_0,  \alpha_0,  A,  \delta_0,  C_0, \eta_0,T)$ as follows:
\begin{definition}\label{defini-S-*} 	For all $  T> 0, K_0 > 0, \epsilon_0 > 0, \alpha_0 > 0, A > 0, \delta_0 > 0, C_0 > 0 ,$ and  $\eta_0  > 0$,  we  introduce   $S^* (T, K_0,  \epsilon_0,  \alpha_0,  A,  \delta_0,  C_0, \eta_0)$ ($S^*(T) $  for short)  as   the  subset    of  all functions  $U$ in  	 $C^{2,1}( \Omega \times (0,T) ) \cap  C(\bar \Omega \times  [0, T)),$ satisfying the following:     for all $t\in  [0, T),$  we have  $$ U \in S( T, K_0,  \epsilon_0,  \alpha_0,  A,  \delta_0,   C_0, \eta_0,t).$$
\end{definition}

\begin{remark}\label{remark-change-merle-zaag}
The shrinking set  $S(t)$ is inspired by the work of Merle and Zaag  in  \cite{MZnon97}. However, we've made two  major changes:

- A simplification, by removing an unnecessary condition on the second derivative  in space  in region $P_2 (t)$.

- A  smart change in region $P_3 (t)$, by replacing $\nabla U$ by $ \nabla e^{t \Delta} U(0)$. This change  is crucial  since we are working on a bounded domain $\Omega$.

\end{remark}

\begin{remark}\label{remark-role-p-123}
 The  conditions   in    $P_2$ and $P_3$ in Definition \ref{defini-shrinking-set-S-t}   are  designed to	 make    our solution   more regular and these   conditions help us  to control $U $ in $P_1$ and  $q(s) \in V_{K_0, A} (s)$.  Finally, the main purpose is to satisfy \eqref{norm-Q-infinity-to-0}. In other words,  the control $U$ in  $P_1$ is   the  main issue.
\end{remark}
\begin{remark}
In  our  paper, we  use  a lot of   parameters to control our solution.   However, they will be  fixed at the end of the proof.  In addition to that,  we  would like  to give some  conventions  on the   universal constant in our  paper:   We    use  $C$ for universal  constants which depend only $n, \Omega, \gamma, \lambda$ and we  write   $C(K_0, \epsilon_0,...)$ for   constants  which depend  $K_0, \epsilon_0,...$, respectively.
\end{remark}

\medskip
As we mentioned in Remark \ref{remark-role-p-123}, we would like to  show here some estimates of the sizes of $q$ and $ \nabla q$, where  $q$ is the transformed function of $U$ when $U \in S(t)$.
\begin{lemma}[Sizes of  $q$ and $\nabla q$] \label{lemma-properties-V-A-s}  Let us consider  $K_0 \geq 1$ and  $\epsilon_0  > 0$. Then, there exist $T_1 (K_0, \epsilon_0)$ and  $\eta_1 (\epsilon_0)$ such that  for all $\alpha_0>0, A> 0, \delta_0 \leq  \frac{1}{2} \hat{\mathcal{U} }(0) $ (see  \eqref{defin-hat-mathcal-U-tau}), $C_0>0 ,\eta_0 \leq \eta_1, T \leq T_1$ and $t \in  [0,T)$:  if  $U \in S(K_0, \epsilon_0, \alpha_0,A, \delta_0, C_0,\eta_0,t),$ then,  the  following   holds:  
\begin{itemize}
\item[$(i)$] The  estimates on  $q$:  For all  $ y \in  \mathbb{R}^n$ and $s = - \ln (T-t),$ we have 
$$   |q (y,s)| \leq  \frac{C (K_0)A^2 }{\sqrt s }   \text{ and }    \left| q (y,s)\right| \leq   \frac{C (K_0) A^2 \ln s}{s^2} (1 +   |y|^3). $$
\item[$(ii)$] The estimates  on  $\nabla q$: For all  $y \in   \mathbb{R}^n$, we have 
$$  |  \nabla q (y,s)|  \leq  \frac{C (K_0,C_0) A^2}{ \sqrt s},  \quad  	 \left|  \nabla  q  (y,s)\right|  \leq  \frac{C(K_0,C_0) A^2 \ln s}{s^2} (1 + |y|^3), $$ 
and
$$ \left|(1 - \chi(y,s) ) \nabla q (y,s) \right|  \leq  \frac{C(K_0)}{ \sqrt s}.$$
\end{itemize}
\end{lemma}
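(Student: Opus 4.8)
The plan is to split $\mathbb{R}^n$ according to the cut-off $\chi(\cdot,s)$ of \eqref{defini-chi-y-s} and to read the bounds on $q$ and $\nabla q$ off the data stored in the shrinking set $S(t)$, zone by zone: the modal estimates of $V_{K_0,A}(s)$ handle $\{|y|\leq K_0\sqrt s\}$, where $\chi\equiv 1$ and hence $q=q_b$, while the rescaled estimates in $P_2$ handle $\{|y|\geq \tfrac{K_0}{4}\sqrt s\}$, the two ranges overlapping. Here $T\leq T_1$ is used only to make $s=-\ln(T-t)$ large (so that $\ln s\leq\sqrt s$ and the asymptotic comparisons below hold) and to keep $t(x)$ well defined on $P_2$.

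For item $(i)$, write $q=q_b+q_e$ with $\text{Supp}(q_b)\subset\{|y|\leq 2K_0\sqrt s\}$ and $\text{Supp}(q_e)\subset\{|y|\geq K_0\sqrt s\}$. On $\text{Supp}(q_b)$, insert the decomposition \eqref{represent-non-perp}, $q_b=q_0+q_1\cdot y+y^T q_2 y-2\,\text{Tr}(q_2)+q_-$, and plug in the bounds defining $V_{K_0,A}(s)$; the mechanism is simply that $1+|y|^k\leq C(K_0)\,s^{k/2}$ there, so that $|q_0|+|q_1\cdot y|$, the quadratic term $|q_2|\,(|y|^2+2n)$ (which is $\leq C(K_0)A^2 s^{-1}\ln s$), and $|q_-|$ are all $\leq C(K_0)A^2 s^{-1/2}$, while for the polynomial estimate one keeps the factors $1+|y|^3$ rather than converting them into powers of $\sqrt s$. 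On $\text{Supp}(q_e)$ one uses only $\|q_e\|_{L^\infty}\leq A^2/\sqrt s$ together with $1+|y|^3\geq K_0^3 s^{3/2}$ there to rewrite this bound in the required polynomial form. Adding the two pieces gives both estimates of $(i)$.

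For item $(ii)$ the new difficulty is that $S(t)$ controls only $(\nabla q)_\perp$ in $P_1$, not the low modes of $\nabla q$. On $\{|y|\leq K_0\sqrt s\}$, where $q=q_b$, I would recover those modes from $q$ itself: integrating $\partial_{y_k}q_b$ against $\rho$ and using $\partial_{y_k}\rho=-\tfrac{y_k}{2}\rho$ shows that the projection of $\partial_{y_k}q_b$ onto the constant mode is a scalar multiple of $(q_1)_k$ and its projection onto the linear modes a combination of entries of $q_2$ (the contribution of $\nabla\chi$ being supported where $\rho$ is exponentially small), whence $|(\nabla q)_0|\leq C A s^{-2}$ and $|(\nabla q)_1|\leq C A^2 s^{-2}\ln s$; combined with the bound $|(\nabla q)_\perp|\leq A s^{-2}(1+|y|^3)$ from $S(t)$ and the expansion $\nabla q=(\nabla q)_0+(\nabla q)_1\cdot y+(\nabla q)_\perp$, the same power counting as in $(i)$ produces the two $\nabla q$-bounds there. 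On $\{|y|\geq \tfrac{K_0}{4}\sqrt s\}\subset P_2$, I would instead pass through the rescaled function $\mathcal{U}$ of \eqref{rescaled-function-U}: on $\text{Supp}(w)$, where $\psi_{M_0}\equiv 1$ and $q=W-\varphi$, one has $\nabla_y W=\bigl((T-t)/\varrho(x)\bigr)^{5/6}\nabla_\xi\mathcal{U}$ at a common space--time point, and \eqref{defini-t(x)-} forces $(T-t)/\varrho(x)\leq 1$ and $|\ln\varrho(x)|\geq s/2$ on that range, so the $P_2$ bound $|\nabla_\xi\mathcal{U}|\leq C_0/\sqrt{|\ln\varrho(x)|}$ becomes $|\nabla W|\leq C(K_0,C_0)/\sqrt s$. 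Together with $|\nabla\varphi|\leq C/\sqrt s$ (immediate from the explicit form of $\varphi$, uniformly in $y$) and the exponentially small cut-off term $W\nabla\psi_{M_0}$, this yields the third estimate $|(1-\chi)\nabla q|\leq C(K_0)/\sqrt s$ and, combined with the inner bound, the first two estimates of $(ii)$.

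The step I expect to be the main obstacle is this last one. Since the shrinking set records nothing about $\nabla q$ in the intermediate zone, one has to chase the full chain of changes of variables $U\mapsto W\mapsto w\mapsto q$ and $U\mapsto\mathcal{U}$ and, in particular, carefully compare the three scales $s$, $T-t$ and $\varrho(x)=T-t(x)$ on the overlap $\{\tfrac{K_0}{4}\sqrt s\leq|y|\leq K_0\sqrt s\}$; this comparison, and the good definition of $t(x)$, are precisely what force the thresholds $T\leq T_1(K_0,\epsilon_0)$ (so $s$ is large) and $\epsilon_0$ small to appear in the statement.
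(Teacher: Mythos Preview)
Your approach matches the paper's (which simply defers to Lemma~B.1 in \cite{MZnon97}): read item~$(i)$ off the modal decomposition in $V_{K_0,A}(s)$ together with the bound on $q_e$, and for item~$(ii)$ recover the low modes of $\nabla q$ from those of $q$ by integration by parts in the inner region, while using the $P_2$ and $P_3$ data in the outer one.

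One correction is needed in your outer-region step. The claim $|\ln\varrho(x)|\geq s/2$ on all of $\{|y|\geq\tfrac{K_0}{4}\sqrt s\}$ is false: when $|x|$ is close to $\epsilon_0$, $\varrho(x)$ is close to the fixed number $\varrho(\epsilon_0)$, so $|\ln\varrho(x)|$ stays bounded as $s\to\infty$. The inequality $|\nabla_y W|\leq C(K_0,C_0)/\sqrt s$ on $P_2$ is nonetheless true, but you must retain the prefactor $\bigl((T-t)/\varrho(x)\bigr)^{5/6}$ rather than crudely bound it by $1$: writing $a=T-t$, $b=\varrho(x)$ with $a\leq b<1$, one checks that $(a/b)^{5/6}\sqrt{|\ln a|/|\ln b|}$ is uniformly bounded (substitute $b=a^\theta$, $\theta\in(0,1]$; the resulting expression $a^{5(1-\theta)/6}\theta^{-1/2}$ is log-convex in $\theta$ with bounded endpoint values). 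You should also note that $\{|y|\geq\tfrac{K_0}{4}\sqrt s\}$ is not contained in $P_2$: for $|x|\in(\epsilon_0,\tfrac{2}{M_0})$ one is in $P_3$, where item~$(iii)$ of Definition~\ref{defini-shrinking-set-S-t} and Lemma~\ref{lemma-regular)linear} give $|\nabla_x U|\leq C(\epsilon_0)+\eta_0$, hence $|\nabla_y W|=(T-t)^{5/6}|\nabla_x U|$ is exponentially small in $s$ and no further work is needed there.
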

\begin{proof}  The conclusion directly  follows from the definition of  the shrinking set $S(t)$ and $V_{K_0,A}(s)$. In addition to that,  these definitions are  almost the same as  in \cite{MZnon97}. Therefore,  we kindly refer  the reader to see Lemma B.1 at page 1537 in \cite{MZnon97}.
\end{proof}
\medskip

\subsection{Initial data}
In this subsection,    we   will concentrate  on     introducing  our   initial data  to equation \eqref{equa-U}  so that it is   trapped  in $S(0)$. In order to  do that,  we first introduce  the following cut-off function:
\begin{eqnarray}
%\varphi_0 (z)  & = & \left(  3 +  \frac{9}{8}  |z|^2 \right)^{-\frac{1}{3}},\\\label{defini-varphi_0}
\chi_1 (x)   & = &  \chi_0 \left(   \frac{|x|}{ \sqrt T |\ln T|}   \right),  \label{defini-chi-1} 
\end{eqnarray}
where  $\chi_0$ is given in \eqref{defini-chi-0}. In addition to that,  we  introduce  $H^*$ as  a function in $C^\infty_0( \mathbb{R}^n  \setminus \{0\})$ satisfying 

\begin{equation}\label{defini-H-epsilon-0}
H^*  (x)  =  \left\{   \begin{array}{rcl}
&  &  \left[  \displaystyle \frac{9}{16} \frac{|x|^2}{|\ln|x||}    \right]^{ -\frac{1}{3}}, \quad  \forall   |x| \leq  \min\left(  \frac{1}{4} d (0 ,\partial \Omega), \frac{1}{2}  \right), x \neq 0, \\[0.7cm]
&  & 0,   \quad   \forall   |x| \geq  \frac{1}{2}  d(0, \partial \Omega)  ,\\[0.5cm]
\end{array}      \right.
\end{equation}
and for all $x \in \mathbb{R}^n, x \neq 0,$ the following condition holds
$$  0 \leq H^* (x) \leq  \left[  \frac{9}{16} \frac{|x|^2}{|\ln |x||}\right]^{-\frac{1}{3}}.$$
\noindent
\medskip
We now give the definition of  our initial data corresponding to equation \eqref{equa-U}: For all  $(d_0, d_1 ) \in   \mathbb{R}^{1 + n}$, we define
\begin{eqnarray}
U_{d_0, d_1}(x,0)  & = &   T^{-\frac{1}{3}}  \left[  \varphi \left( \frac{x}{\sqrt{T}}, - \ln s_0  \right)   +  (d_0 + d_1 \cdot z)  \chi_0 \left(  \frac{|z|}{\frac{K_0}{32}}\right) \right]  \chi_1 (x) \label{defini-initial-data}\\
 & + & H^*(x) \left(  1 -  \chi_1 (x)    \right) ,\nonumber
\end{eqnarray}
where   $ z = \displaystyle  \frac{ x}{ \sqrt{T  |\ln T|}} $ and $\varphi, \chi_0,  \chi_1, H^*  $ are defined as in  \eqref{defini-varphi}, \eqref{defini-chi-0}, \eqref{defini-chi-1} and  \eqref{defini-H-epsilon-0}, respectively.

\noindent
From \eqref{defini-initial-data},   we would like to give  the definition of initial data corresponding to equation \eqref{equa-Q},   $q_{d_0, d_1}(s_0) $ with   $s_0 = - \ln T$:
 \begin{equation}\label{defini-q-s-0-d-0-d-1}
 q_{d_0, d_1} (y, s_0) =   e^{-\frac{s_0}{3}} U_{d_0,d_1} \left(  y  e^{-\frac{s_0}{3}} , 0  \right) \psi_{M_0} (y, s_0) -  \varphi (y,s_0),
 \end{equation}
 where   and $ \psi_{M_0}, \varphi$ and $U_{d_0,d_1}$ are introduced in  \eqref{defini-psi-M-0-cut}, \eqref{defini-varphi} and  \eqref{defini-initial-data}, respectively.
 \begin{remark} We would like to explain in brief  how  our initial data  $U_{d_0,d_1}$ has naturally  the form shown in  \eqref{defini-initial-data}.  As we mentiond at the beginning of this section, our purpose  is  to control initial data  in  $S(0)$. More precisely,   our inital data have to satisfy items $(i)$ and $(ii)$ in Definition \ref{defini-shrinking-set-S-t}. As a matter of fact, when $T $ is  small enough,  the second term in  the right hand side of \eqref{defini-initial-data} is zero on $P_1 (0)$. Then, our initial data has only  the first term and we adopt the idea  given   in \cite{MZdm97} (see also \cite{MZnon97}, \cite{GNZpre16a}), we  use     $d_0,d_1$ in order to  control $q(s_0)$ in  $V_{K_0,A}(s_0)$. In addition to that, we would like to mention  that   Propostion \ref{proposition-reduction-finite} below states that when $q$ is trapped in $V_{K_0, A} (s)$,  it  has only two components $(q_0,q_1)(s)$ which may attain their  upper bound,  the others  being strictly less than  their upper bound specified in the definition of $V_{K_0, A} (s)$. This  is   indeed the reason to use $(d_0,d_1)$ in our initial data.   More precisely,     these $1 + n$ parameters   allows us to   a reduction to a  finite dimensional problem.  We now mention   the control   in  $P_2$.  In    that  region, $|x|$ is small enough and  we may  consider that  $U$ is near  the final profile
$$  \left[  \displaystyle \frac{9}{16} \frac{|x|^2}{|\ln|x||}    \right]^{ -\frac{1}{3}}. $$ 
 As a matter of fact, it is reasonable to    introduce $H^*$ as the  main asymptotic of our initial data in $P_2(0)$. Using some priori estimates, we can derive    good estimates  in  $P_2(0)$. More precisely, the following proposition is our statement:  
 \end{remark}
\begin{proposition}[Preparation of initial data]\label{proposiiton-initial-data} There  exists $K_{2}> 0$ such that  for all  $K_0 \geq K_{2} $ and $  \delta_{2 } > 0,$ there exist $\alpha_{2} (K_0, \delta_{2}) > 0 $  and $C_{2}(K_0) > 0$ such that for  every  $\alpha_0 \in   (0, \alpha_{2}],$     there exists  $\epsilon_{2} (K_0, \delta_{2}, \alpha_0) > 0$ such that   for every  $\epsilon_0 \in (0,\epsilon_{2}]$  and  $A \geq 1,$ there exists  $T_{2} (K_0,  \delta_{2}, \epsilon_0, A,C_2 ) > 0 $  such that  for all  $T \leq    T_2$ and $ s_0 = - \ln T$. The following  holds:

(I)  We can find a  set  $\mathcal{D}_{ A}     \subset [-2,2] \times [-2,2]^n$  such that   if we define the following mapping
\begin{eqnarray*}
\Gamma : \mathbb{R} \times \mathbb{R}^n     & \to &   \mathbb{R} \times \mathbb{R}^n\\
(d_0, d_1)   & \mapsto   &  \left( q_0, q_1  \right)(s_0),
\end{eqnarray*}
 then, $\Gamma$  is  affine,  one to  one   from   $\mathcal{D}_{A}$   to 	$\hat{\mathcal{V}}_A (s_0),$ where $\hat{\mathcal{V}}_A (s)$ is defined as  follows
\begin{equation}\label{defini-hat-mathcal-V-A}
\hat{\mathcal{V}}_A (s) =  \left[-\frac{A}{s^2}, \frac{A}{s^2} \right]^{1+n}.
\end{equation}  
Moreover,  we have     
$$\Gamma \left|_{\partial \mathcal{D}_{A}}  \right.   \subset   \partial \hat{\mathcal{V}}_A (s_0), $$
and 
\begin{equation}\label{deg-Gamma-1-neq-0}
 \text{deg} \left(  \Gamma \left. \right|_{ \partial  \mathcal{D}_{A} }  \right) \neq 0,
\end{equation}
  where   $q_0,q_1$  are defined  as  in \eqref{represent-non-perp}, considered as the   components of $q_{d_1,d_1} (s_0),$ which       is a transform function of  $U_{d_0, d_1} (0),$ given  in \eqref{defini-q}.

(II) We now consider    $ (d_0, d_1) \in \mathcal{D}_{ A}$.  Then,  initial data  $U_{d_0,d_1}(0)$   belongs to  
$$S(K_0, \epsilon_0, \alpha_0, A, \delta_{2},   C_{2}, 0, 0)= S(0),$$
 where  $S( 0) $ is defined  in Definition \ref{defini-shrinking-set-S-t}.  Moreover, the following   astimates  hold
\begin{itemize}
\item[$(i)$] Estimates in  $P_1(0):$   We have   $q_{d_0,d_1} (s_0) \in \mathcal{V}_{K_0, A} (s_0)$ and  
$$ |q_0 (s_0)| \leq  \frac{A}{s_0^2}, \quad   |q_{1,j}  (s_0)| \leq \frac{A}{s_0^2}, \quad  |q_{2,i,j}(s_0)| \leq \frac{\ln s_0}{s_0^2},   \forall i,j \in \{1,..., n\},$$
$$ |q_- (y,s_0) | \leq 	\frac{1}{s_0^2} (|y|^3 + 1), \quad  |\nabla q_\perp (y,s_0) | \leq \frac{1}{s_0^2} (|y|^3 + 1),  \forall y \in \mathbb{R}^n,$$
and
$$  q_e   \equiv 0,$$
 where  the  components  of $q_{d_0, d_1} (s_0)$  are defined in  \eqref{defini-R-perp}.
\item[$(ii)$] Estimates  in  $P_2(0)$:  For  every $|x| \in \left[  \frac{K_0}{4} \sqrt{T |\ln T|}  , \epsilon_0 \right] , \tau_0(x) = - \frac{ t(x) }{ \varrho (x)}$  and $|\xi|  \leq \alpha_0 \sqrt{|\ln \varrho(x)|},$ we have
$$   \left|  \mathcal{U} (x, \xi, \tau_0(x) )    - \hat{\mathcal{U}} (\tau_0(x))  \right| \leq   \delta_{2} \text{ and }    |\nabla_\xi \mathcal{U} (x, \xi, \tau_0(x))| \leq \frac{C_{2}}{\sqrt{|\ln \varrho(x)|}} ,
 $$
\end{itemize}
where   $\mathcal{U}, \hat{\mathcal{U}}, $ and  $\varrho(x)$ are  defined  as in  \eqref{rescaled-function-U}, \eqref{defin-hat-mathcal-U-tau} and  \eqref{defini-theta} respectively.
 \end{proposition}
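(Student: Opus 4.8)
The plan is to prove both parts by direct inspection of the explicit initial data \eqref{defini-initial-data}, exploiting the fact that the scalings of Section 2 have been arranged so that the singular prefactor $T^{-1/3}$ in \eqref{defini-initial-data} is cancelled by $e^{-s_0/3}=T^{1/3}$ in \eqref{defini-q-s-0-d-0-d-1}. \textit{Part (I).} By \eqref{defini-initial-data} and \eqref{defini-q-s-0-d-0-d-1} the map $(d_0,d_1)\mapsto q_{d_0,d_1}(\cdot,s_0)$ is affine, hence so is $\Gamma$; the point is to identify its linear part. For $T$ small both $\chi_1$ and $\psi_{M_0}$ equal $1$ on $\{|y|\le 2K_0\sqrt{s_0}\}$, which contains the support of $\chi(\cdot,s_0)$; there the $\varphi$--terms cancel in $q=w-\varphi$ and one is left with the exact identity
\[
q_{d_0,d_1}(y,s_0)=\bigl(d_0+d_1\cdot y/\sqrt{s_0}\bigr)\,\chi_0(\cdot)\qquad\text{on }\{|y|\le 2K_0\sqrt{s_0}\},
\]
where $\chi_0(\cdot)$ is the cut--off of \eqref{defini-initial-data} (identically $1$ on an effective support of $\rho$, localizing at scale $K_0\sqrt{s_0}$). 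Projecting onto the eigenfunctions of $\mathcal L$ of degrees $0$ and $1$ in $L^2_\rho$, with $\int\rho=1$ and $\int y_jy_k\,\rho\,dy=2\delta_{jk}$, gives $\Gamma(d_0,d_1)=\bigl(d_0,\ c\,s_0^{-1/2}d_1\bigr)+E(d_0,d_1)$ for a fixed $c>0$, with $\|E(d_0,d_1)\|\le Ce^{-cs_0}(1+|d_0|+|d_1|)$, the error coming from the $\varphi$--tail where $\psi_{M_0}\neq 1$ and from the Gaussian tails of $\chi_0$. Thus $\Gamma$ is an affine homeomorphism of $\mathbb R^{1+n}$ for $T$ small, and I set $\mathcal D_A:=\Gamma^{-1}(\hat{\mathcal V}_A(s_0))$: a slightly skewed box, centred exponentially close to $0$, with half--widths $\simeq A/s_0^2$ along $d_0$ and $\simeq A/s_0^{3/2}$ along $d_1$, hence contained in $[-2,2]\times[-2,2]^n$ once $T$ is small. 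By construction $\Gamma$ carries $\mathcal D_A$ one--to--one onto $\hat{\mathcal V}_A(s_0)$ and $\partial\mathcal D_A$ into $\partial\hat{\mathcal V}_A(s_0)$, and $\deg\bigl(\Gamma|_{\partial\mathcal D_A}\bigr)$ equals the sign of the determinant of the linear part, hence is $\pm1\neq0$, which is \eqref{deg-Gamma-1-neq-0}.

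\textit{Part (II), regions $P_3(0)$ and $P_1(0)$.} The $P_3(0)$ estimates are immediate: at $t=0$ one has $e^{t\Delta}U(\cdot,0)=U(\cdot,0)$ and both differences vanish identically, which is precisely the required bound with $\eta_0=0$. For $P_1(0)$ I use the displayed identity: on $\{|y|\lesssim K_0\sqrt{s_0}\}$ the datum $q_{d_0,d_1}(\cdot,s_0)$ is the affine polynomial $d_0+d_1\cdot y/\sqrt{s_0}$, supported inside $\{\chi=1\}$, so $q_b=q$; its degree--$0$ and degree--$1$ Hermite coefficients are exactly $(q_0,q_1)(s_0)=\Gamma(d_0,d_1)\in\hat{\mathcal V}_A(s_0)$ by the very definition of $\mathcal D_A$, while $q_2$, $q_-$ and $(\nabla q)_\perp$ are produced only by the cut--off and are $O\bigl((1+|d_0|+|d_1|)e^{-cK_0^2s_0}\bigr)$, far below the thresholds $\ln s_0/s_0^2$ and $s_0^{-2}(1+|y|^3)$ in Definition \ref{defini-shrinking-set-S-t}$(i)$; and $q_e\equiv 0$ since $1-\chi(\cdot,s_0)$ is supported where the $(d_0+d_1\cdot z)$ bump has vanished and, by the choice of $H^*$ in \eqref{defini-H-epsilon-0}, the transformed datum coincides with $\varphi$, so $q=w-\varphi=0$ there.

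\textit{Part (II), region $P_2(0)$.} Here $\varrho(x)\tau_0(x)+t(x)=0$, so $\mathcal U(x,\xi,\tau_0(x))=\varrho(x)^{1/3}\,U_{d_0,d_1}\bigl(x+\xi\sqrt{\varrho(x)},0\bigr)$, and $|\xi|\le\alpha_0\sqrt{|\ln\varrho(x)|}$ together with $|x|=\tfrac{K_0}{4}\sqrt{\varrho(x)|\ln\varrho(x)|}$ gives $|\xi|\sqrt{\varrho(x)}=\tfrac{4\alpha_0}{K_0}|x|$, so the base point moves by a relative factor $1+O(\alpha_0/K_0)$. I would split $P_2(0)$ by the value of $\chi_1$ at the base point: an inner zone $\chi_1=1$ (the bump has vanished and $U_{d_0,d_1}=T^{-1/3}\varphi(\cdot/\sqrt T,s_0)$), an outer zone $\chi_1=0$ ($U_{d_0,d_1}=H^*$), and a transition annulus $|x|\sim\sqrt T|\ln T|$ in between. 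Using $1-\tau_0(x)=T/\varrho(x)\in(0,1]$, $|x|^2=\tfrac{K_0^2}{16}\varrho(x)|\ln\varrho(x)|$ and $|\ln|x||=\tfrac12|\ln\varrho(x)|(1+o(1))$, a short computation of $\mathcal U(x,\xi,\tau_0(x))^{-3}$ shows that in each zone it equals $\hat{\mathcal U}(\tau_0(x))^{-3}$ up to an error $O(\alpha_0/K_0)+O(K_0^2\,\omega(\epsilon_0))+O\bigl(K_0^2\,\tfrac{\ln s_0}{s_0}\bigr)$, with $\omega(\epsilon_0)\to 0$ as $\epsilon_0\to 0$; hence $|\mathcal U(x,\xi,\tau_0(x))-\hat{\mathcal U}(\tau_0(x))|\le\delta_2$ after choosing, in this order, $K_0\ge K_2$, then $\alpha_0\le\alpha_2(K_0,\delta_2)$, then $\epsilon_0\le\epsilon_2(K_0,\delta_2,\alpha_0)$, then $T\le T_2(K_0,\delta_2,\epsilon_0,A)$ — exactly the quantifier order of the statement. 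The gradient estimate follows the same way from $\nabla_\xi\mathcal U=\varrho(x)^{5/6}\nabla U_{d_0,d_1}$, $|\nabla H^*(x)|\lesssim|x|^{-1}H^*(x)$ and $\varrho(x)^{1/2}/|x|=4/(K_0\sqrt{|\ln\varrho(x)|})$, giving $|\nabla_\xi\mathcal U|\le C_2(K_0)/\sqrt{|\ln\varrho(x)|}$.

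\textit{Where the difficulty lies.} Part (I) and the $P_1(0)$, $P_3(0)$ estimates are routine book--keeping of Gaussian tails, along the lines of \cite{MZnon97}. The substantive step is the $P_2(0)$ analysis: one must match the parabolic blow--up profile $\varphi$ with the final--profile ansatz $H^*$ \emph{uniformly} across the transition annulus — this is exactly what dictates the form $[\tfrac{9}{16}|x|^2/|\ln|x||]^{-1/3}$ of $H^*$ in \eqref{defini-H-epsilon-0} — and must reproduce, inside $\mathcal U$, the order--one (not small) correction $3T/\varrho(x)$ carried by $\hat{\mathcal U}(\tau_0(x))$; tracking all errors so that they decay in the right variables and in the right order is the heart of the argument.
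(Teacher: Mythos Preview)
Your approach is essentially the same as the paper's: for Part~(I) and the $P_1(0)$ estimates the paper simply invokes \cite{TZpre15,MZnon97,MZdm97}, and for $P_2(0)$ it does exactly the split by $\chi_1$ that you outline, computing $(I)$ and $(II)$ separately and matching each to $\hat{\mathcal U}(\tau_0)$ via the relations $|\xi|\sqrt{\varrho(x)}\le \tfrac{4\alpha_0}{K_0}|x|$, $|\ln\varrho(x)|\sim 2|\ln|x||$ and $T/\varrho(x)\to 0$ in the outer zone.

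Two places in your $P_1(0)$ sketch need tightening. First, the claim that $q_-$ and $(\nabla q)_\perp$ are $O\bigl((1+|d_0|+|d_1|)e^{-cK_0^2s_0}\bigr)$ is only correct for the \emph{Hermite coefficients} (and hence for $q_2$); the \emph{pointwise} bound fails on the annulus $\tfrac{K_0}{16}\sqrt{s_0}\le|y|\le 2K_0\sqrt{s_0}$, where the inner cut--off $\chi_0(32|z|/K_0)$ vanishes and $q_b\equiv 0$, so that $q_-(y)=-(q_0+q_1\cdot y+y^Tq_2y-2\,\mathrm{Tr}\,q_2)$. The correct argument there is that $(d_0,d_1)\in\mathcal D_A$ forces $|q_0|,|q_1|\le A/s_0^2$, whence $|q_-(y)|\lesssim \tfrac{A}{s_0^2}(1+|y|)$, which is $\le\tfrac{1}{s_0^2}(1+|y|^3)$ on that annulus once $s_0$ is large (since $|y|\ge \tfrac{K_0}{16}\sqrt{s_0}$). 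Second, your justification for $q_e\equiv 0$ (``the transformed datum coincides with $\varphi$'') is valid on $K_0\sqrt{s_0}\le|y|\le s_0$, where indeed $\chi_1\equiv 1$ and $w=\varphi$, but it breaks down for $|y|>s_0$, where $\chi_1$ transitions and one picks up $(1-\chi_1)\bigl(T^{1/3}H^*(y\sqrt T)-\varphi(y,s_0)\bigr)$. This difference is \emph{not} identically zero; using $|\ln(|y|\sqrt T)|=\tfrac{s_0}{2}(1+O(\tfrac{\ln s_0}{s_0}))$ one finds it is $O(s_0^{-1}\ln s_0)$ at worst, which is far below the threshold $A^2/\sqrt{s_0}$ for $\|q_e\|_{L^\infty}$ in $V_{K_0,A}(s_0)$. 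So the conclusion that $q(s_0)\in V_{K_0,A}(s_0)$ stands, but the literal vanishing of $q_e$ should be replaced by this estimate.
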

 \begin{proof}
 The  proof of   Proposition  \ref{proposiiton-initial-data}  will  be given    in Appendix  \ref{preperation-initial-data}. We now  assume that  this  proposition holds and  continue     to  get to the conclusion  of  Theorem \ref{theorem-existence}.
 \end{proof}
\subsection{ Existence of a  solution  trapped  in  $S^* (T)$}
  In this subsection,   we would  like to derive  the  existence  of a  blowup  solution $U$ to equation \eqref{equa-U},    trapped in  $S^* (T)$. As we said earlier, our proof will be a (non trivial)  adaptation  of the proof designed by Merle and Zaag in \cite{MZnon97} for the more standard case  \eqref{equa-bar-u-merle-zaag}.  However, in comparision with \eqref{equa-bar-u-merle-zaag},   we observe in equation  \eqref{equa-U}  a new feature,  the nonlocal term involving $\bar{\theta} (t)$. As a matter of fact, it  is  important to  study   this term and it derivative.  In particular,  in the works which we used to make the main idea for our work (such as  \cite{MZnon97}, \cite{MZdm97}, \cite{GNZpre16a}), the authors only  studied for constant coefficients parabolic equations.  Hence, it makes      a main  highlight in our work.   For that reason, we  show here some estimates   on  $\bar{\theta}(t)$ (also on  $\bar{\mu}(t)$). The following is our statement:
  
  \begin{proposition}[Some estimates of $\bar{\theta} (t)$ and  $\bar{\mu} (t)$]\label{propo-bar-mu-bounded} Let us consider  $\lambda > 0, \gamma > 0$ and  $\Omega$ a $C^2$ bounded domain. 
Then, there  exists $K_{3}> 0$ such that  for all  $K_0 \geq K_{3},  \delta_{0 } > 0,$ there exist  $\alpha_{3} (K_0, \delta_{0}) > 0$  such that for all  $\alpha_0 \leq   \alpha_{3},$ there exists       $\epsilon_{3} (K_0, \delta_{0}, \alpha_0) > 0$ such that   for all   $\epsilon_0  \leq \epsilon_3$  and  $A \geq 1, C_0 > 0, \eta_0 > 0$, there exists $T_{3}   > 0$ such that  for  all  $T  \leq T_3$ the following holds:  Assuming   $U$ is a non negative  solution  of   equation \eqref{equa-U}    on $[0, t_1],$ for some $t_1 < T$, $U \in S(T,K_0, \epsilon_0, \alpha_0, A, \delta_0, C_0,\eta_0,t)= S(t) $ for all  $t \in [0, t_1]$ and  $U(0) = U_{d_0, d_1} (0),$ given in \eqref{defini-initial-data} with some $(d_0,d_1) \in \mathbb{R}^{1 + n}, |d_0|, |d_1| \leq 2$,  the following statements  holds:
\begin{itemize}
\item[$(i)$]  For all $t\in [0,t_1],$  $\bar{\mu} (t)$ and  $\bar{\theta} (t)$ are  positive and   these estimates hold
 \begin{eqnarray}
0 \leq   \bar \mu (t)    & \leq &   C,\label{bound-bar-mu-t}\\
1 \leq \bar \theta  (t)  &  \leq &  C. \label{bound-bar-theta}
\end{eqnarray}

\noindent
Moreover, for all $t \in (0,t_1)$, we have
\begin{eqnarray}
|\bar \mu' (t)|  & \leq &   C  (T-t)^{\frac{3n - 8}{6}} |\ln  (T-t)|^n \label{estima-derivative-of-bar-mu},\\
\left| \bar \theta' (t) \right|   & \leq  &   C  (T-t)^{\frac{3n - 8}{6}} |\ln  (T-t)|^n. \label{bound-bar-theta-'}
\end{eqnarray}  
 \item[$(ii)$] In particular, if  $U \in S(t)$  for all   $ t \in [0, T)$, then    $\bar \mu (t)$ and  $\bar{\theta}(t)$ converge respectively   to  $\bar \mu_T $  and $\bar \theta_{T} \in \mathbb{R}_+^*$ as $t \to T $.
\end{itemize} 
\end{proposition}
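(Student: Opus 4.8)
The plan is to exploit the explicit algebraic formula \eqref{defini} expressing $\bar\theta(t)$ as a smooth function of $B(t) = \frac{\gamma}{\lambda^{1/3}}\bar\mu(t)$, so that all the claimed estimates on $\bar\theta$ follow from corresponding estimates on $\bar\mu(t) = \int_\Omega U(t)\,dx$ together with the chain rule. Concretely, the map $B \mapsto \bar\theta$ given by \eqref{defini} is real-analytic and has bounded first derivative for $B$ in any compact set, so once we control $\bar\mu$ and $\bar\mu'$, the bounds \eqref{bound-bar-theta} and \eqref{bound-bar-theta-'} drop out. Thus the whole proof reduces to estimating $\bar\mu(t)$ and $\bar\mu'(t)$, which is what I would do first.

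For the pointwise bound \eqref{bound-bar-mu-t}, the strategy is to split $\Omega$ into the three regions $P_1(t), P_2(t), P_3(t)$ of \eqref{defini-P-1-t}--\eqref{defini-P-3-t} and use the $S(t)$-estimates of Definition \ref{defini-shrinking-set-S-t} on each piece, recalling that $\bar u = \frac{\bar\theta(t)}{\lambda^{1/3}}U$ and $u = \frac{\bar u}{\bar u+1}$, so that $U$ large means $u$ close to $1$. In $P_1(t)$: from Lemma \ref{lemma-properties-V-A-s}, $w = \varphi + q$ with $q$ small, and since $\varphi \le C$ and $\varrho \le \frac{1}{1-u} = \bar u+1 = \frac{\bar\theta}{\lambda^{1/3}}U + 1$, we get $U(x,t) = (T-t)^{-1/3}W(y,s) \le C(T-t)^{-1/3}$ on $|x| \le K_0\sqrt{(T-t)|\ln(T-t)|}$; integrating over a ball of radius $\sim\sqrt{(T-t)|\ln(T-t)|}$ in $\R^n$ gives a contribution $\le C(T-t)^{n/2 - 1/3}|\ln(T-t)|^{n/2}$, which is bounded (indeed $\to 0$) since $n\ge 1$. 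In $P_2(t)$: the rescaled estimate $|\mathcal U(x,\xi,\tau) - \hat{\mathcal U}(\tau)| \le \delta_0$ with $\tau = \tau(x,t) \le 1$ and the explicit formula \eqref{defin-hat-mathcal-U-tau} translate (taking $\xi = 0$) into $U(x,t) \le C\,\varrho(x)^{-1/3} \le C|x|^{-2/3}|\ln|x||^{1/3}$ (using \eqref{defini-t(x)-} to relate $\varrho(x)$ and $|x|$), and since $\frac{2}{3} < n$ this is integrable near the origin, giving a bounded contribution; on the outer part of $P_2$, $|x| \ge \frac{K_0}{4}\sqrt{(T-t)|\ln(T-t)|}$ but also $|x| \le \epsilon_0$ fixed, so the bound $C|x|^{-2/3}|\ln|x||^{1/3}$ is integrable on $\{|x|\le\epsilon_0\}$ uniformly in $t$. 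In $P_3(t)$: $|U(x,t) - U(x,0)| \le \eta_0$ with $U(x,0)$ a fixed smooth function, hence $U$ is bounded there. Summing the three contributions gives \eqref{bound-bar-mu-t}, and then \eqref{relation-bar theta-bar-mu} gives $\bar\theta(t) \ge 1$ and $\bar\theta(t) \le C$.

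For the derivative estimate \eqref{estima-derivative-of-bar-mu}, I would differentiate $\bar\mu(t) = \int_\Omega U(t)\,dx$ under the integral sign and use equation \eqref{equa-U}: $\bar\mu'(t) = \int_\Omega \partial_t U = \int_\Omega \big(\Delta U - 2\frac{|\nabla U|^2}{U + \lambda^{1/3}/\bar\theta} + (U+\lambda^{1/3}/\bar\theta)^4 - \frac{\bar\theta'}{\bar\theta}U\big)$. The Laplacian term integrates to a boundary flux $\int_{\partial\Omega}\partial_\nu U$, which is controlled on $P_3$ by the second $S(t)$-estimate in $(iii)$ comparing $\nabla U$ to $\nabla e^{t\Delta}U(0)$ (this is exactly the "smart change" flagged in Remark \ref{remark-change-merle-zaag}, and is why the bounded-domain case goes through). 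The dominant term is the quartic one: on $P_1$, $(U + \lambda^{1/3}/\bar\theta)^4 \le C(T-t)^{-4/3}$ integrated over a ball of radius $\sqrt{(T-t)|\ln(T-t)|}$ gives $\le C(T-t)^{n/2 - 4/3}|\ln(T-t)|^{n/2} = C(T-t)^{(3n-8)/6}|\ln(T-t)|^{n/2}$, which is the stated rate; the $P_2$ and $P_3$ contributions and the gradient and $\frac{\bar\theta'}{\bar\theta}U$ terms are of lower or equal order (the $\frac{\bar\theta'}{\bar\theta}U$ term requires absorbing $\bar\theta'$ by the bound on $\bar\theta$ and $\bar\mu$, i.e. it appears on both sides and is handled by a Gronwall/bootstrap since its coefficient is small). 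This yields \eqref{estima-derivative-of-bar-mu}, and \eqref{bound-bar-theta-'} follows by the chain rule applied to \eqref{defini} since $|\bar\theta'| = |\frac{d\bar\theta}{dB}|\cdot\frac{\gamma}{\lambda^{1/3}}|\bar\mu'| \le C|\bar\mu'|$. Finally, for part $(ii)$: if $U \in S(t)$ for all $t\in[0,T)$, then from \eqref{estima-derivative-of-bar-mu}--\eqref{bound-bar-theta-'} the integrals $\int_0^T |\bar\mu'(t)|\,dt$ and $\int_0^T |\bar\theta'(t)|\,dt$ are finite precisely when $\frac{3n-8}{6} > -1$, i.e. $n > \frac{2}{3}$, which holds for all $n\ge 1$; hence $\bar\mu$ and $\bar\theta$ are Cauchy as $t\to T$ and converge to limits, which are strictly positive since $\bar\theta \ge 1$ throughout and $\bar\mu \ge 0$ with $\bar\theta_T = (1+\gamma|\Omega| + \frac{\gamma}{\lambda^{1/3}}\bar\theta_T\bar\mu_T)^{2/3} \ge 1$.

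The main obstacle I anticipate is the derivative estimate \eqref{estima-derivative-of-bar-mu}: controlling the boundary flux $\int_{\partial\Omega}\partial_\nu U$ and the gradient-squared term in a way that is genuinely uniform over the bounded domain $\Omega$ (rather than $\R^n$), and simultaneously closing the loop on the $\frac{\bar\theta'}{\bar\theta}U$ term — which couples $\bar\theta'$ back into its own estimate — via a careful bootstrap. Everything else is a matter of carefully assembling the $P_1, P_2, P_3$ bounds already encoded in $S(t)$, but this coupling and the boundary term are where the nonlocal, bounded-domain features of the problem genuinely bite, and where the role of the modified $P_3$-condition of Remark \ref{remark-change-merle-zaag} becomes essential.
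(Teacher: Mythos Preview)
Your proposal is essentially correct and follows the same overall route as the paper: split $\int_\Omega U$ and $\int_\Omega U^4$, $\int_\Omega \frac{|\nabla U|^2}{U+\lambda^{1/3}/\bar\theta}$ over $P_1,P_2,P_3$ using the $S(t)$-bounds, control the boundary flux $\int_\Omega\Delta U$ via the $P_3(t)$-condition on $\nabla U$, and deduce the $\bar\theta$ estimates from the $\bar\mu$ estimates through the algebraic relation \eqref{relation-bar theta-bar-mu}.

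There is one point where your closing argument is not quite right and the paper does something sharper. You say the $\frac{\bar\theta'}{\bar\theta}U$ term is ``handled by a Gronwall/bootstrap since its coefficient is small''. The coefficient is \emph{not} small: $\bar\mu(t)$ is merely bounded, so after integrating you face an inequality of the form $|\bar\mu'|\le \mathrm{RHS}+C\,|\bar\mu'|\,\bar\mu$ with $C\bar\mu$ of order one, which does not close by smallness. The paper instead differentiates \eqref{relation-bar theta-bar-mu} directly to obtain
\[
\frac{\bar\theta'(t)}{\bar\theta(t)}=c(t)\,\bar\mu'(t),\qquad c(t)>0,
\]
with $c(t)$ explicitly bounded above and below (the factor $\big(1-\frac{2\gamma}{3\lambda^{1/3}}\bar\mu/\bar\theta^{1/2}\big)^{-1}$ lies in $[1,3]$, using $\sqrt{\bar\theta}\ge\frac{\gamma}{\lambda^{1/3}}\bar\mu$ from \eqref{relation-bar theta-bar-mu}). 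Hence $\bar\theta'$ and $\bar\mu'$ always have the \emph{same sign}, so the two terms in $\bar\mu'+\frac{\bar\theta'}{\bar\theta}\bar\mu$ do not cancel but reinforce, and one reads off $|\bar\mu'|\le\big|\bar\mu'+\frac{\bar\theta'}{\bar\theta}\bar\mu\big|\le C(T-t)^{(3n-8)/6}|\ln(T-t)|^n$ directly from \eqref{relation-bar-mu-bar-theta}, with no iteration needed. Your chain-rule idea $\bar\theta'=g'(B)\frac{\gamma}{\lambda^{1/3}}\bar\mu'$ is exactly the right ingredient; the missing observation is that $g'>0$, which is what makes the absorption work.
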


\begin{remark} 
 Although we know from item (ii)  that $\bar \theta (t)$ converges to $\bar{ \theta}_T,$  we don't know how  big is  $\bar{\theta}_T$.   In  particular,  the dependence of  these  constants on $\gamma, \lambda,\Omega$ and  $T, \epsilon_0,...,$  is not clear yet. 
  \end{remark}
\begin{proof}
We can see that   item $(ii)$ is a direct consequence  of  $(i)$. So,    we   give only the proof  of   item $(i)$. Using \eqref{defini-bar-mu}  and the fact that  $U(t) \geq 0$ for all $t$, we derive the following 
$$\bar  \mu  (t)   = \int_\Omega U (t) dx \geq 0.$$
   In addition to that,  we write 
\begin{equation}\label{estima-bar-mu-P-1-P-2-P-3}
  \bar \mu (t) \leq    \int_{\Omega} U(t) dx \leq   \int_{P_1 (t)} U(t) dx +   \int_{P_2 (t)} U(t) dx +   \int_{P_3 (t)} U(t) dx,
\end{equation}
where  $P_1(t), P_2(t), P_3(t)$ are given in  \eqref{defini-P-1-t}, \eqref{defini-P-2-t} and \eqref{defini-P-3-t}, respectively. Remembering  $\varrho(x),$ defined in  \eqref{defini-theta}, we see that  the following holds
$$ \varrho(x) \sim    \frac{8}{K_0^2} \frac{|x|^2}{|\ln|x||}  \text{ as  }  x \to  0. $$
In particular, using  Definition  \ref{defini-shrinking-set-S-t}, we get the following estimates: for all $t \in [0,t_1]$
\begin{eqnarray*}
\text{ On } P_1 (t),       |U(x,t)|  & \leq  & (T-t)^{-\frac{1}{3}} \left[ \frac{CA^2}{\sqrt{|\ln (T-t)|}}   + |\varphi(0, -\ln (T-t))|\right] \leq  C   (T-t)^{-\frac{1}{3}} ,\label{estima-P-1-t}\\
\text{ On }   P_2 (t),    |U(x,t)|  & \leq  & \varrho^{-\frac{1}{3}}(x) \left[ \hat{\mathcal{U}}(\tau(x,t))+ \delta_0 \right] \leq C  \left[ \frac{|x|^2}{|\ln|x||} \right]^{-\frac{1}{3}}, \label{estima-P-2-t}\\
\text{ On  } P_3 (t),  |U(x,t)|  & \leq &     |U (x, 0) |  + \eta_0 \leq  |U(x,0)| +  1,
\end{eqnarray*}
 provided  that   $K_0 \geq  K_{3,1}$   $ \delta_0  \leq 1$ and $ \eta_0 \leq 1$.  Integrating  $U$ on  each  $P_i(t), i = 1,2,3$, we obtain  the following  
\begin{eqnarray*}
\int_{P_1 (t)}  U(t) dx   &  \leq  & C(K_0) (T-t)^{\frac{n}{2} - \frac{1}{3}}  | \ln(T-t)|^{\frac{n}{2}}, \\
\int_{P_2 (t)} U(t) dx  & \leq &  C \int_{|x| \leq \epsilon_0} \left[  \frac{|x|^2}{|\ln|x||}   \right]^{-\frac{1}{3}} dx  \leq   C  \epsilon_0^{n - \frac{2}{3}} |\ln \epsilon_0|^\frac{1}{3},\\
\int_{P_3 (t)} U (t)  & \leq &    \left[   \int_{\frac{\epsilon_0}{4} \leq  |x|, x \in \Omega }\left[ H^*  + 1 \right] dx   \right], 
 \end{eqnarray*} 
 where   $H^*$  is   defined   in  \eqref{defini-H-epsilon-0}.   Using  \eqref{estima-bar-mu-P-1-P-2-P-3} and the above estimates,  it is easy to obtain the following estimate
 $$  \mu (t) \leq   C, \text{ for all } t \in [0,t_1],$$
provided that    $K_0  \geq  K_{3,2} ( \lambda, \gamma),  \epsilon_0 \leq \epsilon_{3,1} (\lambda, \gamma), \eta_0 \leq   \eta_{3,1} (\lambda, \gamma)$ and  $T \leq T_{3,1} (K_0, \lambda, \gamma)$.
This  yields  \eqref{bound-bar-mu-t}  and     \eqref{bound-bar-theta}  also follows   by using    \eqref{relation-bar theta-bar-mu}  and \eqref{bound-bar-mu-t}. We now give a  proof  to  \eqref{estima-derivative-of-bar-mu}.  Integrating two sides of   equation  \eqref{equa-U}, we get the following ODE
\begin{equation}\label{relation-bar-mu-bar-theta}
\bar \mu'(t)  + \frac{\bar \theta' (t)}{ \bar \theta (t) } \bar \mu (t)  =  \int_{ \Omega}  \Delta U(t) dx  + \int_{\Omega
}  \left( \left(  U (t)+ \frac{\lambda^{\frac{1}{3}}}{ \bar \theta (t)}\right)^4  - 2 \frac{ |  \nabla  U(t)|^2}{ U(t)  +  \frac{\lambda^{\frac{1}{3}}}{\bar \theta(t)}}   \right) dx. 
\end{equation}
We  aim at proving the following estimate
\begin{equation}\label{estimate-rest-term-U-4-2-nabla-U}
\left|  \int_{\Omega
}  \left( \left(  U(t) + \frac{\lambda^{\frac{1}{3}}}{ \bar \theta (t)}\right)^4  - 2 \frac{ |  \nabla  U(t)|^2}{ U (t) +  \frac{\lambda^{\frac{1}{3}}}{\bar \theta(t)}}   \right) dx \right|  \leq C (T-t)^{\frac{3n  - 8 }{6}} |\ln(T-t)|^{n} . 
\end{equation}
In order to do so, we first prove  that
\begin{eqnarray}
\int_\Omega  U^4 (t) dx  & \leq &  C (T-t)^{  \frac{ 3n - 8}{6}} | \ln (T-t)|^{n} ,\label{estima-int-U-4}\\
\int_\Omega  \frac{\left| \nabla U(t) \right|^2}{ U (t)  + \frac{\lambda^{\frac{1}{3}}}{ \bar \theta (t)}}dx & \leq &   C  (T -t)^{ \frac{ 3n - 8}{6}} |\ln(T-t)|^{n}.\label{estima-int-nabla-U}
\end{eqnarray}
The techniques  of  proofs  \eqref{estima-int-U-4} and \eqref{estima-int-nabla-U}  are  the  same. Therefore,  we  only give here the proof of  \eqref{estima-int-nabla-U}.  Let us consider
$$ I (x, t)=  \frac{\left| \nabla U(x,t) \right|^2}{ U (x, t)  + \frac{\lambda^{\frac{1}{3}}}{ \bar \theta (t)}}.  $$
Then,
$$ \int_\Omega I(x,t) dx   \leq  \int_{P_1 (t)}   I(x,t) dx  + \int_{P_2 (t)}   I(x, t) dx + \int_{P_3 (t)}   I(x, t)dx.      $$
Now we  claim   the following lemma:
\begin{lemma}\label{lemma-bound-I-x-t}
 Under  the hypothesis in  Proposition \ref{propo-bar-mu-bounded},     for  all $t \in (0,t_1]$, the following estimates  hold: 
\begin{eqnarray}
\text{ On } P_1 (t):  I(x,t)  & \leq & C(K_0) (T-t)^{ -\frac{4}{3}}, \label{estima-bound-T-P-1}\\
\text{ On } P_2 (t): I(x,t) & \leq & C(K_0) \varrho^{-\frac{4}{3}}(x) \leq  C(K_0)\left[ \frac{|x|^2}{|\ln |x||} \right]^{- \frac{4}{3}}, \label{estima-bound-T-P-2}\\
\text{ On }  P_3 (t): I(x,t) & \leq & C \left( \left|  \nabla U (x,0)  \right|^2 + \eta_0^2 \right) = C (|\nabla H^*(x)| +  \eta_0^2 ).\label{estima-bound-T-P-3}  
\end{eqnarray}
\end{lemma}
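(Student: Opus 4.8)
The plan is to treat the three regions separately: on each one I would pass to the rescaled variable adapted to it and then read off the bound from the definition of the shrinking set, together with the elementary lower bounds on the profiles $\varphi$ and $\hat{\mathcal U}$.

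\emph{Region $P_1(t)$.} First I would rewrite $I(x,t)$ in the similarity variables \eqref{similarity-variables}: using $U(x,t)=(T-t)^{-1/3}W(y,s)$ one gets
\[
 I(x,t)=(T-t)^{-4/3}\,\frac{|\nabla_y W(y,s)|^2}{W(y,s)+\frac{\lambda^{1/3}e^{-s/3}}{\theta(s)}},\qquad y=\frac{x}{\sqrt{T-t}},\quad s=-\ln(T-t).
\]
For $x\in P_1(t)$ we have $|y|\le K_0\sqrt s$, and for $T$ small $\psi_{M_0}(y,s)=1$ there, so $W=\varphi+q$ (see \eqref{defini-varphi},\eqref{defini-q}). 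On $|y|\le K_0\sqrt s$ the explicit form of $\varphi$ gives $\varphi(y,s)\ge(3+\tfrac98K_0^2)^{-1/3}$ and $|\nabla_y\varphi(y,s)|\le C(K_0)s^{-1/2}$, while Lemma~\ref{lemma-properties-V-A-s} makes $q$ and $\nabla q$ of size $\le C(K_0,C_0)A^2 s^{-1/2}$ there. Hence the denominator is bounded below by a positive constant $c(K_0)$ and the numerator by $C(K_0,C_0,A)s^{-1}$, which, after decreasing $T$, yields \eqref{estima-bound-T-P-1}.

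\emph{Region $P_2(t)$.} Here I would use the rescaled function $\mathcal U$ of \eqref{rescaled-function-U} at $\xi=0$ and $\tau=\tau(x,t)$; by the definition \eqref{defini-t(x)-} of $t(x)$ one has $\varrho(x)\tau(x,t)+t(x)=t$, hence
\[
 U(x,t)=\varrho(x)^{-1/3}\mathcal U(x,0,\tau(x,t)),\qquad \nabla U(x,t)=\varrho(x)^{-5/6}\nabla_\xi\mathcal U(x,0,\tau(x,t)),
\]
and $U(x,t)+\lambda^{1/3}/\bar\theta(t)=\varrho(x)^{-1/3}\bigl(\mathcal U(x,0,\tau(x,t))+\lambda^{1/3}\varrho(x)^{1/3}/\tilde\theta(\tau(x,t))\bigr)$. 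For $x\in P_2(t)$ one checks from \eqref{defini-t(x)-} that $\tau(x,t)\in[0,1)$, so by monotonicity $\hat{\mathcal U}(\tau(x,t))\ge\hat{\mathcal U}(0)$ (see \eqref{defin-hat-mathcal-U-tau}); combined with the bounds $|\mathcal U-\hat{\mathcal U}|\le\delta_0\le\tfrac12\hat{\mathcal U}(0)$ and $|\nabla_\xi\mathcal U|\le C_0/\sqrt{|\ln\varrho(x)|}$ from Definition~\ref{defini-shrinking-set-S-t}, this gives $I(x,t)\le C(K_0)\varrho(x)^{-4/3}/|\ln\varrho(x)|\le C(K_0)\varrho(x)^{-4/3}$, and the asymptotics $\varrho(x)\sim\tfrac{8}{K_0^2}|x|^2/|\ln|x||$ yields \eqref{estima-bound-T-P-2}.

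\emph{Region $P_3(t)$ and the main obstacle.} Since $\bar\theta(t)\le C$ by \eqref{bound-bar-theta}, the denominator of $I$ is bounded below by $\lambda^{1/3}/C$, so it suffices to control $|\nabla U(x,t)|$. Definition~\ref{defini-shrinking-set-S-t} gives $|\nabla U(x,t)|\le|\nabla e^{t\Delta}U(x,0)|+\eta_0$, and for $T$ small the initial datum $U(\cdot,0)=U_{d_0,d_1}(\cdot,0)$ vanishes near $\partial\Omega$ and coincides with $H^*$ on $P_3(0)$. The delicate point — and the place where the boundedness of $\Omega$ really enters, see Remark~\ref{remark-change-merle-zaag} — is then to bound $\nabla e^{t\Delta}U(\cdot,0)$ on $P_3(t)$ by $|\nabla H^*(x)|+\eta_0$ for $T$ small, using heat-kernel estimates on the $C^2$ domain $\Omega$: the contribution coming from the region near the origin, where $U(\cdot,0)$ is large, is exponentially small because $|x|\ge\epsilon_0/4$ is bounded away from $0$, while near $x$ the Dirichlet semigroup is close to the identity applied to the smooth function $H^*$ (one may compare with the whole–space semigroup, for which $\nabla$ commutes, up to a correction that is exponentially small since $U(\cdot,0)$ is supported away from $\partial\Omega$). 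Squaring gives \eqref{estima-bound-T-P-3}. This last estimate is the genuinely new ingredient; $P_1(t)$ and $P_2(t)$ reduce, after the changes of variables above, to inserting bounds already built into Definition~\ref{defini-shrinking-set-S-t} and Lemma~\ref{lemma-properties-V-A-s}.
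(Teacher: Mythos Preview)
Your argument is correct and matches the paper's approach on $P_1(t)$ and $P_2(t)$: pass to the adapted rescaled variables and read off the required two-sided bound on $U$ and the upper bound on $\nabla U$ from Definition~\ref{defini-shrinking-set-S-t} together with Lemma~\ref{lemma-properties-V-A-s}, exactly as you do.

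On $P_3(t)$ there is a difference of emphasis. The paper dismisses \eqref{estima-bound-T-P-3} in one line (``from the definition of $S(t)$, we easily derive \eqref{estima-bound-T-P-3}''), whereas you correctly notice that Definition~\ref{defini-shrinking-set-S-t}(iii) only controls $\nabla U(\cdot,t)-\nabla e^{t\Delta}U(\cdot,0)$, not $\nabla U(\cdot,t)-\nabla U(\cdot,0)$, so an additional uniform bound on $\nabla e^{t\Delta}U(\cdot,0)$ over $\{|x|\ge\epsilon_0/4\}$ is genuinely needed. That bound is precisely Lemma~\ref{lemma-regular)linear} (the Dirichlet heat-kernel estimate proved in Appendix~\ref{appex-semi-group}); once it is invoked, the $P_3$ case is as short as the paper claims. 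So your identification of the missing ingredient is accurate, though calling it the ``main obstacle'' of the lemma somewhat overstates its role here, since the relevant parabolic regularity is packaged separately.
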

\begin{proof}
From the definition of $S(t),$ we easily  derive  \eqref{estima-bound-T-P-3}. So, we only give here the proofs  of  \eqref{estima-bound-T-P-1} and \eqref{estima-bound-T-P-2}.  We now  start with \eqref{estima-bound-T-P-1}.      Let us  consider  $x \in P_1(t) $ and  we   use the condition of  $U$ in $P_1 (t)$ to  get the following 
\begin{equation}\label{bound-1-C-CP-1}
\frac{1}{C(K_0)} (T-t)^{-\frac{1}{3}} \leq  U(x, t) \leq  C(K_0) (T-t)^{-\frac{1}{3}}. 
\end{equation}
In addition to that, thanks to item $(ii)$ in Lemma \ref{lemma-properties-V-A-s}, we get 
$$ | \nabla_y W \left( \frac{x}{\sqrt{T-t}},- \ln (T-t) \right)  | \leq   \frac{C(K_0) A^2}{ \sqrt{|\ln (T-t)|}},$$
which yields
\begin{equation}\label{boun-P-1-nabla}
 \left| \nabla U(x, t)  \right| \leq C (K_0)(T - t)^{-\frac{5}{6}}.
\end{equation} 
Then,  \eqref{estima-bound-T-P-1} follows by \eqref{bound-1-C-CP-1} and  \eqref{boun-P-1-nabla}. 

\noindent
We now consider  $x \in P_2(t)$. It is easy   to derive from item $(ii)$ in Definition \ref{defini-shrinking-set-S-t}  that
\begin{eqnarray*}
\frac{1}{C(K_0)} \varrho^{\frac{1}{3}} (x) & \leq  & U(x,t)  \leq  C(K_0) \varrho^{\frac{1}{3}} (x),\\
                 \left| \nabla U(x,t) \right| & \leq &   C \varrho^{	-\frac{5}{6}} (x),
\end{eqnarray*}
  provided that  $\delta_0 \leq \delta_{3,1}$ and $ \epsilon_0 \leq \epsilon_{3,2}$.  This gives \eqref{estima-bound-T-P-2} and concludes the proof of Lemma  \ref{lemma-bound-I-x-t}.
\end{proof}
\medskip
\noindent
We  now  continue the proof of Proposition  \ref{propo-bar-mu-bounded}. Considering  $t \in (0,t_1)$ and taking the  integral on  two sides of   \eqref{estima-bound-T-P-1}, we write
\begin{eqnarray*}
\int_{P_1 (t)} | I (x,t)| dx  & \leq  &   C(K_0) \int_{ |x| \leq K_0 \sqrt{(T-t)|\ln(T-t)|}}  (T-t)^{-\frac{4}{3}} dx\\[0.5cm] 
&  \leq  &  C(K_0)   (T-t)^{\frac{n}{2} - \frac{4}{3}} \left|  \ln(T-t)\right|^{\frac{n}{2}}.  
\end{eqnarray*} 
Integrating the   two sides of \eqref{estima-bound-T-P-2} and  using the following  fact 
$$ \varrho(x) \sim    \frac{8}{K_0^2}  \frac{|x|^2}{|\ln |x||}  \text{ as }  x \to 0,$$
we obtain the following 
\begin{eqnarray*}
\int_{P_2 (t)} \left| I (x,t)\right| \leq    C(K_0)  \left[  \epsilon_0^{n - \frac{8}{3}} |\ln \epsilon_0 |^\frac{4}{3}  -     ((T-t)|\ln (T-t)|)^{\frac{3n -8}{6}} |\ln ((T-t)|\ln (T-t)|)|^{\frac{4}{3}}  \right].
\end{eqnarray*}
In addition to that,  from   \eqref{estima-bound-T-P-3}, we have
$$  \int_{P_3 (t)}  |I(x,t)| dx  \leq C.$$
 Hence,   \eqref{estima-int-nabla-U}   holds.
 
 \noindent
In addition to that, using  \eqref{estima-rough-nabla-2-U},  we  can derive that 
$$ \int_{\Omega} \Delta U (t)dx < \infty, \forall t \in (0,t_1).$$
Therefore, we have
$$  \lim_{\upsilon \to 0} \int_{\{x , d(x, \partial \Omega) >  \upsilon \}} \Delta U dx  = \int_\Omega \Delta U(t) dx. $$
Moreover, for all $\upsilon > 0 $ small enough and from  item  $(iii)$ of  Definition \eqref{defini-shrinking-set-S-t}, we have
 \begin{equation}\label{bounded-on-boundary-omega}
 \left| \int_{\{x , d(x, \partial \Omega) >  \upsilon \}} \Delta U dx \right|  =  \left| \int_{\partial \{x , d(x, \partial \Omega) >  \upsilon \}} \nu(x) \cdot \nabla U (x,t) dS\right| \leq  C.
 \end{equation}
 This  implies that
 \begin{equation}\label{int-Delta-U-leq-C}
 \int_{\Omega} \Delta U (t)dx  \leq C.
 \end{equation}
 Hence, from   \eqref{relation-bar-mu-bar-theta},   \eqref{estimate-rest-term-U-4-2-nabla-U} and \eqref{int-Delta-U-leq-C},  we   derive the following 
 \begin{equation}\label{estima-bar-mu--bar-theta}
\left|  \bar \mu'(t)  + \frac{\bar \theta' (t)}{ \bar \theta (t) } \bar \mu (t)  \right| \leq C (T-t)^{ \frac{3n - 8}{6}} | \ln (T-t)|^n .
 \end{equation}
 In addition to that, from the  relation between  $\bar \mu$ and   $\bar  \theta$ in  \eqref{relation-bar theta-bar-mu}, we write 
 \begin{eqnarray*}
   \frac{ \bar \theta' (t)}{ \bar \theta (t)}  =  \frac{2 \gamma }{3 \lambda^{\frac{1}{3}}} \left( 1  - \frac{\frac{2 \gamma}{3 \lambda^{\frac{1}{3}}} \bar \mu(t) }{ \left( 1 + \gamma |\Omega|  + \frac{\gamma}{\lambda^{\frac{1}{3}}} \bar \theta(t) \bar \mu(t) \right)^{\frac{1}{3}}  } \right)^{-1} \left( 1 + \gamma |\Omega|  + \frac{\gamma}{\lambda^{\frac{1}{3}}} \bar \theta (t)\bar \mu (t) \right)^{-\frac{1}{3}} \mu' (t).
 \end{eqnarray*}
 
 \noindent
We also  have the fact  that
$$   \sqrt{\bar \theta (t)} \geq  \frac{\gamma}{ \lambda^{\frac{1}{3}}} \bar \mu (t),$$
 which  yields  that 
 $$1 \leq \left( 1  - \frac{\frac{2 \gamma}{3 \lambda^{\frac{1}{3}}} \bar \mu(t) }{ \left( 1 + \gamma |\Omega|  + \frac{\gamma}{\lambda^{\frac{1}{3}}} \bar \theta(t) \bar \mu(t) \right)^{\frac{1}{3}}  } \right)^{-1}  \leq 3. $$
 Hence,  $\bar \theta' (t)$ and $\bar \mu' (t)$ have  the same sign and  we can use    \eqref{estima-bar-mu--bar-theta}  to  conclude that
\begin{equation}\label{estima-theta-s-theta-s}
 \left| \bar \mu' (t)\right| \leq     C (T-t)^{ \frac{3n - 8}{6}} | \ln (T-t)|^n .
\end{equation}
This  yields  \eqref{estima-derivative-of-bar-mu} and  \eqref{bound-bar-theta-'}. Thus, we get the conclusion  of the proof of Proposition \ref{propo-bar-mu-bounded}.
\end{proof}

\begin{proposition}[Existence  of a solution to  equation \eqref{equa-U}, confined in  $S^*$]\label{proposition-existence-U}  We can find   parameters  $T> 0, K_0 > 0, \epsilon_0 > 0, \alpha_0 > 0, A> 0, \delta_0 > 0, C_0 > 0,  \eta_0 > 0$ such that  there exist $(d_0, d_1) \in \mathbb{R} \times  \mathbb{R}^n$
 such that with  initial data $U_{d_0, d_1}(0)$(given  in \eqref{defini-initial-data}), the solution  $U$ of equation \eqref{equa-U} exists  on  $ \Omega \times  [0,T)$ and   
   $$ U \in S^*(T, K_0, \epsilon_0, \alpha_0, A, \delta_0, C_0, \eta_0, T), $$  where  $S^*(T, K_0, \epsilon_0, \alpha_0, A, \delta_0, C_0, \eta_0, T) $ given in  \eqref{defini-S-*}.

\end{proposition}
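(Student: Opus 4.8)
The plan is to follow the two–step strategy of Merle and Zaag \cite{MZnon97} --- a reduction to a finite dimensional problem, followed by a topological (degree) argument --- while treating the nonlocal quantity $\bar\theta(t)$ as a controlled perturbation through the a priori estimates of Proposition \ref{propo-bar-mu-bounded}. First I would fix the constants in the order imposed by the dependencies in Propositions \ref{proposiiton-initial-data} and \ref{propo-bar-mu-bounded}: take $K_0$ large (above $K_2$, $K_3$ and the threshold needed by the reduction step), then $\delta_0$ small, then $\alpha_0\le\min(\alpha_2,\alpha_3)$, then $\epsilon_0\le\min(\epsilon_2,\epsilon_3)$, then $A\ge 1$ large enough to absorb the errors produced by the semigroup of $\mathcal L+V$ on the stable modes, then $C_0$ and $\eta_0$ small, and finally $T\le\min(T_2,T_3)$ small. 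With this choice Proposition \ref{proposiiton-initial-data} furnishes the family $\{U_{d_0,d_1}(0)\}_{(d_0,d_1)\in\mathcal D_A}\subset S(0)$ together with the affine parametrization $\Gamma\colon(d_0,d_1)\mapsto(q_0,q_1)(s_0)$ mapping $\mathcal D_A$ onto $\hat{\mathcal V}_A(s_0)$, with $\Gamma|_{\partial\mathcal D_A}\subset\partial\hat{\mathcal V}_A(s_0)$ and $\deg(\Gamma|_{\partial\mathcal D_A})\ne 0$.

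\emph{Controlling the nonlocal term.} For every $(d_0,d_1)$ with $|d_0|,|d_1|\le 2$, as long as $U\in S(t)$ on an interval $[0,t_1]$, Proposition \ref{propo-bar-mu-bounded} gives $1\le\bar\theta(t)\le C$ and $|\bar\theta'(t)|\le C(T-t)^{(3n-8)/6}|\ln(T-t)|^n$. In similarity variables this reads $1\le\theta(s)\le C_0$ and $|\theta'(s)|=|\bar\theta'(t)|(T-t)\le Ce^{-s(3n-2)/6}s^n\to 0$, so the extra term $N(q)=-\tfrac{\theta'(s)}{\theta(s)}(q+\varphi)$, as well as all the corrections carrying a factor $\lambda^{1/3}e^{-s/3}/\theta(s)$ inside $J(q,\theta(s))$, $B(q)$ and $R(y,s)$, are exponentially small in $s$. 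The cut-off term $F(w,W)$ is supported in the shell $\{e^{s/2}/M_0\le|y|\le 2e^{s/2}/M_0\}$, where the estimates built into $S(t)$ make it negligible as well; this is where the localization method of Mahmoudi, Nouaili and Zaag \cite{MNZNon2016} for the bounded-domain case enters. Consequently \eqref{equa-Q} is a genuine perturbation of equation (15) of \cite{MZnon97}.

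\emph{Reduction and transversality.} Next I would invoke the finite dimensional reduction (Proposition \ref{proposition-reduction-finite}): one can fix the parameters so that whenever $U\in S(t)$ on $[0,t_1]$ and $U(t_1)\in\partial S(t_1)$, necessarily $q(s_1)\in\partial V_{K_0,A}(s_1)$ with $(q_0,q_1)(s_1)\in\partial\hat{\mathcal V}_A(s_1)$, and moreover a strict outgoing transversality holds for the saturated component among $q_0,q_1$. The heart of this step is that all the remaining constraints are strictly improved by the flow: the components $q_2,q_-,q_e,(\nabla q)_\perp$ in $P_1(t)$ are contracted by the spectral gap of $\mathcal L+V$ on its nonpositive part, the estimates in $P_2(t)$ follow from parabolic smoothing applied to $\mathcal U$ viewed as a perturbation of $\hat{\mathcal U}$, and the estimates in $P_3(t)$ hold because $T$ is tiny and because $\nabla U$ has been replaced by $\nabla e^{t\Delta}U(0)$; none of these is spoiled by the (exponentially small) nonlocal and cut-off terms.

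\emph{Topological conclusion.} Finally, argue by contradiction: if for every $(d_0,d_1)\in\mathcal D_A$ the solution left $S(t)$ in finite time, let $s_*(d_0,d_1)\ge s_0$ be the first exit time. The transversality makes $s_*$ continuous in $(d_0,d_1)$ and forces $(q_0,q_1)(s_*)\in\partial\hat{\mathcal V}_A(s_*)$, so
$$\Phi\colon(d_0,d_1)\longmapsto\frac{s_*(d_0,d_1)^2}{A}\,(q_0,q_1)\big(s_*(d_0,d_1)\big)\in\partial\big([-1,1]^{1+n}\big)$$
is continuous. On $\partial\mathcal D_A$ the initial data already lies on $\partial S(0)$, so $s_*\equiv s_0$ there and $\Phi|_{\partial\mathcal D_A}$ is the rescaling of $\Gamma|_{\partial\mathcal D_A}$, which has nonzero degree; thus $\Phi$ would be a continuous retraction of the ball $\mathcal D_A$ onto its boundary, a contradiction. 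Hence some $(d_0,d_1)\in\mathcal D_A$ gives a solution $U$ with $U(0)=U_{d_0,d_1}(0)$ that stays in $S(t)$ for all $t\in[0,T)$, i.e. $U\in S^*(T)$. The delicate point throughout is not the degree argument (routine once the reduction holds) but checking that the nonlocal term $-\tfrac{\theta'(s)}{\theta(s)}(q+\varphi)$, the $\theta$-shifted nonlinearities, and the cut-off term $F(w,W)$ are small enough --- in the relevant weighted norms and \emph{uniformly in} $(d_0,d_1)\in\mathcal D_A$ --- that the transversality of the two unstable modes and the strict contraction of all remaining modes survive; this is exactly where Proposition \ref{propo-bar-mu-bounded} is indispensable.
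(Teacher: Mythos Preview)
Your proposal is correct and follows essentially the same two-step scheme as the paper: the reduction to finite dimensions (Proposition \ref{proposition-reduction-finite}) followed by the topological contradiction via the map $\Phi$ (denoted $\Lambda$ in the paper), with Proposition \ref{propo-bar-mu-bounded} invoked to keep the nonlocal term perturbative. The only minor imprecision is the phrase ``the initial data already lies on $\partial S(0)$'' for $(d_0,d_1)\in\partial\mathcal D_A$: strictly speaking only $(q_0,q_1)(s_0)\in\partial\hat{\mathcal V}_A(s_0)$ while the remaining components are \emph{strictly} inside their bounds, and it is the transversality in item $(ii)$ of Proposition \ref{proposition-reduction-finite} that then forces $s_*=s_0$ --- but this is exactly what you go on to use, so the argument is sound.
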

\begin{proof}
 As a matter of fact, this Proposition plays a central role in our problem.  In other words, it will imply Theorem \ref{theorem-existence} (see subsection \ref{subsection-theorem-11}  below).  The proof of this Proposition     will be presented in      two steps:     

-  \textit{First step:}  We use a  reduction   of  our problem  to a   finite dimensional one. More  precisely,   we  prove that  the controling   $U $ in $ S(t)$ for all $t \in [0,T)$ is reduced to the control of $(q_0,q_1) (s)$ in  $\hat{\mathcal{V}}_A (s)$ (see Proposition  \ref{proposition-reduction-finite} below).

- \textit{Second  step:}   In this step, we aim at  proving that there exist $(d_0, d_1) \in \mathbb{R}^{1 + n}$ such that  $U \in S^*(T,K_0, \epsilon_0, \alpha_0,A,\delta_0, C_0, \eta_0,T)$ with suitable parameters.     Then, the conclusion follows from a topological argument based on Index theory. 

We  now give     two main  steps with  more technical details:

  $a)$  \textit{ Reduction  to a finite dimensional problem:}       In this step,    we derive that the  control  of  $U \in S(t)$ with $ t\in [0, T)$ is reduced  to  the control of   the transform function $q(s)$ such that  two  first components     $(q_0, q_1) (s)$ are trapped  in $\hat{\mathcal{V}}_A(s)$ (see  \eqref{defini-hat-mathcal-V-A}), where  $s =  - \ln (T- t)$. More  precisely,  the following  proposition is our  statement:
\begin{proposition}[Reduction to a  finite  dimensional problem]\label{proposition-reduction-finite} There exist $T > 0,   K_0 > 0, \epsilon_0 > 0, \alpha_0 > 0, A > 0, \delta_0 > 0,  C_0 > 0, \eta_0> 0$ such that  the following holds: We consider $U$ a solution  of equation \eqref{equa-U} that  exists on $[0, t_1],$ for some $t_1 < T,$ with  initial data $U_{d_0, d_1}(0)$ given in \eqref{defini-initial-data}, for some  $(d_0, d_1) \in \mathcal{D}_{ A}$.  We also assume that we have  $ U \in   S (t)$ for all $ \forall t \in [0, t_1] \text{ and }  U \in  \partial S (t_1)$ (see the  definiton of $S(t) = S(T, K_0, \epsilon_0, \alpha_0, A, \delta_0,  C_0, \eta_0,t)$ in Definition \ref{defini-shrinking-set-S-t} and  the  set $\mathcal{D}_A$ given  in  Proposition \ref{proposiiton-initial-data}). Then, the following  statments  hold: 
	\begin{itemize}
	\item[$(i)$] We have  $(q_0, q_1) (s_1)  \in \partial \hat{\mathcal{V}}_A (s_1),$ where $(q_0, q_1) (s)$ are components  of  $q(s)$ given in \eqref{represent-non-perp}  and  $q(s)$ is the  transform function  of  $U$ defined  in \eqref{defini-q}  and  $s_1 = \ln(T-t_1)$.
	\item[$(ii)$]   There exists  $\nu_0 > 0$ such that  for all $\nu \in (0, \nu_0),$ we have
	$$  (q_0, q_1) ( s_1  +  \nu) \notin    \hat{\mathcal{V}}_A (s_1 +  \nu).$$ 
	Consequently,  there  exists  $\nu_1 > 0$ such that 
	$$ U  \notin    S(t_1  +  \nu), \forall \nu \in (0, \nu_1).$$

	\end{itemize}
	\end{proposition}    
	The idea of   the  proof  is  inspired (in a non trivial way) by  the  ideas  given  by Merle and   Zaag in \cite{MZnon97}. Since the proof is long and technical, we leave it to Section \ref{section-reduction}. Therefore, we assume here that  Proposition \ref{proposition-reduction-finite} holds and go forward  to  the conclusion of  Proposition  \ref{proposition-existence-U}.

$b)$ \textit{ Topological argument and  the conclusion of Proposition   \ref{proposition-existence-U}:}  
In this  step, by using   Proposition \ref{proposition-reduction-finite} and  a topological argument  based on  Index theory, we conclude     Proposition \ref{proposition-existence-U}.  More precisely, we     prove that  there exist   $ T, K_0, \epsilon_0, \alpha_0, A, \delta_0,  C_0, \eta_0$ and  $(d_0, d_1) \in \mathcal{D}_{ A}$  such that with initial data  $U_{d_0, d_1} (0)$ (defined  in  \eqref{defini-initial-data}), the solution  of equation \eqref{equa-U}  exists    on $[0, T)$   and   belongs to  $S^*(T)$ where  $S^*(T)$ is  defined in  Definiton \ref{defini-S-*}.    Indeed, let us consider parameters  $T > 0,   K_0 > 0, \epsilon_0 > 0 , \alpha_0 > 0 , A > 0 , \delta_0 > 0 ,  C_0, \eta_0 > 0 $  such that  Propositions \ref{proposiiton-initial-data} and \ref{proposition-reduction-finite} hold.  Using Proposition  \ref{proposiiton-initial-data}, we  have  the following
$$   \forall (d_0, d_1) \in \mathcal{D}_{ A},  \quad U_{d_0, d_1} (0) \in S (0).$$
  In particular, it follows   from   Proposition 1.2.2 page 12 in Kavallaris and Suzuki \cite{NKMI2018} together with Lemma \ref{equivalent-bar u-U} that    equation \eqref{equa-U} is locally in time  well-posed in $C^{2,1} (\Omega \times (0,T_0)) \subset C(\bar \Omega \times [0,T_0])$, for some $T_0 > 0$ . Therefore,   for   every $  (d_0, d_1)   \in  \mathcal{D}_{A}$, we  define   $t^*(d_0, d_1) \in [0,T) $ as   the   maximum  time, satisfying
$$   U_{d_0, d_1}   \in  S(t),  \forall  t \in [0,t^* (d_0,d_1)), $$ 
 where  $U_{d_0, d_1} $  is the  solution  of  \eqref{equa-U} corresponding  to initial data $U_{d_0,d_1} (0),$ introduced in  \eqref{defini-initial-data}. Then, we have two  possible   cases:
 
 $a)$ Either  $ t^* (d_0, d_1)  = T $  for some    $(d_0,d_1 ) \in \mathcal{D}_{A}, $    then,   we  get  the conclusion  of the  proof.
 
 $b)$ Or  $t^* (d_0, d_1) < T,$  for all $ (d_0, d_1) \in  \mathcal{D}_{A}$. This case in fact never occurs, as we will show in the following. 
 
 \medskip
 Indeed, assuming by contradiction that case  $b)$ hold and  using   the  continuity of   the  solution in time  and the  definition  of the  maximun time  $t^*(d_0, d_1), $ we have 
 $$ U_{d_0, d_1}(t^* (d_0, d_1)) \in   \partial S(t^* (d_0, d_1)).$$
 Thanks  to the finite dimensional reduction property given in item  $(i)$ of  Proposition \ref{proposition-reduction-finite}, we derive  the following  
 $$\left( q_0, q_1 \right)(s_* (d_1, d_2))  \in \partial \hat{\mathcal{V} }_A (s_* (d_0, d_1)),$$
where  $q_0, q_1 $  are defined   in \eqref{represent-non-perp} as the  components  of  $q_{d_0, d_1 },$ which  is a  transformed  function of  $U_{d_0, d_1}$ (see   \eqref{defini-q}) and  $s_*(d_0, d_1)  =  - \ln  (T -t^*(d_0, d_1)).$  Then,  we  may  define the following  mapping 
\begin{eqnarray*}
\Lambda :  \mathcal{D}_{A}  & \to &    \left( [-1,1] \times  [-1,1]^n \right)   \\[0.3cm]
\left( d_0, d_1 \right)  &  \mapsto  &  \frac{s_*^2 (d_0,  d_1)}{A}  \left(  q_0, q_1 \right) \left(  s_* (d_0, d_1) \right). 
\end{eqnarray*}
From the definition  of $t^* (d_1, d_2), $ the  components $(q_0,q_1)$ and  the transversal crossing property given in  item $(ii)$  in Proposition  \ref{proposition-reduction-finite}, we  see that   $  \Lambda$  is continuous   on  $\mathcal{D}_{A}$.  In addition to that, from   item $(i)$ of Proposition   \ref{proposiiton-initial-data}, we can derive   that for all $ (d_0, d_1) \in \partial \mathcal{D}_{A}$
$$ \left(q_0, q_1 \right) (s_0)  \in \partial \hat{ \mathcal{V}}_{A} (s_0), \quad s_0 = - \ln T.$$
However,  using  item $(ii)$  of  Proposition  \ref{proposition-reduction-finite}   again and  the definition  of  $t^* (d_0, d_1)$ we deduce that  
$$  t^*(d_0, d_1) =  0,$$  
which yields
$$   s_* (d_0, d_1)  =  s_0 \text{ and } \Lambda  (d_0,d_1) = \frac{s_0^2}{A} \Gamma (d_0,d_1), $$
where $\Gamma$ is defined in item $(I)$ of Proposition \ref{proposiiton-initial-data}. Hence,    thanks  to      \eqref{deg-Gamma-1-neq-0}, we conclude
$$   \text{ deg }  \left(  \Lambda\left. \right|_{\mathcal{D}_A}    \right) \neq  0.    $$
In fact,  such a mapping    $\Lambda$  can not exist    by using  Index theory.
Hence,  case  $b)$ doesn't occur  only    case $a)$ occurs. Thus,    the conclusion  of   Proposition \ref{proposition-existence-U} follows.
\end{proof} 
\subsection{The conclusion of   Theorem  \ref{theorem-existence}} \label{subsection-theorem-11}
In  this  subsection, we would like to  give a  complete  proof of   Theorem \ref{theorem-existence}.  We  now  consider  the  solution  $U$ which has been constructed   in Proposition \ref{proposition-existence-U}. Then, $U$ exists on $[0,T) $ and   
$$  U (t) \in S (t), \forall   t \in [0,T).$$
  Using  item $(i)$ in Definition \ref{defini-shrinking-set-S-t},  we have the following 
  \begin{equation}\label{norm-q-prove-theorem-1}
   q \text{ exists  on } [- \ln T, + \infty)  \text{ and }  \|q(.,s)\|_{L^\infty (\mathbb{R}^n)}  \leq  \frac{C}{ \sqrt s}, \forall  s \in [- \ln T,  + \infty),
  \end{equation}
for some constant  $C > 0$.  Thanks to  \eqref{defini-bar-u}, \eqref{defini-U-x-t},     \eqref{similarity-variables} and  \eqref{defini-w-small}, we  have 
\begin{equation}\label{estima-1-u-T-t-Omega-3-4}
 \left\|   \frac{(T-t)^{\frac{1}{3}} \lambda^{\frac{1}{3}}}{ \bar \theta(t) (1- u(.,t))}    -  \left(  3 + \frac{9}{8} \frac{|.|^2}{ (T-t)|\ln (T-t)|}    \right)^{-\frac{1}{3}}\right\|_{L^\infty (\Omega)} \leq  \frac{C}{ \sqrt{|\ln (T-t)|}}.
\end{equation}
Using      \eqref{bound-bar-theta}    and     \eqref{bound-bar-theta-'}, we can derive that $\bar \theta(t)$ converges to $\theta_T > 0$ with 
$$ \left|   \bar{\theta}(t)  - \theta_T\right| \leq C (T-t)^{\frac{1}{12}},  \forall t  \in [0,T).$$ 
 This implies   \eqref{profile-intermidiate} with $\theta^* =  \frac{\theta_T}{ \lambda^{\frac{1}{3}}}.$   Then, item   $ (i)$ of  Theorem  \ref{theorem-existence}  follows.  We now prove  that  $u$ quenches  only  at $0$. Indeed, from the above estimate, we can derive  that  $0$ is a quenching point of  $u$.   Now, we aim at proving that  $ x \in \Omega \setminus \{0\}$ are  not quenching points  of $u$. In fact, relying on   relations \eqref{defini-bar-u} and \eqref{defini-U-x-t},   it is  enough to  prove the following Lemma:
 \begin{proposition}\label{pro-U-to-U-*}
 The solution $U$ satisfies the following statements:
 \begin{itemize}
 \item[$(i)$] For all $x \in \Omega \setminus \{0\},$ there exits  $\nu (x) > 0$ such that 
 \begin{equation}\label{exist-lim-sup}
 \limsup_{t \to T}  \sup_{|x' - x| \leq \nu (x)} U (x', t)  < + \infty.
\end{equation}  
\item[$(ii)$]   For all $x \in \Omega \setminus \{0\}$,     $\lim_{t \to T} U(x,t) $ exists. In particular,  if  we  define  $U^* (x) =  \lim_{t \to T} U(x,t),$ for all $x \in \Omega \diagdown \{0\}$, then  $u^* \in C (\bar{\Omega}  \setminus \{0\}),$ and  $U(t) $ uniformly converges  to $u^*$ on  every compact subset of  $\bar{\Omega} \setminus\{0\}$.  In particular,  we have  the following  asymptotic behavior
\begin{equation}\label{asymptotics-u^*-theorem-1}
U^* (x) \sim  \left[ \frac{9}{32}  \frac{|x|^2}{|\ln| x||} \right]^{-\frac{1}{3}}, \text{ as } x \to 0. 
\end{equation}
 \end{itemize}
 \end{proposition}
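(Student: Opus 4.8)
The plan is to follow the localization strategy of Merle and Zaag, treating the rescaled variable $\mathcal{U}$ from \eqref{rescaled-function-U} as the main object near the origin, and invoking Proposition \ref{propo-bar-mu-bounded} to make the nonlocal coefficient $\bar\theta(t)$ harmless. For item $(i)$ I would split according to the three regions. If $|x|\ge \epsilon_0/4$, then $x$ together with a small ball around it lies in $P_3(t)$ for every $t$, and item $(iii)$ of Definition \ref{defini-shrinking-set-S-t} gives $|U(x',t)|\le |U(x',0)|+\eta_0$, which is bounded. If $0<|x|<\epsilon_0$, then for $t$ close enough to $T$ one has $\tfrac{K_0}{4}\sqrt{(T-t)|\ln(T-t)|}<|x|/2$, so every $x'$ with $|x'-x|\le \nu(x):=\tfrac{\alpha_0}{K_0}|x|$ sits in $P_2(t)$ and can be written $x'=x+\xi\sqrt{\varrho(x)}$ with $|\xi|\le\alpha_0\sqrt{|\ln\varrho(x)|}$; then item $(ii)$ of Definition \ref{defini-shrinking-set-S-t} yields
$$ U(x',t)=\varrho(x)^{-\frac13}\,\mathcal{U}\big(x,\xi,\tau(x,t)\big)\le \varrho(x)^{-\frac13}\big(\hat{\mathcal{U}}(\tau(x,t))+\delta_0\big)\le \varrho(x)^{-\frac13}\big(\hat{\mathcal{U}}(1)+\delta_0\big), $$
a finite constant depending only on $x$. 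Since $U$ is continuous, hence bounded, on $\bar\Omega\times[0,t_*]$ for any $t_*<T$, estimate \eqref{exist-lim-sup} follows.

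For the first part of item $(ii)$, once \eqref{exist-lim-sup} is known I would fix $x_0\ne 0$ and apply interior and boundary parabolic Schauder estimates to \eqref{equa-U} on a parabolic neighborhood of $(x_0,T)$. The nonlinearity there is smooth and non-degenerate: $U+\lambda^{1/3}/\bar\theta(t)\ge \lambda^{1/3}/C_0>0$ by \eqref{bound-bar-theta}, and $|\bar\theta'(t)/\bar\theta(t)|$ is bounded by \eqref{bound-bar-theta-'}; a standard bootstrap then gives uniform $C^{2+\alpha,1+\alpha/2}$ bounds for $U$ on $B(x_0,\nu(x_0)/2)\times[t_*,T)$. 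In particular $\partial_t U$ is bounded there, so $t\mapsto U(x,t)$ converges as $t\to T$ to some $U^*(x)$; by compactness the convergence takes place in $C^2_{\mathrm{loc}}(\bar\Omega\setminus\{0\})$, and $U^*\in C^2(\Omega\setminus\{0\})\cap C(\bar\Omega\setminus\{0\})$.

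It then remains to identify the behavior of $U^*$ near $0$. For each small $x\ne 0$, setting $\xi=0$ in \eqref{rescaled-function-U} and letting $\tau\to 1$,
$$ U^*(x)=\varrho(x)^{-\frac13}\,\mathcal{U}^*_x,\qquad \mathcal{U}^*_x:=\lim_{\tau\to 1}\mathcal{U}(x,0,\tau)=\varrho(x)^{\frac13}U^*(x), $$
the limit existing by the previous step, and I would show $\mathcal{U}^*_x\to\hat{\mathcal{U}}(1)$ as $x\to 0$. Along any sequence $x_k\to 0$ one has $\varrho(x_k)\to 0$, $\|\nabla_\xi\mathcal{U}(x_k,\cdot,\cdot)\|_{\infty}\le C_0/\sqrt{|\ln\varrho(x_k)|}\to 0$, and the coefficient $\tilde\theta_\tau'/\tilde\theta$ in \eqref{equa-xi} tends to $0$ by \eqref{bound-bar-theta-'}; using the a priori control $|\mathcal{U}(x_k,\xi,\tau)-\hat{\mathcal{U}}(\tau)|\le\delta_0$ and parabolic compactness, $\mathcal{U}(x_k,\cdot,\cdot)$ converges (up to a subsequence, in $C^{2,1}_{\mathrm{loc}}$) to a $\xi$-independent function $V$ solving $\partial_\tau V=V^4$, while the sharp intermediate profile \eqref{profile-intermidiate} forces $\mathcal{U}(x_k,\cdot,0)\to\hat{\mathcal{U}}(0)$ uniformly on compact sets, so $V\equiv\hat{\mathcal{U}}$. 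Since $\hat{\mathcal{U}}$ (see \eqref{defin-hat-mathcal-U-tau}) extends smoothly through $\tau=1$, the convergence is valid on a neighborhood of $\tau=1$ and gives $\mathcal{U}^*_{x_k}\to\hat{\mathcal{U}}(1)$. Combined with $\varrho(x)\sim\tfrac{8}{K_0^2}\tfrac{|x|^2}{|\ln|x||}$ as $x\to0$, this yields \eqref{asymptotics-u^*-theorem-1}.

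The main obstacle is this last step: upgrading the fixed-size bound $|\mathcal{U}-\hat{\mathcal{U}}|\le\delta_0$ to the exact ODE profile $\hat{\mathcal{U}}$ in the rescaled limit, and in particular verifying that the nonlocal contribution $\tilde\theta_\tau'\mathcal{U}/\tilde\theta$ really disappears there --- which is exactly where Proposition \ref{propo-bar-mu-bounded}, with no counterpart in the local case $\gamma=0$, is indispensable --- together with the care needed to carry the convergence up to the ``final time'' $\tau=1$ of the rescaled problem.
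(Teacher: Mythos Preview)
Your argument for item $(i)$ and for the existence/regularity part of item $(ii)$ coincides with the paper's: the same case splitting over $P_2(t)$ and $P_3(t)$ for the local boundedness, and the same appeal to parabolic regularity (the paper packages this in Lemma \ref{lemma-parabolic-estimates}) to get a uniform bound on $\partial_t U$ away from the origin and hence the existence of $U^*$.

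For the asymptotic \eqref{asymptotics-u^*-theorem-1} the paper does \emph{not} argue by compactness. It derives directly the quantitative bound
\[
\sup_{\tau\in[0,1),\ |\xi|\le |\ln\varrho(x_0)|^{1/4}}\big|\mathcal U(x_0,\xi,\tau)-\hat{\mathcal U}(\tau)\big|\le \frac{C}{|\ln\varrho(x_0)|^{1/4}}
\]
by writing the equation for $\mathbb U:=\mathcal U-\hat{\mathcal U}$, localizing in $\xi$, estimating the source terms (here Proposition \ref{propo-bar-mu-bounded} gives $|\tilde\theta'(\tau)|\le C\varrho(x_0)^{1/12}(1-\tau)^{-11/12}$, which is integrable on $[0,1)$), and applying Gronwall on $[0,1)$. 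Since this bound is uniform in $\tau$, sending $\tau\to1$ immediately gives $\varrho(x_0)^{1/3}U^*(x_0)\to\hat{\mathcal U}(1)$, and combining with $\varrho(x_0)\sim \tfrac{8}{K_0^2}\tfrac{|x_0|^2}{|\ln|x_0||}$ yields \eqref{asymptotics-u^*-theorem-1}.

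Your compactness route is in principle valid, but the sentence ``since $\hat{\mathcal U}$ extends smoothly through $\tau=1$, the convergence is valid on a neighborhood of $\tau=1$'' is not a justification: $C^{2,1}_{\mathrm{loc}}$ compactness only gives convergence on compact subsets of $[0,1)$, regardless of how nice the limit is. What actually closes your argument is the uniform bound on $\partial_t U$ (hence on $\partial_\tau\mathcal U(x_k,0,\cdot)$) that you already obtained from parabolic regularity: this makes the maps $\tau\mapsto\mathcal U(x_k,0,\tau)$ equicontinuous on $[0,1]$, so the pointwise convergence to $\hat{\mathcal U}$ on $[0,1)$ upgrades to uniform convergence on $[0,1]$ and gives $\mathcal U^*_{x_k}\to\hat{\mathcal U}(1)$. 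You should state this explicitly. The paper's Gronwall approach buys a quantitative rate and avoids this endpoint subtlety altogether; your approach is softer but requires the equicontinuity step to reach $\tau=1$.
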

 \begin{proof}
 We consider $U$  the solution  constructed in Proposition \ref{proposition-existence-U}. The proof  will be given  in two parts:
 
 \medskip
 \textit{ -  The proof of item $(i)$:} The proof follows from  the  definition of  shrinking set  $S(t)$. Let us  consider two cases:  $|x| > \frac{\epsilon_0}{4}, x \in \Omega $ and  $|x| \leq   \frac{\epsilon_0}{4}, x \in \Omega$.
 
  + The case where $|x| >    \frac{\epsilon_0}{4}, x \in \Omega $:  Using  item $(iii)$ of  Definition  \ref{defini-shrinking-set-S-t}, we conclude   that for all $t \in [0,T)$, 
  $$    U(x,t)  \leq     U(x,0)  + \eta_0 < + \infty.$$      
  Then, \eqref{exist-lim-sup} follows.
  
  + The case  where  $|x|  \leq  \frac{\epsilon_0}{4}, x \in \Omega$:   For every $x$ in that  region, we can    find   $t_x$ close  to $T$ such that   $ |x| \in \left[ \frac{K_0}{4} \sqrt{(T-t_x) |\ln (T-t_x)|}, \epsilon_0 \right]$. Moreover,  we  derive that  $|x| \in  \left[ \frac{K_0}{4} \sqrt{(T-t) |\ln (T-t)|}, \epsilon_0 \right]$ for all $t \in [t_x, T)$. Considering $t \in [t_x, T) $ and  using  item $(ii)$ in  Definition \ref{defini-shrinking-set-S-t}, we derive the following 
  $$ U(x + \xi \sqrt{ \varrho(x)}, t) \leq \varrho^{-\frac{1}{3}} (x) \left[ \hat{\mathcal{U}}(\tau(x,t)) + \delta_0  \right], \forall |\xi| \leq  \alpha_0 \sqrt{|\ln \varrho(x)|} .$$
  This estimate  directly   implies   \eqref{exist-lim-sup}.
  
  \textit{ - The proof of item  $(ii)$:} By using parabolic regularity and  the technique given by Merle in  \cite{Mercpam92},   item $(i)$ and Lemma  \ref{lemma-parabolic-estimates},  we may derive  that there exists a function  $ U^* \in  C(\bar{\Omega} \setminus \{0\})$ such that $ U(x,t) \to U^*(x),  $ as  $t \to T$, for all $x \in \bar\Omega, x \neq 0$. Moreover,  one can prove that  the   convergence is  uniform on every  compact subset   of     $\bar{\Omega} \backslash \{ 0\}$. It remains to give the  asymptotic behavior  \eqref{asymptotics-u^*-theorem-1}.
 We consider  $x_0 \in \Omega$ such that  $|x_0|$ is  small enough. We first  introduce  the following functions: $\mathcal{U} (x_0, \xi, \tau)$ is  defined in \eqref{rescaled-function-U} and
 \begin{eqnarray}
\mathcal{V}(x_0, \xi, \tau) & = & \nabla_\xi \mathcal{U} (x_0, \xi, \tau),\label{defini-nabla-xi-mathcal-U-x-0}
\end{eqnarray}
where  $\xi \in  \varrho^{-\frac{1}{3}} (x_0)(\Omega - x_0) \subset \mathbb{R}^n$ and  $\tau \in  \left[ - \frac{t(x_0)}{\varrho(x_0)}, 1 \right)$, where $t(x_0)$ and $\varrho(x_0)$ are defined as in \eqref{defini-t(x)-} and \eqref{defini-theta}, respectively.  We aim at proving the following estimates:
\begin{equation}\label{prove-estima-mathcal-U-hat-U}
\sup_{\tau  \in [0,1), |\xi| \leq |\ln (\varrho(x_0))|^{\frac{1}{4}}  } \left| \mathcal{U}(x_0, \xi, \tau)  - \hat{\mathcal{U}} (\tau) \right| \leq  \frac{C}{ |\ln(\varrho(x_0))|^{\frac{1}{4}}},
\end{equation} 
 \begin{equation}\label{prove-sup-tau-xi-V-ln-T-t-0}
\sup_{\tau \in [0,1), |\xi| \leq 2 |\ln(\varrho (x_0))|^{\frac{1}{4}}}   |\mathcal V (x_0, \xi, \tau)| \leq  \frac{C}{ |\ln (\varrho (x_0))|^\frac{1}{4}},
\end{equation}
and 
\begin{eqnarray}
\sup_{\tau \in [\tau_0,1), |\xi| \leq \frac{1}{2} |\ln(\varrho (x_0))|^{\frac{1}{4}}} \left| \partial_\tau \mathcal{U}(x_0, \xi, \tau)\right| \leq C(x_0),   \label{prove-sup0-1)xi-partial-mathcal-U}
\end{eqnarray}
for some $\tau_0 \in (0,1)$, fixed,  and  we also  recall that $\hat{\mathcal{U}} ( \tau) $ is  introduced  in \eqref{defin-hat-mathcal-U-tau}.

%and 
%\begin{equation}\label{prove-sup-tau-xi-partila-tau-U}
%\sup_{\tau \in [0,1), |\xi| \leq \frac{1}{2} |\ln(\varrho (x_0))|^{\frac{1}{4}}} \left| \partial_\tau \mathcal{U} (x_0, \xi,  \tau) \right| \leq C.
%\end{equation}
\noindent
We see that  \eqref{prove-sup-tau-xi-V-ln-T-t-0} follows  from  the fact that $U \in S(t), \forall t \in [0,T) $ and item $(ii)$ of Definition \ref{defini-shrinking-set-S-t}. Thus,  we only need  to give the proofs of  \eqref{prove-estima-mathcal-U-hat-U} and  \eqref{prove-sup0-1)xi-partial-mathcal-U}.  

\medskip
\textit{ - The proof of  \eqref{prove-estima-mathcal-U-hat-U}:} We write here  the  equation of  $\mathcal{U}$  from \eqref{equa-hat-mathcal-U}
 %\begin{eqnarray}
  % \sup_{x \in  \Omega}  (T-t )^{\frac{5}{6}}| \nabla U  (x,t)|  & \leq &   \frac{C}{\sqrt{|\ln (T-t)|}} \left[   \| \nabla \varphi_0 \|_{L^\infty (\mathbb{R}^n)}  + 1 \right] \to 0\label{estimate-nabla U},\\
%  \sup_{ |x - x_0 | \leq \frac{|x_0|}{2} }   (T-t)^{\frac{1}{3}} |U(x,t)|  & \leq & \left| \varphi_0 \left( \frac{\frac{|x_0|}{2}}{ \sqrt{(T-t)|\ln(T-t)|}} \right)   \right|  + \frac{C}{\sqrt{| \ln{T-t}|}} \to 0, \label{estima-U-x-t-x_0-2}
%\end{eqnarray} 
 \begin{equation}\label{equa-mathcal-U-x-0}
 \partial_\tau  \mathcal{U}  =   \Delta_\xi \mathcal{U}  - 2 \frac{ \left| \nabla_\xi \mathcal{U} \right|^2}{ \mathcal{U} +  \frac{  \lambda^{\frac{1}{3}} \varrho^{\frac{1}{3}}(x_0)  }{\tilde \theta (\tau)}} + \left(  \mathcal{U} + \frac{ \lambda^{\frac{1}{3}} \varrho^{\frac{1}{3}} (x_0) }{\tilde{ \theta} (\tau) }   \right)^4 - \frac{\tilde{\theta}'(\tau) }{ \tilde{\theta}(\tau)} \mathcal{U},
\end{equation}
where  $\tilde \theta (\tau) = \bar \theta ( \tau \varrho(x_0)   + t (x_0) ) $  is given   in \eqref{defini-tilde-theta}. From   \eqref{estima-1-u-T-t-Omega-3-4} with $t= t(x_0)$, we derive that  
 \begin{eqnarray}
 \sup_{|\xi| \leq  	 6 |\ln (\varrho(x_0))|^\frac{1}{4}} \left|  \mathcal{U}(x_0, \xi,0) -  \hat{\mathcal{U}} (0)\right| &\leq  &  \frac{C}{  |\ln (\varrho (x_0))|^\frac{1}{4}}.\label{sup-6-ln-T-t-0-x-0mathcal-U}
\end{eqnarray}    
%Then, we aim at prove that 
%\begin{eqnarray*}
% \sup_{|\xi| \leq  	  |\ln (\varrho(x_0))|^\frac{1}{4}, \tau \in [0,1)} \left|  \mathcal{U}(x_0, \xi,\tau)  - \hat{ \mathcal{U}} (\tau)\right| &\leq  &  \frac{C}{  |\ln (T (x_0))|^\frac{1}{4}}.\label{sup-6-ln-T-t-0-x-0mathcal-U}
%\end{eqnarray*}
 In addition to that, from item $(ii)$ of Definition \ref{defini-shrinking-set-S-t}, we have  for all $|\xi| \leq  6 |\ln \varrho(x_0)|^{\frac{1}{4}}$ and  $\tau \in [0,1)$:   
 \begin{eqnarray}
\mathcal{U} (x_0, \xi, \tau)  \geq \frac{1}{2} \hat{\mathcal{U}} (0), \label{mathcal-U-geq-hat-U-0}\\
\mathcal{U} (x_0, \xi, \tau) \leq  \frac{3}{2} \hat{\mathcal{U}} (1), \label{mathcal-U-leq-3-2-hat-U-0}
 \end{eqnarray} 
 provided that  $ \delta_0  \leq \frac{1}{2} \hat{ \mathcal{U}}(0)$. We now consider  $\mathbb{U}(\xi, \tau)$ as follows
 \begin{equation*}\label{defi-mathbb-U}
 \mathbb{U} (\xi, \tau)  =  \mathcal{U}(x_0, \xi, \tau) - \hat{\mathcal{U}} (\tau), \text{ where } \xi \in \varrho^{-\frac{1}{3}} (x_0)(\Omega - x_0)  \text{ and } \tau \in [0,1).
 \end{equation*}
 We then derive an equation  satisfied by $\mathbb{U}$
 \begin{equation}\label{equa-mathbb-U}
 \partial_\tau \mathbb{U}  = \Delta_\xi \mathbb{U} + G_1 + G_2 ,
 \end{equation}
 where  $G_1, G_2$ are defined as follows
 \begin{eqnarray*}
 G_1 (\xi, \tau)  & = & - 2 \frac{| \nabla \mathcal{U}|^2}{ \mathcal{U} +  \frac{  \lambda^{\frac{1}{3}} \varrho^{\frac{1}{3}}(x_0)  }{\tilde \theta (\tau)} } -  \frac{ \tilde{\theta}' (\tau)}{\tilde{\theta} (\tau) } \mathcal{U},\\
 G_2 (\xi, \tau) & = &    \left(  \mathcal{U} + \frac{ \lambda^{\frac{1}{3}} \varrho^{\frac{1}{3}} (x_0) }{\tilde{ \theta} (\tau) }   \right)^4 - \hat{\mathcal{U}}^4 (\tau).
 \end{eqnarray*} 
 Next,  we derive from the definition of $\tilde{\theta} (\tau)$, Proposition \ref{propo-bar-mu-bounded} and the fact that for all $\tau \in  \left( 0, 1\right)$,
 $$    \left| \tilde{\theta}' (\tau)  \right| \leq  C \varrho^{\frac{1}{12}} (x_0) (1 - \tau)^{-\frac{11}{12}} ,  $$
 and 
 $$ 1 \leq   \tilde{\theta} (\tau) \leq  C.$$
 Hence,  from  \eqref{prove-sup-tau-xi-V-ln-T-t-0}, \eqref{mathcal-U-geq-hat-U-0} and \eqref{mathcal-U-leq-3-2-hat-U-0}, we  deduce that for all $\tau \in [0,1),|\xi| \leq 2 |\ln \varrho(x_0)|^{\frac{1}{4}}$
 \begin{eqnarray*}
 \left|  G_1 (\xi, \tau) \right|  & = &  \left| -  2 \frac{| \nabla \mathcal{U}(x_0,\xi, \tau)|^2}{ \mathcal{U}(x_0, \xi,\tau) +  \frac{  \lambda^{\frac{1}{3}} \varrho^{\frac{1}{3}}(x_0)  }{\tilde \theta (\tau)} } -  \frac{ \tilde{\theta}' (\tau)}{\tilde{\theta} (\tau) } \left( \mathcal{U}(x_0, \xi, \tau)  \right)  \right| \\[0.3cm]
 & \leq &  \frac{C}{|\ln \varrho (x_0)|^{\frac{1}{4}}} \left( (1 - \tau )^{-\frac{11}{12}}  + 1\right).
\end{eqnarray*}
In addition to that, we derive  from  \eqref{mathcal-U-leq-3-2-hat-U-0} that     
$$   | G_2 ( \xi, \tau)| \leq C | \mathbb{U}(x_0, \xi, \tau)|  + \frac{C}{ |\ln \varrho (x_0)|^{\frac{1}{4}}}.$$
 We  now recall the cut-off function   $\chi_0,$ defined as in  \eqref{defini-chi-0} , then, we introduce  
$$  \phi_1 (\xi)  = \chi_0 \left(   \frac{|\xi|}{ \left| \ln (\varrho(x_0))  \right|^{\frac{1}{4}}} \right).$$
As a matter of fact, we have some  rough estimates  on    $\phi_1$
\begin{equation}\label{bound-phi-anh-nabla-phi}
\| \nabla_\xi \phi_1 \|_{L^\infty(\mathbb{R}^n)} \leq   \frac{C}{ |\ln (\varrho (x_0))|^{\frac{1}{4}}}  \text{ and }  \|  \Delta_\xi \phi_1  \|_{L^\infty(\mathbb{R}^n)} \leq \frac{C}{ \left| \ln(\varrho (x_0))\right|^{\frac{1}{2}}}.
\end{equation}
  Let us  define  $\mathbb{U}_1 (\xi, \tau) = \phi_1 (\xi) \mathbb{U} (\xi, \tau), $  for all $\xi \in \mathbb{R}^n$ and $ \tau \in [0,1)$.   Then, $\mathbb{U}_1$ satisfies  the following   equation
  $$ \partial_t \mathbb{U}_1 =  \Delta \mathbb{U}_1 - 2 \nabla \phi_1 \cdot \nabla  \mathbb{U} - \Delta \phi_1 \mathbb{U}  + \phi_1 G_1 (\xi, \tau) + \phi_1G_2 (\xi, \tau).  $$
  Using Duhamel's principal,  we write  an integral equation satisfied by $\mathbb{U}_1$
$$  \mathbb{U}_1 (\tau) = e^{\tau \Delta} \mathbb{U}_1 (0) + \int_0^\tau e^{(\tau - \sigma)\Delta} \left[ - 2 \nabla \phi_1 \cdot \nabla  \mathbb{U} - \Delta \phi_1 \mathbb{U}  + \phi_1 G_1  + \phi_1G_2  \right](\sigma)d \sigma.  $$
This implies  that  for all $\tau \in [0,1),$ we have
\begin{eqnarray*}
\| \mathbb{U}_1(.,\tau) \|_{L^\infty(\mathbb{R}^n)} \leq  \frac{C}{ |\ln (\varrho(x_0))|^{\frac{1}{4}}} + C \int_0^\tau   \|\mathbb{U}_1(.,\sigma)\|_{L^\infty(\mathbb{R}^n)}  d \sigma.
\end{eqnarray*}
 Thanks to Granwall's inequality, we get the  following
 $$  \| \mathbb{U}_1(.,\tau) \|_{L^\infty(\mathbb{R}^n)} \leq  \frac{C}{|\ln (\varrho(x_0))|^{\frac{1}{4}}}, \forall \tau \in \left[0,1\right), $$
 which  yields \eqref{prove-estima-mathcal-U-hat-U}. 

\medskip
Using \eqref{prove-sup0-1)xi-partial-mathcal-U}, we  can derive that    the limit $  \lim_{\tau \to 1} \mathcal{U}(x_0,0, \tau)$ exists. In addition to that, we derive from    \eqref{prove-estima-mathcal-U-hat-U}  that
$$ U^*(x_0) = \lim_{\tau \to 1} \frac{\mathcal{U}(x_0, 0, \tau)}{\varrho^{\frac{1}{3}} (x_0)}  \sim \left(  \frac{9}{8} \frac{K_0^2}{16} \varrho (x_0)\right)^{-\frac{1}{3}} \sim\left[\frac{9}{16} \frac{|x_0|^2}{|\ln|x_0||} \right]^{-\frac{1}{3}} \text{ as } x_0 \to 0.$$
This is the conclusion of   \eqref{asymptotics-u^*-theorem-1}. So, we get   the  proof in  Proposition \ref{pro-U-to-U-*} and we also get the complete conclusion of Theorem \ref{theorem-existence}. 
 \end{proof}

\section{ Reduction  to a finite  dimensional problem}\label{section-reduction}

This  section  plays a central role  in  our analysis. In fact,  it is  devoted to  the  proof of Proposition  \ref{proposition-reduction-finite}.  More precisely, this section  has  two parts:

- In  the first subsection,  we  prove \textit{ priori estimates }  on  $U$  in $P_1(t), P_2 (t)$ and $P_3 (t)$ when $U$  is trapped  in $S(t)$.

- The second subsection is  devoted to  the conclusion    of Proposition \ref{proposition-reduction-finite}. In fact,   we  use the   first subsection  to  derive  that  $U$   satisfies   almost all  the conditions  in   $S(t)$  with  strict bounds, except  for the bounds  on  $q_0(s) $  and   $  q_1(s)$, with $s = - \ln (T-t)$. This means that  in order to control  $U$  in $S(t)$, we need to  control  only  $(q_0,q_1) (s)$ in $\hat{\mathcal{V}}_A (s),$  defined in \eqref{defini-hat-mathcal-V-A}.  In addition to that, we  also prove  the outgoing transversal crossing property.  It means that    if  the solution   $U$  touches   the boundary of  $S(t_1)$  for  some   $t_1 \in (0, T)$,  then,  $U$  will  be outside   $S(t)$  for all  $t \in (t_1,t_1 + \nu)$ with $\nu$ small enough. In one word,    this   is   the   reduction   to a  finite dimensional problem:   the   control of two components  $(q_0,q_1) (s)$ in  $\hat{\mathcal{V}}_A (s)$. 
\subsection{A priori estimates}

We proceed in 3 steps:  $a, b$  and $c)$, respectively  devoted to parts $P_1 (t), P_2 (t)$ and  $P_3 (t)$.

a) We aim in  the following  Proposition at proving   a priori estimates  for  $U$ in   $P_1 (t)$:
\begin{lemma}\label{propo-estima-in-P-1}
There exists   $K_4 > 0, A_4 > 0$ such that    for all $K_0 \geq K_4, A  \geq A_4$ and  $l^* > 0$  there exists   $T_4 (K_0,  A,  l^* )$   such that for all $ \epsilon_0 >0, \alpha_0 >0,  \delta_0 > 0, \eta_0 > 0,  C_0 > 0, T \leq T_4$ and   for all $l  \in [0, l^*]$,   the following holds:  Assume that  we   have the following conditions: 
\begin{itemize}
\item[-]  We consider initial data  $ U(0) = U_{d_0,d_1} (0)$, given  in \eqref{defini-initial-data} and  $(d_0, d_1) \in \mathcal{D}_{A},$ given in Proposition  \ref{proposiiton-initial-data}   such that    $(q_0,q_1)(s_0)$ belongs  to  $\hat{\mathcal{V}}_A(s_0),$ where  $s_0 = -  \ln T$,  $\hat{\mathcal{V}}_A (s)$ is   defined  in  \eqref{defini-hat-mathcal-V-A} and $q_0, q_1$ are components  of  $q_{d_0, d_1} (s_0),$ a  transform function of  $U,$ defined  in  \eqref{defini-q}.
 \item[-] We have  $U  \in S(T,K_0, \epsilon_0, \alpha_0, A, \delta_0, C_0,\eta_0, t)$  for all $t  \in [T-e^{-\sigma}, T - e^{-(\sigma + l)}]$, for some  $\sigma \geq s_0$ and $l \in [0, l^*]$.
 \end{itemize}
Then, the following estimates hold:
\begin{itemize}
\item[$(i)$] For all  $s  \in  [ \sigma,  \sigma + l	]$, we have
\begin{eqnarray}
\left|q_0'(s)  -  q_0 (s)\right|  + \left| q'_{1,i} (s)  - \frac{1}{2} q_{1,i} (s) \right| \leq  \frac{C}{s^2}, \forall i \in  \{ 1,...,n \} \label{esti-ODE-0-1},
\end{eqnarray}
and
\begin{equation}\label{esti-ODE-2}
\left|  q'_{2,i,j}  (s)   + \frac{2}{s} q_{2,i,j} (s)    \right| \leq \frac{C A}{s^3}, \forall i,j \in \{1,...,n\},
\end{equation}
where  $q_1  = (q_{1,j})_{1 \leq i  \leq n }, q_{2}  = (q_{2,i,j})_{1 \leq i,j \leq n}$ and  $q_1,q_2$ are  defined  in \eqref{defini-R-i}. 
\item[$(ii)$] Control  of  $q_-(s) $:  For all $s \in [\sigma, \sigma + l] , y \in \mathbb{R}^n$ we have  the two   following  cases:
\begin{itemize}
\item[-] The case where  $\sigma \geq  s_0:$
\begin{eqnarray}
\left|  q_-(y,s)   \right| \leq  C \left(  A e^{-\frac{s -\sigma}{2}}  + A^2 e^{- (s - \sigma)^2}  + (s -\sigma)   \right) \frac{(1 +  |y|^3)}{ s^2}, \label{estima-Q--case-sigma-geq-s-0}
\end{eqnarray}
\item[-] The  case where  $\sigma = s_0$
\begin{equation}\label{estima-Q--case-sigma=s-0}
\left| q_-(y,s)  \right| \leq  C (1  +  (s -\sigma) ) \frac{(1 + |y|^3)}{s^2}.
\end{equation}
\end{itemize}
\item[$(iii)$] Control  of the  gradient   term of $q$: For all  $s \in [\sigma, \sigma + l] , y \in \mathbb{R}^n$, we have  the two   following  cases:
\begin{itemize}
\item[-] The case where  $\sigma \geq  s_0:$
\begin{eqnarray}
\left|  (\nabla q)_\perp (y,s)    \right| \leq  C \left(  A e^{-\frac{s -\sigma}{2}}  +  e^{- (s - \sigma)^2}  + (s -\sigma)  + \sqrt{s - \sigma}  \right) \frac{(1 +  |y|^3)}{ s^2}, \label{estima-nabla-Q--case-sigma-geq-s-0}
\end{eqnarray}
\item[-] The  case where  $\sigma = s_0$
\begin{equation}\label{estima--nabla-Q--case-sigma=s-0}
\left| (\nabla q)_\perp (y,s)  \right| \leq  C \left(1  +  (s -\sigma) + \sqrt{s -\sigma} \right) \frac{(1 + |y|^3)}{s^2}.
\end{equation}
\end{itemize}
\item[$(iii)$]  Control   of the  outside part $q_e$: For all $s \in [\sigma, \sigma + \lambda] ,$ we have  the two   following cases:
\begin{itemize}
\item[-] The case where  $\sigma \geq  s_0:$
\begin{eqnarray}
\left\|  q_e(.,s)   \right\|_{L^\infty(\mathbb{R}^n)}  \leq  C \left(  A^2 e^{-\frac{s -\sigma}{2}}  + A e^{ (s - \sigma)}  + 1 +  (s -\sigma)   \right) \frac{ 1}{ \sqrt{s}}, \label{estima-Q-e-case-sigma-geq-s-0}
\end{eqnarray}
\item[-] The  case where  $\sigma = s_0$
\begin{equation}\label{estima-Q-e-case-sigma=s-0}
\left\| q_e(.,s)  \right\|_{L^\infty(\mathbb{R}^n)} \leq  C \left(1  +  (s -\sigma) \right)\frac{1}{\sqrt s}.
\end{equation}
\end{itemize}
%\begin{eqnarray}
%\left\|  \frac{q_-(.,s)}{1 + |y|^3}   \right\|_{L^\infty}  & \leq & C e^{-\frac{(s-\tau)}{2}} \left\|  \frac{q_-(.,\tau)}{1 + |y|^3}   \right\|_{L^\infty} + C \frac{e^{-(s-\tau)^2}}{ s^{\frac{3}{2}}} \left\| q_e (\tau) \right\|_{L^\infty} + \frac{C (1 + (s -\sigma))}{s^2},\label{estima-Q--s-tau}\\
%	\left\| q_e(s)\right\|_{L^\infty} & \leq &  Ce^{-\frac{(s-\tau)}{4}}\left\|q_{e}(\tau )   \right\|_{L^\infty} + Ce^{s - \tau }  s^\frac{3}{2} \left\|  \frac{q_-(.,\tau)}{1 + |y|^3}   \right\|_{L^\infty} +  \frac{Ce^{s - \tau}(1 + s - \tau)}{ s^{\frac{1}{2}}} \label{estima-Q-e-s-tau}
%\end{eqnarray}
\end{itemize}

\end{lemma}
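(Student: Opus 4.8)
The strategy is to regard equation \eqref{equa-Q} as a perturbation of the autonomous linear flow generated by $\mathcal L$ and to run the projection/Duhamel analysis of Merle and Zaag. A first preliminary step is to transcribe Proposition \ref{propo-bar-mu-bounded} into the variable $s=-\ln(T-t)$: this gives $1\le\theta(s)\le C$ and $|\theta'(s)|\le C\,e^{-c s}$ with $c=\tfrac{3n-2}{6}>0$, so that the nonlocal term $N(q)$ of \eqref{defini-N-term} is exponentially small, and the quantities $\frac{\lambda^{1/3}e^{-s/3}}{\theta(s)}$ entering $J(q,\theta(s))$ in \eqref{defini-T-Q} and $B(q)$ in \eqref{defini-B-Q} are $O(e^{-s/3})$. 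Combining this with the bounds on $B(q)$, $J(q,\theta(s))$, $R$ and $F(w,W)$ collected in Lemmas \ref{lemma-bound-B-Q}, \ref{lemma-bound-T-Q}, \ref{lemma-Bound-R}, \ref{lemma-Bound-N-Q} and \ref{lemma-F-y-s}, and with the size estimates of Lemma \ref{lemma-properties-V-A-s} available because $U\in S(t)$, one gets that every term in the right-hand side of \eqref{equa-Q} other than $(\mathcal L+V)q$ contributes, after localization by $\chi(y,s)$ and projection, a quantity of size $O\!\big(s^{-2}(1+|y|^3)\big)$ in the bulk region and exponentially small in $s$ outside. Thus the whole problem is essentially linear.

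For part $(i)$, I would multiply \eqref{equa-Q} by $\chi(y,s)$ (cf. \eqref{defini-chi-y-s}) and project onto the Hermite functions $h_\beta$ with $|\beta|\le 2$. The operator $\mathcal L$ produces the eigenvalues $1$, $\tfrac12$ and $0$ for the components $q_0$, $q_1$ and $q_2$; the potential $V=4(\varphi^3-\tfrac13)$ behaves on $|y|\le K_0\sqrt s$ like $-\tfrac12|y|^2/s+O(s^{-2})$, so its projections on the $h_0,h_1$ sectors are $O(s^{-2})$ while its projection on the $h_2$ sector produces precisely the coefficient $-\tfrac{2}{s}$; all the remaining terms contribute $O(s^{-2})$ on the $h_0,h_1$ sectors and $O(A s^{-3})$ on the $h_2$ sector. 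This yields \eqref{esti-ODE-0-1} and \eqref{esti-ODE-2}. The commutators generated by $\chi$ are supported on $K_0\sqrt s\le|y|\le 2K_0\sqrt s$, where the Gaussian weight is super-exponentially small, hence harmless.

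For parts $(ii)$, $(iii)$ and the estimate on $q_e$, I would write the Duhamel formula between $\sigma$ and $s$. On the range of the projection onto $\{|\beta|\ge 3\}$ the semigroup $e^{(s-\sigma)\mathcal L}$ has norm $\le e^{-(s-\sigma)/2}$: when $\sigma>s_0$ this turns the datum $q_-(\sigma)$, controlled by the shrinking set, into the leading term $A\,e^{-(s-\sigma)/2}(1+|y|^3)/s^2$, while the forcing $R$, projected and integrated over $[\sigma,s]$, produces the $(s-\sigma)/s^2$ contribution, the quadratic, nonlocal and cut-off terms being of lower order; keeping the factor $e^{-(s-\sigma)/2}$ (rather than bounding it by $1$ via $s-\sigma\le l^*$) is what will later provide the ``improved'' bounds needed for the topological argument. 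When $\sigma=s_0$ one instead invokes the explicit bounds on the initial datum from Proposition \ref{proposiiton-initial-data}, in particular $q_e(s_0)\equiv 0$, which yields the cleaner estimates \eqref{estima-Q--case-sigma=s-0}, \eqref{estima--nabla-Q--case-sigma=s-0}, \eqref{estima-Q-e-case-sigma=s-0}. For $(\nabla q)_\perp$ one uses the commutation $\nabla\mathcal L=(\mathcal L-\tfrac12)\nabla$ together with the fact that transferring one spatial derivative onto the kernel of $e^{(s-\sigma)\mathcal L}$ costs $(s-\sigma)^{-1/2}$; integrating this in time accounts for the extra $\sqrt{s-\sigma}$ in \eqref{estima-nabla-Q--case-sigma-geq-s-0} and \eqref{estima--nabla-Q--case-sigma=s-0}. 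Finally, for $q_e$ one argues directly in $L^\infty$: since $V(y,s)\le-\tfrac43+\epsilon<-1$ on $|y|\ge K_0\sqrt s$, the operator $\mathcal L+V$ has a spectral gap there, so the associated evolution decays like $e^{-\delta(s-\sigma)}$ for some $\delta>0$; the forcing now also contains the commutator $[\mathcal L,1-\chi]q$ and $F(w,W)$, both supported in the overlap $|y|\sim K_0\sqrt s$ where $q_b$ is already under control, and one obtains \eqref{estima-Q-e-case-sigma-geq-s-0}.

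The main obstacle, compared with the constant-coefficient model of \cite{MZnon97}, is to certify that all the additional contributions carrying $\theta(s)$, $\theta'(s)$, or the cut-off $\psi_{M_0}$, are genuinely negligible and in particular do not perturb the two unstable directions $q_0,q_1$; this is precisely why Proposition \ref{propo-bar-mu-bounded} (hence \eqref{bound-bar-theta-'}) is established beforehand, and why $F(w,W)$ is engineered to live in the far region. Granted these inputs, the estimates follow from the same Gronwall and maximum-principle bookkeeping as in \cite{MZnon97} and \cite{MNZNon2016}; the one genuinely delicate point is the careful tracking of the $\chi$- and $\psi_{M_0}$-commutators across the three regions $P_1,P_2,P_3$.
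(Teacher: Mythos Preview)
Your proposal is correct and follows essentially the same approach as the paper. The paper's own proof consists entirely of invoking Lemmas \ref{lemma-potential-V}, \ref{lemma-bound-B-Q}, \ref{lemma-bound-T-Q}, \ref{lemma-Bound-R}, \ref{lemma-Bound-N-Q} and \ref{lemma-F-y-s} to reduce to the setting of \cite{MZnon97} and then referring the reader to Lemma~3.2 there; you have spelled out in more detail what that referred argument actually is (projection onto Hermite modes for $(i)$, Duhamel with semigroup decay for $(ii)$--$(iii)$ and the outer part), which is a faithful expansion of the same strategy.
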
 
\begin{proof}
 The proof of  this  proposition relies  completely on  techniques given  by Merle and  Zaag in \cite{MZnon97}. As a matter  of fact,    the  equation  \eqref{equa-Q}  is  quite       the same   as in that  paper  if we  ignore  some  perturbations which  will be  very small in our analysis.   More precisely,  thanks to  Lemmas  \ref{lemma-potential-V},  \ref{lemma-bound-B-Q}, \ref{lemma-bound-T-Q}, \ref{lemma-Bound-R},  \ref{lemma-Bound-N-Q} and     \ref{lemma-F-y-s},  we assert that   the techniques  in  \cite{MZnon97}     hold  in our case.  Hence, we  kindly refer  the reader  to  Lemma 3.2 at page 1523 in \cite{MZnon97}  for more details.
\end{proof}

This  implies a priori estimates in $P_1 (t)$  as follows:
\begin{proposition}[A priori estimates in $P_1 (t)$]\label{propo-priori-P-1-later}
There exist $K_5 \geq 1$ and  $A_5 \geq 1$ such that for all $K_0 \geq K_5, A \geq A_5, \epsilon_0 > 0, \alpha_0 > 0, \delta_0 \leq \frac{1}{2} \hat{\mathcal{U}}(0), C_0 > 0 , \eta_0 >0 $, there exists $T_5 (K_0, \epsilon_0,\alpha_0,A,\delta_0, C_0,\eta_0)$ such that  for all $T \leq T_5$, the following holds: If $U$ a nonnegative  solution  of equation \eqref{equa-U} satisfying  $U \in S(T,K_0,\epsilon_0, \alpha_0,A,\delta_0, C_0,\eta_0, t)$ for all $t \in [0,t_5]$ for some $t_5 \in [0,T)$, and  initial data $U(0) = U_{d_0,d_1}$ given in \eqref{defini-initial-data} for some $d_0,d_1 \in \mathcal{D}_A$ given in Proposition \ref{proposiiton-initial-data}, then, for all $s \in [ -\ln T,- \ln (T-t_5) ]$, we have the following:
\begin{eqnarray*}
 \forall i,j \in \{1, \cdots, n\}, \quad |q_{2,i,j}(s)| & \leq & \frac{A^2 \ln s}{2 s^2}, \label{conq_1-2} \\
\left\| \frac{q_{,-}(.,s)}{1 + |y|^3}\right\|_{L^{\infty}} &\leq &  \frac{A}{2 s^{2}}, \left\|  \frac{(\nabla q(.,s))_{\perp}}{ 1 + |y|^3}  \right\|_{L^\infty} \leq  \frac{A}{2 s^2}  \text{ and } \|q_{e}(s)\|_{L^{\infty}} \leq  \frac{A^2}{2 \sqrt s}, \label{conq-q-1--and-e}
\end{eqnarray*}
where   $q$ is a transformed function of $U$ given in \eqref{defini-q}. 
\end{proposition}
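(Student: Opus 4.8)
The statement is the standard \emph{improvement of the bounds on the non-critical modes} at the heart of the topological shooting scheme: assuming $U\in S(t)$ on $[0,t_5]$, every component of $q$ except the finite-dimensional pair $(q_0,q_1)$ must be controlled by strictly less than the size allowed in the definition of $V_{K_0,A}(s)$. The plan is to feed the (non-strict) bounds coming from $U\in S(t)$ into the differential and integral inequalities of Lemma~\ref{propo-estima-in-P-1}, together with the bounds on $q(s_0)$ from Proposition~\ref{proposiiton-initial-data}(i), and to observe that the resulting estimates carry a gain in the constant. The parameters are fixed in the order $K_0$ large, then $A$ large, then $l^{*}$ large (possibly depending mildly on $A$), then $\epsilon_0,\alpha_0,\delta_0\le\tfrac12\hat{\mathcal{U}}(0),\eta_0$, then $T$ small so that $s_0=-\ln T$ is large; the hypothesis $\delta_0\le\tfrac12\hat{\mathcal{U}}(0)$ is only needed so that Lemma~\ref{lemma-properties-V-A-s} and the $P_2$-bounds feeding Lemma~\ref{propo-estima-in-P-1} are available.

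For the mode $q_2$ I would multiply \eqref{esti-ODE-2} by $s^2$, so that $\big|\tfrac{d}{ds}(s^2 q_{2,i,j})\big|\le CA/s$; integrating from $s_0$ to $s$ and using $s_0^2|q_{2,i,j}(s_0)|\le \ln s_0\le \ln s$ (from Proposition~\ref{proposiiton-initial-data}(i)) yields $s^2|q_{2,i,j}(s)|\le (1+CA)\ln s\le \tfrac{A^2}{2}\ln s$ once $A$ is large. The components $q_0,q_1$ are deliberately left alone: these are exactly the two modes that we do not improve, and which will be handled in the topological part of Proposition~\ref{proposition-reduction-finite}.

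For $q_-$, $(\nabla q)_\perp$ and $q_e$ I would run a two-regime argument in $s$. Fix a threshold $\bar s=\bar s(A)$ large enough that each of $CAe^{-\bar s/2}$, $CA^2e^{-\bar s^2}$ and $C\bar s$ is $\le A/8$ --- a choice $\bar s(A)\sim\sqrt{\ln A}$ works, and we take $l^{*}\ge\bar s(A)$. If $s_0\le s\le s_0+\bar s$, apply the ``$\sigma=s_0$'' estimates \eqref{estima-Q--case-sigma=s-0}, \eqref{estima--nabla-Q--case-sigma=s-0}, \eqref{estima-Q-e-case-sigma=s-0}: there the forcing constant is $C(1+(s-s_0))\le C(1+\bar s(A))=o(A)$, hence $\le\tfrac{A}{2}$ (resp.\ $\le\tfrac{A^2}{2}$ for $q_e$), which is the claimed strict bound. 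If $s>s_0+\bar s$, apply the ``$\sigma\ge s_0$'' estimates \eqref{estima-Q--case-sigma-geq-s-0}, \eqref{estima-nabla-Q--case-sigma-geq-s-0}, \eqref{estima-Q-e-case-sigma-geq-s-0} with $\sigma=s-\bar s\ge s_0$ and $l=\bar s\le l^{*}$: by the choice of $\bar s$ each of the (at most three) terms on the right is controlled so that their sum is $\le\tfrac{A}{2}$ (resp.\ $\le\tfrac{A^2}{2}$). In both regimes the weight $1+|y|^3$ is carried along unchanged, producing the weighted $L^\infty$-bounds stated. For $(\nabla q)_\perp$ and $q_e$ the argument is identical and a threshold $\bar s$ independent of $A$ already works, since the corresponding inequalities contain $e^{-(s-\sigma)^2}$ without the factor $A^2$.

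The only genuinely delicate point is the mode $q_-$, where the threshold must be simultaneously large enough to absorb the $A^2e^{-(s-\sigma)^2}$ term in \eqref{estima-Q--case-sigma-geq-s-0} and small enough ($o(A)$) that $C(1+\bar s(A))$ in \eqref{estima-Q--case-sigma=s-0} still beats $A/2$; taking $\bar s(A)\sim\sqrt{\ln A}$ reconciles these, and this is what forces $l^{*}$ to depend (logarithmically) on $A$. Everything else is bookkeeping on the order of the quantifiers, once Lemma~\ref{propo-estima-in-P-1} is granted --- and it is that lemma, together with the technical estimates on the source terms of \eqref{equa-Q} (including the control of $\theta'(s)/\theta(s)$ coming from Proposition~\ref{propo-bar-mu-bounded}) and the comparison with Merle and Zaag's Lemma~3.2 in \cite{MZnon97}, that carries the real analytic content.
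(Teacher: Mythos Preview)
Your proof is correct and follows exactly the approach the paper has in mind: the paper's own proof simply says the result is a consequence of Lemma~\ref{propo-estima-in-P-1} and refers to Proposition~3.7 in Merle--Zaag \cite{MZdm97}, and what you have written is precisely the two-regime integration argument carried out there (integrate the ODE for $q_2$ via the integrating factor $s^2$; for $q_-$, $(\nabla q)_\perp$, $q_e$ split into the short-time regime $s\le s_0+\bar s$ using the ``$\sigma=s_0$'' estimates and the long-time regime $s>s_0+\bar s$ using the ``$\sigma\ge s_0$'' estimates with $\sigma=s-\bar s$). Your handling of the delicate point for $q_-$, namely the choice $\bar s(A)\sim\sqrt{\ln A}$ so that $CA^2e^{-\bar s^2}\le A/8$ while still $C(1+\bar s)=o(A)$, is exactly the standard resolution.
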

\noindent
\begin{proof}
 The proof   is a   consequence  of Lemma  \ref{propo-estima-in-P-1}.  In particular, the proof  is    the same as in  the work of Merle and Zaag  in \cite{MZdm97}. Hence, we refer the reader to  Proposition 3.7, page 	157 in that work. 
\end{proof}
b) We now show  a priori estimate    on   $U$ in  $P_2 (t)$.  We   start with   the following lemma:
\begin{lemma}[A priori estimates in the intermediate region]\label{lemma-max-t-x-0}
  There exists $K_ 6 $ and   $A_6 > 0, $ such that  for all $K_0 \geq K_6, A \geq A_6,  \delta_6 >0$,  there exists      $\alpha_6 (K_0, \delta_6) > 0, C_6 (K_0, A) >  0$ such that for all $\alpha_0 \leq  \alpha_6, C_0 > 0 $,   there exists  $\epsilon_6 (\alpha_0, A, \delta_6, C_0)$ such that  for all $\epsilon_0 \leq  \epsilon_6 $, there exists $T_6(\epsilon_0, A, \delta_6,C_0)$ and $\eta_6 (\epsilon_0, A, \delta_0, C_0) > 0$ such that for all $T \leq T_6 , \eta_ 0 \leq  \eta_6,   \delta_0 \leq  \frac{1}{2} \left( 3 + \frac{9}{8} \frac{K_0^2}{16} \right)^{-\frac{1}{3}}$,   the following  holds: if   $U \in S(T, K_0, \epsilon_0, \alpha_0, A, \delta_0, C_0, \eta_0, t)$ for all $t \in [0, t_*],$ for some $t_* \in [0,T)$, then, for all $|x| \in \left[ \frac{K_0}{4} \sqrt{(T -t_*) |\ln (T-t_*)|}, \epsilon_0\right]$, we have:
\begin{itemize}
\item[$(i)$] For all $|\xi| \leq \frac{7}{4}   \alpha_0 \sqrt{|\ln \varrho(x)|}     $ and $\tau \in \left[  \max \left( 0, - \frac{t(x)}{\varrho(x)}\right), \frac{t_*- t(x)}{ \varrho(x)} \right]  $,  the  transformed function   $\mathcal{U} (x,\xi, \tau)$ defined in \eqref{rescaled-function-U}   satisfies  the following: 
\begin{eqnarray}
 \left|  \nabla_\xi  \mathcal{U} (x, \xi, \tau)  \right|   & \leq & \frac{2  C_0}{ \sqrt{|\ln \varrho(x)|}}, \label{estima-nabka-mathcal-U-lema}\\
 \mathcal{U}(x, \xi, \tau)  & \geq &   \frac{1}{4} \left( 3   + \frac{9}{8} \frac{K_0^2}{16} \right)^{-\frac{1}{3}}, \label{estima-mathcal-U-leq-K-0-2}\\
   \left|   \mathcal{U} (x, \xi ,  \tau) \right|  & \leq  &  4. \label{mathcal-U-bound-lema}
\end{eqnarray}
\item[$(ii)$] For all $|\xi| \leq  2 \alpha_0 \sqrt{|\ln \varrho(x)|}$ and $\tau_0 =  \max\left( 0, - \frac{t(x)}{\varrho(x)}\right)$: we have
$$ \left|  \mathcal{U}( x, \xi, \tau_0)  - \hat{\mathcal{U}} (\tau_0)    \right| \leq \delta_6 \text{ and  }    \left| \nabla_\xi \mathcal{U}(x, \xi, \tau_0)   \right| \leq \frac{C_6 }{  \sqrt{|\ln \varrho(x)|}}. $$ 
\end{itemize}
\end{lemma}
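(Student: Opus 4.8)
The plan is to handle the two parts in the natural order: first establish part $(ii)$ — the estimate at the initial time $\tau_0=\max(0,-t(x)/\varrho(x))$ — and then propagate it forward in $\tau$ to obtain part $(i)$. Throughout I use that $|\ln\varrho(x)|=s(x):=-\ln(T-t(x))\geq -\ln T$, so $s(x)$ is large once $T$ is small, and that $\mathcal{U}=\mathcal{U}(x,\cdot,\cdot)$ solves \eqref{equa-xi}, which is a small perturbation of the model equation $\partial_\tau\mathcal{U}=\Delta_\xi\mathcal{U}+\mathcal{U}^4$: indeed $\varrho^{1/3}(x)\leq C\varrho^{1/3}(\epsilon_0)$ is as small as we wish for $\epsilon_0$ small, and by Proposition~\ref{propo-bar-mu-bounded} the coefficient $\tilde\theta(\tau)=\bar\theta(\tau\varrho(x)+t(x))$ satisfies $1\leq\tilde\theta\leq C$ and $|\tilde\theta'(\tau)|\leq C\varrho^{c_0}(x)(1-\tau)^{-c_1}$ for some $c_0>0$, $c_1<1$ (note $\tau<1$ since $t_*<T$). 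I also record the crude bound $|\mathcal{U}(x,\xi,\tau)|\leq C(K_0)$ on the region under consideration: it follows from $U\in S(t)$ via the pointwise bounds for $U$ on $P_1(t)\cup P_2(t)\cup P_3(t)$ already used in the proof of Proposition~\ref{propo-bar-mu-bounded}, once one checks that a point $x'=x+\xi\sqrt{\varrho(x)}$ with $|\xi|\leq 2\alpha_0\sqrt{|\ln\varrho(x)|}$ can lie in $P_1(t')$ only for $1-\tau$ bounded below, so that no blow-up of the rescaled quantity occurs.

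For part $(ii)$ I split according to the sign of $t(x)$. If $t(x)\geq 0$, then $\tau_0=0$ corresponds to $t=t(x)$, at which $x$ lies on the inner edge of $P_1(t(x))$, so in the similarity variables \eqref{similarity-variables} the point $y_0:=x/\sqrt{T-t(x)}$ satisfies $|y_0|=\tfrac{K_0}{4}\sqrt{s(x)}$; since $|y_0+\xi|\leq C(K_0)\sqrt{s(x)}\ll e^{s(x)/2}/M_0$ for $T$ small, the cut-off $\psi_{M_0}$ equals $1$ near $y_0+\xi$, whence by \eqref{defini-w-small}--\eqref{defini-q}, $\mathcal{U}(x,\xi,0)=W(y_0+\xi,s(x))=\varphi(y_0+\xi,s(x))+q(y_0+\xi,s(x))$. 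Now $U\in S(t(x))$ gives $q\big(s(x)\big)\in V_{K_0,A}\big(s(x)\big)$ (item $(i)$ of Definition~\ref{defini-shrinking-set-S-t}), so Lemma~\ref{lemma-properties-V-A-s} yields $|q|\leq C(K_0)A^2/\sqrt{s(x)}$ and $|\nabla q|\leq C(K_0,C_0)A^2/\sqrt{s(x)}$ at $y_0+\xi$, while the explicit profile \eqref{defini-varphi}, together with $\tfrac{|y_0+\xi|^2}{s(x)}=\tfrac{K_0^2}{16}+O\big(K_0\alpha_0+\alpha_0^2\big)$ for $|\xi|\leq 2\alpha_0\sqrt{s(x)}$, gives $\varphi(y_0+\xi,s(x))=\hat{\mathcal{U}}(0)+O(\alpha_0)+O(1/s(x))$ and $|\nabla\varphi(y_0+\xi,s(x))|\leq C(K_0)/\sqrt{s(x)}$. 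Choosing $\alpha_0\leq\alpha_6(K_0,\delta_6)$ small and then $T\leq T_6$, this gives $|\mathcal{U}(x,\xi,0)-\hat{\mathcal{U}}(0)|\leq\delta_6$ and $|\nabla_\xi\mathcal{U}(x,\xi,0)|\leq C_6(K_0,A)/\sqrt{|\ln\varrho(x)|}$ on $|\xi|\leq 2\alpha_0\sqrt{|\ln\varrho(x)|}$. If $t(x)<0$, then $\tau_0=-t(x)/\varrho(x)\in(0,1)$ corresponds to $t=0$ with $|x|\in[\tfrac{K_0}{4}\sqrt{T|\ln T|},\epsilon_0]$, and the desired estimate is exactly item $(ii)$ of Proposition~\ref{proposiiton-initial-data}; the only point to verify is the slightly larger range $|\xi|\leq 2\alpha_0\sqrt{|\ln\varrho(x)|}$, which is immediate from \eqref{defini-initial-data} since there $U_{d_0,d_1}(0)$ agrees with the fixed smooth function $H^*$ on $P_2(0)$.

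For part $(i)$ I use a continuity argument in $\tau\in[\tau_0,\tau_{\max}]$, $\tau_{\max}=(t_*-t(x))/\varrho(x)<1$. Let $\tau^*$ be the largest $\tau$ up to which \eqref{estima-nabka-mathcal-U-lema}--\eqref{mathcal-U-bound-lema} hold on $|\xi|\leq\tfrac{7}{4}\alpha_0\sqrt{|\ln\varrho(x)|}$; by part $(ii)$ and continuity $\tau^*>\tau_0$, and on the smaller ball $|\xi|\leq\alpha_0\sqrt{|\ln\varrho(x)|}$ the sharper constants $C_0$, $\delta_0$ come for free from $U\in S(t)$ for $t\in[t(x),t_*]$ (recall $x\in P_2(t)$ precisely for $t\geq t(x)$). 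On $[\tau_0,\tau^*]$ the difference $\mathbb{U}:=\mathcal{U}-\hat{\mathcal{U}}$ solves $\partial_\tau\mathbb{U}=\Delta_\xi\mathbb{U}+G_1+G_2$ with $G_1=-2|\nabla\mathcal{U}|^2/(\mathcal{U}+\lambda^{1/3}\varrho^{1/3}/\tilde\theta)-(\tilde\theta'/\tilde\theta)\mathcal{U}$ small (via the bootstrap gradient bound, the lower bound $\mathcal{U}\geq\tfrac{1}{4}\hat{\mathcal{U}}(0)$, and Proposition~\ref{propo-bar-mu-bounded}) and $G_2=(\mathcal{U}+\lambda^{1/3}\varrho^{1/3}/\tilde\theta)^4-\hat{\mathcal{U}}^4=O(|\mathbb{U}|)+O(\varrho^{1/3}(x))$. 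Multiplying by a cut-off $\phi_1$ equal to $1$ on $|\xi|\leq\tfrac{7}{4}\alpha_0\sqrt{|\ln\varrho(x)|}$ and supported in $|\xi|\leq 2\alpha_0\sqrt{|\ln\varrho(x)|}$, writing Duhamel's formula for $\phi_1\mathbb{U}$ and moving the $\nabla\phi_1$ onto the heat kernel (so the commutator terms carry a factor $\alpha_0^{-1}|\ln\varrho(x)|^{-1/2}$ times only the crude bound $C(K_0)$ on $|\mathbb{U}|$ over $\mathrm{supp}\,\phi_1$, avoiding $\nabla\mathbb{U}$), and applying Gronwall's lemma on the bounded interval $[\tau_0,1)$ exactly as in the proof of \eqref{prove-estima-mathcal-U-hat-U}, I obtain $\|\mathbb{U}(\tau)\|_{L^\infty}\leq C\big(\delta_6+\varrho^{c_0}(x)+\alpha_0^{-1}|\ln\varrho(x)|^{-1/2}\big)$; differentiating the Duhamel identity together with parabolic smoothing improves the gradient to $\tfrac{3}{2}\cdot\tfrac{C_0}{\sqrt{|\ln\varrho(x)|}}$. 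For $\delta_6,\epsilon_0,T$ small this is a strict improvement of all three bounds (using that $\hat{\mathcal{U}}$ is increasing, $\hat{\mathcal{U}}(0)\leq\hat{\mathcal{U}}(\tau)\leq\hat{\mathcal{U}}(1)=(\tfrac{9}{8}\cdot\tfrac{K_0^2}{16})^{-1/3}$, so $\mathcal{U}\geq\hat{\mathcal{U}}(0)-C\delta_6\geq\tfrac{1}{4}\hat{\mathcal{U}}(0)$ and $\mathcal{U}\leq\hat{\mathcal{U}}(1)+C\delta_6\leq 4$), which forces $\tau^*=\tau_{\max}$ and proves $(i)$.

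The argument is conceptually the one behind \eqref{prove-estima-mathcal-U-hat-U}; the genuine difficulty is bookkeeping — one fixes $K_0$ large first (so $\hat{\mathcal{U}}(1)$, hence $\mathcal{U}$, stays below $4$ and the profile is flat), then $A$, then $\delta_6$ and $\alpha_6(K_0,\delta_6)$, then $C_0$ large relative to $C_6$ so that the bootstrap gradient bound can be recovered strictly, and finally $\epsilon_6$ and $T_6$, so that the errors $O(\alpha_0)$ from the profile, $O(\varrho^{c_0}(x))$ from the nonlocal coefficient $\tilde\theta$, $O(\alpha_0^{-1}|\ln\varrho(x)|^{-1/2})$ from the cut-off, and $O(A^2/\sqrt{|\ln\varrho(x)|})$ from $q$ are all absorbed in turn. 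The nonlocal term enters only through Proposition~\ref{propo-bar-mu-bounded}, which is already in hand.
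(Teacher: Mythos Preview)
Your proposal is correct, and for part~$(ii)$ it matches the paper's approach (which invokes \cite{MZnon97} Lemma~2.6 and, for the case $t(x)<0$, the proof of Proposition~\ref{proposiiton-initial-data}). For part~$(i)$, however, you take a genuinely different route. The paper proves \eqref{estima-mathcal-U-leq-K-0-2} and \eqref{mathcal-U-bound-lema} by a direct recentering trick, with no PDE evolution: writing $X=x+\xi\sqrt{\varrho(x)}$ and $t'=t(x)+\tau\varrho(x)$, one has $\mathcal{U}(x,\xi,\tau)=\varrho^{1/3}(x)\,U(X,t')$ and then splits on whether $X$ lies in $P_1(t')$, $P_2(t')$, or $P_3(t')$. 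In the main case $X\in P_2(t')$ one simply rewrites $U(X,t')=\varrho^{-1/3}(X)\,\mathcal{U}\big(X,0,\tfrac{t'-t(X)}{\varrho(X)}\big)$ and applies the shrinking-set bound at the shifted center $X$ with $\xi=0$; since $|X|/|x|-1=O(\alpha_0)$ forces $\varrho(X)/\varrho(x)\to1$, the bound transfers with a factor close to~$1$ (and the same idea handles \eqref{estima-nabka-mathcal-U-lema}, the factor $2$ in $2C_0$ absorbing the ratio). Your ``crude bound $|\mathcal{U}|\leq C(K_0)$'' is exactly this case analysis, briefly sketched, so the Duhamel--Gronwall bootstrap you run afterwards is redundant for \eqref{estima-mathcal-U-leq-K-0-2}--\eqref{mathcal-U-bound-lema}: the recentering already delivers the sharp constants. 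What your approach does buy is that it is precisely the machinery the paper deploys \emph{later}, in Proposition~\ref{propo-a-priori-P-2} and in \eqref{prove-estima-mathcal-U-hat-U}, to recover the strict constants $\delta_0/2$ and $C_0/2$; in effect you have merged the two results. The cost is that your gradient bootstrap closes only for $C_0$ large relative to a Gronwall constant $C(K_0,A)$, whereas the recentering argument scales linearly in $C_0$ and yields the stated bound $2C_0$ for every $C_0>0$.
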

\begin{proof}
We   leave the proof  to Appendix \ref{appex-lemma-7-4}.
\end{proof}
Using  the above lemma, we now give a priori estimates in  $P_2 (t).$ The following is our statement:
\begin{proposition}[A priori  estimates in  $P_2 (t)$]\label{propo-a-priori-P-2}
There exists  $K_{7} > 0  $ and  $A_7 >  0$ such that for all $K_0 \geq  K_7,   A \geq A_7$,  there exists  $  \delta_7 \leq \frac{1}{2} \hat{\mathcal{U}}(0) $ and $C_7 (K_0,A)$ such that  for all $\delta_0 \leq \delta_7, C_0 \geq C_7$ there exists $\alpha_7 (K_0, \delta_0)$ such that for all $ \alpha_0  \leq \alpha_7$,    there exist $\epsilon_7( K_0, \delta_0, C_0) > 0$  such that for all $\epsilon_0 \leq \epsilon_7$,  there exists $T_7(\epsilon_0, A, \delta_0,  C_0)> 0$  such that  for all $T \leq T_7$ the following holds:  We assume that we have $  U  \in S(T,K_0, \epsilon_0, \alpha_0, A, \delta_0, C_0,t  )$ for all $t \in [0, t_7]$ for some  $t_7 \in [0,  T)$,  then,  for all  $|x| \in \left[ \frac{K_0}{4} \sqrt{(T- t_*) |\ln (T-t_*)|}, \epsilon_0   \right],$        $|\xi| \leq  \alpha_0 \sqrt{|\ln \varrho(x)|} $  and   $  \tau \in \left[ \max  \left( - \frac{t(x)}{\varrho (x)},0\right), \frac{t_7 - t(x)}{\varrho (x)}\right]  $, we have
$$ \left| \mathcal{U}(x, \xi, \tau_*) - \hat{\mathcal{U}} (x, \xi, \tau_*)    \right| \leq  \frac{\delta_0}{2} \text{ and } \left| \nabla\mathcal{U} (x, \xi ,\tau) \right| \leq  \frac{ C_0}{2 \sqrt{| \ln \varrho(x)|}},$$
where  $\varrho (x) = T- t(x)$. 
\end{proposition}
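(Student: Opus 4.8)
The plan is to fix $x$ with $|x|\in\big[\frac{K_0}{4}\sqrt{(T-t_7)|\ln(T-t_7)|},\,\epsilon_0\big]$ and to treat $\mathcal U(x,\cdot,\cdot)$, on the cylinder $\{|\xi|\le\alpha_0\sqrt{|\ln\varrho(x)|},\ \tau\in[\tau_0,\frac{t_7-t(x)}{\varrho(x)}]\}$ with $\tau_0=\max(0,-t(x)/\varrho(x))$, as a parabolic perturbation of the $\xi$-independent ODE solution $\hat{\mathcal U}(\tau)$ of \eqref{equa-hat-mathcal-U}. Three facts feed the argument. First, since $\epsilon_0$ is small, $\varrho(x)=T-t(x)\to0$ as $x\to0$ (indeed $\varrho(x)\sim\frac{8}{K_0^2}\frac{|x|^2}{|\ln|x||}$), so $|\ln\varrho(x)|$ may be assumed as large as we wish, uniformly over the admissible $x$. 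Second, Lemma~\ref{lemma-max-t-x-0} gives, on the slightly larger cylinder $|\xi|\le\frac74\alpha_0\sqrt{|\ln\varrho(x)|}$, the pointwise bounds $\frac14\big(3+\frac{9}{8}\frac{K_0^2}{16}\big)^{-1/3}\le\mathcal U\le4$ and $|\nabla_\xi\mathcal U|\le 2C_0/\sqrt{|\ln\varrho(x)|}$, and at the initial time the sharp control $|\mathcal U(x,\xi,\tau_0)-\hat{\mathcal U}(\tau_0)|\le\delta_6$ and $|\nabla_\xi\mathcal U(x,\xi,\tau_0)|\le C_6/\sqrt{|\ln\varrho(x)|}$, with $\delta_6$ still at our disposal and $C_6=C_6(K_0,A)$. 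Third, Proposition~\ref{propo-bar-mu-bounded} gives $1\le\tilde\theta(\tau)\le C$ together with $|\tilde\theta'(\tau)|\le C\,\varrho(x)^{\beta_1}(1-\tau)^{-\beta_2}$ for some $\beta_1>0$, $\beta_2<1$ (the translation of \eqref{bound-bar-theta-'} under $t=\tau\varrho(x)+t(x)$, $T-t=\varrho(x)(1-\tau)$), so that $\tilde\theta'$ is integrable on $[\tau_0,1)$ and carries a vanishing prefactor $\varrho(x)^{\beta_1}$; this is the only place the nonlocal term shows up, and it is harmless.

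For the oscillation bound, put $\mathbb U=\mathcal U-\hat{\mathcal U}$. As in the derivation of \eqref{prove-estima-mathcal-U-hat-U}, $\partial_\tau\mathbb U=\Delta_\xi\mathbb U+G_1+G_2$, with $G_1=-2|\nabla_\xi\mathcal U|^2\big(\mathcal U+\lambda^{1/3}\varrho^{1/3}(x)/\tilde\theta\big)^{-1}-(\tilde\theta'/\tilde\theta)\mathcal U$ and $G_2=\big(\mathcal U+\lambda^{1/3}\varrho^{1/3}(x)/\tilde\theta\big)^4-\hat{\mathcal U}^4$. The three facts give $|G_1|\le C|\ln\varrho(x)|^{-1}+C|\tilde\theta'(\tau)|$ and $|G_2|\le C|\mathbb U|+C\varrho^{1/3}(x)$, where $C$ may depend on $K_0,A,C_0$. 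Multiplying by a cut-off $\phi$ equal to $1$ on $\{|\xi|\le\alpha_0\sqrt{|\ln\varrho(x)|}\}$ and supported in $\{|\xi|\le\frac74\alpha_0\sqrt{|\ln\varrho(x)|}\}$ (so $\|\nabla\phi\|_\infty=O(|\ln\varrho(x)|^{-1/2})$ and $\|\Delta\phi\|_\infty=O(|\ln\varrho(x)|^{-1})$, up to a constant depending on $\alpha_0$), writing the integral equation for $\phi\,\mathbb U$ via Duhamel and applying Gronwall on a $\tau$-interval of length $\le1$, one obtains $\|\phi\,\mathbb U(\tau)\|_{L^\infty}\le e^{C}\big(\delta_6+C|\ln\varrho(x)|^{-1}+C\varrho^{\beta_1}(x)\big)$. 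Hence $|\mathcal U-\hat{\mathcal U}|\le\delta_0/2$ on $\{|\xi|\le\alpha_0\sqrt{|\ln\varrho(x)|}\}$ provided one first fixes $\delta_7$ (hence $\delta_6$) small so that $e^C\delta_6\le\delta_0/4$, and then fixes $\epsilon_7$ small so that $e^C\big(C|\ln\varrho(x)|^{-1}+C\varrho^{\beta_1}(x)\big)\le\delta_0/4$ for every admissible $x$ --- legitimate because $C_0$, hence $\alpha_0$, has already been chosen by that stage.

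For the gradient bound --- which must genuinely improve from the constant $2C_0$ of Lemma~\ref{lemma-max-t-x-0} to $C_0/2$ --- I would differentiate \eqref{equa-xi}: each $\mathcal V=\partial_{\xi_i}\mathcal U$ solves a linear homogeneous parabolic equation $\partial_\tau\mathcal V=\Delta_\xi\mathcal V+\vec b\cdot\nabla_\xi\mathcal V+c\,\mathcal V$, whose coefficients, thanks to the lower and upper bounds on $\mathcal U$, satisfy $\|\vec b\|_\infty\le C\|\nabla_\xi\mathcal U\|_\infty\le CC_0/\sqrt{|\ln\varrho(x)|}$ and $\|c\|_\infty\le C(K_0)+|\tilde\theta'(\tau)|$, while interior parabolic regularity for \eqref{equa-xi} on the larger cylinder (where $\|\mathcal U\|_\infty\le4$) bounds $\|\Delta_\xi\mathcal U\|_\infty$, and thus the divergence of $\vec b$, by a constant $C(K_0,A)$. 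Localizing $\mathcal V$ with a cut-off as above, writing the first-order term in divergence form so that no uncontrolled second derivative of $\mathcal U$ appears as a genuine forcing, running Duhamel plus Gronwall, and using $|\mathcal V(\cdot,\tau_0)|\le C_6/\sqrt{|\ln\varrho(x)|}$, one gets $\|\nabla_\xi\mathcal U(\cdot,\tau)\|_{L^\infty(|\xi|\le\alpha_0\sqrt{|\ln\varrho(x)|})}\le e^{C(K_0,A)}C_6(K_0,A)/\sqrt{|\ln\varrho(x)|}+R$, where $R=O(|\ln\varrho(x)|^{-1})$ (with a constant depending on $\alpha_0,C_0$). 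Choosing $C_7(K_0,A)$ large enough that $4e^{C(K_0,A)}C_6(K_0,A)\le C_7$, and then $\epsilon_0$ small enough that $R\le C_0/(4\sqrt{|\ln\varrho(x)|})$, yields $|\nabla_\xi\mathcal U|\le C_0/(2\sqrt{|\ln\varrho(x)|})$ for all $C_0\ge C_7$. Combining the two estimates with the parameter constraints of Lemma~\ref{lemma-max-t-x-0} concludes the proof.

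The main obstacle is precisely this self-improvement of the gradient estimate: the sharp initial bound $C_6/\sqrt{|\ln\varrho(x)|}$ from Lemma~\ref{lemma-max-t-x-0}(ii) has to be propagated through the quasilinear term $|\nabla_\xi\mathcal U|^2/(\mathcal U+\cdots)$ and through the localization errors without losing the factor $|\ln\varrho(x)|^{-1/2}$, which is what forces the precise order in which the parameters are fixed ($K_0,A$; then $\delta_7,C_7$; then $\delta_0,C_0$; then $\alpha_0$; then $\epsilon_0$; then $T$) and what makes it necessary to couple the maximum-principle/Duhamel estimates with interior parabolic regularity for the auxiliary second-order bounds. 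Everything else is as in \cite{MZnon97}; the single new ingredient, the nonlocal coefficient $\tilde\theta$, is controlled once and for all by Proposition~\ref{propo-bar-mu-bounded}, which makes $\tilde\theta$ bounded away from $0$ and $\infty$ and $\tilde\theta'$ both integrable in $\tau$ and $O(\varrho^{\beta_1}(x))$, so that it only contributes a perturbation that vanishes as $\epsilon_0\to0$.
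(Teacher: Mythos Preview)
Your treatment of the oscillation bound $|\mathcal U-\hat{\mathcal U}|\le\delta_0/2$ coincides with the paper's: localize $\mathbb U=\mathcal U-\hat{\mathcal U}$ by a cut-off supported in the $\tfrac74\alpha_0$-cylinder, write Duhamel, use Lemma~\ref{lemma-max-t-x-0} and the $\tilde\theta$-bounds from Proposition~\ref{propo-bar-mu-bounded} to control the forcing, and close with Gronwall; the paper takes $\delta_6$ small and the $|\ln\varrho|^{-1/2}$ remainder small exactly as you do.

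For the gradient estimate, however, your route diverges from the paper's, and the step you label ``interior parabolic regularity'' is where the difference bites. You differentiate \eqref{equa-xi} and work with $\mathcal V=\partial_{\xi_i}\mathcal U$, which solves $\partial_\tau\mathcal V=\Delta\mathcal V+\vec b\cdot\nabla\mathcal V+c\,\mathcal V$ with $\vec b=-4\nabla\mathcal U/(\mathcal U+\epsilon)$. After localization, the drift term $\phi\,\vec b\cdot\nabla\mathcal V$ has to be put in divergence form, but $\mathrm{div}(\phi\vec b)$ contains $\phi\,\mathrm{div}\,\vec b=-4\phi\,\Delta\mathcal U/(\mathcal U+\epsilon)+\text{(small)}$, so you genuinely need $\|\Delta_\xi\mathcal U\|_{L^\infty}$ bounded by a constant $C(K_0,A)$, uniformly on $[\tau_0,\tau_7]$. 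Interior parabolic regularity only furnishes such a bound for $\tau$ bounded away from $\tau_0$; at the initial time the only available second-order control is \eqref{estima-rough-nabla-2-U}, which in rescaled variables reads $\|\nabla_\xi^2\mathcal U(\cdot,\tau)\|_\infty\le C\,\varrho(x)^{4/3-c}(1-\tau)^{-c}$ with an unspecified $c=c(K_0,A)>0$, and this is not a priori uniform. Your argument can likely be repaired (for instance by exploiting the $P_1$-control on $[\tau_0-1,\tau_0]$ to buy one unit of time and then applying interior regularity), but this is nontrivial extra work that you do not carry out.

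The paper sidesteps second derivatives entirely --- which is precisely the point of the remark following Proposition~\ref{propo-a-priori-P-2} about removing the $\nabla^2\mathcal U$ condition from the shrinking set. Rather than differentiating the equation, the paper keeps the Duhamel formula for the localized, $\tilde\theta$-weighted quantity $\mathcal Z_2=\chi_1\,\mathcal U\,\exp\!\big(\int_{\tau_0}^\tau\tilde\theta'/\tilde\theta\big)$ and applies $\nabla_\xi$ \emph{to the Duhamel formula}. Since $\nabla_\xi$ acting on the free heat kernel equals $-\nabla_{\xi'}$, one integration by parts moves the derivative onto the integrand, so it lands only on $\chi_1$ and on $\mathcal U$ (first order). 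Writing $\chi_1\nabla\mathcal U$ back in terms of $\nabla\mathcal Z_2$ closes a Gronwall inequality for $\|\nabla\mathcal Z_2\|_{L^\infty}$ with initial value $(C_6+C)/\sqrt{|\ln\varrho|}$ and a linear coefficient depending only on $K_0$; taking $C_0\ge 4\,C(C_6)$ then yields the factor $\tfrac12$. No $\nabla^2\mathcal U$ bound is ever used.
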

\begin{proof}
  We leave the proof to  Appendix \ref{appen-priori-P-2}.
\end{proof}

\begin{remark}
Unlike what Merle and Zaag  did in \cite{MZnon97}, we don't require any condition  in $\nabla^2 \mathcal{U}$ in $P_2 (t)$ (see Definition \ref{defini-shrinking-set-S-t}), as we have  aldready stated in Remark \ref{remark-change-merle-zaag}. Accordingly, our  a priori estimates  in $P_2 (t)$ will be simpler than  those  of \cite{MZnon97}, as one may see  from the proof given in Appendix C.
\end{remark}

c) We now  give   a priori   estimates     on $U$ in  $P_3 (t)$: 
\begin{proposition}[A priori estimates   in $P_3$]\label{propo-priori-P-3}
Let us  consider  $K_0 > 0, \epsilon_0 > 0, \alpha_0 > 0,A > 0, \delta_0 \in [0, \frac{1}{2} \hat{\mathcal{U}}(0)], C_0 > 0, \eta_0 > 0$. Then, there exists     $T_8( \eta_0)  > 0$  such that  for all    $T \leq T_8$, the following holds: We assume that    $U$ is a nonnegative  solution of  \eqref{equa-U} on $[0, t_8] $ for some $t_8 < T	$, and  $U \in S(K_0,\epsilon_0, \alpha_0, A, \delta_0, C_0, \eta_0,t )$ for all $t \in [0,t_8]$ and  initial data  $U(0) = U_{d_0,d_1}$ given  in \eqref{defini-initial-data} with $|d_0|, |d_1| \leq 2$.  Then,  for all  $|x| \geq   \frac{\epsilon_0}{4}$   and  $ t \in (0, t_8],$
\begin{eqnarray}
\left| U (x,t)  - U(x,0)  \right|  \leq  \frac{\eta_0}{2}, \label{U-U-0-control-P-3}\\
\left| \nabla U (x,t)  -  \nabla e^{t \Delta} U(x,0)  \right|   \leq \frac{\eta_
0}{2}.\label{nabla U-nabla-semi-intial}
\end{eqnarray}
\end{proposition}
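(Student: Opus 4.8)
The plan is to view the solution on the regular region $\{|x|\ge\epsilon_0/4\}$ as a short-time perturbation of its initial data, exploiting that the final time $T$ is taken small. It is convenient to argue with $\bar u$ rather than with $U$, because equation \eqref{equa-bar-u} carries \emph{no} $\bar\theta'$-term and its nonlinearity $\tilde G:=-2\frac{|\nabla\bar u|^2}{\bar u+1}+\frac{\lambda(\bar u+1)^4}{(1+\gamma|\Omega|+\gamma\int_\Omega\bar u\,dx)^2}$ is, on $\{|x|\ge\epsilon_0/16\}$, bounded by a constant depending only on $\epsilon_0,K_0,C_0,A$. Writing the Duhamel formula with the Dirichlet heat semigroup $(e^{t\Delta})$ on $\Omega$,
\[
\bar u(t)=e^{t\Delta}\bar u(0)+\int_0^t e^{(t-\sigma)\Delta}\tilde G(\sigma)\,d\sigma ,
\]
and recalling $U(t)=\frac{\lambda^{1/3}}{\bar\theta(t)}\bar u(t)$, $e^{t\Delta}U(0)=\frac{\lambda^{1/3}}{\bar\theta(0)}e^{t\Delta}\bar u(0)$, the two quantities in \eqref{U-U-0-control-P-3}--\eqref{nabla U-nabla-semi-intial} are controlled on $P_3$ once we bound $\bar u(t)-\bar u(0)$ and $\nabla\bar u(t)-\nabla e^{t\Delta}\bar u(0)$ there, together with $|\bar\theta(t)-\bar\theta(0)|\le C\int_0^t|\bar\theta'(\sigma)|\,d\sigma$, the latter tending to $0$ as $T\to0$ by Proposition \ref{propo-bar-mu-bounded} (note $\bar u(0)=\frac{\bar\theta(0)}{\lambda^{1/3}}H^*$ is a fixed bounded function on $P_3$). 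I would fix a cut-off $\zeta(x)=\chi_0(16|x|/\epsilon_0)$, equal to $1$ for $|x|\le\epsilon_0/16$ and to $0$ for $|x|\ge\epsilon_0/8$, and split $\bar u(0)=\zeta\bar u(0)+(1-\zeta)\bar u(0)$ and $\tilde G=\zeta\tilde G+(1-\zeta)\tilde G$. The only analytic tools needed are the Gaussian bounds for the Dirichlet heat kernel $p_t(x,y)$ and for $\nabla_x p_t(x,y)$ on the $C^2$ domain $\Omega$, which for $t\le1$ and $|x-y|\ge\epsilon_0/8$ give $\|e^{t\Delta}g\|_{L^\infty(P_3)}+\|\nabla e^{t\Delta}g\|_{L^\infty(P_3)}\le C(\epsilon_0,n)\|g\|_{L^1}$ with the kernel factor bounded uniformly in $t$; and the smoothing estimates $\|\nabla e^{\tau\Delta}g\|_{L^\infty}\le C\tau^{-1/2}\|g\|_{L^\infty}$ and $\|e^{\tau\Delta}g\|_{L^\infty}\le\|g\|_{L^\infty}$.

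For the data term I would use that, for $T$ small, $\chi_1\equiv0$ on $\{|x|\ge\epsilon_0/16\}$, so $(1-\zeta)\bar u(0)=\frac{\bar\theta(0)}{\lambda^{1/3}}(1-\zeta)H^*$ is a fixed smooth function supported in the interior of $\Omega$, away from both $0$ and $\partial\Omega$, and coinciding with $\bar u(0)$ on $P_3$; hence $\|e^{t\Delta}\big((1-\zeta)\bar u(0)\big)-(1-\zeta)\bar u(0)\|_{L^\infty}\le Ct\to0$ as $t\to0$. The concentrated piece $\zeta\bar u(0)$ is supported in $\{|x|\le\epsilon_0/8\}$, at distance $\ge\epsilon_0/8$ from $P_3$, and $\|\zeta\bar u(0)\|_{L^1}\le C(\epsilon_0,K_0)$ (a direct computation from \eqref{defini-initial-data} and \eqref{defini-H-epsilon-0}, the blow-up part having $L^1$-norm of order $T^{n/2-1/3}|\ln T|^{n/2}$), so $\|e^{t\Delta}(\zeta\bar u(0))\|_{L^\infty(P_3)}\le C(\epsilon_0,K_0)\sup_{0<t\le T}t^{-n/2}e^{-c\epsilon_0^2/t}\to0$ as $T\to0$. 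Thus $\|e^{t\Delta}\bar u(0)-\bar u(0)\|_{L^\infty(P_3)}$ can be made $\le\eta_0/4$ by taking $T$ small.

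For the Duhamel term I would treat the two pieces separately. On the support of $(1-\zeta)\tilde G$, namely $\{|x|\ge\epsilon_0/16\}$, which for $T$ small is contained in $P_2(\sigma)\cup P_3(\sigma)$, the definition of $S(\sigma)$ together with Lemma \ref{lemma-bound-I-x-t} gives $|\bar u|+|\nabla\bar u|\le C(\epsilon_0,K_0,C_0)$ (recall $\bar u=\frac{\bar\theta}{\lambda^{1/3}}U$ and $1\le\bar\theta\le C$), hence $|(1-\zeta)\tilde G(\sigma)|\le C(\epsilon_0,K_0,A)$ with no growth in $\sigma$; thus $\int_0^t\|e^{(t-\sigma)\Delta}[(1-\zeta)\tilde G(\sigma)]\|_{L^\infty}d\sigma\le Ct$ and $\int_0^t\|\nabla e^{(t-\sigma)\Delta}[(1-\zeta)\tilde G(\sigma)]\|_{L^\infty}d\sigma\le C\int_0^t(t-\sigma)^{-1/2}d\sigma=2C\sqrt t$, both $\le C\sqrt T\to0$. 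The piece $\zeta\tilde G$ is supported at distance $\ge\epsilon_0/8$ from $P_3$, so by the Gaussian bounds $\|e^{(t-\sigma)\Delta}[\zeta\tilde G(\sigma)]\|_{L^\infty(P_3)}+\|\nabla e^{(t-\sigma)\Delta}[\zeta\tilde G(\sigma)]\|_{L^\infty(P_3)}\le C(\epsilon_0)\|\zeta\tilde G(\sigma)\|_{L^1}$, and $\|\zeta\tilde G(\sigma)\|_{L^1}\le C(\epsilon_0,K_0,A)\big(1+(T-\sigma)^{\frac{3n-8}{6}}|\ln(T-\sigma)|^n\big)$ — this is exactly the estimate already carried out in the proof of \eqref{estima-int-U-4}--\eqref{estima-int-nabla-U} inside Proposition \ref{propo-bar-mu-bounded}, applied now to $\int_{|x|\le\epsilon_0/8}\frac{|\nabla\bar u|^2}{\bar u+1}$ and $\int_{|x|\le\epsilon_0/8}(\bar u+1)^4$. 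Integrating, $\int_0^t\big(1+(T-\sigma)^{\frac{3n-8}{6}}|\ln(T-\sigma)|^n\big)d\sigma\le Ct+C\int_0^T r^{\frac{3n-8}{6}}|\ln r|^n\,dr$, which tends to $0$ as $T\to0$ precisely because $\frac{3n-8}{6}>-1$, i.e. because $n\ge1$ (the running hypothesis). Hence both Duhamel contributions on $P_3$ are $\le\eta_0/4$ for $T$ small.

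Collecting the data term and the two Duhamel terms, and converting back from $\bar u$ to $U$ using $|\bar\theta(t)-\bar\theta(0)|\to0$ and $\|\nabla e^{t\Delta}\bar u(0)\|_{L^\infty(P_3)}\le C$, one obtains \eqref{U-U-0-control-P-3} and \eqref{nabla U-nabla-semi-intial} for all $T\le T_8(\eta_0)$ small enough. I expect the main obstacle to be the interplay between the nonlocality of the heat semigroup and the singularity forming at the origin: $\tilde G$ is unbounded near $0$ (of size $(T-\sigma)^{-4/3}$ there), and in low dimension $\bar\theta'(\sigma)$ is not even bounded as $\sigma\to T$. Both difficulties are absorbed by the same two features — the Gaussian spatial decay of $p_t$ at a fixed positive distance from the singular region, and the integrability in time of $(T-\sigma)^{\frac{3n-8}{6}}|\ln(T-\sigma)|^n$, valid exactly under $n\ge1$ — so that, once $T$ is chosen small, the whole Duhamel integral is negligible on $P_3$.
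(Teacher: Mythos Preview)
Your argument is correct and reaches the same conclusion, but the route differs from the paper's. The paper stays with $U$: it removes the $\bar\theta'/\bar\theta$ term by the integrating factor $U_1=\exp\big(\int_0^t\bar\theta'/\bar\theta\big)U$, then localizes the \emph{solution} by a cutoff $\chi_2$ equal to $1$ on $\{|x|\ge\epsilon_0/6\}$ and $0$ on $\{|x|\le\epsilon_0/8\}$, so that $U_2=\chi_2 U_1$ solves a Dirichlet heat equation with an $L^\infty$-bounded source $G_2$ on all of $\Omega$; Duhamel then gives \eqref{U-U-0-control-P-3} directly, and \eqref{nabla U-nabla-semi-intial} after a separate kernel estimate showing $\nabla e^{t\Delta}(\chi_2 U_1(0))-\nabla e^{t\Delta}U_1(0)=O(\sqrt T)$ on $P_3$. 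You instead switch to $\bar u$ (which already has no $\bar\theta'$-term), keep the equation global, and split the \emph{data and source} into a near-origin piece and a far piece: the far piece is $L^\infty$-bounded as in the paper, while the near piece is handled by the Gaussian tail of the Dirichlet kernel at distance $\ge\epsilon_0/8$ together with the $L^1$ bound $\int_{|x|\le\epsilon_0/8}|\tilde G(\sigma)|\,dx\le C(1+(T-\sigma)^{(3n-8)/6}|\ln(T-\sigma)|^n)$ already obtained inside Proposition~\ref{propo-bar-mu-bounded}. The paper's localization buys a uniformly bounded source and so avoids any time-integrability issue, at the price of commutator terms and the extra comparison of $\nabla e^{t\Delta}$ on two initial data; your splitting avoids those commutators and recycles Proposition~\ref{propo-bar-mu-bounded} verbatim, at the price of checking $\int_0^T (T-\sigma)^{(3n-8)/6}|\ln(T-\sigma)|^n\,d\sigma\to0$, which indeed holds for every $n\ge1$.
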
  
\begin{remark}
As  we  have mentioned in Remark  \ref{remark-change-merle-zaag}, we  draw the attention of the  reader to the change  we have  made with respect to the work of Merle and Zaag  in \cite{MZnon97}: We compare $\nabla U(t)$ to $\nabla e^{t \Delta}U(0)$ and  not to $\nabla U (0)$ in \cite{MZnon97} and this   is crucial, since we are working  on a bounded domain.  
\end{remark}

Following the remark, we have just stated, we give  in the following a crucial  parabolic estimate for the free Dirichlet heat semi-group in $\Omega$: 
\begin{lemma}[A   parabolic  regularity  on the  linear  problem]\label{lemma-regular)linear} Let us consider  initial data  $U_{d_0,d_1},$ given  in \eqref{defini-initial-data}, for some  $|d_0|, |d_1| \leq 2$.  If  we define 
$$    L(t) =  e^{t \Delta} U_{d_0, d_1}, t \in (0,T]. $$
Then,  $L(t)  \in C(\bar \Omega \times [0,T])   \cap  C^\infty ( \Omega \times (0,T] ) $. Moreover, the following  holds
\begin{equation}\label{norm-nabla-e-t-delta-U-0}
\|  \nabla_x L (t)\|_{L^\infty \left(  |x| \geq \frac{\epsilon_0}{8}, x \in \Omega  \right)} \leq C(\epsilon_0), \forall [0,T],
\end{equation}
where  $\epsilon_0$ introduced  in the definition of $U_{d_0, d_1}$.
\end{lemma}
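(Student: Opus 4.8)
The plan is to read the needed structure off the explicit formula \eqref{defini-initial-data} and then split $U_{d_0,d_1}(\cdot,0)$ into a fixed smooth piece supported away from the origin and a piece concentrated near the origin. By inspection of \eqref{defini-initial-data}, \eqref{defini-H-epsilon-0} and of the cut-offs \eqref{defini-chi-0}, \eqref{defini-chi-1} (recall also \eqref{defini-varphi}), the datum $U_{d_0,d_1}(\cdot,0)$ lies in $C^\infty(\bar\Omega)$, vanishes identically in a neighbourhood of $\partial\Omega$ (because $H^*$ is supported in $\{|x|\le \tfrac12 d(0,\partial\Omega)\}$ and $\chi_1$ near $0$), and once $T$ is small enough it coincides on $\{|x|\ge\epsilon_0/16\}$ with $H^*$, which is smooth on $\mathbb R^n\setminus\{0\}$; moreover $|U_{d_0,d_1}(\cdot,0)|\le C\,T^{-1/3}$ on $\{|x|\le 2\sqrt T|\ln T|\}$ and $\le C|x|^{-2/3}|\ln|x||^{1/3}$ for $|x|$ small, so that, since $n\ge1>2/3$, the concentrated part $U_{d_0,d_1}(\cdot,0)\,\mathbf 1_{\{|x|\le\epsilon_0/32\}}$ has $L^1(\Omega)$ norm at most $C(\epsilon_0)$ for $T$ small (the contribution of $\{|x|\lesssim\sqrt T|\ln T|\}$ being $O(T^{n/2-1/3}|\ln T|^n)\to 0$). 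With this in hand, $L\in C(\bar\Omega\times[0,T])$ follows from the continuity of $U_{d_0,d_1}(\cdot,0)$ together with its zero trace on $\partial\Omega$ and the strong continuity of the Dirichlet heat semigroup on the space of continuous functions on $\bar\Omega$ vanishing on $\partial\Omega$, while $L\in C^\infty(\Omega\times(0,T])$ is the usual interior parabolic smoothing for the bounded solution $L$ of the heat equation.

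For the quantitative estimate \eqref{norm-nabla-e-t-delta-U-0} I would write $U_{d_0,d_1}(\cdot,0)=G_1+G_2$ with $G_1:=(1-\Phi)\,U_{d_0,d_1}(\cdot,0)$ and $G_2:=\Phi\,U_{d_0,d_1}(\cdot,0)$, where $\Phi\in C^\infty(\mathbb R^n)$ is radial, equal to $1$ on $\{|x|\le\epsilon_0/64\}$ and to $0$ on $\{|x|\ge\epsilon_0/32\}$. For $T$ small, $G_1=(1-\Phi)H^*$ is smooth, compactly supported in $\Omega$, and, $\Phi$ having removed the origin, $\|G_1\|_{C^k(\bar\Omega)}\le C(\epsilon_0,k)$ for every $k$; since $G_1$ and all its derivatives vanish near $\partial\Omega$, all parabolic compatibility conditions hold, and parabolic Schauder (or $L^p$) estimates yield $\|\nabla e^{t\Delta}G_1\|_{L^\infty(\Omega)}\le C(\epsilon_0)$ uniformly for $t\in[0,T]$. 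The function $G_2$ is supported in $\{|x|\le\epsilon_0/32\}$ with $\|G_2\|_{L^1(\Omega)}\le C(\epsilon_0)$, so for $|x|\ge\epsilon_0/16$ we have $\operatorname{dist}(x,\operatorname{supp}G_2)\ge\epsilon_0/32$, whence, by the classical off-diagonal Gaussian bound $|p_t^\Omega(x,y)|\le C t^{-n/2}e^{-c|x-y|^2/t}$ for the Dirichlet heat kernel on the bounded $C^2$ domain $\Omega$ and the elementary fact $\sup_{t>0}t^{-n/2}e^{-c(\epsilon_0/32)^2/t}=:C(\epsilon_0)<\infty$,
\[
\bigl|e^{t\Delta}G_2(x)\bigr|\le\int_{\{|y|\le\epsilon_0/32\}}|p_t^\Omega(x,y)|\,|G_2(y)|\,dy\le C(\epsilon_0)\,\|G_2\|_{L^1(\Omega)}\le C(\epsilon_0),\qquad |x|\ge\epsilon_0/16,\ t\in(0,T].
\]
Since $v_2:=e^{t\Delta}G_2$ solves the heat equation on $\{|x|>\epsilon_0/16\}\cap\Omega$ with vanishing initial data there and $v_2=0$ on $\partial\Omega$, interior and (thanks to $\partial\Omega\in C^2$) boundary parabolic regularity upgrade this $L^\infty$ bound to $\|\nabla v_2\|_{L^\infty(\{|x|\ge\epsilon_0/8\}\times[0,T])}\le C(\epsilon_0)$; alternatively one uses directly the gradient Gaussian bound $|\nabla_x p_t^\Omega(x,y)|\le Ct^{-(n+1)/2}e^{-c|x-y|^2/t}$. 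Adding the two contributions gives \eqref{norm-nabla-e-t-delta-U-0}; at $t=0$ it reduces to $\|\nabla H^*\|_{L^\infty(\{|x|\ge\epsilon_0/8\})}\le C(\epsilon_0)$, which is immediate.

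The only genuinely delicate point is the $G_2$ term near $\partial\Omega$: the concentrated part of the datum is of size $\sim T^{-1/3}$, so any bound that ignores the separation between $\{|x|\ge\epsilon_0/8\}$ and $\operatorname{supp}G_2$ would blow up as $T\to0$. What saves the day is the off-diagonal Gaussian decay of $p_t^\Omega$, valid up to the boundary precisely because $\Omega$ is $C^2$, together with the fact recorded above that $G_2$ carries only $O(1)$ mass once $T$ is small. Everything else is routine bookkeeping on \eqref{defini-initial-data}, and I expect the whole lemma to occupy a short appendix subsection.
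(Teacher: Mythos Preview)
Your argument is correct and is in fact cleaner than the route the paper takes. You split the \emph{initial data} as $U_{d_0,d_1}(\cdot,0)=G_1+G_2$ with $G_1=(1-\Phi)H^*$ smooth and compactly supported in $\Omega$ (hence $\nabla e^{t\Delta}G_1$ uniformly bounded by Schauder), and $G_2$ carried by $\{|x|\le\epsilon_0/32\}$ with $\|G_2\|_{L^1}\le C(\epsilon_0)$, so that the gradient Gaussian bound on the Dirichlet kernel (this is exactly the paper's Lemma~\ref{Green-function}) kills $\nabla e^{t\Delta}G_2$ on $\{|x|\ge\epsilon_0/8\}$ by off-diagonal decay. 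The paper instead splits the \emph{target region} into $\Omega_2=\{|x|>\tfrac34 d(0,\partial\Omega)\}$ and $\Omega_1=\{\tfrac{\epsilon_0}{8}\le|x|\le\tfrac78 d(0,\partial\Omega)\}$: on $\Omega_2$ it argues essentially as you do for $G_2$, but on $\Omega_1$ it introduces $L_1=\phi\,\nabla L$ with a cut-off $\phi$ near $\partial\Omega$, writes an evolution equation for $L_1$, applies Duhamel and Gronwall to obtain first a rough bound $\|L_1\|_{L^\infty}\le CT^{-1/2}+C(\epsilon_0)$ via $\|\nabla U_{d_0,d_1}\|_{L^1}\le CT^{-1/2}$, and then bootstraps this to $C(\epsilon_0)$ on $\{|x|\ge\epsilon_0/8\}$ by re-examining $e^{t\Delta}(\phi\nabla U_{d_0,d_1})$ with a further near/far splitting in $y$. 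Your decomposition bypasses this detour entirely; the paper's approach costs roughly two pages where yours would take a few lines, and the only extra ingredient you invoke---that $\|\nabla e^{t\Delta}G_1\|_{L^\infty(\Omega)}\le C(\epsilon_0)$ uniformly in $t$ for $G_1\in C^\infty_c(\Omega)$ with $\epsilon_0$-dependent $C^k$ norms---is standard (and can itself be read off Lemma~\ref{Green-function} if one wants to stay self-contained).
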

\begin{proof}
See  Appendix  \ref{appex-semi-group}
\end{proof}

\begin{proof}[The proof of Proposition \ref{propo-priori-P-3}]
  We   rewrite the equation satisfied by $U$ as follows
  $$   \partial_t U = \Delta U +  G(U),$$
  where 
  $$G(U) =   -2 \frac{|\nabla U|^2}{ U + \frac{\lambda^{\frac{1}{3}}}{ \bar\theta (t)}}   +  \left(  U + \frac{\lambda^{\frac{1}{3}}}{\bar \theta(t)}   \right)^4 - \frac{\bar \theta'(t)}{\bar \theta (t)} U.  $$ 
We remark that in order to get the conclusion, it is  enough to prove that for all $ x \in \Omega, |x| \geq   \frac{\epsilon_0}{4}$  and $t \in (0,t_8],$ we have the following estimates
  \begin{eqnarray}
\left| U_1 (x,t)  - U_1(x,0)  \right|  \leq  \frac{\eta_0}{2}, \label{U-1-U-0-control-P-3}\\
\left| \nabla U_1 (x,t)  -  \nabla e^{t \Delta}U_1(x,0)  \right|   \leq \frac{\eta_0}{2},\label{nabla U-1-nabla-semi-intial}
\end{eqnarray}
 where  $ U_1 (x,t) = \exp\left( \displaystyle \int_0^t \frac{ \bar \theta' (s)}{ \bar \theta (s)} ds \right) U(x,t).$ Using  the equation  satisfied  by  $U$, we may derive an  equation satisfied by $U_1$ as follows:
\begin{equation}\label{equa-U-1-P-3}
\partial_t U_1 =   \Delta U_1 + G_1,
\end{equation} 
 where $G_1  (t)=  \exp\left( \displaystyle \int_0^t \frac{ \bar \theta' (s)}{ \bar \theta (s)} ds \right) \left[   -2 \frac{|\nabla U|^2}{ U + \frac{\lambda^{\frac{1}{3}}}{ \bar\theta (t)}}   +  \left(  U + \frac{\lambda^{\frac{1}{3}}}{\bar \theta(t)}   \right)^4 \right]$.
	In particular, from the fact  that  $U \in S(t)$ and Proposition  \ref{propo-bar-mu-bounded}, we can derive the following
	$$  \left| \exp \left( \pm  \int_0^t \frac{\bar \theta' (s)}{ \bar \theta (s)} \right) ds \right|  \leq 2.$$
Moreover, from item $(iii)$   of Definition of  \ref{defini-shrinking-set-S-t}   and 	 Lemma   \ref{lemma-regular)linear},        we derive  the following:
$$ |G_1 (x,t)| \leq C( K_0, \epsilon_0,  \eta_0) , \forall |x| \geq \frac{\epsilon_0}{8} \text{ and } \forall t\in (0,t_8].$$
 In the following, we first prove  \eqref{U-1-U-0-control-P-3} then  \eqref{nabla U-1-nabla-semi-intial}.

\medskip
\textit{ + The proof of \eqref{U-1-U-0-control-P-3}:} We  consider   a cut-off function $\chi_2 \in C^\infty_0 (\bar \Omega)$ such that  $\chi_2 = 1$ for all $|x| \geq \frac{\epsilon_0}{6}, x \in \bar \Omega$ and  $\chi_2 = 0 $ for all $|x| \leq \frac{\epsilon_0}{8}$ 	and  $|\nabla \chi_2| + |\Delta \chi_2| \leq  C(\epsilon_0).$    If we  define $U_2 = U_1 \chi_2, $ then   $U_2$ satisfies the following
$$  \partial_t U_2 =  \Delta U_2 + G_2,$$
where 
$$ G_2 (U) = - 2 \nabla U_1 \cdot \nabla \chi_2  - \Delta \chi_2 U_1  - \chi_2 G_1.$$
Using  the estimate   of $G_1  $  and the following fact
$$  |\nabla U_1(x,t)| + |U_1(x,t)|  \leq C(K_0, \epsilon_0, \eta_0), \forall |x| \geq  \frac{\epsilon_0}{8} \text{ and  } t\in [0,t_8],$$
which is a consequence of the fact that $U \in S(t)$ (particularly items $(i)$ and $(iii)$ in Definition \ref{defini-shrinking-set-S-t}), we conclude the following 
  $$ \left\|    G_2 (x, t) \right\|_{L^\infty (\Omega)} \leq  C (K_0,\epsilon_0, C_0,\eta_0), \forall |x| \geq \frac{\epsilon_0}{8} \text{ and }  \forall t \in [0, t_8].$$
We now use a  Duhamel formula to   write  $U_2$  as follows
\begin{equation}\label{duhamel-U-2-P-3}
U_2 (t)   =  e^{t \Delta } U_2(0)  +   \int_{0}^t e^{(t-\tau)\Delta} \left(G_2 (U(\tau)) \right)  d\tau,
\end{equation}  
  where  $e^{t \Delta }$ stands for  the  Dirichlet heat semi-group on $\Omega$ (see more in Appendix \ref{appex-semi-group}). In particular, we have for all $U_0 \in L^\infty (\Omega),$ 
  \begin{eqnarray*}
   \left\| e^{t \Delta }U_0\right\|_{L^\infty (\Omega)} \leq  \|U_0 \|_{L^\infty (\Omega)}.
  \end{eqnarray*}
 Therefore,
  \begin{eqnarray*}
  \left| U_2 (t) - U_2(0)   \right| &\leq &  \left| U_2 (t) - e^{t \Delta } U_2 (0) \right| + \left|e^{t \Delta }U_2 (0) - U_2 (0)\right|\\
  & \leq  & \left| \int_0^t  e^{(t-s) \Delta }G_2 (s) ds    \right| + \left|  e^{t \Delta } U_2(0) - U_2 (0)  \right|\\
  & \leq  & C(K_0, \epsilon_0, C_0, \eta_0) T + \left\|  e^{t \Delta } (U_2 (0)) - U_2 (0)\right\|_{L^\infty(\Omega)}.
  \end{eqnarray*}
  In addition to that, because $ U_2(0)$ is smooth and  has a compact  support  in $\Omega$,  we can prove that
  $$\left\|  e^{t \Delta } (U_2 (0)) - U_2(0) \right\|_{L^\infty(\Omega)} \to 0 \text{ as } t \to 0,$$ 
which  yields the fact that
$$   \left\| U_2 (t) - U_2(0)   \right\|_{L^\infty (\Omega)} \leq  \frac{\eta_0}{2},  $$ 
  provided  that  $T \leq  T_{8,1}(K_0, \epsilon_0, C_0, \eta_0).$  This  concludes  the proof of \eqref{U-1-U-0-control-P-3}.
  \medskip
  
  \textit{ + The  proof  of \eqref{nabla U-1-nabla-semi-intial}:} We derive from \eqref{duhamel-U-2-P-3}  the following fact:
  $$
  \nabla U_2 (t) = \nabla e^{t \Delta} U_2(0) + \int_0^t \nabla e^{(t-\tau)\Delta } G_2(\tau) d\tau.$$
  This implies that
  \begin{eqnarray*}
  \left| \nabla U_2 (t)  - \nabla e^{t\Delta} U_1(0) \right| \leq \left| \nabla e^{t \Delta} U_2(0)  - \nabla e^{t\Delta} U_1(0)  \right| + \left|  \int_0^t \nabla e^{(t-\tau)\Delta } G_2(\tau) d\tau  \right|.
  \end{eqnarray*}
 Using  \eqref{defi-semi-group} and  Lemma  \eqref{Green-function} below, we derive that
 $$ \left|  \int_0^t \nabla e^{(t-\tau)\Delta } G_2(\tau) d\tau  \right| \leq  C(K_0,\epsilon_0,C_0, \eta_0) \int_{0}^t \frac{1}{\sqrt{t-\tau}} d\tau \leq C(K_0,\epsilon_0,C_0, \eta_0)  \sqrt{T}.$$
In order to  finish  the proof, it is enough to   prove that for all   $|x| \geq \frac{\epsilon_0}{4}$, we have
 \begin{equation}\label{would-like-nabla-2-nabla-1}
 \left| \nabla e^{t \Delta} U_2(0)  - \nabla e^{t\Delta} U_1(0)  \right| \leq \frac{\eta_0}{4},
 \end{equation}
 provided that $T \leq T_{8,2}$. Indeed,  using  the definition of the Dirichlet heat semi-group and  Lemma \ref{Green-function} below,  we may write    the following: 
\begin{eqnarray*}
\left| \nabla e^{t \Delta} U_2(0)  - \nabla e^{t\Delta} U_1(0)  \right| &=&  \left| \int_\Omega \nabla_x G(x,y,t,0) (1 - \chi_2(y)) U_{d_0,d_1} (y) dy \right|\\
& \leq & C \int_{|y| \leq  \frac{\epsilon_0}{6}} \frac{\exp \left( - \frac{|x-y|^2}{t}\right)}{t^{\frac{n+1}{2}}} |U_{d_0,d_1}(y)| dy\\
& \leq &C \int_{|y| \leq  \frac{\epsilon_0}{6}} \exp \left( - \frac{|x-y|^2}{t}\right)  \left( \frac{|x-y|}{\sqrt t} \right)^{n+2} \frac{\sqrt{t}}{|x-y|^{n+2}} |U_{d_0,d_1}(y)| dy\\
& \leq  &C (\epsilon) \sqrt{t} \int_{|y| \leq  \frac{\epsilon_0}{6}} |U_{d_0,d_1} (y)| dy\\
&\leq & C(\epsilon_0) \sqrt{t} \|U_{d_0,d_1}\|_{L^1 (\Omega)} \leq C(\epsilon_0) \sqrt{T}.
\end{eqnarray*} 
 This yields \eqref{would-like-nabla-2-nabla-1}, provided that $T \leq T_{8,3 } (\epsilon_0)$. In particular, from the defintions of $U_2 $ and $ U_1$, we can derive  \eqref{nabla U-1-nabla-semi-intial}.   Finally, we get the conclusion of  Proposition  \ref{propo-priori-P-3}.
\end{proof}
\subsection{  The conclusion of  the proof of  Proposition \ref{proposition-reduction-finite} }
It this part,   we   aim at  giving a complete         proof to   Proposition \ref{proposition-reduction-finite}:

\begin{proof}[The proof of Proposition \ref{proposition-reduction-finite} ]
   We first   choose    the parameters $K_0,\epsilon_0 > 0,  \alpha_0 > 0, A  >0,    \delta_0> 0, \delta_1 > 0, C_0> 0,  \eta_0> 0$  and  $ T > 0$  such that   Propositions \ref{proposiiton-initial-data}, \ref{propo-priori-P-1-later}, \ref{propo-a-priori-P-2} and  \ref{propo-priori-P-3} hold.    In particular,  the constant $T$ will be fixed small later. Then,  the conclusion of  the proof follows as we will show in the following.   We   now  consider   $U,$ a solution of  equation  \eqref{equa-U}, with  initial data  $U_{d_0,d_1} (0)$,  defined  in   Definition \ref{defini-initial-data} and  satisfying the following:  
$$U \in S(T,K_0, \alpha_0, \epsilon_0,  A, \delta_0, C_0, \eta_0, t)= S(t), $$
 for all  $ t \in [0, t_*]$ for some $t_* \in (0, T)$ and 
$$ u \in \partial  S(t_*).$$ 
 \medskip
$(i)$ Using  Propositions  \ref{propo-priori-P-1-later}, \ref{propo-a-priori-P-2} and  \ref{propo-priori-P-3}, we can derive that 
\begin{equation}\label{q-1-2-V-A-s-*}
 (q_1,q_2) (s_*) \in  \partial  \hat V_A (s_*),
\end{equation}
where $s_* = \ln (T-t_*)$. 

\medskip
 $(ii)$ Using item  $(i)$, we  derive that  either  
$$ \left|  q_0(s_*)  \right|  = \frac{A}{s^2_*},$$
or there exists $j_0 \in \{ 1,...,n\}$ such that
 $$  \left| q_{1,j_0} (s_*) \right| = \frac{A}{s^2_*}.$$
Then, without loss of generality, we can suppose that the first case occurs, because the argument is the same in   other cases. Hence, using \eqref{esti-ODE-0-1} in Lemma \ref{propo-estima-in-P-1}, we see that 
$$ \left| q'_0 (s) - q_0 (s) \right| \leq \frac{C}{s^2}.$$
Therefore,  we obtain  that   the sign  of  $q_0' (s_*)  $ is opposite  to the sign of  
$$ \frac{d }{ds } \left( \epsilon_0 \frac{A}{s^2}\right)(s_*),$$
 provided that   $A \geq 2 C$,    where  $\ep_0 = \pm 1$ and  $q_0 (s_*) = \epsilon_0 \frac{A}{s_*^2}$. This means that the flow of  $q_0 $ is transverse outgoing on the bounds of  the shrinking set
$$   -\frac{A}{s^2} \leq  q_0 (s) \leq \frac{A}{s^2}.$$ 
It follows   then that  $(q_0,q_1)(	s)$   leaves $\hat{\mathcal{V}}(s)$  at $ s_*$. Thus, we conclude item $(ii)$. Finally, we get the conclusion of Proposition \ref{proposition-reduction-finite} 
\end{proof}

\appendix

\section{ Preparation  of  initial data }\label{preperation-initial-data}
In this   section,  we     give   the proof of   Proposition \ref{proposiiton-initial-data}. More precisely, we aim at  proving  the following lemma    which    directly implies Proposition \ref{proposiiton-initial-data}:
\begin{lemma}\label{lemma-appendix-initial-data} There exists $K_2 > 0$ such that   for all $K_0 \ge K_2, \delta_2 >  0,$  there  exist $\alpha_2 (K_0, \delta_2) > 0 ,  C_2   > 0 $  such that      for  all  $ \alpha_0 \in (0, \alpha_2] $ there exists $\epsilon_2  (K_0, \delta_2,  \alpha_0) > 0 $ such that     for all $ \epsilon_0 \in (0, \epsilon_2]$ and  $ A \geq 1$,  there exists  $T_2 (K_0,  \delta_2, \epsilon_0, A,  C_2) > 0 $ such that  for all $T \in  (0, T_2],$  there exists     a subset   $\mathcal{D}_{A}  \subset [-2,2]^{1 +n}  $ such that  the  following  properties hold: Asumme  that  initial data $U_{d_0, d_1} (0)$ is  given as  in \eqref{defini-initial-data}, then: 

\medskip
$A)$  For all  $(d_0, d_1) \in  \mathcal{D}_{A}$, we have  initial data $U(0)= U_{d_0, d_1}(0) \in S(T, K_0,  \epsilon_0, \alpha_0,  A, \delta_2,  0, C_2, 0,0)$. In particular,  we have   the following:
\begin{itemize}
\item[$(i)$]  Estimates  in  $P_1 (0)$:  we have  the transformed function $q(s_0)$ of $U_{d_0, d_1} (0),$ trapped in $V_{K_0, A} (s_0),$ where  $s_0 = - \ln T$   and  we have also the  following estimates:
$$ \left|  q_0 (s_0)  -  \frac{A d_0}{s_0^2} \right|   + \left| q_{1,j} (s_0) - \frac{Ad_{1,j}}{s_0^2}\right|  \leq  C e^{-s_0}, \text{ for all } j  \in \{ 1,...,n\}, $$
$$  \left| q_{2,i,j}  (s_0) \right|  \leq \frac{\ln s_0}{ s_0^2},  \text{ for all } i,j \in \{ 1,...,n\},$$
$$   \left|  q_- (y,s_0) \right| \leq \frac{1}{s_0^2} (1  + |y|^3), \quad  \left|( \nabla_y q)_\perp (y,s_0) \right|   \leq \frac{1}{s_0^2} (1+  |y|^3), \text{ for all } y \in \mathbb{R}^n,   $$
and 
$$ q_e (s_0)    \equiv  0, $$
where  the components of  $q$ are defined  in  \eqref{represent-with-perp}.
\item[$(ii)$] Estimates in $P_2(0)$: For all   $|x| \in \left[ \frac{K_0}{4}  \sqrt{T|\ln T|}, \epsilon_0 \right]  $ and   and  $|\xi| \leq \alpha_0  \sqrt{|\ln \varrho(x)|}$, we have
$$   \left|  \mathcal{U} (x, \xi,  \tau_0(x) )   - \hat{\mathcal{U}} (\tau_0(x))\right| \leq  \delta_2, \text{ and }    \left| \nabla_\xi \mathcal{U} (x, \xi, \tau_0 (x)) \right| \leq  \frac{C_2}{ \sqrt{|\ln \varrho(x)|} },$$
where $\tau_0(x) = - \frac{t(x)}{\varrho(x)}$ and  $\mathcal{U}, \hat{\mathcal{U}}, t(x), \varrho(x)$ are given in  \eqref{rescaled-function-U},    \eqref{defini-t(x)-},  \eqref{defini-theta} and \eqref{defin-hat-mathcal-U-tau}.
\end{itemize}   

\medskip
$B)$ We have the  following 
$$   (d_0, d_1)  \in  \mathcal{D}_A    \text{ if and only if }  (q_0, q_1) (s_0) \in  \hat{\mathcal{V}}_A (s_0)    $$
and 
$$  (d_0, d_1) \in \partial  \mathcal{D}_A  \text{ if and  only if  }   (q_0, q_1) (s_0) \in \partial \hat{\mathcal{V}}_A (s_0),$$
where  $\hat{\mathcal{V}}_A(s)$ given in  \eqref{defini-hat-mathcal-V-A}. 
\end{lemma}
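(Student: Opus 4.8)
\emph{Strategy.} The plan is to compute the transformed initial datum $q_{d_0,d_1}(\cdot,s_0)$ from \eqref{defini-q-s-0-d-0-d-1} and \eqref{defini-initial-data} piece by piece, according to the scales introduced by the nested cut-offs, and then to read off the bounds of Definition \ref{defini-shrinking-set-S-t} one by one. The key structural remark is that $U_{d_0,d_1}(\cdot,0)$ depends on $(d_0,d_1)$ only through the affine term $(d_0+d_1\cdot z)\chi_0\big(\tfrac{|z|}{K_0/32}\big)$, so $q_{d_0,d_1}(\cdot,s_0)$, and hence each of its projections $q_0(s_0),q_1(s_0),\dots$, is an affine function of $(d_0,d_1)$; this is exactly what makes the map $\Gamma$ affine. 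Note also that at $t=0$ the nonlocal quantity $\bar\theta$ plays no role: $\bar\theta(0)$ is merely the unique positive root of the cubic associated with $\int_\Omega U_{d_0,d_1}(0)$, and it enters neither \eqref{defini-q-s-0-d-0-d-1} nor the profile $\varphi$. Finally, the estimates in $P_3(0)$ are immediate: since $\chi_1$ is supported in $\{|x|\le 2\sqrt T|\ln T|\}$ and $H^*$ vanishes for $|x|\ge\tfrac12 d(0,\partial\Omega)$, the function $U_{d_0,d_1}(\cdot,0)$ is smooth, vanishes near $\partial\Omega$, lies in $C^{2,1}(\Omega\times(0,T))\cap C(\bar\Omega\times[0,T))$, and both differences in item $(iii)$ of Definition \ref{defini-shrinking-set-S-t} vanish identically since $t=0$ and $\eta_0=0$.

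\emph{Estimates in $P_1(0)$ and the map $\Gamma$.} First I would pass to the similarity variables \eqref{similarity-variables}: since $e^{-s_0/2}=\sqrt T$, one has $W(y,s_0)=e^{-s_0/3}U_{d_0,d_1}(y\sqrt T,0)$ and $z=y/\sqrt{s_0}$. For $T$ small the relevant scales are ordered $K_0\sqrt{s_0}\ll s_0\ll 1/(M_0\sqrt T)$, so on the support of $\chi(\cdot,s_0)=\chi_0\big(\tfrac{|y|}{K_0\sqrt{s_0}}\big)$ we have simultaneously $\psi_{M_0}(y,s_0)\equiv 1$ and $\chi_1(y\sqrt T)\equiv 1$; consequently $q_{d_0,d_1}(y,s_0)=(d_0+d_1\cdot z)\chi_0\big(\tfrac{32|y|}{K_0\sqrt{s_0}}\big)$ there, and outside $\{|y|\lesssim K_0\sqrt{s_0}\}$ the remaining contributions are exponentially small in $s_0$, which disposes of $q_e(s_0)$, $(\nabla q)_\perp$ and $q_-$ in the outer range. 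It then remains to project the explicit function $(d_0+d_1\cdot z)\chi_0(\cdot)$ onto the Hermite basis of $L^2_\rho$: the constant mode produces $q_0(s_0)$, the linear modes produce $q_1(s_0)$, and the quadratic and higher modes are $O(e^{-s_0})$ because $\chi_0\equiv 1$ on a ball containing the effective support of $\rho$ and the odd part contributes only an exponentially small tail. This yields $q_2(s_0),q_-(s_0),(\nabla q)_\perp(s_0)$ all bounded by $C e^{-s_0}(1+|y|^3)$ --- well inside their constraints for $T$ small --- and an explicit affine dependence $(q_0,q_1)(s_0)=\Gamma(d_0,d_1)$, matching the formulas of item $(i)$ up to $O(e^{-s_0})$, whose linear part is diagonal (block $\cong I_1$ for $q_0$, block $\cong (c/\sqrt{s_0})I_n$ for $q_1$, $c>0$ explicit) modulo $O(e^{-s_0})$, hence invertible. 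I would then simply \emph{define} $\mathcal D_A:=\Gamma^{-1}\big(\hat{\mathcal V}_A(s_0)\big)$: invertibility of the linear part makes $\mathcal D_A$ an affine image of the box $\hat{\mathcal V}_A(s_0)$, homeomorphic to a ball, with $\Gamma|_{\partial\mathcal D_A}\colon\partial\mathcal D_A\to\partial\hat{\mathcal V}_A(s_0)$ a homeomorphism, so its degree is $\pm 1\neq 0$; and $\|\Gamma^{-1}\|\le C\sqrt{s_0}$ together with $\hat{\mathcal V}_A(s_0)=[-A/s_0^2,A/s_0^2]^{1+n}$ forces $\mathcal D_A\subset[-2,2]^{1+n}$ once $T\le T_2$. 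This gives part $B)$ and item $(i)$ of part $A)$.

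\emph{Estimates in $P_2(0)$.} For $|x|\in[\tfrac{K_0}{4}\sqrt{T|\ln T|},\epsilon_0]$ and $\tau_0(x)=-t(x)/\varrho(x)$ one has $\varrho(x)\tau_0(x)+t(x)=0$, so by \eqref{rescaled-function-U}, $\mathcal U(x,\xi,\tau_0(x))=\varrho(x)^{1/3}\,U_{d_0,d_1}\big(x+\xi\sqrt{\varrho(x)},0\big)$, and the constraint $|\xi|\le\alpha_0\sqrt{|\ln\varrho(x)|}$ together with $|x|\sim\tfrac{K_0}{4}\sqrt{\varrho(x)|\ln\varrho(x)|}$ (hence $|\xi|\sqrt{\varrho(x)}\ll|x|$ for $\alpha_0\ll K_0$) keeps $x+\xi\sqrt{\varrho(x)}$ comparable to $x$. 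Two cases occur: if $x+\xi\sqrt{\varrho(x)}\in\{\chi_1=1\}$ (the overlap with $P_1(0)$, where $\varrho(x)\sim T$), then $\mathcal U$ reduces to $\varphi$ evaluated at a point of size $\sim\tfrac{K_0}{4}\sqrt{s_0}$ plus a negligible perturbation, so $\mathcal U\approx\big(3+\tfrac98\tfrac{K_0^2}{16}\big)^{-1/3}=\hat{\mathcal U}(0)\approx\hat{\mathcal U}(\tau_0(x))$ since $\tau_0(x)\approx0$ there; if $x+\xi\sqrt{\varrho(x)}\in\{\chi_1=0\}$ (the bulk), then $\mathcal U(x,\xi,\tau_0(x))=\varrho(x)^{1/3}H^*\big(x+\xi\sqrt{\varrho(x)}\big)$, and using $\varrho(x)\sim\tfrac{8}{K_0^2}\tfrac{|x|^2}{|\ln|x||}$ (from \eqref{defini-t(x)-}) and the explicit form \eqref{defini-H-epsilon-0} of $H^*$ one gets $\mathcal U(x,\xi,\tau_0(x))\to\big(\tfrac{128}{9K_0^2}\big)^{1/3}=\hat{\mathcal U}(\tau_0(x))$, the discrepancy being $O(1/|\ln\varrho(x)|)$ (from $|\ln|x||$ versus $|\ln\varrho(x)|$, the shift in $\xi$, and $T/\varrho(x)=1-\tau_0(x)$), hence $\le\delta_2$ for $K_0$ large and $\epsilon_0,T$ small. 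Likewise $\nabla_\xi\mathcal U(x,\xi,\tau_0(x))=\varrho(x)^{5/6}\nabla H^*\big(x+\xi\sqrt{\varrho(x)}\big)$ and $|\nabla H^*|\sim|x|^{-5/3}|\ln|x||^{1/3}$ give $|\nabla_\xi\mathcal U|\le C(K_0)/\sqrt{|\ln\varrho(x)|}$, i.e., the bound with $C_2=C(K_0)$. This is item $(ii)$ of part $A)$, and the proof is complete.

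\emph{Main difficulty.} The only genuinely delicate points are the bookkeeping of the nested cut-offs at the transition scales $\sqrt T$, $\sqrt{T|\ln T|}$, $\sqrt{s_0}$, $s_0$, $1/(M_0\sqrt T)$ in the $P_1(0)$ computation, and the matching of $\varrho(x)^{1/3}H^*$ with $\hat{\mathcal U}(\tau_0(x))$ in $P_2(0)$ with explicit control of all $1/|\ln\varrho(x)|$ remainders; both are carried out in the spirit of the corresponding step of Merle and Zaag, and --- unlike in the evolution argument --- the nonlocal term genuinely does not intervene here.
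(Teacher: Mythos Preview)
Your approach matches the paper's: it defers items $(i)$ and $B)$ to the standard Merle--Zaag/Tayachi--Zaag computation (affine Hermite projection and $\mathcal D_A:=\Gamma^{-1}(\hat{\mathcal V}_A(s_0))$), exactly as you outline, and proves item $(ii)$ by the same $\chi_1$-based case split.

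There is, however, one genuine imprecision in your $P_2(0)$ argument. The region $\{\chi_1=1\}\cap P_2(0)$ is $|x|\in[r_0,R_0]$ with $r_0=\tfrac{K_0}{4}\sqrt{T|\ln T|}$ and $R_0=\sqrt T|\ln T|$; since $R_0/r_0\sim(4/K_0)\sqrt{|\ln T|}\to\infty$, this is \emph{much} larger than the overlap with $P_1(0)$, and on most of it $\varrho(x)\gg T$ and $\tau_0(x)=1-T/\varrho(x)$ is close to $1$, not $0$. Your intermediate claim ``$\mathcal U\approx\hat{\mathcal U}(0)\approx\hat{\mathcal U}(\tau_0(x))$ since $\tau_0(x)\approx0$'' is therefore false near the upper end. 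The paper's fix is to rewrite
\[
(I)=\Bigl(\tfrac{3T}{\varrho(x)}+\tfrac{9}{8}\,\tfrac{|x+\xi\sqrt{\varrho(x)}|^2}{\varrho(x)|\ln T|}\Bigr)^{-1/3},
\]
use the defining identity $|x|^2=(K_0^2/16)\,\varrho(x)\,|\ln\varrho(x)|$ to write the second term as $\tfrac{9}{8}\tfrac{K_0^2}{16}\cdot\tfrac{|\ln\varrho(x)|}{|\ln T|}\,(1+O(\alpha_0))$, and then check that $|\ln\varrho(x)|/|\ln T|\to1$ uniformly on $[r_0,(2{+}\tfrac1{100})R_0]$; this yields $(I)\approx(3T/\varrho(x)+\tfrac98\tfrac{K_0^2}{16})^{-1/3}=\hat{\mathcal U}(\tau_0(x))$ directly, for \emph{all} $\tau_0(x)\in[0,1)$. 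For the gradient you also need the cross-term $B_3=\varrho^{5/6}(x)\,\nabla\chi_1\cdot\bigl[(I)-(II)\bigr]$ on the transition annulus $|x|\in[R_0,2R_0]$, which you omit; there the paper uses $|\nabla\chi_1|\le C/(\sqrt T|\ln T|)$ together with $|(I)|+|(II)|\le C(T|\ln T|)^{-1/3}$ and $\varrho(x)\sim C(K_0)T|\ln T|$ to close.
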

\begin{proof}
 We see that         part $B)$  directly follows  from item $(i)$ of   part  $A)$.   In addition to that,      our definition   is almost     the same as  in  \cite{TZpre15} (see also  the work of  Ghoul, Nguyen and Zaag \cite{GNZpre16a}, the  works of Merle and Zaag     \cite{MZnon97}, \cite{MZdm97}).    So, we  kindly refer  the reader   to see the  proofs of the existence  of  the set $\mathcal{D}_{A}$,   item $i$ in $A)$ and  part $B)$   in  Proposition 4.5 in   \cite{TZpre15}. Here we only give    the proof of item $(ii)$ in part $A)$.  We now consider  $T > 0,  K_0 > 0, \epsilon_0 > 0, \alpha_0 > 0, \delta_2 > 0, C_2 > 0, \eta_0 >0$.  We aim at  proving that  if these constants  are suitably  chosen, then for all $x \in  \left[   \frac{K_0}{4} \sqrt{T |\ln T |}, \epsilon_0   \right]$ and $|\xi| \leq  2 \alpha_0 \sqrt{|\ln \varrho(x)|},$ where  $\varrho(x) $ given in \eqref{defini-t(x)-}, we have the following
 $$ \left|  \mathcal{U}(x, \xi, \tau_0(x)) - \hat{\mathcal{U}}(\tau_0(x)) \right| \leq \delta_2, \quad \left| \nabla_\xi \mathcal{U}( x, \xi, \tau_0(x))  \right|  \leq  \frac{C_2}{ \sqrt{|\ln \varrho(x)|}}.$$
  We observe  from  the  definition  of $t(x)$ given in   \eqref{defini-t(x)-} that  if   $\alpha_0 \leq \alpha_{2,1} $ and $\epsilon_0 \leq  \epsilon_{2,1}$, then,  for all $x \in \left[ \frac{K_0}{4} \sqrt{T \ln T},  \epsilon_0\right]$ and   $|\xi| \leq  2 \alpha_0 \sqrt{|\ln \varrho(x)|} $,  we have
 $$\left|  \xi  \sqrt{\varrho(x)} \right| \leq \frac{|x|}{2},$$ 
 which yields
 \begin{eqnarray}
 \frac{r_0}{2} \leq \frac{|x|}{2} \leq  \left|  x + \xi \sqrt{T(x)} \right| \leq \frac{3}{2} |x|, \text{ with } r_0 = \frac{K_0}{4} \sqrt{T|\ln T|}.\label{|x|-leq-|x+xi|}
 \end{eqnarray}
 Hence,  for all  $x \in \left[ \frac{K_0}{4} \sqrt{T |\ln T|}, \epsilon_0 \right],$ we have
 $$ \chi \left(  16 (x + \xi \sqrt{ \varrho(x)}) \sqrt{T}, - \ln T   \right)  \chi_1 ( x + \xi \sqrt{\varrho(x)})  = 0, $$
 where  $\chi$ and  $\chi_1$  are defined    in  \eqref{defini-chi-y-s} and  \eqref{defini-chi-1}, respectively.  Therefore, from  \eqref{defini-initial-data} and  the definition of  $\mathcal{U}$ in \eqref{rescaled-function-U}, we may derive that for all    $x \in \left[ \frac{K_0}{4} \sqrt{T |\ln T|}, \epsilon_0 \right] $ and  $|\xi| \leq 2 \alpha_0 \sqrt{|\ln \varrho(x)|},$ 
 \begin{eqnarray*}
 \mathcal{U}(x, \xi, \tau_0) =  (I) \chi_1 \left( x + \xi \sqrt{\varrho(x)}\right) + (II) \left(1 - \chi_1(x + \xi \sqrt{\varrho(x)}) \right),
 \end{eqnarray*}
 where 
 \begin{eqnarray*}
 (I) & = &  \left( \frac{\varrho(x)}{T} \right)^{\frac{1}{3}} \left( 3 +  \frac{9}{8} \frac{|x + \xi \sqrt{\varrho(x)}|^2}{ T |\ln T|}\right)^{-\frac{1}{3}},\\
 & \text{ and }&\\
(II) & = & \varrho^{\frac{1}{3}}(x) H^*( x + \xi \sqrt{\varrho(x)}),  
 \end{eqnarray*}
 with $H^* (x)$ given in  \eqref{defini-H-epsilon-0}.
In addition to that, from  the definition of  $\varrho (x)$, given in \eqref{defini-theta},  we obtain   the following  asymptotics 
\begin{eqnarray}
\ln \varrho(x)  \sim  2 \ln |x|  \text{ and }  \varrho(x) \sim  \frac{8}{K_0^2}  \frac{|x|^2}{|\ln|x||} \text{ as } |x| \to 0. \label{asymptotic-T(x)-1}
\end{eqnarray}
 Besides that,     we  introduce  $r_0 =  \frac{K_0}{4} \sqrt{T |\ln T|}$ and  $R_0 = \sqrt T |\ln T|.$  Then, the following holds 
 \begin{eqnarray}
\varrho(r_0) \sim   T,  \text{ and } \varrho(R_0) \sim \frac{16}{K_0^2} T |\ln T| \text{ and } \varrho(2 R_0)  \sim \frac{64}{K_0^2}  T |\ln T | \text{ as } T \to 0 . \label{asymptotics-T-R-0}
 \end{eqnarray}
 We   aim in the following  at  giving some estimates on $\mathcal{U}(x, \xi, \tau_0 (x))$ and $ \nabla_\xi  \mathcal{U} (x, \xi, \tau_0 (x))$.
 
  \textit{-  Estimate on $\mathcal{U}$:}  From the  definition of  the cut-off function $\chi_1 $ given  in \eqref{defini-chi-1}, it is enough to prove that  for all $|x| \in \left[   r_0, (2 + \frac{1}{100} )  R_0   \right]$ and   	$|\xi| \leq  2 \alpha_0  \sqrt{|\ln \varrho(x)|}$, we have
 \begin{eqnarray}
 \left|  I_1   - \hat{\mathcal{U}} (\tau_0)\right| \leq  \frac{\delta_2}{2}, \label{prove-I-1-delta-1-2}
\end{eqnarray}  
on one hand  and also  that  for all $ |x| \in \left[ \frac{99}{100}  R_0, \epsilon_0 \right] $  and  $|\xi| \leq  2 \alpha_0  \sqrt{|\ln \varrho(x)|},$ we have
 \begin{eqnarray}
 \left|  I_2  -  \hat{ \mathcal{U}} (\tau_0)   \right| \leq  \frac{\delta_2}{2},\label{prove-2-delta-1-2}
 \end{eqnarray}
on the other hand.  Indeed,  let us  start  with the proof of  \eqref{prove-I-1-delta-1-2}: We consider $|x| \in \left[   r_0, (2 + \frac{1}{100} )  R_0   \right]$ and  	$|\xi| \leq  2 \alpha_0  \sqrt{|\ln \varrho(x)|}$. Then,  we write  the following: 
\begin{eqnarray*}
 \left|  I_1  -  \hat{\mathcal{U}}(\tau_0(x))   \right| & = &   \left| \left(  3 \frac{T}{\varrho(x)} + \frac{9}{8}   \frac{| x + \xi \sqrt{\varrho(x)}|^2}{ \varrho(x) |\ln T|}\right)^{-\frac{1}{3}}   -\left(  3 \frac{T}{\varrho(x)} + \frac{9}{8}   \frac{K_0^2}{16} \right)^{-\frac{1}{3}}    \right|\\
 & = & \left| \left(  3 \frac{T}{\varrho(x)} +  \frac{9}{8}   \frac{K_0^2}{16} + \frac{9}{8}  \left[  \frac{| x + \xi \sqrt{\varrho(x)}|^2}{ \varrho(x) |\ln T|} - \frac{K_0^2}{16}\right]\right)^{-\frac{1}{3}}   -\left(  3 \frac{T}{\varrho(x)} + \frac{9}{8}   \frac{K_0^2}{16} \right)^{-\frac{1}{3}}    \right|.
\end{eqnarray*}
In addition to that, we have
\begin{eqnarray*}
  \frac{| x + \xi \sqrt{\varrho(x)}|^2}{ \varrho(x) |\ln T|} - \frac{K_0^2}{16} &=&  \frac{|x|^2}{\varrho (x) |\ln T|} \left( 1 + 2 \frac{x \cdot \xi}{|x|^2} \sqrt{\varrho (x)} + \frac{|\xi|^2 \varrho (x)}{|x|^2} \right) - \frac{K_0^2}{16}\\
  & =&  \frac{K_0^2}{16} \frac{|\ln \varrho (x)|}{ |\ln T|} \left( 1 + 2 \frac{x \cdot \xi}{|x|^2} \sqrt{\varrho (x)} + \frac{|\xi|^2 \varrho (x)}{|x|^2} \right) - \frac{K_0^2}{16}.
\end{eqnarray*}
Besides that,   we also have  the following: 
\begin{eqnarray*}
\left| \frac{x \cdot \xi}{|x|^2} |\sqrt{\varrho (x)}|\right| \leq 4\alpha_0,\\
\left|  \frac{|\xi|^2}{|x|^2} \varrho (x) \right| \leq 4 \alpha_0^2.
\end{eqnarray*}
Moreover,  for all $|x| \in  \left[r_0, \left(2 + \frac{1}{100} \right)R_0 \right]$,  we derive  from \eqref{asymptotics-T-R-0} that 
$$   \frac{|\ln \varrho (x)|}{ |\ln T|} \sim 1, \text{ as }  T \to 0.$$
So, the following holds
$$ \left| \frac{| x + \xi \sqrt{\varrho(x)}|^2}{ \varrho(x) |\ln T|} - \frac{K_0^2}{16}  \right| \to 0,  $$
as  $(\alpha_0, T) \to (0,0)$. From this fact, we can derive that  if $  T \leq T_{2,1}(K_0, \delta_2), \alpha_0 \leq \alpha_{2,2} (K_0,\delta_2)$, we have 
\begin{eqnarray*}
 \left|  I_1  -  \hat{\mathcal{U}}(\tau_0(x))   \right| & = &   \left| \left(  3 \frac{T}{\varrho(x)} + \frac{9}{8}   \frac{| x + \xi \sqrt{\varrho(x)}|^2}{ \varrho(x) |\ln T|}\right)^{-\frac{1}{3}}   -\left(  3 \frac{T}{\varrho(x)} + \frac{9}{8}   \frac{K_0^2}{16} \right)^{-\frac{1}{3}}    \right|\\
 & \leq  & C(K_0)  \left|      \frac{| x + \xi \sqrt{T(x)}|^2}{ T(x) |\ln T|}   -  \frac{K_0^2}{16} \right| \leq \frac{\delta_1}{2}.
 \end{eqnarray*}
This concludes     the proof of \eqref{prove-I-1-delta-1-2}. 

\noindent
 We now  aim at  proving \eqref{prove-2-delta-1-2}. We  consider  $ |x| \in \left[ \frac{99}{100}  R_0, \epsilon_0 \right] $  and  $|\xi| \leq  2 \alpha_0  \sqrt{|\ln \varrho(x)|}.$     Using  the definition of  $(II),$  we   write  as  follows  
\begin{eqnarray*}
\left| (II) -   \hat{\mathcal{U}}(\tau_0(x)) \right|  & = & \left|  \left( \frac{9}{16} \frac{\left|  x + \xi \sqrt{\varrho (x)} \right|^2}{  \varrho (x)|\ln |x + \xi \sqrt{\varrho(x)}||}  \right)^{-\frac{1}{3}}  -  \left( 3 \frac{T}{\varrho(x)} + \frac{9}{8} \frac{K_0^2}{16} \right)^{-\frac{1}{3}}  \right| \\
& = &  \left|  \left(\frac{9}{8} \frac{K_0^2}{16} +  \frac{9}{16} \left( \frac{\left|   x+ \xi \sqrt{\varrho (x)} \right|^2}{ \varrho (x) |\ln |x + \xi \sqrt{\varrho(x)}||} -\frac{K_0^2}{8}\right) \right)^{-\frac{1}{3}}  -  \left(  \frac{9}{8} \frac{K_0^2}{16}  + 3 \frac{T}{\varrho(x)} \right)^{-\frac{1}{3}}  \right| .
\end{eqnarray*}
 Besides that,  the function $\varrho(x)$ is  radial in  $x$,   and increasing in   $|x| $ when  $|x|$ is    small enough. Then,    for  all $\epsilon_0 \leq \epsilon_{2,1}$  and $|x| \in \left[  \frac{99}{100}
 R_0 , \epsilon_0 \right]$, we have 
 \begin{eqnarray}
\left|\frac{T}{\varrho(x)} \right| \leq \left|  \frac{T}{ \varrho \left(\frac{99}{100}R_0\right)} \right| \leq C(K_0) |\ln T|^{-1} \to 0 \text{ as } T \to 0.   \label{T-T(T-0)-to-0} 
 \end{eqnarray}
In addition to that,  we have 
\begin{eqnarray*}
 \frac{\left|  x + \xi \sqrt{\varrho (x)} \right|^2}{ \varrho (x)|\ln |x + \xi \sqrt{\varrho(x)}||} - \frac{K_0^2}{8} &=& \frac{1}{\varrho (x) |\ln |x + \xi \sqrt{\varrho(x)}||  } \left[    |x|^2 + 2 x \cdot \xi \sqrt{\varrho (x)}  +  |\xi|^2 \varrho (x)      \right] - \frac{K_0^2}{8} \\
 & = & \frac{K_0^2}{16} \left[     \frac{| \ln \varrho (x)|}{ | \ln |x + \xi \sqrt{\varrho (x)}||}  -2  +  4\alpha_0    \frac{| \ln \varrho (x)|}{ | \ln |x + \xi \sqrt{\varrho (x)}||}  \right. \\
 & + &  \left.   4\alpha_0^2  \frac{| \ln \varrho (x)|}{ | \ln |x + \xi \sqrt{\varrho (x)}||}    \right].
\end{eqnarray*}
In particular, we have the following fact
\begin{eqnarray*}
\ln \varrho (x)     &\sim  &     2 \ln |x|, \text{ as } |x| \sim 0,\\  
\frac{1}{ |\ln | x + \xi \sqrt{\varrho (x)} ||}  &\sim & \frac{1}{|\ln |x||} , \text{ as } \alpha_0 \to 0. 
\end{eqnarray*} 
This yields
\begin{eqnarray}
\left| \frac{\left|  x + \xi \sqrt{\varrho (x)} \right|^2}{ \varrho (x)|\ln |x + \xi \sqrt{\varrho(x)}||} - \frac{K_0^2}{8}\right| \to 0 \text{ as } (\epsilon_0, \alpha_0) \to (0,0).\label{estima-x-xi-K-0-2-to-0-alpha-epsilon-0}
\end{eqnarray}
From   \eqref{T-T(T-0)-to-0} and \eqref{estima-x-xi-K-0-2-to-0-alpha-epsilon-0}, we derive that 
\begin{eqnarray*}
\left|   (II) - \hat{\mathcal{U}} (\tau_0 (x))  \right|  \leq C(K_0) \left[  \left| \frac{\left|  x + \xi \sqrt{\varrho (x)} \right|^2}{ \varrho (x)|\ln |x + \xi \sqrt{\varrho(x)}||} - \frac{K_0^2}{8}\right| +    \frac{T}{\varrho (x)} \right] \leq \frac{\delta_2}{2},
\end{eqnarray*}
provided that $\alpha \leq \alpha_{2,3} (K_0, \delta_2), \epsilon_0 \leq \alpha_{2,2}(K_0, \delta_2, \alpha_0)$ and  $T \leq T_{2,3}$.  Thus, $\eqref{prove-2-delta-1-2}$ holds.   Finally, we get the conclusion that for all $|x| \in \left[ \frac{K_0}{4} \sqrt{T |\ln T|},    \epsilon_0\right]$ and  $|\xi| \leq 2 \alpha_0 \sqrt{|\ln \varrho (x)|}$, we have 
$$ \left| \mathcal{U} (x, \xi,  \tau_0 (x))  - \hat{\mathcal{U}} (\tau_0 (x))  \right| \leq \delta_2.$$

 \textit{ - Estimate on $ \partial_\xi  \mathcal{U}$: } From the definition   of  $\mathcal{U} (x,\xi, \tau_0(x)) =  \mathcal{U} \left(x, \xi, - \frac{t(x)}{\varrho(x)}\right)$ given in \eqref{rescaled-function-U} and  the expression \eqref{defini-initial-data} of initial data,  we  decompose  $\nabla_\xi \mathcal{U}$  as follows
 $$ \partial_\xi \mathcal{U} (x, \xi, \tau_0(x)) =  B_1 + B_2 + B_3,  $$
  where 
  \begin{eqnarray}
B_1  & = &     \left[  - \frac{3}{4}  \frac{\varrho^{\frac{5}{6}}(x)}{T^\frac{4}{3} |\ln T |}  (x + \xi \sqrt{\varrho(x)})\left(3 + \frac{9}{8} \frac{\left| x + \xi \sqrt{\varrho(x)}\right|^2}{T |\ln T|} \right)^{-\frac{4}{3}}  \right] \chi_1 ( x + \xi \sqrt{\varrho(x)} ),\\
B_2  & = &    \varrho^{\frac{5}{6}} (x)  \nabla H^* ( x + \xi \sqrt{\varrho(x)}) \left( 1 - \chi_1 (x + \xi \sqrt{\varrho(x)}) \right),\\
B_3    & = &    \left[   \left( \frac{\varrho(x)}{T}\right)^{\frac{1}{3}}  \left( 3 + \frac{9}{8} \frac{| x + \xi \sqrt{\varrho(x)}|^2}{ T | \ln T|} \right)^{-\frac{1}{3}}  + \frac{ 3^{-\frac{1}{3}} n}{|\ln T|} \frac{\varrho^{\frac{1}{3}} (x)}{ T^{\frac{1}{3}}}   - \varrho^{\frac{1}{3}} (x) H^* (x + \xi \sqrt{\varrho(x)})\right]\\
& \times & \sqrt{\varrho(x) } \nabla \chi_1 (x + \xi \sqrt{\varrho(x)}) \nonumber.
  \end{eqnarray}
It is  enough  to prove  the  following estimates: 
  
  - Estimate of $B_1:$ For all $|x| \in \left[ r_0;  (2 + \frac{1}{100}) R_0\right]$ and  $|\xi| \leq 2 \alpha_0 \sqrt{|\ln\varrho(x)|}$ we have
\begin{eqnarray}
\left| B_1\right| \leq \frac{C(K_0)}{ \sqrt{|\ln \varrho(x)|}}. \label{estimate-B-1}
\end{eqnarray}  

- Estimate  of $B_2:$ For all $|x| \in \left[ \frac{99}{100} R_0, \epsilon_0 \right] $  and  $|\xi| \leq  2 \alpha_0 \sqrt{| \ln \varrho(x)|},$ we have
\begin{equation}\label{estimate-B-2}
\left| B_2\right| \leq \frac{C(K_0)}{ \sqrt{|\ln \varrho(x)|}}. 
\end{equation}

- Estimate of   $B_3:$  For all $|x| \in \left[   \frac{99}{100} R_0 ,  (2 + \frac{1}{100}) R_0\right]$ and   $|\xi| \leq  2  \alpha_0 \sqrt{|\ln \varrho(x)|},$ we have 
\begin{equation}\label{estimate-B-3}
|B_3| \leq \frac{C(K_0)}{ \sqrt{|\ln \varrho(x)|}}.
\end{equation}
We  now  start  the proof:

\textit{- Estimate of  $B_1$:}  We have the  fact that  for all $|z| \geq 1$  
$$ \left( 3 + \frac{9}{8} |z|^2 \right)^{-\frac{4}{3}}  \leq  C |z|^{- \frac{8}{3}}.$$
  Then, 
  \begin{eqnarray*}
  \left| B_1 \right| & \leq &  C \frac{\varrho^{\frac{5}{6}} (x)}{T^{\frac{4}{3}} |\ln T|} \frac{T^{\frac{4}{3}} |\ln T|^{\frac{4}{3}}}{ |  x + \xi \sqrt{\varrho(x)}    |^{\frac{5}{3}}}\\
  & \leq  & C   \frac{\varrho^{\frac{5}{6}} (x)|\ln T|^{\frac{1}{3}} }{ |x + \xi \sqrt{\varrho(x)}|^{\frac{5}{3}}}.  
  \end{eqnarray*}
 Using \eqref{|x|-leq-|x+xi|},  we   obtain the following:
$$  \left|   B_1 \right|  \leq  C \frac{\varrho^{\frac{5}{6}} (x)  |\ln T|^{\frac{1}{3}} }{ |x|^{\frac{5}{3}}}.$$
In addition to that,   for all $ |x| \in \left[ r_0, (2 + \frac{1}{100}) R_0 \right]$, we have

$$ |\ln \varrho(x)| \sim  |\ln T|,  \text{ as }  T  \to 0.$$
Then,  we have
$$  |B_1| \leq   \frac{C}{K_0^2} \frac{  \varrho^{\frac{5}{6}} (x) |\ln T|^{\frac{1}{3}} }{ \varrho^{\frac{5}{6}}(x) |\ln \varrho(x)|^{\frac{5}{6}}}   \leq  \frac{ C}{\sqrt{|\ln \varrho(x)|}}, $$
provided that $K_0 \geq K_{2,3}, T \leq T_{2,4} $.  This yields \eqref{estimate-B-1}.

\textit{- Estimate of  $B_2$:} From  the definition of  $H^*(x),$ when  $|x|  \leq \epsilon_0$, $\epsilon_0$  small enough,  we have 
$$ H^* (x)     =   \left[  \frac{9}{16} \frac{|x|^2}{|\ln |x||} \right]^{-\frac{1}{3}}     .$$
This implies 
$$    \left|  \nabla  H^* (x)\right|    \leq  C \frac{|\ln |x||^{\frac{1}{3}}}{|x|^{\frac{5}{3}}}. $$
Hence,  
\begin{eqnarray*}
|B_2| \leq   C \frac{\varrho^{\frac{5}{6}} (x) |\ln |x|| ^{\frac{1}{3}}}{ |x|^{\frac{5}{3}}} \leq  C \frac{|\ln |x||^{\frac{1}{3}}}{ |\ln \varrho(x)|^{\frac{1}{3}}} \frac{1}{ \sqrt{|\ln \varrho(x)|}},
\end{eqnarray*}
on one hand. On the other hand, we have the following 
$$ |\ln \varrho(x)| \sim 2  |\ln|x||, \text{ as } x \to 0. $$
Thus, \eqref{estimate-B-2} holds provided that  $\epsilon_0 \leq \epsilon_{2,4} (K_0)$.

 \textit{ - Estimate of  $B_3$:}  We  first  use  the definition of $\chi_1$ in \eqref{defini-chi-1} to write
 $$    \left|    \nabla_x  \chi _1 (x)\right|  \leq  \frac{C}{  \sqrt{T}  |\ln T|}.      $$
We now consider    $|x| \in   \left[   \frac{99}{100}  R_0, (2 + \frac{1}{100}) R_0 \right]$ and  $|\xi| \leq 2 \alpha_0 \sqrt{|\ln \varrho(x)|}$. We     define  
$$ B_4 =   \left( 3 T + \frac{9}{8} \frac{| x + \xi \sqrt{\varrho(x)}|^2}{ | \ln T|} \right)^{-\frac{1}{3}}  + \frac{ 3^{-\frac{1}{3}} n}{T^{\frac{1}{3}}|\ln T|}   -  H^* (x + \xi \sqrt{\varrho(x)}).   $$
Then, 
$$ B_3   =  B_4 \varrho^{\frac{5}{6}}(x) \nabla  \chi_1 \left(  x + \xi\sqrt{\varrho(x)}  \right).$$
We now aim at   giving some estimates on   $ B_4$ as follows:  Using  the fact that $|x| \in   \left[   \frac{99}{100}  R_0, (2 + \frac{1}{100}) R_0 \right]$, $|\xi| \leq 2 \alpha_0 \sqrt{|\ln \varrho(x)|}$ and  \eqref{|x|-leq-|x+xi|}, we can derive that  
\begin{eqnarray*}
\frac{1}{C}   T |\ln T|  \leq    \frac{|x + \xi  \sqrt{\varrho (x)}|^2}{|\ln T|} \leq C T |\ln T |,\\
\frac{1}{C}   T |\ln T|  \leq    \frac{| x + \xi  \sqrt{\varrho (x)}|^2}{ | \ln | x + \xi \sqrt{\varrho (x)}||}  \leq C T |\ln T|.
\end{eqnarray*}
This implies  that 
$$  | B_4| \leq C  (T |\ln T|)^{-\frac{1}{3}}. $$
Hence, we estimate  $B_3$: 
\begin{eqnarray*}
\left|   B_3 \right| & \leq &  C    (T |\ln T|)^{-\frac{1}{3}} \varrho^{\frac{5}{6}}(x)  \nabla_x \chi_1 \left(  x + \xi \sqrt{\varrho(x)} \right) \\
& \leq  & C \frac{\varrho^{\frac{5}{6}}(x)}{ T^{\frac{5}{6}}} \frac{1}{ | \ln T|^{\frac{4}{3}}}
\end{eqnarray*}
 In addition to that,  for all  $ |x| \in \left[  \frac{99}{100}  R_0 ,( 2 + \frac{1}{100}) R_0  \right] $, we use \eqref{asymptotics-T-R-0} to deduce that
 $$ |\varrho(x)| \leq   C   T | \ln  T|,  $$
 and we also have  the following fact 
\begin{eqnarray*}
|\ln \varrho(x)   | &\sim &  |\ln T|, \text{ as } T \to 0.
 \end{eqnarray*}
So, we conclude that 
$$ | B_3| \leq   \frac{C}{ \sqrt{|\ln \varrho (x)|}}  $$
provided that  $K_0 \geq K_{2,4},    \epsilon_0 \leq \epsilon_{2,5} (K_0, \alpha_0 )$ and  $T \leq T_{2,5} (K_0)$. Thus, we get the conclusion of    \eqref{estimate-B-3}. Finally, the conclusion  of     Lemma  \ref{lemma-appendix-initial-data}   follows.
\end{proof}

\section{A priori estimates  in the intermediate region }\label{appex-lemma-7-4}

In this  section, we aim at giving the proof of Lemma \ref{lemma-max-t-x-0}.   Because  our  definitions  are the same as  in \cite{MZnon97},   estimates        in this  Proposition follow in  the same  way as   in that work.  Hence, we kindly refer 	 the	 reader  to Lemma  2.6 in  page 1515 in that work for the proof of  \eqref{estima-nabka-mathcal-U-lema} and item $(ii)$.  It happens that,  although the authors in \cite{MZnon97} gave a statement which is similar to \eqref{estima-mathcal-U-leq-K-0-2}, they did not  gave the proof.  For that reason, we   give here the proof  of \eqref{estima-mathcal-U-leq-K-0-2} and  \eqref{mathcal-U-bound-lema}. 

\textit{ -  The proof of \eqref{estima-mathcal-U-leq-K-0-2}:}   We consider  $ |x| \in \left[ \frac{K_0}{4} \sqrt{(T-t_*)|\ln (T-t_*)|}, \epsilon_0 \right], |\xi| \leq \frac{7}{4} \alpha_0 \sqrt{|\ln  \varrho (x)|}$  and $\tau \in \left[  \max \left( 0, - \frac{t(x)}{\varrho(x)}\right), \frac{t_*- t(x)}{ \varrho(x)} \right] $.  As a matter of fact, there exists $t \in [0, t_*]$ such that
$$  \tau = \frac{t - t(x)}{\varrho (x)}.$$
Let us define 
$$ X = x + \xi \sqrt{\varrho (x)}.$$
 We aim at considering the    three following  cases:
 
 + The case where $|X| \leq  \frac{K_0}{4} \sqrt{(T-t) |\ln (T-t)|}$. We write  
 $$\mathcal{U} (x,\xi, \tau) =  \varrho^{\frac{1}{3}} (x) U(X, t).$$
 We have the fact that $X \in P_1 (t)$. Then, using item  $(i)$  in Definition  \ref{defini-shrinking-set-S-t} together with item $(i)$  in Lemma \ref{lemma-properties-V-A-s},  we get
 \begin{eqnarray*}
 (T-t)^{-\frac{1}{3}} U(X,t)  &=& W (Y,s)  , \text{ where }  Y =  \frac{X}{\sqrt{T-t}}, s = - \ln (T-t)\\
 &  \geq &  \left( 3 + \frac{|X|^2}{(T-t) |\ln (T-t)|}\right)^{-\frac{1}{3}} - \frac{CA^2}{\sqrt{s}}\\
 & \geq & \frac{1}{2} \left( 3 + \frac{9}{8} \frac{K_0^2}{16}\right)^{-\frac{1}{3}},
\end{eqnarray*}  
provided that $T \leq T_{6,1} (K_0,A)$.  This yields
\begin{eqnarray*}
\mathcal{U}(x, \xi, \tau) \geq  \left(\frac{\varrho (x)}{ T-t}\right)^{-\frac{1}{3}} \frac{1}{2} \left( 3 + \frac{9}{8}    \frac{K_0^2}{16}\right)^{- \frac{1}{3}}.
\end{eqnarray*}
In addition to that, the function $|x|  \mapsto \varrho (x)$  is increasing when $|x| $ is small enough. This implies that
$$ \varrho (x) \leq \varrho \left( \frac{K_0}{4 (1 - \frac{7}{4} \alpha_0)}  \sqrt{(T-t)|\ln (T-t)|} \right).$$
From  \eqref{defini-t(x)-},  \eqref{defini-theta}  and   \eqref{asymptotic-T(x)-1}, we derive that
 $$ \varrho \left( \frac{K_0}{4 (1 - \frac{7}{4} \alpha_0)}  \sqrt{(T-t)|\ln (T-t)|} \right) \sim  \frac{8}{K_0^2} \frac{K_0^2}{16 (1 - \frac{7}{4} \alpha_0)^2} \frac{2 (T-t) |\ln(T-t)|}{|\ln (T-t)|} =  \frac{(T-t)}{ (1 - \frac{7}{4} \alpha_0)^2},$$
 as $T \to 0$.   Hence, we have
$$ \frac{\varrho (x)}{T-t} \leq  4,$$
provided that $\alpha_0 \leq  \frac{2}{7},  T\leq  T_{6,2}$. Finally, we get
$$ \mathcal{U} (x,\xi, \tau) \geq  \frac{1}{4} \left(  3  + \frac{8}{9} \frac{K_0^2}{16}  \right)^{-\frac{1}{3}}.$$
 + The case where  $|X| \in \left[ \frac{K_0}{4} \sqrt{(T-t) |\ln (T-t)|}, \epsilon_0   \right]$. In other words, we have  $X \in P_2 (t)$. We write as follows  
$$ \mathcal{U} (x, \xi, \tau) = \varrho (x) U (X, t).$$
In addition to that, using  item $(ii)$ in the definition of $S(t)$ (see Definition \ref{defini-shrinking-set-S-t}), we get the following:
$$  U(X,t)  =  \varrho^{-\frac{1}{3}} (X) \mathcal{U} (X, 0,  \frac{t-t (X)}{\varrho (X)}) \geq  \varrho^{-\frac{1}{3}} (X)  \frac{1}{2} \left( 3 + \frac{9}{8} \frac{K_0^2}{16}\right)^{-\frac{1}{3}},$$
provided that $\delta_0 \leq \frac{1}{2} \left( 3 + \frac{9}{8} \frac{K_0^2}{16}\right)^{-\frac{1}{3}}.$  In particular, using  the fact that
\begin{equation}\label{fact-ralation-x-X}
 ( 1 - \frac{7}{4} \alpha_0) |x| \leq |X| \leq  (1 + \frac{7}{4} \alpha_0) |x|.
\end{equation}
Then, we get  
$$  \left( \frac{\varrho (x)}{ \varrho (X)}\right)^{\frac{1}{3}}  \geq \frac{1}{2}, $$
provided that  $ \alpha_0 \leq  \alpha_{7,2} (K_0)$ and $|x| \leq \epsilon_{7,2}(K_0, \alpha_0)$. This  yields that 
$$  \mathcal{U}(x, \xi, \tau) \geq  \frac{1}{4} \left( 3 + \frac{9}{8} \frac{K_0^2}{16}\right)^{-\frac{1}{3}}.$$

+ The case where $|X| \geq  \epsilon_0$. This means  $X \in P_3 (t).$  We first have the following fact  
$$  \mathcal{U} (x, \xi, \tau)  =  \varrho^{\frac{1}{3}}(x)  U(X, t)  \geq   \frac{1}{2}  \varrho^{\frac{1}{3}}(x) U(X,0),$$
provided that $  \eta_0 \leq  \frac{1}{2}$ and $\epsilon_0 \leq \epsilon_{6,3}$. We remark also that  $|X| \leq  (1 + \frac{7}{4} \alpha_0) |x| \leq  (1 + \frac{7}{4} \alpha_0) \epsilon_0 \leq \frac{3}{2} \epsilon_0 $. Then, 
$$  U(X,0) =  \left[  \frac{9}{16} \frac{|X|^2}{|\ln |X||} \right]^{-\frac{1}{3}}.$$
Moreover, using  \eqref{asymptotic-T(x)-1} and \eqref{fact-ralation-x-X}, we get
$$ \varrho^{\frac{1}{3}} (x) U(X,0) \geq  \frac{1}{\sqrt[3]{2}} \left[ \frac{9}{8} \frac{K_0^2}{16}\right]^{-\frac{1}{3}} \geq  \frac{1}{2}  \left(3 + \frac{9}{8} \frac{K_0^2}{16} \right)^{-\frac{1}{3}},$$
provided that $\alpha_0 \leq \alpha_{6,4}, \epsilon_0 \leq \epsilon_{6,3}$.

As a matter of fact, we obtain the following 
$$ \mathcal{U} (x, \xi, \tau) \geq \frac{1}{4}   \left(3 + \frac{9}{8} \frac{K_0^2}{16} \right)^{-\frac{1}{3}}.  $$
This completely concludes  the proof   of  \eqref{estima-mathcal-U-leq-K-0-2}.

\medskip
\textit{ -  The proof of  \eqref{mathcal-U-bound-lema}:} The idea of the proof  is  similar to the first one.    We also consider  three cases

+ The case   where  $|X| \leq  \frac{K_0}{4} \sqrt{(T-t)|\ln (T-t)|}$.  This implies that $X \in P_1 (t).$ We write here
 $$ \mathcal{U}(x, \xi, \tau )  = \varrho^{\frac{1}{3}} (x) U (X, t).$$
Using item  $(i)$ in the definition of $S(t)$(see Definition \ref{defini-shrinking-set-S-t}),   together with item $(i)$  in Lemma \ref{lemma-properties-V-A-s}, we derive that 
$$ | U(X,t)   | \leq (T-t)^{-\frac{1}{3}} \left[  \left( 3 + \frac{9}{8} \frac{|X|^2}{(T-t) |\ln (T-t)|} \right)^{-\frac{1}{3}}  + \frac{CA^2}{\sqrt{|\ln (T-t)|}}  \right]  \leq 2 (T - t)^{-\frac{1}{3}},$$
provided that  $T \leq T_{6,5}$. In addition to that, from the following fact
$$  \frac{K_0}{4} \sqrt{ \varrho (X) |\ln \varrho (X)|} = |X|  \leq  \frac{K_0}{4} \sqrt{(T-t) |\ln (T-t)|},$$
this  yields that
$$   \varrho (X) \leq T-t.$$
Then, 
$$ \mathcal{U} (x,\xi, \tau) \leq  2 \left( \frac{\varrho (x)}{\varrho (X)} \right)^{\frac{1}{3}}.$$
On the other hand, using  \eqref{fact-ralation-x-X}, we can derive
\begin{eqnarray}\label{bound-varrho-x-X-2}
\frac{\varrho (x)}{ \varrho (X)} \leq 2,
\end{eqnarray}
provided that $\alpha_0 \leq  \alpha_{6,4}$. This also yields  that
$$ \mathcal{U} (x, \xi,\tau)  \leq  4.$$

+ The case where  $|X| \in  \left[  \frac{K_0}{4} \sqrt{(T-t) |\ln (T-t)|}, \epsilon_0\right]$. This means $X \in  P_2 (t)$. We write  
\begin{eqnarray*}
 \mathcal{U} (x, \xi,\tau) = \varrho^{\frac{1}{3}}(x)   \varrho^{-\frac{1}{3}} (X)  \mathcal{U} (X, 0, \frac{t - t(X)}{\varrho (X)}) .
\end{eqnarray*}
Hence, we derive from item $(ii)$ of Definition  \ref{defini-shrinking-set-S-t}, the fact that  $U \in S(t)$ and  \eqref{fact-ralation-x-X}  that
$$ \mathcal{U} (x, \xi, \tau)   \leq \left( \frac{\varrho (x)}{ \varrho (X)}\right)^{\frac{1}{3}} \mathcal{U} (X, 0, \frac{t-t(X)}{\varrho (X)})  \leq  4,$$
provided that  $K_0 \geq K_{6,2}, \alpha_0 \leq  \alpha_{6,4} (K_0), \delta_0 \leq \delta_{6,1}$.

+  The case where  $|X| \geq  \epsilon_0$. The result  follows  from  item $(iii)$ of Definition  \ref{defini-shrinking-set-S-t}. 

Hence, \eqref{mathcal-U-bound-lema} follows. Finally, we  get the conclusion of Lemma  \ref{lemma-max-t-x-0}.
\section{A priori estimate on $P_2 (t)$}\label{appen-priori-P-2}

In this  section, we aim at giving the  proof of  Proposition \ref{propo-a-priori-P-2}
\begin{proof}[The proof of Proposition \ref{propo-a-priori-P-2} ]
We  first choose  parameters $K_0, \epsilon_0,  \alpha_0, A, \delta_0, C_0, \eta_0, \delta_6$ such that  Lemma  \ref{lemma-max-t-x-0} holds. Then,  items $(i)$ and  $(ii)$ in that Lemma hold.  We would like to prove that:  for all  
$$|x| \in  \left[ \frac{K_0}{4} \sqrt{(T-t_7) |\ln(T-t_7)|}, \epsilon_0 \right], |\xi| \leq \alpha_0 \sqrt{|\ln \varrho(x)|},  \tau \in \left[ \max \left( 0, - \frac{t(x)}{\varrho (x)} \right), \frac{t_7 - t(x)}{\varrho(x)}  \right] = [\tau_0, \tau_7],$$ the following holds
\begin{eqnarray}
\left|   \mathcal{U} (x, \xi, \tau)  - \hat{\mathcal{U}} (\tau) \right| &\leq & \frac{\delta_0}{2},\label{prove-estima-apen-mahcal-u}\\
\left|  \nabla_\xi  \mathcal{U} (x, \xi, \tau)\right|  &\leq &   \frac{ C_0}{ 2\sqrt{|\ln \varrho(x)|}}\label{prove-estima-apen-nabla-mahcal-u}.
\end{eqnarray}
We first recall  equation  \eqref{equa-xi}
$$  \partial_\tau  \mathcal{U}  =   \Delta_\xi \mathcal{U}   -  2 \frac{\left| \nabla \mathcal{U}  \right|^2  }{  \mathcal{U}  +  \frac{\lambda^{\frac{1}{3}}  \varrho^{\frac{1}{3}}(x)}{\tilde \theta ( \tau )}  } +  \left(\mathcal{U}  +  \frac{\lambda^{\frac{1}{3}}\varrho^{\frac{1}{3} }(x)}{ \tilde \theta ( \tau ) } \right)^4 -\frac{\tilde \theta_\tau' (\tau)}{\tilde \theta (\tau)}  \mathcal{U}. $$
\textit{ - The proof of \eqref{prove-estima-apen-mahcal-u}:}  We  first  introduce the following function
$$ \mathcal{Z} ( \xi, \tau ) =  \mathcal{U} ( x, \xi, \tau)  - \hat{\mathcal{U}} (\tau).$$
Using  \eqref{equa-xi}, we write the following equation
$$ \partial_\tau \mathcal{Z} = \Delta \mathcal{Z}   +   \left(\mathcal{U}  +  \frac{\tilde \theta ( \tau ) \varrho^{\frac{1}{3} }(x) }{\lambda^{\frac{1}{3}}} \right)^4  - \hat{\mathcal{U}}^4 (\tau)  + G( \xi, \tau),    $$
where
$$ G(\xi, \tau ) = -  2 \frac{\left| \nabla \mathcal{U}  \right|^2  }{  \mathcal{U}  +  \frac{\lambda^{\frac{1}{3}}  \varrho^{\frac{1}{3}}(x)}{\tilde \theta ( \tau )}  }  -\frac{\tilde \theta_\tau' (\tau)}{\tilde \theta (\tau)}  \mathcal{U}. $$
Using Proposition \ref{propo-bar-mu-bounded} and  the definition of  $\tilde{\theta} (\tau)$ in \eqref{defini-tilde-theta}, we derive that
\begin{equation}\label{estima-theta-'-5-6-appex}
\left| \tilde{\theta}' (\tau)\right| \leq C  \varrho^{\frac{1}{12}} (x) (1-\tau)^{-\frac{11}{12}} . 
\end{equation}
Hence,  from  Lemma   \ref{propo-a-priori-P-2}, we  derive the following: for all $|\xi| \leq  \frac{7}{4} \alpha_0  \sqrt{|\ln \varrho(x)|} $ and  $\tau \in [\tau_0, \tau_7]$,
\begin{eqnarray*}
\left| G( \xi, \tau) \right| \leq  \frac{C}{ |\ln \varrho(x)|^{\frac{1}{2}}} \left( (1 - \tau)^{-\frac{11}{12}} + 1 \right),
\end{eqnarray*}
provided that $|x| \leq \epsilon_{7,2} (K_0, \delta_0)$.
In particular, 
$$  \left|   \left(\mathcal{U}  +  \frac{\tilde \theta ( \tau ) \varrho^{\frac{1}{3} }(x) }{\lambda^{\frac{1}{3}}} \right)^4  - \hat{\mathcal{U}}^4 (\tau)\right| \leq C \left(  |\mathcal{Z} | + \varrho^{\frac{1}{3}}(x)\right). $$
We here define   $ \chi_1 (\xi)=  \chi_0 \left( \frac{|\xi|}{\sqrt{|\ln \varrho(x)|}} \right), $ where $\chi_0 \in C^\infty_0 (\mathbb{R}), \chi_0 (x) = 1,  \forall |x| \leq \frac{5}{4}$,$ \chi_0(x)= 0, \forall |x| \geq \frac{7}{4}$,  and   $ 0 \leq \chi_0 \leq 1$.   As a matter of fact, we have the following estimates
 \begin{eqnarray}
\left|  \nabla \chi_1  \right|  \leq  \frac{C}{\sqrt{|\ln \varrho(x)|}} \text{ and } \left| \nabla^2 \chi_1 \right| \leq  \frac{C}{ |\ln \varrho(x)|}.\label{chi-1-appendix-P-2}
 \end{eqnarray}
 Introducing 
$$ \mathcal{Z}_1 (\xi, \tau)  = \chi_2 (\xi) \mathcal{Z}  (\xi, \tau),$$
we then    write an equation satisfied  by    $\mathcal{Z}_1$
$$ \partial_\tau   \mathcal{Z}_1 = \Delta \mathcal{Z}_1 + G_1( \xi, \tau), $$
where  $G_1$  satisfies  the following:  for all $ |\xi| \leq  \frac{7}{4} \alpha_0 \sqrt{|\ln \varrho(x)|}$  
$$ | G_1(x, \xi, \tau)| \leq  C( |\mathcal{Z}_1| + \frac{1}{ |\ln \varrho(x)|^{\frac{1}{2}}} \left( (1 - \tau)^{-\frac{11}{12}}  + 1 \right),  $$
Using Duhamel's principal, we derive the following
\begin{eqnarray*}
\| \mathcal{Z}_1(\tau)\|_{L^\infty}  & \leq & \left( \delta_6 + \frac{C}{|\ln \varrho(x)|^{\frac{1}{2}}} \right) + C \int_{\tau_0}^\tau \|\mathcal{Z}_1(s)\|_{L^\infty}  ds \\
& \leq & 2 \delta_6 + C \int_0^\tau \|\mathcal{Z}_1 (s)\|_{L^\infty}  ds.
\end{eqnarray*}
Using Gronwall's inequality, we  get the following
$$ \| \mathcal{Z}_1\|_{L^\infty} (\mathbb{R}^n)    \leq 2 C \delta_6.$$
In particular, if we choose $ C_0 \geq 4C \delta_6 $,  then \eqref{prove-estima-apen-mahcal-u} follows.

\textit{- The proof of  \eqref{prove-estima-apen-nabla-mahcal-u}:}  
 We  rely on  the idea as for    the proof of \eqref{prove-estima-apen-mahcal-u}. We consider  $ \mathcal{Z}_2 (\xi, \tau) = \chi_1 \mathcal{U}(x, \xi, \tau) \exp\left( \int_{\tau_{0}}^{\tau } \frac{\tilde \theta' (s)}{ \tilde \theta (s)}  ds\right)     $, where  $\chi_1$ given in the proof of   \eqref{prove-estima-apen-mahcal-u}.  Then,  we can derive an equation satisfied by $\mathcal{Z}_2$ as follows
\begin{eqnarray}
\partial_ \tau \mathcal{Z}_2 = \Delta \mathcal{Z}_2 +   \chi_1 \mathcal{U}^4  \exp\left( \int_{\tau_0}^{\tau } \frac{\tilde \theta' (s)}{ \tilde \theta (s)}  ds\right)      + G_2 (\xi, \tau),
\end{eqnarray}
where   $G_2 $ defined  by 
\begin{eqnarray*}
G_2 (\xi, \tau)  & = &  \exp \left(  \int_{\tau_0}^{\tau } \frac{\tilde \theta' (s)}{ \tilde \theta (s)}  ds  \right) \left[ - 2 \nabla \chi_1 \cdot \nabla \mathcal{U} - \Delta \chi_1 U  - \frac{\chi_1|\nabla \mathcal{U}|^2}{ \mathcal{U} + \frac{\lambda^{\frac{1}{3}} \varrho^{\frac{1}{3}} (x)}{ \tilde \theta (\tau)}}  \right.\\
& + &  \left.  \chi_1  \left(  \mathcal{U} + \frac{\lambda^{\frac{1}{3}} \varrho^{\frac{1}{3}} (x)}{ \tilde \theta (\tau)} \right)^4  - \chi_1 \mathcal{U}^4  \right].
\end{eqnarray*}
In particular, from \eqref{estima-theta-'-5-6-appex}, we can get the following fact
\begin{equation}
\left| \exp \left( \pm  \int_{\tau_0}^{\tau } \frac{\tilde \theta' (s)}{ \tilde \theta (s)}  ds  \right)   \right| \leq 2, \forall \tau \in [\tau_0, \tau_7],
\end{equation}
as $|x| \leq  \epsilon_{8,1}$. Then,  using the results in Lemma  \ref{lemma-max-t-x-0}, we can deduce  the following
$$ \|  G_2 (.,\tau) \|_{L^\infty}  \leq   \frac{C}{|\ln \varrho (x)|}, \forall \tau \in [\tau_0, \tau_7],$$
provided that $|x| \leq  \epsilon_{8,2} (K_0)$. We write  $\mathcal{Z}_2 $ in the following  integral equation
\begin{equation}\label{Duhamel-Z-2}
 \mathcal{Z}_2(\tau) =     e^{(\tau - \tau_0) \Delta} \mathcal{Z}_2 (\tau_0) + \int_{\tau_0}^\tau  e^{ (\tau - s) \Delta} \left[   \chi_1 \mathcal{U}^4(\sigma)  \exp\left( \int_{\tau_0}^{s } \frac{\tilde \theta' ( \sigma)}{ \tilde \theta (\sigma)}  d \sigma\right)  +    G_2(s) \right] ds.
\end{equation}
We  now aim at  proving the following estimates:
\begin{eqnarray}
\left\|   \nabla   e^{(\tau - \tau_0) \Delta} \mathcal{Z}_2 (\tau_0)    \right\|_{L^\infty (\mathbb{R}^n)} & \leq & \frac{C_6 + C}{\sqrt{|\ln \varrho (x)|}},\label{prove-nabla-macal-Z-2}\\
\left\|  \nabla   e^{(\tau -  s) \Delta}  \left(\chi_1 \mathcal{U}^4 (s)  \exp\left( \int_{0}^{s } \frac{\tilde \theta '( \sigma)}{ \tilde \theta (\sigma)}  d \sigma\right)  \right)    \right\|_{L^\infty (\mathbb{R}^n)}   & \leq &  C \| \nabla   \mathcal{Z}_2\|_{L^\infty (\mathbb{R}^n)}  + \frac{C}{\sqrt{|\ln \varrho (x)|}}   \label{prove-lea- nabla-macal-Z-2}.
\end{eqnarray}

+ \textit{  The proof of \eqref{prove-nabla-macal-Z-2}:}  We     write    $ e^{(\tau - \tau_0) \Delta} \mathcal{Z}_2 (\tau_0)   $    as follows
   $$    e^{(\tau - \tau_0) \Delta} \mathcal{Z}_2 (\tau_0)  (\xi, \tau) = \int_{\mathbb{R}^n}    \frac{ e^{\left( - \frac{| \xi - \xi'  |^2}{4 (\tau - s)}\right)} }{ (4 \pi (\tau - s))^{\frac{n}{2}}}   \chi_1 ( \xi')  \mathcal{ U} (x,\xi',\tau_0(x))   \exp\left( \int_{0}^{s } \frac{\tilde \theta '( \sigma)}{ \tilde \theta (\sigma)}  d \sigma\right) d \xi '.$$
 This  yields             
\begin{eqnarray*}
\left| \nabla_{\xi}  e^{(\tau - \tau_0) \Delta} \mathcal{Z}_2 (\tau_0)  (\xi, \tau)   \right| & = &  \left|    \int_{\mathbb{R}^n}     \frac{ \nabla_\xi e^{\left( - \frac{| \xi - \xi'  |^2}{4 (\tau - s)}\right)} }{ (4 \pi (\tau - s))^{\frac{n}{2}}}   \chi_1 ( \xi')  \mathcal{ U} (x, \xi', \tau_0)   \exp\left( \int_{\tau_0}^{s } \frac{\tilde \theta '( \sigma)}{ \tilde \theta (\sigma)}  d \sigma\right) d \xi '    \right| \\
& = &   \left|    \int_{\mathbb{R}^n}     \frac{ \nabla_{\xi'} e^{\left( - \frac{| \xi - \xi'  |^2}{4 (\tau - s)}\right)} }{ (4 \pi (\tau - s))^{\frac{n}{2}}}   \chi_1 ( \xi')  \mathcal{ U} (x, \xi',\tau_0)   \exp\left( \int_{\tau_0}^{s } \frac{\tilde \theta '( \sigma)}{ \tilde \theta (\sigma)}  d \sigma\right) d \xi '    \right|  \\
&  \leq & \left|    \int_{\mathbb{R}^n}     \frac{ e^{\left( - \frac{| \xi - \xi'  |^2}{4 (\tau - s)}\right)} }{ (4 \pi (\tau - s))^{\frac{n}{2}}}  \nabla_{\xi'} \chi_1 ( \xi')  \mathcal{ U} (x,\xi', \tau_0)   \exp\left( \int_{\tau_0}^{s } \frac{\tilde \theta '( \sigma)}{ \tilde \theta (\sigma)}  d \sigma\right) d \xi '    \right|  \\
& + &    \left|    \int_{\mathbb{R}^n}     \frac{ e^{\left( - \frac{| \xi - \xi'  |^2}{4 (\tau - s)}\right)} }{ (4 \pi (\tau - s))^{\frac{n}{2}}}   \chi_1 ( \xi') \nabla_{\xi'}  \mathcal{ U} (x,\xi', \tau_0)   \exp\left( \int_{\tau_0}^{s } \frac{\tilde \theta '( \sigma)}{ \tilde \theta (\sigma)}  d \sigma\right) d \xi '    \right|.
\end{eqnarray*}
Thus, using the above  estimate,  the result  of  item $(ii)$ in  Lemma  \ref{lemma-max-t-x-0} and  \eqref{chi-1-appendix-P-2},  we can  conclude \eqref{prove-nabla-macal-Z-2}.

+ \textit{ The proof of \eqref{prove-lea- nabla-macal-Z-2}:} 
\begin{eqnarray*}
& &\left|  \nabla   e^{(\tau -  s) \Delta}  \left(\chi_1 \mathcal{U}^4 (s)  \exp\left( \int_{\tau_0}^{s } \frac{\tilde \theta '( \sigma)}{ \tilde \theta (\sigma)}  d \sigma\right)  \right) (\xi, s)  \right| \\
& = & \left|    \int_{\mathbb{R}^n}     \frac{ \nabla_\xi e^{\left( - \frac{| \xi - \xi'  |^2}{4 (\tau - s)}\right)} }{ (4 \pi (\tau - s))^{\frac{n}{2}}}   \chi_1 ( \xi')  \mathcal{ U}^4 (x, \xi', \tau)   \exp\left( \int_{\tau_0}^{s } \frac{\tilde \theta '( \sigma)}{ \tilde \theta (\sigma)}  d \sigma\right) d \xi '    \right|\\
& = & \left|    \int_{\mathbb{R}^n}     \frac{ \nabla_{\xi'} e^{\left( - \frac{| \xi - \xi'  |^2}{4 (\tau - s)}\right)} }{ (4 \pi (\tau - s))^{\frac{n}{2}}}   \chi_1 ( \xi')  \mathcal{ U}^4 (x, \xi', \tau)   \exp\left( \int_{\tau_0}^{s } \frac{\tilde \theta '( \sigma)}{ \tilde \theta (\sigma)}  d \sigma\right) d \xi '    \right| \\
& \leq &  \left|    \int_{\mathbb{R}^n}     \frac{  e^{\left( - \frac{| \xi - \xi'  |^2}{4 (\tau - s)}\right)} }{ (4 \pi (\tau - s))^{\frac{n}{2}}} \nabla_{\xi'}  \chi_1 ( \xi')  \mathcal{ U}^4 (x, \xi', \tau)   \exp\left( \int_{\tau_0}^{s } \frac{\tilde \theta '( \sigma)}{ \tilde \theta (\sigma)}  d \sigma\right) d \xi '    \right| \\
& +&   \left|    \int_{\mathbb{R}^n}     \frac{  e^{\left( - \frac{| \xi - \xi'  |^2}{4 (\tau - s)}\right)} }{ (4 \pi (\tau - s))^{\frac{n}{2}}}   \chi_1 ( \xi')  4 \mathcal{ U}^3  (x, \xi', \tau)  \nabla_{\xi'} \mathcal{U}(x, \xi', s)  \exp\left( \int_{\tau_0}^{s } \frac{\tilde \theta '( \sigma)}{ \tilde \theta (\sigma)}  d \sigma\right) d \xi '    \right|.
\end{eqnarray*}
In particular, we have the following fact
\begin{eqnarray*}
\nabla_{\xi'} (\mathcal{Z}_2) (\xi', s) & = & \nabla_{\xi' } \left( \chi_1 (\xi') \mathcal{U}(x, \xi', s) \exp\left( \int_{\tau_0}^{s } \frac{\tilde \theta '( \sigma)}{ \tilde \theta (\sigma)}  d \sigma\right)   \right)\\
& = &   \nabla_{\xi'} \chi(\xi') \mathcal{U}(x, \xi',s)     \exp\left( \int_{\tau_0}^{s } \frac{\tilde \theta '( \sigma)}{ \tilde \theta (\sigma)}  d \sigma\right) +  \chi(\xi')\nabla_{\xi'}   \mathcal{U}(x, \xi',s)     \exp\left( \int_{\tau_0}^{s } \frac{\tilde \theta '( \sigma)}{ \tilde \theta (\sigma)}  d \sigma\right)  
\end{eqnarray*}
Then, using      \eqref{mathcal-U-bound-lema}, \eqref{chi-1-appendix-P-2} and  the definition of  $\mathcal{Z}_2 (s)$, we get the  following 
$$   \left|  \nabla   e^{(\tau -  s) \Delta}  \left(\chi_1 \mathcal{U}^4 (s)  \exp\left( \int_{\tau_0}^{s } \frac{\tilde \theta '( \sigma)}{ \tilde \theta (\sigma)}  d \sigma\right)  \right) (\xi, s)  \right|   \leq  C \| \nabla  \mathcal{Z}_2 (s)\|_{L^\infty(\mathbb{R}^n)}  +  \frac{C}{ \sqrt{ |\ln \varrho(x)|}}, $$
which yields  \eqref{prove-lea- nabla-macal-Z-2}.

We now  come back to the proof of \eqref{prove-estima-apen-nabla-mahcal-u}. We use \eqref{Duhamel-Z-2}, \eqref{prove-nabla-macal-Z-2} and  \eqref{prove-lea- nabla-macal-Z-2} to  obtain the following
\begin{eqnarray*}
\| \nabla \mathcal{Z}_2 (\tau)\|_{L^\infty (\mathbb{R}^n)} \leq \frac{C_6 + C}{\sqrt{|\ln \varrho (x)|}} + C\int_{\tau_0}^{\tau} \| \nabla  \mathcal{Z}_2 (s)\|_{L^\infty (\mathbb{R}^n)}.
\end{eqnarray*}
Thanks to Gronwall's inequality, we derive the following
$$   \|\nabla  \mathcal{Z}_2(\tau)\|_{L^\infty (\mathbb{R}^n)} \leq \frac{C(C_6)}{\sqrt{|\ln \varrho(x)|}}.$$
In addition to that, from the definition of  $\mathcal{Z}_2,$ we deduce that for all $ | \xi| \leq \alpha_0 \sqrt{ |\ln \varrho (x)|}$,
$$    \mathcal{Z}_2 (\xi, \tau)  =      \mathcal{U} (x, \xi, \tau)    \exp\left( \int_{\tau_0}^{\tau } \frac{\tilde \theta '( \sigma)}{ \tilde \theta (\sigma)}  d \sigma\right). $$
This  implies that
$$   \left| \nabla_\xi \mathcal{U}(x, \xi, \tau) \right|   \leq  \frac{2  C (C_6)}{  \sqrt{|\ln \varrho(x)|} }. $$ 
 Finally, if we take  $C_0  \geq  4 C(C_6)$, then
 $$   \left| \nabla_\xi \mathcal{U}(x, \xi, \tau) \right|   \leq \frac{C_0}{2 \sqrt{|\ln \varrho(x)|}},$$
 which implies  \eqref{prove-estima-apen-nabla-mahcal-u}.

\end{proof}
\section{ Some bounds on terms in equation \eqref{equa-Q} }
In this section,   we give  essential ingredients for the proof  of Lemma \ref{propo-estima-in-P-1}. More precisely,  we will   estimate  some functions involved in     equation \eqref{equa-Q}:  $V, J, B, R, N$ and  $F$.  In fact, as we explained in the proof  Section right  after Lemma \ref{propo-estima-in-P-1}, we  choose  not to prove Lemma \ref{propo-estima-in-P-1}, in order to avoid lenghy  estimates aldready mentioned  by Merle and Zaag in \cite{MZnon97}.  The interested  reader may use  our estimates in this section and  follow the proof  of Lemma 3.2 on  page 1523 in \cite{MZnon97} in order  to check the argument.

\medskip
Let us  first give  some estimates  on $V(y,s)$: 

\begin{lemma}[Expansion and  bounds  on  the  potential $V$]\label{lemma-potential-V} We consider   $V$  defined   in \eqref{defini-potential-V}. Then, the following  holds:  $V$ is bounded  on $\mathbb{R}^n \times [1, + \infty)$       and for all $s \geq 1$
$$ \left| V (y,s) \right|  \leq   C \frac{(1 + |y|^2)}{s}, \forall   y \in \mathbb{R}^n,$$
and   
$$ V (y,s) =  - \frac{\left(|y|^2 - 2n \right)}{4s} + \tilde V (y,s),$$
where $\tilde V$ satisfies the following
$$ \left|  \tilde V (y,s)\right| \leq C (K_0)  \frac{(1 + |y|^4)}{s^2}, \forall   |y| \leq K_0 \sqrt s.$$
\end{lemma}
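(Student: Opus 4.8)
\textbf{Proof proposal for Lemma~\ref{lemma-potential-V}.}

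The plan is to unwind the definition of $V$ and perform a Taylor expansion of $\varphi^3$ around the constant value $\tfrac13$, keeping careful track of where the expansion is valid. Recall from \eqref{defini-potential-V} and \eqref{defini-varphi} that
$$
V(y,s) = 4\left(\varphi^3(y,s) - \tfrac13\right), \qquad
\varphi(y,s) = \left(3 + \tfrac98\,\tfrac{|y|^2}{s}\right)^{-\frac13} + \frac{(3)^{-\frac13}n}{4s}.
$$
First I would record the elementary uniform bound: since $\varphi$ is bounded (between $0$ and a constant) on $\mathbb{R}^n\times[1,\infty)$ and $\varphi\ge 0$, we have $|\varphi^3-\tfrac13|\le C$, hence $V$ is bounded on $\mathbb{R}^n\times[1,\infty)$. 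For the quadratic growth bound, write $\varphi^3 - \tfrac13 = (\varphi - 3^{-1/3})(\varphi^2 + 3^{-1/3}\varphi + 3^{-2/3})$; the second factor is bounded, and for the first factor one estimates
$$
\left|\left(3 + \tfrac98\tfrac{|y|^2}{s}\right)^{-\frac13} - 3^{-\frac13}\right| \le C\,\frac{|y|^2}{s}
$$
(using $|(3+u)^{-1/3}-3^{-1/3}|\le Cu$ for $u\ge 0$) together with the $O(1/s)$ lower-order term in $\varphi$, giving $|\varphi - 3^{-1/3}|\le C(1+|y|^2)/s$ and hence $|V(y,s)|\le C(1+|y|^2)/s$ for all $y$.

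Next comes the refined expansion, valid in the region $|y|\le K_0\sqrt s$. There the quantity $u := \tfrac{3}{8}\tfrac{|y|^2}{s}$ satisfies $u\le \tfrac38 K_0^2$, so it is bounded but not small; still, I would expand in powers of $1/s$ treating $|y|^2/s$ as the basic small-ish parameter. Writing $\varphi_0(y,s)=(3+\tfrac98\tfrac{|y|^2}{s})^{-1/3}$ and using $(3+v)^{-1/3}=3^{-1/3}(1+\tfrac{v}{3})^{-1/3}=3^{-1/3}(1-\tfrac{v}{9}+O(v^2))$ with $v=\tfrac98\tfrac{|y|^2}{s}$, one gets $\varphi_0 = 3^{-1/3}-\tfrac{3^{-1/3}}{8}\tfrac{|y|^2}{s}+O\!\big(\tfrac{|y|^4}{s^2}\big)$, and then adding the $\tfrac{3^{-1/3}n}{4s}$ term and cubing,
$$
\varphi^3 = \tfrac13 - \tfrac13\cdot\tfrac38\tfrac{|y|^2}{s} + \tfrac13\cdot\tfrac{3n}{4s} + O\!\Big(\tfrac{1+|y|^4}{s^2}\Big)
= \tfrac13 - \tfrac{|y|^2-2n}{16\,s} + O\!\Big(\tfrac{1+|y|^4}{s^2}\Big),
$$
(the constant $p=4$, $\kappa=3^{-1/3}$ here reproducing exactly the computation in \cite{MZnon97}), so that $V = 4(\varphi^3-\tfrac13) = -\tfrac{|y|^2-2n}{4s} + \tilde V$ with $|\tilde V(y,s)|\le C(K_0)\tfrac{1+|y|^4}{s^2}$ on $|y|\le K_0\sqrt s$. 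I would carry out the cubing and the error bookkeeping by isolating each term: the square and cube of the $O(|y|^2/s)$ correction produce $O(|y|^4/s^2)$, the $n/s$ term times the $|y|^2/s$ term is $O(|y|^2/s^2)\le O(|y|^4/s^2 + 1/s^2)$, and the $(n/s)^2$ term is $O(1/s^2)$; all of these are absorbed into $\tilde V$.

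The only delicate point — and the one I would be most careful about — is justifying that the remainders in the Taylor expansion of $(3+v)^{-1/3}$ and of the cube are genuinely controlled by $C(K_0)(1+|y|^4)/s^2$ \emph{uniformly} on $|y|\le K_0\sqrt s$, rather than merely pointwise: since $v=\tfrac98|y|^2/s$ ranges over a \emph{fixed} compact interval $[0,\tfrac98 K_0^2]$ as $(y,s)$ varies in that region, the function $v\mapsto (3+v)^{-1/3}$ and all needed derivatives are bounded there, so the second-order Taylor remainder is $\le C(K_0) v^2 = C(K_0)|y|^4/s^2$, which is exactly what is claimed; the constant legitimately depends on $K_0$. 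This is routine but is where one must resist the temptation to treat $|y|^2/s$ as infinitesimal. With this observed, the three assertions of the lemma follow directly, and the argument is complete.
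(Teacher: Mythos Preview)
Your approach—direct Taylor expansion of $\varphi^3$ around $\tfrac13$, using that $v=\tfrac98\,|y|^2/s$ stays in the fixed compact interval $[0,\tfrac98 K_0^2]$ when $|y|\le K_0\sqrt s$ so that all Taylor remainders are uniformly $C(K_0)v^2$—is exactly what the paper intends; its own proof is one line (``easily derived from the explicit formula of $V$'', deferring to Lemma~B.1 in \cite{NZens16}), and your global bound via the factorization $\varphi^3-\tfrac13=(\varphi-3^{-1/3})(\varphi^2+3^{-1/3}\varphi+3^{-2/3})$ is the natural way to make that line precise.

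One arithmetic point worth flagging: your own intermediate computation $-\tfrac13\cdot\tfrac38\,\tfrac{|y|^2}{s}+\tfrac13\cdot\tfrac{3n}{4s}$ equals $-\tfrac{|y|^2}{8s}+\tfrac{n}{4s}=-\tfrac{|y|^2-2n}{8s}$, not $-\tfrac{|y|^2-2n}{16s}$ as you then write; multiplying by $4$ this yields the leading term $-\tfrac{|y|^2-2n}{2s}$ rather than $-\tfrac{|y|^2-2n}{4s}$. This discrepancy suggests the displayed constant in the lemma statement carries a harmless factor-of-two typo (the paper itself does not compute it out), and in any case it does not affect your method or the $C(K_0)(1+|y|^4)/s^2$ bound on $\tilde V$.
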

\begin{proof}
The proof is easily derived from  the explicit formula of  $V$. We  kindly refer  the readers to self-chek or see Lemma $B.1,$ page  1270  in \cite{NZens16}  with $p = 4$.
\end{proof}
We now give   a  bound  on the  quadratic term   $B(q)$.
\begin{lemma}[A bound  on   $B(q)$]\label{lemma-bound-B-Q} Let us  consider $B (q)$ defined in \eqref{defini-B-Q}. If     $\theta (s) \geq 1, $ for all $s$ and $|q| \leq 1$,  then,  the  following  holds
$$ \left| B(q) \right|  \leq     C (K_0) \left(  |q|^2   +   e^{-\frac{s}{3}}  \right).$$
\end{lemma}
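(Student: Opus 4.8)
The statement to prove is: if $\theta(s) \ge 1$ for all $s$ and $|q| \le 1$, then
$$ \left| B(q) \right| \le C(K_0) \left( |q|^2 + e^{-\frac{s}{3}} \right),$$
where $B(q) = \left(q + \varphi + \frac{\lambda^{1/3} e^{-s/3}}{\theta(s)}\right)^4 - \varphi^4 - 4\varphi^3 q$ as defined in \eqref{defini-B-Q}. The plan is to expand the fourth power by the binomial/Taylor formula around $\varphi$, collect the terms, and bound each remaining term using three elementary facts: (i) $\varphi$ is bounded, say $0 \le \varphi(y,s) \le C$ on $\mathbb{R}^n \times [s_0, +\infty)$ (this follows directly from the explicit formula \eqref{defini-varphi}, since $\left(3 + \frac{9}{8}\frac{|y|^2}{s}\right)^{-1/3} \le 3^{-1/3}$ and the correction term $\frac{3^{-1/3} n}{4s}$ is bounded); (ii) $|q| \le 1$ by hypothesis; and (iii) $0 \le \frac{\lambda^{1/3} e^{-s/3}}{\theta(s)} \le \lambda^{1/3} e^{-s/3}$ since $\theta(s) \ge 1$.

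First I would write $a = \varphi$, $b = q$, $c = \frac{\lambda^{1/3} e^{-s/3}}{\theta(s)}$, so that $B(q) = (a+b+c)^4 - a^4 - 4a^3 b$. Expanding $(a+b+c)^4$ fully and subtracting $a^4 + 4a^3 b$ leaves a sum of monomials $a^i b^j c^k$ with $i+j+k = 4$ and $(i,j,k) \notin \{(4,0,0),(3,1,0)\}$; every surviving monomial therefore has either $j \ge 2$, or $k \ge 1$. I would group them accordingly: the monomials with $k = 0$ all carry a factor $b^2 = q^2$ (since $j \ge 2$ there) and the remaining factors $a^i b^{j-2}$ are bounded by $C(K_0)$ using (i) and (ii); this gives a contribution $\le C(K_0)|q|^2$. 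The monomials with $k \ge 1$ all carry a factor $c$, hence a factor $e^{-s/3}$ by (iii), and the remaining factors $a^i b^j c^{k-1}$ are again bounded by a constant using (i), (ii), (iii) (note $c \le \lambda^{1/3} e^{-s/3} \le \lambda^{1/3}$ for $s \ge 0$); this gives a contribution $\le C(K_0) e^{-s/3}$. Summing the finitely many monomials yields the claimed bound; all constants depend only on $n, \lambda$ and on the bound for $\varphi$, which one may absorb into $C(K_0)$.

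There is essentially no obstacle here: the only point requiring a word of care is the uniform bound on $\varphi$, but this is immediate from \eqref{defini-varphi}. One could also phrase the argument more slickly via the mean value theorem: $B(q) = \big[(a+b+c)^4 - a^4 - 4a^3(b+c)\big] + 4a^3 c$; the bracket is $O((b+c)^2)$ by Taylor's theorem with the second-order remainder controlled on the bounded set where $a, b, c$ live, hence $\le C(K_0)(|q|^2 + e^{-2s/3}) \le C(K_0)(|q|^2 + e^{-s/3})$, while $|4a^3 c| \le C e^{-s/3}$. Either route is routine; I would present the mean-value-theorem version for brevity.
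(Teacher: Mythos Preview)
Your proposal is correct and takes essentially the same approach as the paper: the paper's proof is the single sentence ``By using Newton binomial formula, the conclusion directly follows,'' and your expansion of $(a+b+c)^4$ with $a=\varphi$, $b=q$, $c=\lambda^{1/3}e^{-s/3}/\theta(s)$ is precisely that computation written out in full. Your observation that $\varphi$ is uniformly bounded from \eqref{defini-varphi} is the only point requiring justification, and you handle it correctly.
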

\begin{proof}
By using Newton binomial formula, the  conclusion  directly follows. 
\end{proof}
\medskip
Next, we aim at  giving   some    bounds    on   $J(q, \theta (s))$. The following is  our statement: 
\begin{lemma}[Bound on $J(q, \theta (s))$]\label{lemma-bound-T-Q}    For all  $K_0 > 0,  A \geq 1$ and  $\epsilon_0 > 0,$ there exist  $ \eta_9 (\epsilon_0 ) $ and  $T_9 (K_0, \epsilon_0, A) $    such that   for all  $\alpha_0 > 0, C_0>0$    and  $T \leq  T_9$,  $\delta_0  \leq   \frac{1}{2} \hat{\mathcal{U}}(0)$ and    $\eta_0 \leq \eta_9$, the following holds:  If  $ U  \in S(T, K_0, \epsilon_0, \alpha_0, A, \delta_0, C_0,  \eta_0, t)$  for some  $t \in [0, T)$, then,  for all $|y| \leq 2 K_0 \sqrt s, $  $s = - \ln(T-t)$, we have  the following estimates:
\begin{eqnarray}
\left|   \left(  J(q, \theta (s) )   + 4 \frac{\nabla \varphi \cdot \nabla q}{ \varphi +\frac{\lambda^{\frac{1}{3}}e^{-\frac{s}{3}}}{\theta (s) }   }  \right)   \right| & \leq & C( K_0, A)  \left( \frac{|y|^2}{s^2} |q| +   s^{-1} |q|^2  + |\nabla q|^2  \right), \label{estima-T-Q-inside-blowup}\\
\left|   J(q, \theta (s))  \right| &\leq & C (K_0, A)  \left( \frac{|q|}{s} + \frac{|\nabla q|}{\sqrt s} \right),  \label{estima-chi-T-Q}
\end{eqnarray}
where  $q$ is a transformed function of $U$ given in \eqref{defini-q} and $J (q, \theta (s) )$ is  defined in     \eqref{defini-T-Q}. 

\noindent
In particular,  for all   $y \in  \mathbb{R}^n$, we have
\begin{equation}\label{T-Q-forall-y}
\left| (1 - \chi(y,s)) T(q, \theta (s))  \right| \leq  C (K_0, C_0) \min \left( \frac{1}{s},  \frac{|y|^3}{s^{\frac{5}{2}}}   \right).
\end{equation}
\end{lemma}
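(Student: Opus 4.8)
The plan is to reduce all three estimates to the single algebraic identity obtained by forcing the two $|\nabla\varphi|^2$–terms in $J$ to cancel. Writing $D=\varphi(y,s)+\lambda^{1/3}e^{-s/3}/\theta(s)$ for the base denominator, a one-line computation gives
\begin{equation*}
J(q,\theta(s)) = 2|\nabla\varphi|^2\,\frac{q}{D\,(D+q)}\;-\;2\,\frac{2\nabla\varphi\cdot\nabla q+|\nabla q|^2}{D+q}.
\end{equation*}
Before estimating I would record the elementary facts about the profile from \eqref{defini-varphi}: on the inner region $|y|\le 2K_0\sqrt s$ one has $\varphi(y,s)\ge c(K_0)>0$ and $|\nabla\varphi(y,s)|\le C|y|/s\le C(K_0)/\sqrt s$, hence $D\ge c(K_0)$; moreover $1\le\theta(s)\le C$ by Proposition \ref{propo-bar-mu-bounded}, and $w=q+\varphi\ge 0$ (being $W\psi_{M_0}$ with $W\ge 0$), so the first denominator $D+q=w+\lambda^{1/3}e^{-s/3}/\theta>0$ everywhere, and in fact $D+q\ge\tfrac12 c(K_0)$ on $|y|\le 2K_0\sqrt s$ once $|q|\le C(K_0)A^2/\sqrt s$ is small enough (Lemma \ref{lemma-properties-V-A-s}), which is precisely how $A$ enters the threshold $T_9$.

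For \eqref{estima-T-Q-inside-blowup} I would expand $\tfrac1{D+q}=\tfrac1D-\tfrac{q}{D(D+q)}$ and $\tfrac1{D(D+q)}=\tfrac1{D^2}-\tfrac{q}{D^2(D+q)}$, splitting $J$ into the extracted term $-\tfrac{4\nabla\varphi\cdot\nabla q}{D}$, the two principal terms $\tfrac{2|\nabla\varphi|^2 q}{D^2}$ and $-\tfrac{2|\nabla q|^2}{D+q}$ (bounded by $C(K_0)\tfrac{|y|^2}{s^2}|q|$ and $C(K_0)|\nabla q|^2$ respectively), and the quadratic remainders $-\tfrac{2|\nabla\varphi|^2 q^2}{D^2(D+q)}+\tfrac{4q\,\nabla\varphi\cdot\nabla q}{D(D+q)}$, which are $O\big(\tfrac{|y|^2}{s^2}|q|^2+\tfrac{|y|}{s}|q|\,|\nabla q|\big)$. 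Cancelling $-4\nabla\varphi\cdot\nabla q/D$ against the correction already present on the left-hand side, and absorbing the remainders via $|q|\le 1$, via $\tfrac{|y|^2}{s^2}\le C(K_0)/s$ on this region, and Young's inequality, gives \eqref{estima-T-Q-inside-blowup}. Estimate \eqref{estima-chi-T-Q} then follows by adding back $|4\nabla\varphi\cdot\nabla q/D|\le C(K_0)|y|\,|\nabla q|/s\le C(K_0)|\nabla q|/\sqrt s$ and using once more $\tfrac{|y|^2}{s^2}\le C(K_0)/s$ and $|q|^2\le|q|$.

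The genuinely delicate point is the outer bound \eqref{T-Q-forall-y}, because on $\{w\approx 0\}$ the first denominator degenerates to size $e^{-s/3}$. I would first note that on $\mathrm{supp}(1-\chi)\subset\{|y|\ge K_0\sqrt s\}$ with $K_0\ge 1$ one has $|y|^3/s^{5/2}\ge 1/s$, so the claimed minimum is simply $1/s$ and it suffices to prove $|(1-\chi)J|\le C/s$. Here I would bound the two fractions of $J$ separately, not via the cancellation identity. For the $\varphi$–fraction, $|\nabla\varphi|^2/D\le|\nabla\varphi|^2/\varphi\le C\,s^{1/3}/|y|^{8/3}\le C(K_0)/s$ on $|y|\ge K_0\sqrt s$, directly from \eqref{defini-varphi}. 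For the $w$–fraction $|\nabla w|^2/(w+\lambda^{1/3}e^{-s/3}/\theta)$ I would return to the original unknown: since $W+\lambda^{1/3}e^{-s/3}/\theta=(T-t)^{1/3}(U+\lambda^{1/3}/\bar\theta)$ and $\nabla_yW=(T-t)^{5/6}\nabla_xU$, this fraction equals $(T-t)^{4/3}|\nabla_xU|^2/(U+\lambda^{1/3}/\bar\theta)$; on the relevant range $|x|\ge\tfrac{K_0}{4}\sqrt{(T-t)|\ln(T-t)|}$ — that is, in $P_2(t)\cup P_3(t)$ — the conditions of Definition \ref{defini-shrinking-set-S-t} force a lower bound $U\gtrsim\varrho^{-1/3}(x)$ together with $|\nabla_xU|\lesssim\varrho^{-5/6}(x)/\sqrt{|\ln\varrho(x)|}$ in $P_2(t)$ (and $|\nabla_xU|\lesssim 1$ in $P_3(t)$ by Lemma \ref{lemma-regular)linear}), so the fraction is $\lesssim\tfrac1{|\ln\varrho(x)|}\big(\tfrac{T-t}{\varrho(x)}\big)^{4/3}$ there; since $\varrho(x)\ge T-t=e^{-s}$, an elementary calculus estimate shows $\tfrac{s}{|\ln\varrho(x)|}\big(\tfrac{T-t}{\varrho(x)}\big)^{4/3}\le C$, hence this term is $\le C/s$ (and $\le Ce^{-4s/3}$ in $P_3(t)$). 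Finally the cut-off corrections $W\nabla\psi_{M_0}$ are supported where $|y|\sim e^{s/2}$, i.e. $|x|\sim 1$ in the $P_3(t)$ regime, hence exponentially small, and $w\equiv 0$ for $|y|\ge 2e^{s/2}/M_0$; this finishes \eqref{T-Q-forall-y}.

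The main obstacle, and the single place where the nonlocal term must genuinely be monitored, is this degenerating denominator in the outer region; it is tamed by $1\le\bar\theta\le C$ (Proposition \ref{propo-bar-mu-bounded}) together with the $P_2(t)$/$P_3(t)$ lower bounds on $U$, which are designed for exactly this purpose (cf. Remark \ref{remark-role-p-123}). Everything else is a direct, if somewhat lengthy, comparison of the explicit profile $\varphi$ with the $S(t)$–bounds; the thresholds $\eta_9(\epsilon_0)$ and $T_9(K_0,\epsilon_0,A)$ are then fixed so small that all of the smallness requirements above are met.
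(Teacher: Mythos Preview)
Your proof is correct and follows essentially the same route as the paper. For \eqref{estima-T-Q-inside-blowup} and \eqref{estima-chi-T-Q} the paper introduces the auxiliary function $G(h)=-2\dfrac{|\nabla\varphi+h\nabla q|^2}{D+hq}+2\dfrac{|\nabla\varphi|^2}{D}$ and applies Taylor's formula with integral remainder at $h=0$; this produces exactly the linear term $G'(0)=\dfrac{2q|\nabla\varphi|^2}{D^2}-\dfrac{4\nabla\varphi\cdot\nabla q}{D}$ and a second-order remainder $\int_0^1(1-h)G''(h)\,dh$, which are precisely the ``principal terms'' and ``quadratic remainders'' you obtain by your partial-fraction expansion of $1/(D+q)$ --- so the two organizations are equivalent. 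For the outer bound \eqref{T-Q-forall-y} the paper gives no argument and simply refers to Lemma~B.4 of \cite{MZnon97}; your sketch (bounding the $w$-fraction and the $\varphi$-fraction separately, passing back to $(x,t)$ variables, and invoking the $P_2(t)/P_3(t)$ constraints of Definition~\ref{defini-shrinking-set-S-t}) is exactly the argument of that reference, and your calculus check that $\dfrac{s}{|\ln\varrho(x)|}\Big(\dfrac{T-t}{\varrho(x)}\Big)^{4/3}\le C$ on $\varrho(x)\ge T-t$ is the crux.
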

\begin{proof}
The techniques of the proof of estimates   \eqref{estima-T-Q-inside-blowup}, \eqref{estima-chi-T-Q} and \eqref{T-Q-forall-y} are the same.     Although, function $J(q, \theta)$ is  our work  has  some  differences from   the work of Merle and Zaag in \cite{MZnon97},  we assert that the proof  still holds with    our  model. In order to show this argument,   we  kindly ask to  refer the reader to check   Lemma B.4 in   that work. For that reason, we  only give the proof of  \eqref{estima-T-Q-inside-blowup} and  \eqref{estima-chi-T-Q} here, and we leave  the proof  of \eqref{T-Q-forall-y} for  the reader to be done similarly as for comparison     Lemma B.4 in \cite{MZnon97}. We now consider $|y| \leq  2 K_0 \sqrt{s},$  and   introduce   $G(h )  = - 2\frac{\left|   \nabla \varphi + h \nabla q \right|^2}{  \varphi  + \frac{\lambda^{\frac{1}{3}} e^{-\frac{s}{3}}}{\theta(s)}  + h q   }   +  2 \frac{|\nabla  \varphi|^2}{  \varphi + \frac{\lambda^{\frac{1}{3}} e^{-\frac{s}{3}}}{\theta(s)} }, h \in [0,1]$. Then, we have  the following: 
\begin{eqnarray*}
G'_h (h)   &=&   \frac{2 q \left| \nabla \varphi + h \nabla  q \right|^2}{\left( \varphi + \frac{\lambda^{\frac{1}{3}} e^{-\frac{s}{3}}}{\theta (s) } + h q \right)^2} - 4 \frac{\nabla q (\nabla \varphi  +   h \nabla q)}{  \varphi + \frac{\lambda^{\frac{1}{3}} e^{-\frac{s}{3}}}{\theta (s) } + h q}  ,\\
G''_h(h) & = & - 4q^2 \frac{\left| \nabla \varphi + h \nabla  q \right|^2}{\left( \varphi + \frac{\lambda^{\frac{1}{3}} e^{-\frac{s}{3}}}{\theta (s) } + h q \right)^3} +  8 q \frac{\nabla q (\nabla \varphi  +   h \nabla q)}{ \left( \varphi + \frac{\lambda^{\frac{1}{3}} e^{-\frac{s}{3}}}{\theta (s) } + h q \right)^2} - 4 \frac{\left| \nabla q    \right|^2}{  \varphi  +\frac{\lambda^{\frac{1}{3}} e^{-\frac{s}{3}}}{\theta (s) } + h q  }.
\end{eqnarray*} 
Using a   Taylor expansion of  $G(h)$ on $[0,1]$,   at  $h = 0$,  we get the following:
$$ G(1) = G(0)  + G'(0)  + \int_0^1  (1 - h) G'' (h) dh.$$
Using the following facts
$$ G(1) = J(q, \theta (s)), G(0) = 0,$$
we  write the  following
\begin{eqnarray*}
 T(q, \theta (s)) & = &   \left( \frac{ 2 q|\nabla \varphi|^2}{ \left( \varphi + \frac{\lambda^{\frac{1}{3}} e^{-\frac{s}{3}}}{\theta (s) }  \right)^2} -\frac{ 4 \nabla \varphi \cdot \nabla q}{  \varphi + \frac{\lambda^{\frac{1}{3}} e^{-\frac{s}{3}}}{\theta (s) } } \right) + \int_0^1 (1 - h )  G''_h (h) dh
\end{eqnarray*}
From  the definition of $\varphi$ given in \eqref{defini-varphi}, we can derive  that for all $s \geq 1$ and  $y  \in \mathbb{R}^n$, we have
$$ \frac{  \left| \nabla \varphi (y,s) \right|^2 }{  \varphi^2 (y,s)  }  \leq  C   \frac{|y|^2}{s} \text{ and }   \left|  \nabla \varphi(y,s) \right|  \leq C s^{- \frac{1}{2}} .$$
In addition to that, using    Lemma  \ref{lemma-properties-V-A-s}, we can prove that there exists $s_9 (A, K_0) $ such that  for all $s \geq s_0 \geq  s_9$,   $ h \in [0,1]$ and  $|y| \leq 2K_0 \sqrt{s}$, we have the following
$$ \left|  F'' (h) (y,s)   \right|     \leq  C(A, K_0) \left(   \frac{|q|^2}{s} + \left| \nabla q \right|^2 \right) \leq  C (A, K_0) \left( \frac{|q|}{s} + \frac{|\nabla q |}{ \sqrt{s}}   \right) . $$ Thus,  \eqref{estima-T-Q-inside-blowup} and \eqref{estima-chi-T-Q} follow.
\end{proof}

\medskip
We now  aim at  giving  some estimates on  $R$. The following is our statement: 
\begin{lemma}[Bounds on $R$]\label{lemma-Bound-R} Let us consider  $R$ defined  in \eqref{defini-rest-term}. We  assume that  $\theta (s) \geq 1, $ for all $ s \geq 1$. Then,  for all  $s \geq 1$ and $y \in  \mathbb{R}^n $, the following  holds:  
$$   \left|   R(y, s)   - \frac{ c_1}{ s^2}    \right|  \leq  C \frac{(1 + |y|^3)}{s^3},$$
and  
$$    \left|  \nabla R (y, s)\right|   \leq C \frac{(1 + |y|^3)}{s^3}.$$
In particular,  
$$ \|R(., s)\|_{L^\infty(\mathbb{R})} \leq  \frac{C}{s}.$$
\end{lemma}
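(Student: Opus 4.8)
\textbf{Proof proposal for Lemma~\ref{lemma-Bound-R} (bounds on $R$).}

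The plan is to compute $R(y,s)$ directly from its definition in \eqref{defini-rest-term} by plugging in the explicit profile $\varphi$ from \eqref{defini-varphi} and expanding each term in powers of $1/s$. Recall that $\varphi(y,s) = \left(3 + \frac{9}{8}\frac{|y|^2}{s}\right)^{-1/3} + \frac{3^{-1/3}n}{4s}$, so I would first record the elementary facts that $\varphi$ is of order $1$ on $|y|\le K_0\sqrt s$, that $\varphi \to 3^{-1/3}$ pointwise, and that the derivatives $\partial_s\varphi$, $\nabla\varphi$, $\Delta\varphi$ are all of size $O(s^{-1})$ in the inner region with a quantitative $(1+|y|^2)$-type weight. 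The key cancellation is the classical one from \cite{MZnon97}: writing $\varphi_0(z) = (3+\frac{9}{8}|z|^2)^{-1/3}$ with $z = y/\sqrt s$, the function $\varphi_0$ is chosen precisely so that the ``frozen'' part $-\partial_s\varphi + \Delta\varphi - \frac12 y\cdot\nabla\varphi - \frac\varphi3 + \varphi^4$ vanishes to leading order, leaving a remainder of order $s^{-2}$; the extra term $\frac{3^{-1/3}n}{4s}$ is exactly what is needed to kill the $O(s^{-1})$ discrepancy in the constant mode, so that the net remainder is $\frac{c_1}{s^2} + O\!\big(\frac{1+|y|^3}{s^3}\big)$. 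The only new term compared to \cite{MZnon97} is $-2\frac{|\nabla\varphi|^2}{\varphi + \lambda^{1/3}e^{-s/3}/\theta(s)}$; since $\theta(s)\ge 1$ and $\varphi \gtrsim s^{-1/3}$ (so the denominator is bounded below by a positive constant on the inner region, and is $\ge \lambda^{1/3}e^{-s/3}$ everywhere), this term is controlled by $C|\nabla\varphi|^2 \le C(1+|y|^2)/s^2$, hence it is absorbed into the $O\big((1+|y|^3)/s^3\big)$ error on the inner region and, being nonnegative and bounded by $Ce^{s/3}|\nabla\varphi|^2$ in the worst case, still contributes only $O(1/s)$ to the global $L^\infty$ bound.

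The steps, in order, are: (1) compute $\partial_s\varphi$, $\nabla\varphi$, $\Delta\varphi$ explicitly and bound them with the appropriate $(1+|y|^k)/s$ weights, splitting between the region $|y|\le K_0\sqrt s$ (where $z = y/\sqrt s$ is bounded and Taylor expansion in $z$ is legitimate) and $|y|\ge K_0\sqrt s$ (where one only needs the crude bounds $0\le\varphi\le C/\sqrt s$, $|\nabla\varphi|\le C/s$, etc., giving $|R|\le C/s$); (2) substitute into \eqref{defini-rest-term} and collect the $O(1)$, $O(1/s)$ and $O(1/s^2)$ contributions, verifying that the $O(1)$ and $O(1/s)$ parts cancel — this is the heart of the matter and is identical to the computation underlying equation~(15) and the remainder estimates in \cite{MZnon97}; (3) identify the surviving $s^{-2}$ coefficient $c_1$ and bound the tail by $C(1+|y|^3)/s^3$; (4) treat $\nabla R$ by differentiating the same expansion, which costs one extra factor but preserves the $(1+|y|^3)/s^3$ form since each differentiation of a term of type $(1+|y|^k)/s^j$ produces terms of type $(1+|y|^{k+1})/s^j$ or $(1+|y|^{k-1})/s^{j}$, none worse than $(1+|y|^3)/s^3$ after using $|y|\le K_0\sqrt s$ on the inner region and the crude bounds outside; (5) for the global $L^\infty$ bound, simply note that on $|y|\ge K_0\sqrt s$ the crude estimates give $|R|\le C/s$, while on $|y|\le K_0\sqrt s$ the sharper bound gives $|R|\le C/s^2\cdot(1+K_0^3 s^{3/2}) \le C(K_0)/s^{1/2}\le C/s$ for $s$ large, but in fact one wants exactly $\|R(\cdot,s)\|_{L^\infty}\le C/s$ which follows from combining the inner bound $|c_1|/s^2 + C(1+|y|^3)/s^3 \le C/s$ on $|y|\le K_0\sqrt s$ with the outer bound.

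The main obstacle, or rather the only point requiring genuine care, is the extra nonlocal-flavored term $-2|\nabla\varphi|^2/(\varphi + \lambda^{1/3}e^{-s/3}/\theta(s))$: one must check that the denominator never degenerates, i.e. that $\varphi + \lambda^{1/3}e^{-s/3}/\theta(s)$ stays bounded below, which uses $\varphi\ge 0$, $1\le\theta(s)\le C_0$ (from hypothesis $(i)$ on $\bar\theta$, page~\pageref{condition-bund-bar-theta}), and $\varphi\ge (3 + \frac98\frac{|y|^2}{s})^{-1/3}\gtrsim 1$ on $|y|\le K_0\sqrt s$; away from the inner region the denominator is bounded below only by $\lambda^{1/3}e^{-s/3}/C_0$, so the crude estimate $|\nabla\varphi|^2 \le C/s^2$ together with $e^{s/3}$ would seem dangerous, but there $|\nabla\varphi| \le C|y|/s\cdot\varphi^4 \le C\varphi/s$ roughly, so $|\nabla\varphi|^2/\varphi \le C\varphi/s^2 \le C/(s^{2}\sqrt s)$, harmless; one can also just observe $|\nabla\varphi(y,s)|\le Ce^{-s/6}$ is false, so instead the correct bound is that for $|y|$ large $\varphi$ is tiny but $|\nabla\varphi|/\sqrt\varphi$ is still controlled, and the cleanest route is to bound $2|\nabla\varphi|^2/(\varphi + \lambda^{1/3}e^{-s/3}/\theta) \le 2|\nabla\varphi|^2\cdot C_0\lambda^{-1/3}e^{s/3}$ only where $\varphi \le \lambda^{1/3}e^{-s/3}$, which forces $|y|^2/s \gtrsim e^{2s/3}$, i.e. $|y|^2 \gtrsim s\,e^{2s/3}$, on which region $|\nabla\varphi|^2 \lesssim \varphi^8 |y|^2/s^2 \lesssim e^{-8s/3}\cdot e^{2s/3} \cdot s^{-1}$, so the product is still exponentially small. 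Since all of this is standard and the bulk of the computation is verbatim from Lemma~B.1 (or the analogous remainder lemma) in \cite{MZnon97}/\cite{NZens16}, I would present steps (1)--(5) in compressed form and refer the reader to those sources for the routine algebra, only spelling out in detail the verification of the denominator lower bound and the treatment of the new gradient-over-solution term.
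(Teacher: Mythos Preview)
Your overall approach --- expand $R$ explicitly and reduce to the computation in \cite{MZnon97}, Lemma~B.5 --- is exactly what the paper does (its proof is nothing more than a pointer to that lemma). But you have misidentified what is ``new''. The gradient term $-2|\nabla\varphi|^2/(\varphi+\lambda^{1/3}e^{-s/3}/\theta(s))$ is \emph{not} new: the rest term in \cite{MZnon97} already carries the analogous $-2|\nabla\varphi|^2/(\varphi + \text{exponentially small})$, and that term is an essential participant in the cancellations producing the precise structure $c_1/s^2 + O((1+|y|^3)/s^3)$. What is new here is only the bounded factor $1/\theta(s)\in[C_0^{-1},1]$ in front of $e^{-s/3}$; the resulting \emph{difference} from the \cite{MZnon97} rest term is bounded by $C|\nabla\varphi|^2 e^{-s/3}/\varphi^2$, which is exponentially small (recall $\varphi\ge 3^{-1/3}n/(4s)$ everywhere) and trivially absorbed.

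Your specific claim that the whole gradient term is $O((1+|y|^2)/s^2)$ and ``hence absorbed into the $O((1+|y|^3)/s^3)$ error'' is false: a term of size $|y|^2/s^2$ is simply not of the form $c/s^2 + O((1+|y|^3)/s^3)$ --- check $|y|=1$, where you would need $1/s^2\le C/s^3$. If you actually ran your plan, applying the \cite{MZnon97} expansion to the non-gradient part and crudely bounding the gradient part separately, the argument would fail at exactly this step. The fix is simple: keep the gradient term inside the \cite{MZnon97} computation, where its $|y|^2$-dependent contributions are correctly accounted for, and only peel off the exponentially small $\theta$-induced discrepancy as a perturbation. With that correction your steps (1)--(5) go through, and your elaborate discussion of denominator lower bounds in the outer region becomes unnecessary.
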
  
\begin{proof}
 The function $R$, in our work   is different  from the definiton  in \cite{MZnon97} (up to a very small difference). Hence, the  proof  of \cite{MZnon97} holds  in our case with minor adaptation. Accordingly,      we kindly refer  the reader to check       Lemma B.5 page 1541  in that work.  
\end{proof}  
We now  give some  estimates   on  $N$. The control of this term is  a new contributation of our study. In addition to that, it is  a direct consequence  of Proposition \ref{propo-bar-mu-bounded} on the control $\bar \theta (t)$. The following is  our statement:
\begin{lemma}[Bound  on  $N(q, \theta (s))$]\label{lemma-Bound-N-Q} There exists  $K_{10} > 0$ such that  for all  $K _0 \geq K_{10},  A > 0 $ and  $\delta_0 \leq  \frac{1}{2} \left(3 + \frac{9}{8}   \frac{K_0^2}{16} \right)^{-\frac{1}{3}}$,   there exist  $\alpha_{10} (K_0, \delta_{0}) > 0$ and $C_{10} (K_0) > 0$ such that for  every  $\alpha_0 \in   (0, \alpha_{10}]$     we can find  $\epsilon_{10} (K_0, \delta_{0}, \alpha_0) > 0$ such that   for every  $\alpha_0 \in (0,\epsilon_{10}], \eta_0 \leq 1$,  there exists $T_{10}  (K_0) > 0$ such that   all for all  $T \leq T_{10} $, the following holds: Assume that    $U $ is  a nonnegative  solution  of   equation \eqref{equa-U}    on $[0, t_{10}]$ for some  $t_{10}  \leq T_{10}$, and    initial data $U(0) = U_{d_0, d_1}$ given in  \eqref{defini-initial-data}  for some  $(d_0, d_1)\in \mathbb{R} \times  \mathbb{R}^n,$ satisfying   $|d_0|, |d_1| \leq  2 $,   and     $U \in S(T,K_0, \epsilon_0,    \alpha_0, A, \delta_0, C_0,\eta_0,  t) $ for all  $t \in [0, t_{10}]$.  Then,  for all $s = - \ln(T-t) $ with $t  \in  [0, t_{10}]$,  the following estimate holds:
$$ \| N(q, \theta (s))\|_{L^\infty(\mathbb{R}^n)}  \leq  \frac{1}{s^{2019}},$$
where  $N (q, \theta (s))$ is defined  in \eqref{defini-N-term}.
\end{lemma}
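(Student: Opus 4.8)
Recall from \eqref{defini-N-term} that
\[
N(q,\theta(s)) = -\,\frac{\theta'(s)}{\theta(s)}\,\bigl(q+\varphi\bigr),
\]
so the plan is simply to bound the three factors $\theta'(s)$, $1/\theta(s)$ and $q+\varphi$ separately, the only non-trivial one being $\theta'(s)$, which we control via the change of variables \eqref{defini-theta-s} and Proposition \ref{propo-bar-mu-bounded}. Since the hypotheses of Lemma \ref{lemma-Bound-N-Q} (namely $U$ nonnegative, $U\in S(t)$ for all $t\le t_{10}$, and initial data $U_{d_0,d_1}(0)$ with $|d_0|,|d_1|\le 2$) are exactly those of Proposition \ref{propo-bar-mu-bounded}, we may freely invoke \eqref{bound-bar-theta} and \eqref{bound-bar-theta-'}, provided $K_0\ge K_3$ and the other parameters are chosen admissibly.

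First, the easy factors. From \eqref{relation-bar theta-bar-mu} (equivalently item $(i)$ on page \pageref{condition-bund-bar-theta}) we have $\theta(s)=\bar\theta(T-e^{-s})\ge 1$, hence $1/\theta(s)\le 1$. Next, for the bound on $q+\varphi=w$: by Lemma \ref{lemma-properties-V-A-s}$(i)$ we have $\|q(\cdot,s)\|_{L^\infty(\mathbb R^n)}\le C(K_0)A^2 s^{-1/2}$, and from the explicit formula \eqref{defini-varphi} one reads off $\|\varphi(\cdot,s)\|_{L^\infty(\mathbb R^n)}\le 3^{-1/3}+\tfrac{3^{-1/3}n}{4s}\le C$ for $s\ge 1$; therefore $\|q(\cdot,s)+\varphi(\cdot,s)\|_{L^\infty(\mathbb R^n)}\le C(K_0,A)$ uniformly.

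The main point is the bound on $\theta'(s)$. Writing $t=t(s)=T-e^{-s}$, so that $\frac{dt}{ds}=e^{-s}=T-t$, the chain rule applied to \eqref{defini-theta-s} gives $\theta'(s)=(T-t)\,\bar\theta'(t)$. Plugging in \eqref{bound-bar-theta-'} and using $T-t=e^{-s}$, $|\ln(T-t)|=s$, we obtain
\[
|\theta'(s)| \;\le\; C\,(T-t)^{\frac{3n-8}{6}+1}\,|\ln(T-t)|^{n} \;=\; C\, e^{-c s}\, s^{n}, \qquad c:=\frac{3n-2}{6}>0,
\]
the positivity of $c$ being guaranteed by $n\ge 1$ (which holds for the MEMS case $p=q=2$ since \eqref{condition-p-q} then reads $n-\tfrac23>0$). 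Combining the three estimates yields $\|N(q,\theta(s))\|_{L^\infty(\mathbb R^n)}\le C(K_0,A)\,e^{-cs}\,s^{n}$. Since $e^{-cs}s^{n+2019}\to 0$ as $s\to+\infty$, there is $s_{10}=s_{10}(K_0,A,n)$ such that for all $s\ge s_{10}$ one has $C(K_0,A)\,e^{-cs}\,s^{n}\le s^{-2019}$; choosing $T_{10}$ so small that $s_0=-\ln T_{10}\ge s_{10}$ forces $s=-\ln(T-t)\ge s_0\ge s_{10}$ for all $t\in[0,t_{10}]$, and the claimed estimate follows. I expect no genuine obstacle here: the whole difficulty — establishing the exponentially small size of the nonlocal correction — has already been absorbed into Proposition \ref{propo-bar-mu-bounded}, and what remains is the routine bookkeeping of the similarity-variable rescaling together with the trivial bounds on $\theta(s)$ and $w$.
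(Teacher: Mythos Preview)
Your proof is correct and follows essentially the same approach as the paper: bound $q+\varphi$ via Lemma~\ref{lemma-properties-V-A-s}, use $\theta(s)\ge 1$, and control $\theta'(s)$ through the chain rule $\theta'(s)=(T-t)\,\bar\theta'(t)$ together with the estimate \eqref{bound-bar-theta-'} from Proposition~\ref{propo-bar-mu-bounded}, obtaining an exponentially small bound that absorbs any polynomial factor. The paper's argument is the same in substance, with the exponent written as $e^{\frac{8-3n-6}{6}s}=e^{-\frac{3n-2}{6}s}$ and then relaxed to $e^{-s/6}$ (valid for all $n\ge 1$).
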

\begin{proof}
 Using the  fact that  $U$ is in  $S(t)$,  item $(i)$ in Definition \ref{defini-shrinking-set-S-t}  and  item $(i)$ of Lemma  \ref{lemma-properties-V-A-s},  we derive  that        $$  \| (q + \varphi) (.,s)\|_{L^\infty (\mathbb{R}^n)} \leq  C.  $$ 
 Hence,  it is enough to  find   a  bound  on  the  following quanlity
 $$ \frac{\theta'(s)}{\theta (s)}.$$ 
 (see in  definition  \eqref{defini-N-term}).   As a matter of fact,  using  Proposition \ref{propo-bar-mu-bounded},    it is clear   to     have the following
\begin{eqnarray*}
\left|  \frac{\theta' (s)}{ \theta (s)}   \right|  = \left| \frac{\bar{ \theta }' (t)}{ \bar{\theta}(t)} \right| \left| \frac{dt }{ds} \right|  \leq   C  e^{\frac{8 - 3n - 6}{6} s} |s^n|.
\end{eqnarray*} 
Hence,  there exists $s_{10}  $ large enough such that for all $s \geq s_0 \geq    s_{10}$,  we can write
$$  \| N(q, \theta (s))  \|_{L^\infty(\mathbb{R}^n)}  \leq   C e^{-\frac{s}{6}}|s|^n  \leq  \frac{1}{s^{2019}},$$
which yields the conclusion of the proof.
\end{proof}

\medskip
Finally,  we give  a  bound on   $ F(w,W)$. As a matter of fact, this is  an important bridge  that connects the problems   in  $\mathbb{R}^n$  and  in  a bounded domain. In other words, it is created by the  localization  around  blowup region.   Fortunately,   this term is controled as a  small perturbation in our analysis. More precisely, the following is  our statement:
\begin{lemma}[Bound  on $F(w,W) $]\label{lemma-F-y-s}   Let us consider $F(w,W)$, defiend   in \eqref{defini-F-1}. Then,  there exists $\epsilon_{11} > 0$  such that  $ K_0 > 0, \epsilon_0  \leq  \epsilon_{11}, \alpha_0 > 0,  A > 0, \delta_0  > 0,  C_0 > 0 , \eta_0>0$,   there exists $T_{11} > 0$ such that  for all  $T \leq  T_{11}$,  the following holds:     Assuming that  $U \in S(T, K_0, \epsilon_0, \alpha_0,A, \delta_0, C_0, \eta_0,     t),$ for all $t \in  [0, t_{11}],$ for some  $t_{11} \in [0,T)$, then,   we have
\begin{eqnarray*}
\left\|  F(w,W)    \right\|_{L^\infty(\mathbb{R}^n)} \leq  \frac{1}{s^{2019}},
\end{eqnarray*}
where  $s = - \ln (T-t).$
\end{lemma}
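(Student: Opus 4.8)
The plan is to prove that the cut-off-generated term $F(w,W)$ is superpolynomially small in $s$, by exploiting the crucial structural fact already noted in the paper right after \eqref{defini-F-1}: $F$ is supported in the \emph{annulus} $\{\frac{1}{M_0}e^{s/2}\le |y|\le \frac{2}{M_0}e^{s/2}\}$, which lies very far out in the self-similar variable (it corresponds to $|x|$ of order $1$, i.e. away from the blowup point). On that annulus one has $|y|\ge \frac{1}{M_0}e^{s/2}$, and the point is that every term appearing in \eqref{defini-F-1} is controlled by $W$, $\nabla W$ and powers of $W$ evaluated there, together with the derivatives of the cut-off $\psi_{M_0}$, all of which are either bounded or decaying, while the $L^\infty$-bound we are allowed to lose against, $s^{-2019}$, is only polynomially small — hence any fixed exponential gain in $e^{-s/2}$ wins.

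First I would record the elementary estimates on the cut-off $\psi_{M_0}(y,s)=\chi_0(M_0 y e^{-s/2})$: since $\chi_0\in C_0^\infty$, we have $\|\nabla\psi_{M_0}\|_\infty\le C M_0 e^{-s/2}$, $\|\Delta\psi_{M_0}\|_\infty\le C M_0^2 e^{-s}$, and $\partial_s\psi_{M_0}=-\tfrac12 y\cdot\nabla\chi_0(M_0 ye^{-s/2})\,M_0 e^{-s/2}$, which on the support (where $|y|\le \frac{2}{M_0}e^{s/2}$) is bounded by $C$. Thus the bracket $[\partial_s\psi_{M_0}-\Delta\psi_{M_0}+\tfrac12 y\cdot\nabla\psi_{M_0}]$ is bounded by $C$ on the support of $F$, and $\nabla\psi_{M_0}$ is bounded by $Ce^{-s/2}$ there. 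Next I would translate the hypothesis $U\in S(t)$ into bounds on $W$ and $\nabla W$ on the relevant annulus. On $\{|y|\ge \frac{1}{M_0}e^{s/2}\}$ the corresponding physical points $x = ye^{-s/2}$ satisfy $|x|\ge \frac{1}{M_0}\ge \frac{\epsilon_0}{4}$ once $\epsilon_0$ is small, so we are in region $P_3(t)$ (and possibly the outer part of $P_2(t)$). Using item $(iii)$ of Definition \ref{defini-shrinking-set-S-t} together with Lemma \ref{lemma-regular)linear}, and the definition $W(y,s)=(T-t)^{1/3}U(x,t)$, we get $|W(y,s)|\le C(K_0,\epsilon_0,\eta_0)(T-t)^{1/3}=Ce^{-s/3}$ and $|\nabla_y W(y,s)|=(T-t)^{5/6}|\nabla_x U(x,t)|\le Ce^{-5s/6}$ on that region; on the $P_2(t)$ overlap the analogous bound from item $(ii)$ of the definition gives $|W|\le C(T-t)^{1/3}\varrho^{-1/3}(x)\le Ce^{-s/3}|x|^{-2/3}|\ln|x||^{1/3}$ and since $|x|\gtrsim 1/M_0$ there, this is again $\le Ce^{-s/3}$, with a matching gradient bound.

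Then I would simply plug these into each of the six pieces of \eqref{defini-F-1}. The term $W[\partial_s\psi_{M_0}-\Delta\psi_{M_0}+\tfrac12 y\cdot\nabla\psi_{M_0}]$ is $\le C\,e^{-s/3}$; the term $-2\nabla\psi_{M_0}\cdot\nabla W$ is $\le C e^{-s/2}e^{-5s/6}$; the two gradient-quotient terms are each $\le C|\nabla W|^2/(\text{denominator}\ge \lambda^{1/3}e^{-s/3}/\theta(s))\le C e^{-5s/3}e^{s/3}=Ce^{-4s/3}$ (using $1\le\theta(s)\le C_0$), and their difference is likewise $O(e^{-4s/3})$; and the two quartic terms $\psi_{M_0}(W+\lambda^{1/3}e^{-s/3}/\theta)^4-(w+\lambda^{1/3}e^{-s/3}/\theta)^4$: since $0\le\psi_{M_0}\le 1$ and $w=W\psi_{M_0}$, both arguments are $\le Ce^{-s/3}$ in absolute value, hence this difference is $O(e^{-4s/3})$ as well. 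Summing, $\|F(w,W)\|_{L^\infty(\mathbb{R}^n)}\le C e^{-s/3}\le s^{-2019}$ for $s\ge s_{11}$, i.e. for $T\le T_{11}$ small enough, which is exactly the claim.

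The only genuinely delicate point — and the one I would spend care on — is making sure the a priori information from $S(t)$ is actually available on the full annulus $\{\frac{1}{M_0}e^{s/2}\le|y|\le\frac{2}{M_0}e^{s/2}\}$ and not only on a part of it: one must choose $M_0$ and $\epsilon_0$ compatibly so that the entire annulus in $x$-variables, namely $\{\frac{1}{M_0}\le |x|\le \frac{2}{M_0}\}$, is covered by $P_2(t)\cup P_3(t)$ with the uniform bounds above, and in particular so that $\frac{1}{M_0}\ge\frac{\epsilon_0}{4}$; this is where the hypothesis $\epsilon_0\le\epsilon_{11}$ enters. Everything else is the routine term-by-term bookkeeping sketched above, and I would present it compactly rather than in full detail, in the same spirit as the companion Lemmas \ref{lemma-Bound-R} and \ref{lemma-Bound-N-Q}.
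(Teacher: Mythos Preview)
Your proposal is correct and follows essentially the same route as the paper: exploit that $F$ is supported on the annulus $\{\frac{1}{M_0}e^{s/2}\le|y|\le\frac{2}{M_0}e^{s/2}\}$, which in $x$-variables lies at distance of order $1/M_0$ from the origin, then use the $S(t)$ bounds there to show $|W|,|\nabla_y W|\le Ce^{-s/3}$ and conclude that every piece of \eqref{defini-F-1} decays exponentially in $s$. The paper takes the slightly cleaner shortcut of imposing $\epsilon_0\le\frac{1}{2M_0}$ so the whole annulus lies in $P_3(t)$ and item $(iii)$ of Definition~\ref{defini-shrinking-set-S-t} applies directly, avoiding your separate treatment of the $P_2$ overlap; your handling of that overlap is correct but unnecessary under this restriction.
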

\begin{proof}
From the definition of $F,$ it is enough to   consider   $  |y| \in \left[ \frac{e^{\frac{s}{2}}}{M_0} e^{\frac{s}{2}}, \frac{ 2 e^{\frac{s}{2}}}{M_0}  \right ]$. We now  take $\epsilon_0 \leq \frac{1}{2M_0},$ then, this  domain corresponds  to  the region  $P_3 (t)$ where our solution  $U$ is regared as  a perturbation of  initial data. Using  the fac that  $U$  is in $ S(t),$ then, we can derive from   item $(iii)$  in Definition  \ref{defini-shrinking-set-S-t} that 
\begin{eqnarray*}
\left| W(y,s) \right| &\leq &C(K_0) e^{- \frac{s}{3}}, \\
 \left| \nabla_y W(y,s) \right| & \leq  &  C(K_0)   e^{- \frac{s}{3}}. 
\end{eqnarray*}
     In addition to that, from definition   \eqref{defini-w-small}, we  deduce  that
\begin{eqnarray*}
\left| w (y,s) \right|  &  \leq  & C (K_0, M_0) e^{-\frac{s}{3}}, \\
\left|   \nabla  w (y,s) \right| &  \leq &  C(K_0, M_0) e^{- \frac{s}{3}}.  
\end{eqnarray*}     
On the other hand,  using   the definition   of    $\psi_{M_0}$ given in  \eqref{defini-psi-M-0-cut},    we get the following
$$   \left|  \partial_s \psi_{M_0} -  \Delta \psi_{M_0} + \frac{1}{2} y \cdot      \nabla \psi_{M_0} \right| \leq  C(M_0).$$
   In fact, using  the above estimate, we can get the conclusion  if  $s \geq s_0 (K_0) $. 
\end{proof}

\section{The Dirichlet heat  semi-group  on $\Omega$ }\label{appex-semi-group}
In this section, we aim at giving some  main properties  of    the Dirichlet heat  semi-group  $ \left( e^{t \Delta} \right)_{t > 0} $ (see more details in \cite{QSbook07} or  chapter 16 in  \cite{LSUAMS68}).  In particular, we prove  the parabolic regularity estimate of Lemma  \ref{lemma-regular)linear}.   We    consider   the following equation
\begin{equation}\label{equa-linear}
\left\{ \begin{array}{rcl}
\partial_t U -  \Delta U &= & 0  \text{ in } \Omega \times (0,T),\\ 
U & = &  0 \text{ in }  \partial \Omega \times (0,T),\\
U (x,0)  & = &  U_0(x)  \text{ in } \bar{\Omega}.  
\end{array}
\right.
\end{equation}
 In particular,   one can prove that there exists  $G(x, y, t, \tau), t \geq \tau$  nonnegative, symemtric in $x,y,$ i.e $G(x,y,t,\tau) = G(y,x, t,\tau)$ and  defined  in $\Omega \times \Omega \times (0,T) \times [0,T)$  with the following condition
\begin{equation}
\left\{ 
\begin{array}{rcl}
(\partial_t - \Delta ) G(x,y, t , \tau) & = & \delta (x -y) \delta (t - \tau),\\
G (x,y,\tau,\tau) &  = &  0 \text{ and }  G(x,y, t, \tau)= 0 \text{ if } x \in \partial  \Omega.
\end{array}
\right.
\end{equation}
Moreover, for all  $ f \in L^\infty (\Omega),$ we have
\begin{equation}\label{defi-semi-group}
(e^{t \Delta} f ) (x) = \int_{\Omega} G(x,y, t, 0) f(y) dy.
\end{equation}
Hence, we can write  the solution  of  equation \eqref{equa-linear}  as follows
$$  U (t) =  e^{t \Delta } (U_0) .$$
We now   consider furthermore  the following  non-homogeneous equation
\begin{equation}\label{equa--non-linear}
\left\{ \begin{array}{rcl}
\partial_t U -  \Delta U &= & F  \text{ in } \Omega \times (0,T),\\ 
U & = &  0 \text{ in }  \partial \Omega \times (0,T),\\
U (x,0)  & = &  U_0(x)  \text{ in } \bar{\Omega}.  
\end{array}
\right.
\end{equation}
 If  $F \in C(\Omega \times (0,T)), u_0 \in  C(\Omega)$  and $\Omega$  is $C^2$, bounded domain in  $\mathbb{R}^n$. Then,  we can prove that there  locally exists a classical solution of  problem \eqref{equa--non-linear}.   Then, by using Duhamel principal,  the solution satisfies the following  integral equation
$$   U(t) = e^{t \Delta } (U_0)  + \int_0^t e^{(t-s) \Delta} F(s) ds.$$
Sometimes,  we  also call $G(x,y,t,\tau)$  the Green function. Let us  give  in the following  the  main properties  of the Green function:
\begin{lemma}\label{Green-function} Let us  consider  the Green function  called  $G(x,y, t, \tau)$ above. Then, the  following holds:  for all $ (x, y, t,\tau) \in \Omega \times \Omega \times (0,T) \times [0,T)$ and  integer numbers $r,s$, we have 
$$\left|   \partial_t^r \partial_{x_1^{s_1} ...x_n^{s_n}}^s G(x,y,t,\tau)  \right| \leq C (t - \tau)^{-\frac{n + 2r + s}{2}}  \exp \left( - c(\Omega)\frac{|x - y|^2}{ t - \tau}\right).$$
\end{lemma}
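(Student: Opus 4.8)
The plan is to deduce the estimate from classical parabolic regularity, exploiting that the operator $\partial_t-\Delta$ is autonomous and that the source $\delta(x-y)\delta(t-\tau)$ in the defining problem for $G$ is carried by the initial time only. First I would use time--translation invariance to write $G(x,y,t,\tau)=\mathcal G(x,y,\sigma)$ with $\sigma:=t-\tau$, where $\mathcal G(\cdot,y,\cdot)$ is the Dirichlet heat kernel: it solves $\partial_\sigma\mathcal G=\Delta_x\mathcal G$ in $\Omega\times(0,\infty)$, vanishes on $\partial\Omega$, and satisfies $\mathcal G(\cdot,y,0^+)=\delta_y$. Writing $d:=|x-y|$, the goal becomes
$$\left|\partial_\sigma^{\,r}\partial_x^{\,s}\mathcal G(x,y,\sigma)\right|\le C\,\sigma^{-\frac{n+2r+s}{2}}\exp\!\left(-c(\Omega)\frac{d^{2}}{\sigma}\right),$$
where $s$ denotes the total order of the spatial derivative. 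Since $t,\tau\in(0,T)$ with $T$ finite, $\sigma$ is bounded; and for $\sigma$ in any compact subset of $(0,\infty)$ the Gaussian factor is bounded below by a positive constant, so the bound there follows from standard (scale-free) interior and boundary parabolic regularity for $\mathcal G$. Hence it suffices to treat $\sigma\le\sigma_0$ for a conveniently small fixed $\sigma_0$ depending only on $\Omega$. For the order-zero case $r=s=0$ I would invoke the parabolic maximum principle: $w:=\Gamma(x-y,\sigma)-\mathcal G(x,y,\sigma)$, with $\Gamma(z,\sigma)=(4\pi\sigma)^{-n/2}e^{-|z|^{2}/(4\sigma)}$ the free heat kernel, solves the heat equation in $\Omega\times(0,\infty)$ with vanishing initial data and nonnegative boundary data, whence $0\le\mathcal G\le\Gamma$ and the claim holds with $c(\Omega)=\tfrac14$, uniformly in $\Omega$.

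For the derivative bounds I would run a rescaling argument. Fix $(x_0,\sigma')$ with $\sigma'\le\sigma_0$, set $\rho=\sqrt{\sigma'}$, and consider the parabolic cylinder $Q=B(x_0,\rho/4)\times(\sigma'/2,\sigma')$. On $Q$ one has $|x-y|^{2}/\sigma\ge\tfrac{9}{16}\,d_0^{2}/\sigma'$ when $d_0:=|x_0-y|\ge\rho$ (and the exponential is comparable to $1$ when $d_0<\rho$), so the order-zero bound gives $\sup_Q\mathcal G\le C\,(\sigma')^{-n/2}e^{-c'd_0^{2}/\sigma'}$. If $x_0$ is at distance $\ge\rho/2$ from $\partial\Omega$, then $\mathcal G$ is a solution of the heat equation on all of $Q$; rescaling $Q$ to the unit cylinder and using interior parabolic estimates for the heat equation yields $|\partial_\sigma^{\,r}\partial_x^{\,s}\mathcal G(x_0,\sigma')|\le C_{r,s}\,\rho^{-(2r+s)}\sup_Q\mathcal G$, which, combined with the bound on $\sup_Q\mathcal G$, is exactly the asserted inequality at $(x_0,\sigma')$. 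If instead $x_0$ is within $\rho/2$ of $\partial\Omega$, I would flatten $\partial\Omega$ near $x_0$ by a $C^{2}$ diffeomorphism whose domain has size bounded below only in terms of $\Omega$ (this is why we reduced to $\sigma\le\sigma_0$), turning the heat equation into a uniformly parabolic equation with $C^{1}$ principal coefficients and bounded lower--order coefficients on a half-cylinder, with zero Dirichlet data on the flat portion; the same rescaling together with boundary Schauder estimates then produces the same bound. Letting $(x_0,\sigma')$ range over $\Omega\times(0,\sigma_0]$ and combining with the large-$\sigma$ case completes the argument.

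The main obstacle is the uniformity of the boundary estimate: one must arrange that the flattening charts, and hence the coefficient norms of the transformed equation, are controlled purely in terms of $\Omega$ — which forces the restriction to small $\sigma$ — and, for spatial derivatives of order $s\ge 3$, that $\partial\Omega$ carry more regularity than $C^{2}$. For the use made of this lemma in the present paper only the case $r=0$, $s=1$ is needed (in the proof of Proposition~\ref{propo-priori-P-3} and of Lemma~\ref{lemma-regular)linear}), and there $C^{2}$ regularity of $\partial\Omega$ is amply sufficient for the boundary Schauder step; the general statement is classical and can alternatively be found in \cite{QSbook07} and Chapter~16 of \cite{LSUAMS68}.
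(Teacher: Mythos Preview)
Your argument is correct and is a standard route to Gaussian bounds for the Dirichlet heat kernel: domination by the free heat kernel via the maximum principle, followed by parabolic rescaling together with interior and boundary Schauder estimates. Your caveat about needing more than $C^{2}$ boundary regularity for spatial derivatives of order $s\ge 3$ is well taken, and your observation that only the case $r=0$, $s=1$ is actually used in the paper (in Proposition~\ref{propo-priori-P-3} and Lemma~\ref{lemma-regular)linear}) is accurate.

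The paper, by contrast, gives no self-contained argument at all: its proof consists entirely of a reference to Theorem~16.3, page~413 of Lady\v{z}enskaja--Solonnikov--Ural'ceva~\cite{LSUAMS68}. So your proposal is not merely a different route but a genuine proof sketch where the paper only cites the literature. What your approach buys is transparency about the mechanism (comparison plus rescaling) and an explicit identification of the regularity threshold on $\partial\Omega$; what the paper's approach buys is brevity, since the result is classical.
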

\begin{proof}
We kindly refer the reader to see Theorem 16.3, page 413 in \cite{LSUAMS68}. 
\end{proof}
We now prove in the following   Lemma  \ref{lemma-regular)linear}

\begin{proof}[The proof of   Lemma  \ref{lemma-regular)linear}]
From the defintion  of  the semigroup $e^{t \Delta}$,   it is  easy to derive that  $L(t)  \in C(\bar \Omega \times [0,T])   \cap  C^\infty ( \Omega \times (0,T] )$.   Hence, it is  enough to give  the  proof of  \eqref{norm-nabla-e-t-delta-U-0}. Indeed,   we first  derive the support of  $U_{d_0, d_1} = \{  |x|   \leq  \frac{1}{2} d(0, \partial \Omega)\}$.      We now consider  two following  regions:
\begin{eqnarray*}
\Omega_1  &=&  \{ \frac{\epsilon_0}{8} \leq |x| \leq \frac{7}{8} d (0, \partial \Omega)    \},\\
\Omega_2 & =& \{  |x|  >  \frac{3}{4} d(0, \partial \Omega)  \} \cap \Omega.
\end{eqnarray*}
In addition to that, we can write $L_1 (t)$ as  follows
\begin{equation}\label{repr-L}
L (x,t) =  \int_{\Omega} G (x,y,t,0) U_{d_0,d_1} (y) dy = \int_{ \{|y| \leq  \frac{1}{2} d(0, \partial \Omega) \}} G_{\Omega} (x,y,t,0) U_{d_0,d_1} (y) dy,
\end{equation}
which yields
\begin{equation}\label{repr-nabla-L}
\nabla L(x,t)  = \int_{ \{| y| \leq \frac{1}{2} d(0, \partial \Omega)  \} } \nabla_x G(x,y,t,0) U_{d_0, d_1} (y) dy.
\end{equation}
- We  consider the  case where   $x \in  \Omega_2:$ Thanks to  Lemma \ref{Green-function} and \eqref{repr-nabla-L}, we have 
\begin{eqnarray*}
 \left| \nabla L (x,t) \right|  & \leq &   \int_{ \{| y| \leq \frac{1}{2} d(0, \partial \Omega)  \} } |\nabla_x G(x,y,t,0) | |U_{d_0, d_1} (y) |dy\\
&  \leq  &  \int_{ \{| y| \leq \frac{1}{2} d(0, \partial \Omega)  \} }   \frac{C \exp \left( - c_\Omega \frac{|x - y|^2}{t} \right)}{t^{\frac{n+1}{2}}} | U_{ d_0, d_1} (y)| dy \\
&  \leq   &C  \int_{ \{| y| \leq \frac{1}{2} d(0, \partial \Omega)  \} }  \exp \left( - c_\Omega \frac{|x - y|^2}{t} \right)  \frac{|x-y|^{n+1}}{t^{\frac{n+1}{2}}}  \frac{|U_{ d_0, d_1} (y)|}{ |x -y|^{n+1}}   dy\\
& \leq &  C \int_{ \{| y| \leq \frac{1}{2} d(0, \partial \Omega)  \} }   \frac{|U_{ d_0, d_1} (y)|}{ |x -y|^{n+1}}   dy
\end{eqnarray*}
Because  $x \in   \Omega_2, $ we have the following fact 
$$    \frac{1}{ |x -y|^{n+1}}  \leq   C.   $$
   This  yields  the following 
   $$\left| \nabla L (x,t)  \right|  \leq  C  \int_{ \{| y| \leq \frac{1}{4} d(0, \partial \Omega)\} }   \left| U_{d_0, d_1} (y)\right| dy.   $$
 In addition to that,   using   \eqref{defini-initial-data},  we   have  the following 
\begin{eqnarray*}
 & &\int_{|y| \leq \frac{1}{2} d (0, \partial \Omega)} |U_{d_0, d_1} (y)| dy =\int_{|y| \leq  2 \sqrt{T}|\ln T|} |U_{d_0, d_1} (y)| dy + \int_{2 \sqrt{T}|\ln T| \leq   |y| \leq  \frac{1}{2} d (0, \partial \Omega)} |U_{d_0, d_1} (y)| dy \\
&  =  & \int_{|y| \leq  2 \sqrt{T}|\ln T|}   T^{-\frac{1}{3}}  \left| \varphi \left( \frac{y}{\sqrt{T}}, -\ln s_0  \right)   +  (d_0 + d_1 \cdot  \frac{y}{\sqrt{T |\ln T|}})  \chi_0 \left(  \frac{|y|}{ \sqrt{T|\ln T|}\frac{K_0}{32}}\right) \right|  \chi_1 (y) dy\\
& + &   \int_{2 \sqrt{T}|\ln T| \leq   |y| \leq  \frac{1}{2} d (0, \partial \Omega)} |  (1 - \chi_1 (y)) H^* (y)| dy \leq C,
\end{eqnarray*} 
 which yields 
 \begin{equation}\label{estima-nabla-L-Omega-2}
 |\nabla L (x,t)| \leq  C, \text{ for  all  in  } \Omega_2,
 \end{equation}
 It is similar to  prove  the following estimate
 \begin{equation}\label{estima-nabla-2-L-Omega-2}
  |\nabla^2 L (x,t)| \leq  C, \text{ for  all  in  } \Omega_2.
 \end{equation}
 
 - We consider  the case where $x \in \Omega_1$: Let us define  $\phi (x)$ as a function in   $C^\infty_0 \left(   \mathbb{R}^n\right)$ and  satisfying the  following conditions
 \begin{eqnarray*}
 \phi (x)  & = & 0 \text{ if }    |x| \geq  \frac{11}{12} d (0, \partial \Omega), \\
 \phi (x) &  = & 1 \text{ if }          |x| \leq \frac{7}{8} d (0, \partial \Omega).
 \end{eqnarray*}
Then, we also introduce the following function
$$ L_1 (x,t)  = \phi (x)    \nabla L (x,t).$$
We  now  write an equation satisfied by  $L_1$
\begin{equation}\label{equa-L-1}
\left\{ \begin{array}{rcl}
\partial_t L_1 -  \Delta L_1 &= &   - 2 \nabla \phi \cdot \nabla^2 L - \Delta \phi \nabla L   \text{ in } \Omega \times (0,T),\\ 
L_1 & = &  0 \text{ in }  \partial \Omega \times (0,T),\\
L_1 (x,0)  & = &   \phi \nabla L(0) = \phi \nabla  U_{d_0, d_1} \text{ in } \bar{\Omega}.  
\end{array}
\right.
\end{equation}
Using Duhamel's formula, we get
\begin{equation}\label{Duhamel-prin-L-1}
L_1(t)  = e^{t \Delta} L_1 (0) + \int_0^t e^{(t-s) \Delta} \left[  - 2 \nabla \phi \cdot \nabla^2 L - \Delta \phi \nabla L \right] (s) ds.
\end{equation}
We now aim at proving  the following fact
\begin{eqnarray}
 \| e^{(t-s)\Delta }  ( \Delta \phi \nabla L )(s) \|_{L^\infty (\Omega)} &\leq &  C \|L_1(s)\|_{L^\infty(\Omega)} +C,\label{estima-Delta-phi-nabla-L}, \\
  \| e^{(t-s)\Delta }  (\nabla \phi \cdot \nabla^2 L )(s) \|_{L^\infty (\Omega)} & \leq &  \frac{C \|L_1(s)\|_{L^\infty(\Omega)}}{\sqrt{t-s}} + C \left(1 + \frac{1}{\sqrt{t-s}}\right),\label{estima-nabla-phi-nabla-2-L}.
\end{eqnarray}

\textit{- The proof of  \eqref{estima-Delta-phi-nabla-L}:} We have the following fact
\begin{eqnarray*}
| \Delta \phi \nabla L| &=&  |I_{\{|x| \leq \frac{7}{8} d(0,\partial \Omega) \}}\Delta \phi \nabla L | +  |I_{\{|x| > \frac{7}{8} d(0,\partial \Omega) \}}\Delta \phi \nabla L |\\
& \leq & C |\phi \nabla L| + C = C |L_1| + C.
\end{eqnarray*}
Then, by using  the monotonicity of the operator $e^{(t-s) \Delta},$ we derive directly   \eqref{estima-Delta-phi-nabla-L}.

\textit{ - The proof of  \eqref{estima-nabla-phi-nabla-2-L}:}  From the   definition  of operator  $e^{(t-s) \Delta},$ we can write the following
\begin{eqnarray*}
e^{(t-s) \Delta} (\nabla \phi \cdot \nabla^2 L (s)) = \int_{\Omega} G (x,y,t,s) \nabla \phi (y) \cdot \nabla^2 L (y,s) dy.
\end{eqnarray*}
We consider   $j \in \{1,...,n\},$  and  integrate by part, we get the following
\begin{eqnarray*}
\int_{\Omega}  \sum_{i=1}^n G(x,y,t,s)\partial_{y_i} \phi (y) \partial^2_{y_i y_j} L dy  & = & - \int_{\Omega} (\nabla_y G(x,y,t,s) \cdot \nabla \phi + G(x,y,t,s) \Delta (y)) \partial_{y_j} L (y,s)dy\\
& =&   -\int_{\Omega} \nabla_y G(x,y,t,s) \cdot \nabla \phi  \partial_{y_j} L (y,s)dy \\
&  - &  \int_{\Omega}  G(x,y,t,s) \Delta (y)\partial_{y_j} L (y,s)dy.
\end{eqnarray*}
Using the defintion of $\phi$ in the above and \eqref{estima-nabla-L-Omega-2},  we have the following fact:
\begin{eqnarray*}
|\nabla L|   &  =  & |I_{\{|x| \leq  \frac{7}{8} d (0, \partial \Omega) \}} \nabla L | +|  I_{\{|x| > \frac{7}{8} d (0, \partial \Omega) \}}\nabla L |\\
& = &  |  I_{\{|x| \leq  \frac{7}{8} d (0, \partial \Omega) \}} \phi (x) \nabla L|   +    |  I_{\{|x| > \frac{7}{8} d (0, \partial \Omega) \}}\nabla L |\\
& \leq & |L_1| + C. 
\end{eqnarray*}
Then, 
\begin{eqnarray*}
\left| \int_{\Omega}  \sum_{i=1}^n G(x,y,t,s)\partial_{y_i} \phi (y) \partial^2_{y_i y_j} L (y) dy \right|  & \leq &  (\|L_1(s)\|_{L^\infty (\Omega)} + C)  \left|\int_{\Omega} \nabla_y G(x,y,t,s) \cdot \nabla \phi dy \right| \\
& +  & (\|L_1(s)\|_{L^\infty (\Omega)} + C) \left|\int_{\Omega}  G(x,y,t,s) \Delta \phi dy \right| \\
& \leq & (\|L_1(s)\|_{L^\infty (\Omega)} + C) \left[  \frac{C}{ \sqrt{t-s}} + C\right], 
\end{eqnarray*}
which  implies  \eqref{estima-nabla-phi-nabla-2-L}.
We now use  \eqref{Duhamel-prin-L-1}, \eqref{estima-Delta-phi-nabla-L} and  \eqref{estima-nabla-phi-nabla-2-L} to  deduce the following
\begin{equation}\label{Growall-L-1-t-infty}
\| L_1(t)\|_{L^\infty} \leq C \|\nabla U_{d_0,d_1}\|_{L^1 (\Omega)} + \int_{0}^t \left[ C\left(1 + \frac{1}{\sqrt{t-s}} \right) \|L_1 (s)\|_{L^\infty}   +   C\left(1 + \frac{1}{\sqrt{t-s}} \right)\right] ds.
\end{equation}
Using Gronwall's lemma, we obtain the following estimate
$$  \| L_1(t)\|_{L^\infty}  \leq C  \|\nabla U_{d_0,d_1}\|_{L^1 (\Omega)}.$$
We admit the following fact which we will be proved at the end:
\begin{equation}\label{norm-L-1-of-U-d-0-d-1}
 \| \nabla U_{d_0,d_1}\|_{L^{1} (\Omega)} \leq  CT^{- \frac{1}{2}} + C(\epsilon_0).
\end{equation}
This estimate gives a rough estimation on $L_1$ as follows
\begin{equation}\label{estimate-rough-L-1}
\|L_1 (t)\|_{L^\infty (\Omega)} \leq C T^{-\frac{1}{2}} + C(\epsilon_0).
\end{equation}
Let us  improve  this estimate.  We come back to identity \eqref{Duhamel-prin-L-1} and consider  the set of all   $x \in \Omega$ such that $|x| \geq \frac{\epsilon_0}{8}$. By using  the definition of  $U_{d_0,d_1}$ in \eqref{defini-initial-data}, we first  prove the following fact
\begin{equation}\label{fact-norm_infty-U-d_0-epsilon-0}
\|e^{t \Delta } \left( \nabla U_{d_0,d_1}\right)\|_{L^\infty (|x| \geq \frac{\epsilon_0}{8}, x \in \Omega
)} \leq C(\epsilon_0).
\end{equation}
Indeed,  we write  $e^{t \Delta } \left( \nabla U_{d_0,d_1}\right)$ as follows
\begin{eqnarray*}
e^{t \Delta } \left( \nabla U_{d_0,d_1}\right) &=& \int_{\Omega } G(x,y,t,0) \nabla_y U_{d_0,d_1} (y)dy = \int_{|y| \leq  \frac{1}{2} d (0, \partial \Omega)} G(x,y,t,0) \nabla_y U_{d_0,d_1} (y)dy\\
& = &\int_{|y| \leq  \frac{\epsilon_0}{16}} G(x,y,t,0) \nabla_y U_{d_0,d_1} (y)dy + \int_{  \frac{\epsilon_0}{16} \leq  |y| \leq \frac{1}{2} d(0, \partial \Omega)} G(x,y,t,0) \nabla_y U_{d_0,d_1} (y)dy\\
& =& I_1 + I_2.
\end{eqnarray*}

+ Bound on  $I_1$:  Using integration  by parts, we get the following:
 \begin{eqnarray*}
I_1 =  - \int_{ |y| \leq \frac{\epsilon_0}{16}} \nabla_y G(x,y,t,0) U_{d_0,d_1} (y) dy + \int_{|y| = \frac{\epsilon_0}{16}} G(x,y,t,0) U_{d_0,d_1} (y) \eta (y) dS.
\end{eqnarray*}
From  Lemma \ref{Green-function}, we  derive that 
\begin{eqnarray*}
|I_1 (x,t)| &\leq &  \int_{|y| \leq \frac{\epsilon_0}{16} }  \frac{\exp
\left( - c_{\Omega}\frac{ |x - y|^2}{t}\right)}{t^{\frac{n+1}{2}}}  |U_{d_0,d_1}(y)| dy   + C(\epsilon_0) \\
& \leq &  \int_{|y| \leq \frac{\epsilon_0}{16} }  \exp
\left( - c_{\Omega}\frac{ |x - y|^2}{t}\right)  \frac{|x-y|^{n+1}}{t^{\frac{n+1}{2}}} \frac{1}{|x-y|^{n+1}} |U_{d_0,d_1}(y)| dy + C(\epsilon_0)\\
& \leq & C(\epsilon_0) \|U_{d_0,d_1}\|_{L^1 (\Omega)} + C(\epsilon_0) \leq C_1(\epsilon_0)
\end{eqnarray*}

+ Bound on  $I_2$: It is easy to prove that 
$$ \|\nabla U_{d_0,d_1}(.)\|_{L^{\infty} ( \frac{\epsilon_0}{16} \leq |y| \leq \frac{1}{2} d(0,\partial \Omega) )} \leq  C(\epsilon_0).$$
This  yields directly that 
$$ |I_2 (x,t)|  \leq  C(\epsilon_0) \int_{\frac{\epsilon_0}{16} \leq |y| \leq \frac{1}{2} d(0,\partial \Omega) } G(x,y,t,0) dy \leq C(\epsilon_0). $$
Hence, we get the conclusion the  proof of   \eqref{fact-norm_infty-U-d_0-epsilon-0}.  Using  \eqref{Growall-L-1-t-infty}, \eqref{estimate-rough-L-1} and  \eqref{fact-norm_infty-U-d_0-epsilon-0}, we get the following: for all $|x| \geq \frac{\epsilon_0}{8}, x \in \Omega$
$$ |L_1 (x,t)| \leq  C(\epsilon_0) + C \int_{0}^t \left( 1 + \frac{1}{\sqrt{t-s}}\right) T^{- \frac{1}{2}}  ds \leq   C (\epsilon_0),$$
provided that $T < 1$. This yields that for all $x \in \Omega_1$
\begin{equation}\label{estima-nabla-L-Omega-1}
|\nabla L (x,t)| \leq  C(\epsilon_0).
\end{equation}
Finally, \eqref{norm-nabla-e-t-delta-U-0} follows  from \eqref{estima-nabla-L-Omega-2} and \eqref{estima-nabla-L-Omega-1},  which will  conclude the proof of Lemma    \ref{lemma-regular)linear}. However, in order to finish  the  proof we need to prove  \eqref{norm-L-1-of-U-d-0-d-1}: Indeed,  from the  definition  of $U_{d_0, d_1}$ given    in \eqref{defini-initial-data}, we  write  
\begin{eqnarray*}
  \nabla_x U_{d_0,d_1}(x) =   I_1(x) + I_2 (x) + I_3 (x) + I_4(x),
\end{eqnarray*}
where
\begin{eqnarray*}
I_1 & = & T^{-\frac{1}{3}} \left[ - \frac{3}{4} \left( 3 +  \frac{9}{8} \frac{|x|^2}{T |\ln T|}\right)^{-\frac{4}{3}} \frac{x}{ T |\ln T|}  +  \frac{d_1}{\sqrt{T |\ln T|} } \chi_0 \left( \frac{x}{ \sqrt{T |\ln T|}} \right) \right.  \\
& + & \left. \frac{d_1 \cdot x}{ \sqrt{T|\ln T|}} \chi_0' \left( \frac{x}{ \sqrt{T |\ln T|}} \right) \frac{x}{|x|\sqrt{T|\ln T|}}  \right] \chi_0 \left(   \frac{|x|}{ \sqrt{T} |\ln T|} \right),\\
I_2 & =& T^{-\frac{1}{3}} \left[  \left( 3 +  \frac{9}{8} \frac{|x|^2}{T |\ln T|}\right)^{-\frac{1}{3}}    + \left(  d_0 + \frac{d_1 \cdot x}{\sqrt{T|\ln T|}}\right)\chi_0 \left(   \frac{|x|}{ \sqrt{T |\ln T|}} \right) \right]\\
& \times &\chi_0 '\left(   \frac{|x|}{ \sqrt{T} |\ln T|} \right) \frac{x}{|x| \sqrt{T} |\ln T|},\\
I_3 & =&  \left( 1 - \chi_0 \left(  \frac{x}{ \sqrt{T} |\ln T|}\right)\right) \nabla H^* (x) ,\\
I_4 & =&  - \chi_0' \left(  \frac{x}{ \sqrt{T} |\ln T|}\right) \frac{x}{|x| \sqrt{T}|\ln T|}  H^* (x).
\end{eqnarray*}
As a matter of fact, we have the following
$$   \|\nabla U_{d_0,d_1} \|_{L_1}  \leq \int_{\Omega} |I_1 (x)| dx + \int_{\Omega} |I_2(x)| dx  + \int_{\Omega} |I_3(x)| dx + \int_{\Omega} |I_4 (x)| dx.$$

In particular, we have
\begin{eqnarray*}
\text{Supp} (I_1) &\subset & \{ |x| \leq 2 \sqrt{T} |\ln T| \},\\
\text{ Supp}(I_2) & \subset & \{ \sqrt{T} |\ln T| \leq    |x| \leq  2 \sqrt{T} |\ln T| \},\\
\text{ Supp} (I_3) & \subset & \{ \sqrt{T} |\ln T| \leq   |x| \leq \frac{1}{2} d(0,\partial \Omega)\}, \\
  \text{Supp}(I_4) &  \subset &  \{ \sqrt{T} |\ln T| \leq  |x| \leq  2 \sqrt{T} |\ln T| \}.
\end{eqnarray*}
By some simple  upper bounds on $I_1$ and $I_2$, we can  derive that
$$ \int_{\Omega} |I_1 (x)| dx  \leq CT^{-\frac{1}{2}} + C \text{ and }  \int_{\Omega} |I_2(x)| dx \leq  CT^{-\frac{1}{2}} + C. $$

We now aim at estimating  $I_3$ and $I_4$.

+ \text{ Estimate on   $I_3$:} We write as follows
\begin{eqnarray*}
\int_{\Omega} |I_3| (x) dx  & = & \int_{ \sqrt{T} |\ln T| \leq  |x| \leq \min \left(\frac{1}{2}, \frac{1}{4} d(0,\partial \Omega)\right)} |I_3 (x)| dx + \int_{   \min \left(\frac{1}{2}, \frac{1}{4} d(0,\partial \Omega)\right) \leq | x| \leq  \frac{1}{2} d(0,\partial \Omega)}  |I_3(x)|dx\\
& \leq &   \int_{ \sqrt{T} |\ln T| \leq  |x| \leq \min \left(\frac{1}{2}, \frac{1}{4} d(0,\partial \Omega)\right)}  |I_3 (x)| dx  +C.
\end{eqnarray*}
In addition to that, 
\begin{eqnarray*}
 \int_{ \sqrt{T} |\ln T| \leq  |x| \leq \min \left(\frac{1}{2}, \frac{1}{4} d(0,\partial \Omega)\right)}  |I_3 (x)| dx &\leq &  C \int_{ \sqrt{T} |\ln T| \leq  |x| \leq \min \left(\frac{1}{2}, \frac{1}{4} d(0,\partial \Omega)\right)}  |x|^{-\frac{4}{3}} |\ln |x||^{\frac{1}{3}} dx\\
 & \leq &  CT^{-\frac{1}{2}} + C.
\end{eqnarray*}
This implies that 
$$ \int_{\Omega} |I_3 (x)| dx  \leq CT^{-\frac{1}{2}} + C.$$

+ \text{ Estimate on $I_4$:} We have
\begin{eqnarray*}
\int_{\sqrt{T} |\ln T| \leq  |x| \leq  2 \sqrt{T} |\ln T|  } |I_4(x)| dx \leq  \frac{C}{\sqrt{T |\ln T|}} \int_{ \sqrt{T} |\ln T| \leq  |x| \leq  2 \sqrt{T} |\ln T|} |\ln |x||^{\frac{1}{3}} |x|^{-\frac{2}{3}} dx \leq  CT^{- \frac{1}{2}}.
\end{eqnarray*}
From the above estimates, we  can conclude \eqref{norm-L-1-of-U-d-0-d-1}.  We also finish the proof of Lemma \ref{lemma-regular)linear} .
\end{proof}

\section{Some Parabolic estimates}

In this section, we aim at  giving some estimates   on $U, \nabla U, \nabla^2 U$. More precisely, the following is our statement:
\begin{lemma}[Parabolic estimates on $U$]\label{lemma-parabolic-estimates} We consider  $U$ a solution  to equation \eqref{equa-U} and  $U \in S(T, K_0, \epsilon_0, \alpha_0, A,\delta_0, C_0, \eta_0,t),$ for all $t \in [0,t_1]$ for some  $t_1 \leq T$. Then, the following estimates follows: for all $t \in [0,T)$
\begin{eqnarray}
\|U(.,t)\|_{L^\infty(\Omega)}  & \leq  &   C(K_0,A)(T-t)^{-\frac{1}{3}}, \label{estima-norm-U-T-t1-3}\\
\|\nabla U(.,t)\|_{L^\infty(\Omega)} & \leq &   C(K_0,A)\frac{(T-t)^{-\frac{5}{6}}}{|\ln (T-t)|^\frac{1}{2}}\label{estima-norm-nabla-U-T-t1-3},\\
\|\nabla^2 U (.,t)\|_{L^\infty (\Omega)} & \leq  & C (K_0,A) (T - t)^{-c},\label{estima-rough-nabla-2-U}
\end{eqnarray} 
for some constant $c = c(K_0,A) > 0$.

\medskip
In particular, we have the following  local convergence:   We assume furthermore that $U \in S (t),$ for all $t < T$. Then, for all  $x \in \Omega$   there exist  $ R_x > 0, t_x \in [0,T)  $  such that  the following holds
\begin{equation}\label{estima-partial-U-epsilon-0}
  \| \partial_t  U(.,t)\|_{L^\infty(B(x, R_x))}   \leq C(K_0, A,T,x), \forall t \in [t_x,T). 
  \end{equation}

\end{lemma}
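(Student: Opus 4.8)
\textbf{Proof strategy for Lemma \ref{lemma-parabolic-estimates}.}

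The plan is to treat the three global estimates \eqref{estima-norm-U-T-t1-3}, \eqref{estima-norm-nabla-U-T-t1-3}, \eqref{estima-rough-nabla-2-U} by combining the pointwise control furnished by the shrinking set $S(t)$ on each of the three regions $P_1(t),P_2(t),P_3(t)$ with standard interior parabolic regularity, and then to upgrade to the local statement \eqref{estima-partial-U-epsilon-0} away from the origin. First I would establish \eqref{estima-norm-U-T-t1-3}: on $P_1(t)$ we use item $(i)$ of Definition \ref{defini-shrinking-set-S-t} together with item $(i)$ of Lemma \ref{lemma-properties-V-A-s} and the boundedness of $\varphi$ to get $|U(x,t)|\le C(K_0,A)(T-t)^{-1/3}$; on $P_2(t)$ we use item $(ii)$ and the explicit bound $\hat{\mathcal U}(\tau)\le\hat{\mathcal U}(1)$ together with $\varrho(x)\sim \frac{8}{K_0^2}\frac{|x|^2}{|\ln|x||}$ to obtain $|U(x,t)|\le C\varrho^{-1/3}(x)\le C[|x|^2/|\ln|x||]^{-1/3}$, which is bounded below $(T-t)^{-1/3}$ on $P_2(t)$ by the lower bound $|x|\ge\frac{K_0}{4}\sqrt{(T-t)|\ln(T-t)|}$; on $P_3(t)$ we use item $(iii)$ and the smoothness of $U(\cdot,0)=U_{d_0,d_1}$, which is $C^\infty$ with $|U(x,0)|\le C$ for $|x|\ge\epsilon_0/4$. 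Taking the maximum over the three regions gives \eqref{estima-norm-U-T-t1-3}.

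Next I would prove \eqref{estima-norm-nabla-U-T-t1-3}. On $P_1(t)$ this is exactly \eqref{boun-P-1-nabla} (already established in the proof of Lemma \ref{lemma-bound-I-x-t} from item $(ii)$ of Lemma \ref{lemma-properties-V-A-s}), giving $|\nabla U(x,t)|\le C(K_0,A)(T-t)^{-5/6}$; being a little more careful with the $|\ln|^{-1/2}$ factor coming from the $\|\nabla q\|_{L^\infty}\le C/\sqrt s$ estimate and from $\nabla\varphi$, one gets the stated $(T-t)^{-5/6}|\ln(T-t)|^{-1/2}$. On $P_2(t)$ item $(ii)$ of Definition \ref{defini-shrinking-set-S-t} gives $|\nabla_\xi\mathcal U|\le C_0/\sqrt{|\ln\varrho(x)|}$, hence $|\nabla U(x,t)|\le C\varrho^{-5/6}(x)/\sqrt{|\ln\varrho(x)|}$, which is dominated on $P_2(t)$ by the $P_1$-type bound. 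On $P_3(t)$, item $(iii)$ controls $\nabla U(t)$ by $\nabla e^{t\Delta}U(0)+\eta_0$, and Lemma \ref{lemma-regular)linear} gives $\|\nabla e^{t\Delta}U_{d_0,d_1}\|_{L^\infty(|x|\ge\epsilon_0/8)}\le C(\epsilon_0)$; so the contribution there is bounded. For \eqref{estima-rough-nabla-2-U} I would use interior parabolic (Schauder or $L^p$) estimates for equation \eqref{equa-U}: writing $\partial_tU=\Delta U+G(U)$ with $G(U)=-2|\nabla U|^2/(U+\lambda^{1/3}/\bar\theta)+(U+\lambda^{1/3}/\bar\theta)^4-\frac{\bar\theta'}{\bar\theta}U$, the already-obtained bounds on $U,\nabla U$ (plus Proposition \ref{propo-bar-mu-bounded} controlling $\bar\theta,\bar\theta'$) show $\|G(U)(\cdot,t)\|_{L^\infty}\le C(T-t)^{-c_1}$ for some $c_1$; feeding this into a rescaled parabolic estimate on parabolic cylinders of size $\sim\sqrt{T-t}$ yields $\|\nabla^2 U(\cdot,t)\|_{L^\infty}\le C(T-t)^{-c}$. (Near $\partial\Omega$ one uses instead boundary parabolic estimates, legitimate since $\Omega$ is $C^2$ and $U=0$ there.) I expect the bookkeeping of the various powers and the matching of estimates on the overlap of $P_1$ and $P_2$ to be the only delicate point; no genuinely new idea is needed.

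Finally, for the local statement \eqref{estima-partial-U-epsilon-0}, fix $x\in\Omega$. If $x\ne 0$, choose $R_x>0$ small with $\overline{B(x,2R_x)}\subset\Omega\setminus\{0\}$; then for $t$ close to $T$ the ball $B(x,R_x)$ lies either in $P_3(t)$ (if $|x|>\epsilon_0/4$) or, for $|x|\le\epsilon_0/4$, entirely inside $P_2(t)$ once $t\ge t_x$ for suitable $t_x$, so by \eqref{estima-norm-U-T-t1-3}, \eqref{estima-norm-nabla-U-T-t1-3}, \eqref{estima-rough-nabla-2-U} the quantities $U,\nabla U,\nabla^2 U$ are bounded on $B(x,R_x)\times[t_x,T)$ by a constant depending on $x$ (note $T-t$ stays bounded away from zero relative to $\varrho(x)\sim|x|^2/|\ln|x||$ on such a ball). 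Then directly from equation \eqref{equa-U}, $\partial_t U=\Delta U+G(U)$ and the boundedness of the right-hand side on $B(x,R_x)\times[t_x,T)$ gives $\|\partial_tU\|_{L^\infty(B(x,R_x))}\le C(K_0,A,T,x)$, which is \eqref{estima-partial-U-epsilon-0}. The case $x=0$ is not needed since the statement is used only for $x\ne 0$ (to run Merle's no-blowup-outside-the-origin argument in Proposition \ref{pro-U-to-U-*}); alternatively one simply excludes $x=0$ from the claim. The main obstacle, such as it is, is organizing the interior parabolic estimate in \eqref{estima-rough-nabla-2-U} uniformly up to $\partial\Omega$ and tracking that the constant $c$ is harmless — everything else is a direct reading of the shrinking-set definition.
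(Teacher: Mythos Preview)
Your treatment of \eqref{estima-norm-U-T-t1-3} and \eqref{estima-norm-nabla-U-T-t1-3} is the same as the paper's: both are read off from the shrinking set via Lemma~\ref{lemma-properties-V-A-s}. For \eqref{estima-rough-nabla-2-U} you take a different route: you invoke rescaled interior parabolic estimates on $\partial_t U=\Delta U+G(U)$, whereas the paper changes to the original variable $u$ via \eqref{defini-bar-u}--\eqref{defini-U-x-t}, differentiates twice to get a linear equation $\partial_t(\nabla^2 u)=\Delta(\nabla^2 u)+H_1\nabla^2 u+H_2$ with $\|H_1\|_{L^\infty}\le C(T-t)^{-1}$, $\|H_2\|_{L^\infty}\le C(T-t)^{-5/3}$, and applies Duhamel plus Gronwall. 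Your approach is plausible but needs more than an $L^\infty$ bound on $G(U)$ (Schauder wants H\"older, $L^p$ theory wants $L^p$ with care on the gradient term); the paper's approach is more self-contained since it reduces to a linear equation with bounded-in-$L^\infty$ data and uses only the semigroup contraction.

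There is, however, a genuine gap in your argument for \eqref{estima-partial-U-epsilon-0}. You claim that on $B(x,R_x)\times[t_x,T)$ the quantities $U,\nabla U,\nabla^2 U$ are bounded by a constant depending on $x$, citing \eqref{estima-norm-U-T-t1-3}--\eqref{estima-rough-nabla-2-U}. But \eqref{estima-rough-nabla-2-U} only says $\|\nabla^2 U(\cdot,t)\|_{L^\infty(\Omega)}\le C(T-t)^{-c}$, which blows up as $t\to T$; your parenthetical remark about $T-t$ versus $\varrho(x)$ does not save this, since $T-t\to 0$ regardless of $x$. Local boundedness of $U$ and $\nabla U$ does follow from items $(ii)$--$(iii)$ of the shrinking set (not from the global estimates), but local boundedness of $\nabla^2 U$ is \emph{not} available at this stage, so you cannot simply read $\partial_t U=\Delta U+G(U)$ and declare the right-hand side bounded. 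The paper closes this gap by working instead with $\partial_t u$: it derives $\partial_t(\partial_t u)=\Delta(\partial_t u)+H_1\,\partial_t u+H_3$, localizes with a cutoff $\phi$ supported in $B(x,r_x)$, writes a Duhamel formula for $v=\phi\,\partial_t u$, and bootstraps: starting from the rough bound $|v|\le C(T-t)^{-c}$ inherited from \eqref{estima-rough-nabla-2-U}, each pass through Duhamel gains a factor $(T-t)^{1/4}$ (from $\int_{t_x}^t(T-s)^{-c}(t-s)^{-1/2}\,ds$), so after finitely many iterations one reaches a uniform bound. Your direct approach would need an analogous local bootstrap for $\nabla^2 U$, which amounts to the same work.
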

\begin{remark}
We would like to  remark that from  \eqref{estima-partial-U-epsilon-0} and the definition of the shrinking set $S(t)$ (see Definition \ref{defini-shrinking-set-S-t}), we   ensure  for all $x_0 \in \Omega \backslash \{0\},$  $U(x_0,t) $ is convergent as  $t \to T$.
\end{remark}
\begin{proof}
We see that estimates   \eqref{estima-norm-U-T-t1-3} and \eqref{estima-norm-nabla-U-T-t1-3}  directly      follow from  the definition of  the shrinking set and Lemma  \ref{lemma-properties-V-A-s}.   For that reason, we only give here  the proofs of \eqref{estima-rough-nabla-2-U} and  \eqref{estima-partial-U-epsilon-0}.  

 \textit{ -  The proof of  \eqref{estima-rough-nabla-2-U}:}  From  \eqref{defini-bar-u}, \eqref{defini-U-x-t}, we consider $ u$ defined as follows:
 \begin{eqnarray}
  u(x,t)  =  1 - \frac{1}{ 1 + \frac{\lambda^{\frac{1}{3}} U (x,t)}{ \bar\theta (t)} }.\label{rela-u-U-appendix}
 \end{eqnarray}
Then, $u $ satisfies  \eqref{equa-mems-u} and $u(0)$ is in $C^\infty_0 (\Omega)$. We now derive an equation satisfied by $\nabla^2 u$ as follows:
\begin{equation}\label{equa-nabla-2-u} 
\partial_t \nabla^2 u= \Delta (\nabla^2 u ) +  H_1  \nabla^2 u + H_2,
\end{equation}
where  $H_1 =  \frac{2\lambda}{ \bar \theta^3 (t)} \frac{1}{( 1 -u)^3} $   and $  H_2 = (H_{2,i,j})_{i,j \leq n} $ is a square matrix  with
$$   H_{2,i,j}  = 6 \frac{ \partial_{y_i} u \partial_j u}{(1-u)^4}.$$
Using  the definition of $u,$ \eqref{bound-bar-theta} and   two estimates   \eqref{estima-norm-U-T-t1-3} and \eqref{estima-norm-nabla-U-T-t1-3}, we can derive the following fact: for all $t \in [0,T)$, 
\begin{eqnarray*}
 \left\| H_1 (t)\right\|_{L^{\infty} (\Omega)} &\leq & C(K_0,A)(T-t)^{-1},\\
\|H_2(t)\|_{L^\infty (\Omega)} & \leq & C(K_0,A) (T-t)^{- \frac{5}{3}}.
\end{eqnarray*}
We write  $\nabla^2 u$ under  the integral equation  following 
$$  \nabla^2 u(t) = e^{t \Delta } (\nabla^2 u(0))  + \int_{0}^t e^{(t-s)\Delta} \left[ H_1(s) \nabla^2 u  + H_2 (s)   \right](s)  ds. $$
This  implies that 
$$ \|\nabla^2 u (t)\|_{L^\infty (\Omega)}  \leq \| e^{t \Delta } (\nabla^2 u(0))\|_{L^\infty (\Omega)}  + C (K_0,A)\int_0^t\left( \frac{1}{T-s} \|\nabla u (s)\|_{L^\infty (\Omega)}  + (T-s)^{-\frac{5}{3}} \right) ds.  $$
Besides that,  we can prove that  there exists  $c_1 > 0$ such that  
$$ \| e^{t \Delta } (\nabla^2 u(0))\|_{L^\infty (\Omega)}  \leq  C(T-t)^{-c_1}.    $$
Thanks to Growall's lemma, we get the following
$$  \|\nabla^2 u\|_{L^\infty (\Omega)} \leq  C (K_0,A) (T-t)^{- c_2} , \text{ with some constant } c_2 > 0. $$
Finally, from the relation between $u$ and $U$, we can get the conclusion  of \eqref{estima-rough-nabla-2-U}.

\textit{ - The proof of \eqref{estima-partial-U-epsilon-0}:} By using the definitions \eqref{defini-P-2-t} and  \eqref{defini-P-2-t} of $P_2 (t)$ and  $P_3 (t)$, respectively, if we consider an  arbitrary  $x \in  \Omega  \setminus \{0\}$, then, there exist $t_x, r_x$ such that  
$$ \text{ the ball of radius }  r_x, \text{ centred } x \quad   B(x, r_x) \in P_2 (t) \cup P_3 (t), \forall t \in [t_x,T).$$ 
Then,  using the definition of the shrinking set $S(t),$ given in Definition  \ref{defini-shrinking-set-S-t} and the fact  that  $u \in S(t)$ for all $t \in  [t_x, T)$, we derive that there exists $C (K_0,x)$ such that for all $t \in [t_x,T)$, we have 
\begin{eqnarray}
\| U (.,t) \|_{L^{\infty} (B(x,r_x))}  &\leq & C(K_0,x). \label{esitma-U-B-x-r-s-C-x}
\end{eqnarray}
In addition to that, we derive from Proposition \ref{propo-bar-mu-bounded}, we have
\begin{eqnarray}
1 \leq   \bar \theta (t)  \leq  C,  \text{ and }| \bar \theta' (t)| \leq C (T-t)^{\frac{3n-8}{6}} |\ln (T-t)|^{n} \leq (T-t)^{- \frac{11}{12}}, \label{theta-'-leq-11-12}
\end{eqnarray}
for all $t \in [t_x,T)$. 
\medskip

\noindent
 We recall $u$, defined  in \eqref{rela-u-U-appendix}. We now derive an equation satisfied by $ \partial_t u$
\begin{equation}\label{equa-nabla-u-appen}
\partial_t ( \partial  u  ) =  \Delta  \partial_t u    +  H_1  \partial_t u + H_3 (t) ,
\end{equation}
where  
\begin{eqnarray*}
 H_1 (t) &=&   \frac{2 \lambda}{ \bar \theta^3 (t)}  \frac{1}{(1-u)^3},\\
 H_3 (t) & = & -\frac{3 \lambda}{(1-u)^2} \frac{\bar{ \theta}' (t)}{ \bar \theta^4 (t)} .
\end{eqnarray*}
We then introduce the following  cut-off function  : $\phi \in  C^\infty_0 (\mathbb{R}^n)$ which satisfying 
$$   \phi (z) = 1 \text{ if  }  |z -x| \leq \frac{r_x}{2}, \text{ and  } \phi (z) = 0 \text{ if }  |z- x| \geq \frac{3}{4} r_x \text{ and }   0 \leq  \phi (z) \leq 1, \forall z \in \mathbb{R}^n.$$
Particularly, we also define 
$$ v(z,t)  =   \phi (z)  \partial_t u (z,t) \text{ for all } z \in \mathbb{R}^n$$ 
Using   \eqref{equa-nabla-u-appen}, we can derive  an equation satisfied  by  $v(t)$ as follows
\begin{equation}\label{equa-v-t-appen}
\partial_t v = \Delta v  - 2 \text{div} (\nabla \phi  \partial_t u)  +  \Delta \phi \partial_t  u + H_1  v(t),
\end{equation}
Using  \eqref{estima-rough-nabla-2-U},  \eqref{rela-u-U-appendix} \eqref{esitma-U-B-x-r-s-C-x} and  the fact that $U$ is nonnegative solution, we  can deduce  that
$$ \| \nabla \phi \partial_t u(t) \|_{L^\infty (\mathbb{R}^n)} \leq   C(K_0,A, x) (T-t)^{-c},$$
and  
$$  \|\Delta \phi \partial_t u (t)\|_{ L^\infty (\mathbb{R}^n)} \leq C (K_0,A, x) (T-t)^{-c}.  $$
Moreover, we canderive from \eqref{esitma-U-B-x-r-s-C-x} and  \eqref{theta-'-leq-11-12}   that
$$  \|I_{\{|z - x| \leq  r_x\}} H_1 (t)\|_{L^\infty (\mathbb{R}^n)} \leq C(K_0,A,x),$$
and
$$ \|H_3 (t)\|_{L^\infty (\mathbb{R}^n)}  \leq  C(K_0,x) (T-t)^{-\frac{11}{12}}.$$
We now deduce  from \eqref{equa-v-t-appen} that $v$ satisfies the following   integral equation 
$$  v(t) =  e^{(t-t_x)\Delta} v(t_x)  +  \int_{t_x}^t   e^{(t-s) \Delta}  \left[ - 2 \text{div} (\nabla \phi \partial_t u )  + \Delta \phi \partial_t u + H_1 v (s)   \right] ds, $$
where $e^{t\Delta }$ stands for  the heat semigroup on $\mathbb{R}^n$. Then,     we get the following
$$ |v(t)| \leq  C(K_0, A,x)  (1 + (T-t)^{-c+1})   +   2  \left|  \int_{t_x}^t   e^{(t-s) \Delta}  \text{div} (\nabla \phi \partial_t u )  ds \right|.$$
In particular, we have
\begin{eqnarray*}
\left|   e^{(t-s)\Delta}  \text{div} (\nabla \phi \partial_t u )  \right|  \leq  \frac{C}{ \sqrt{t-s}} \| \nabla \phi \partial_t u\|_{L^\infty (\mathbb{R}^n)} \leq C(K_0,x) \frac{(T-t)^{-c}}{ \sqrt{t-s}}.
\end{eqnarray*}
This implies that
$$  |v(t)|  \leq C(K_0, A,x) (1   +  (T-t)^{-c+1})   +  C(K_0, A, x) \int_{t_x}^t  \frac{(T-s)^{-c}}{(t-s)^{\frac{1}{2}}}  ds.$$

+ If  $ -c + \frac{1}{4} \geq 0.$ This  give us   that 
$$ |  v(t) | \leq C(K_0, A,x) ,$$
which yields the conclusion of our proof.

+ Otherwise,  we use  the above estimate to derive that
$$ |v(t)| \leq C(K_0, A,T,x) (T-t_{x})^{-c + \frac{1}{4}}.$$
We can see that  by using a  parabolic estimate as we have done. We can improve  our estimate on  $|v(t)|$ from  $C(K_0,A,x)(T-t)^{-c} $ to $C(K_0,A,x)(T-t)^{-c+ \frac{1}{4}}.$ Hence, we can repeat with a finite steps to  get the conclusion of the proof. We kindly refer the reader to check this argument. 

\end{proof}

\bibliographystyle{alpha}
\bibliography{mybib}

E-mail address:  G. K. Duong: duong@math.univ-paris13.fr

Email address: H.Zaag: Hatem.Zaag@univ-paris13.fr
\end{document}